\documentclass[11pt]{article}
\usepackage{mathtools}
\usepackage[T1]{fontenc}
\usepackage{amsfonts}
\usepackage{amsmath}
\usepackage{amssymb}
\usepackage{amsthm}
\usepackage{bbm}
\usepackage{bm}
\usepackage{xcolor}
\usepackage{mathrsfs}
\usepackage{enumitem}
\usepackage{needspace}
\usepackage[colorlinks=true, linkcolor=blue, citecolor=blue, urlcolor=blue]{hyperref}
\mathtoolsset{showonlyrefs}
\usepackage[
  backend=biber,
  style=numeric,
  sorting=nyt,
  giveninits=true,
  maxbibnames=99,
  doi=true,
  isbn=false,
  url=false,
  eprint=true
]{biblatex}

\DeclareNameAlias{author}{family-given}
\DeclareNameAlias{editor}{family-given}
\DeclareFieldFormat[article]{title}{#1}
\DeclareFieldFormat[incollection]{title}{#1}
\DeclareFieldFormat[inproceedings]{title}{#1}
\DeclareFieldFormat[book]{title}{\mkbibemph{#1}}
\renewbibmacro{in:}{}

\AtEveryBibitem{%
  \clearfield{eprintclass}%
  \clearfield{primaryclass}%
}
\providebibmacro*{eprint:arxiv}{}
\renewbibmacro*{eprint:arxiv}{%
  \printtext[eprint:arxiv]{%
    \printfield{eprinttype}%
    \setunit{\addcolon}%
    \printfield{eprint}}}

\DeclareMathOperator{\spt}{spt}
\DeclareMathOperator{\supp}{supp}
\DeclareMathOperator{\dist}{dist}
\DeclareMathOperator{\Lip}{Lip}

\DeclareMathOperator{\diam}{diam}

\newcommand{\RR}{\mathbb{R}}
\newcommand{\R}{\RR}

\newcommand{\eps}{\varepsilon}

\newcommand{\Id}{\mathbbm{I}}

\newcommand{\1}{\mathbf{1}}

\usepackage[top=29mm,bottom=29mm,left=29mm,right=29mm]{geometry}
\newcommand{\mykill}[1]{}
\numberwithin{equation}{section}
\usepackage[capitalize, noabbrev]{cleveref}
\makeatletter
\AtBeginDocument{%
  \let\label@in@display\cref@old@label@in@display
  \let\ltx@label\cref@old@label
}
\makeatother %
\crefname{equation}{}{} %

\theoremstyle{plain}
\newtheorem{theorem}{Theorem}[section]
\newtheorem{proposition}[theorem]{Proposition}
\newtheorem{lemma}[theorem]{Lemma}
\newtheorem{corollary}[theorem]{Corollary}
\theoremstyle{definition}

\newtheorem{remark}[theorem]{Remark}

\newtheorem{assumption}[theorem]{Assumption}
\crefname{assumption}{Assumption}{Assumptions}
\Crefname{assumption}{Assumption}{Assumptions}
\theoremstyle{remark}

\AddToHook{env/proposition/begin}{\crefalias{theorem}{proposition}}
\AddToHook{env/lemma/begin}{\crefalias{theorem}{lemma}}
\AddToHook{env/corollary/begin}{\crefalias{theorem}{corollary}}
\AddToHook{env/definition/begin}{\crefalias{theorem}{definition}}
\AddToHook{env/remark/begin}{\crefalias{theorem}{remark}}
\AddToHook{env/example/begin}{\crefalias{theorem}{example}}
\AddToHook{env/assumption/begin}{\crefalias{theorem}{assumption}}
\crefname{theorem}{Theorem}{Theorems}
\crefname{proposition}{Proposition}{Propositions}
\crefname{lemma}{Lemma}{Lemmas}
\crefname{corollary}{Corollary}{Corollaries}
\crefname{definition}{Definition}{Definitions}
\crefname{remark}{Remark}{Remarks}
\crefname{example}{Example}{Examples}
\crefname{assumption}{Assumption}{Assumptions}
\usepackage{tocloft}

\newcommand{\dd}{\,\mathrm d}
\newcommand{\intr}{\operatorname{int}}
\DeclareMathOperator{\id}{id}

\newcommand{\esssup}{\operatorname*{ess\,sup}}

\newcommand{\N}{\mathbb N}
\newcommand{\B}{\overline B}
\newcommand{\Hh}{\mathbb H}
\newcommand{\Sn}{\mathbb S^{d-1}}

\DeclareMathOperator{\TV}{TV}

\allowdisplaybreaks[2]
\hypersetup{
 pdftitle={Geometry and Convergence of Quadratically Regularized Optimal Transport II},
 pdfsubject={Quadratically regularized optimal transport, Brenier graph inclusion, potential expansions, and bias asymptotics},
 pdfkeywords={optimal transport, regularization, Brenier graph, boundary layers, bias asymptotics},
 pdfauthor={Alberto Gonzalez-Sanz, Marcel Nutz},
 pdflang={en},
 pdfdisplaydoctitle=true
}
\title{\vspace{-1em}Geometry and Convergence of Quadratically Regularized Optimal Transport II}
\author{Alberto Gonz{\'a}lez-Sanz\thanks{Department of Statistics, Columbia University, ag4855@columbia.edu.} \and Marcel Nutz\thanks{Departments of Mathematics and Statistics, Columbia University, mnutz@columbia.edu.}}
\date{\today}
\begin{document}
\setlength{\abovedisplayskip}{10pt plus 2.5pt minus 4.5pt}
\setlength{\belowdisplayskip}{10pt plus 2.5pt minus 4.5pt}
\setlength{\abovedisplayshortskip}{0pt plus 2.5pt}
\setlength{\belowdisplayshortskip}{6pt plus 2.5pt minus 2.5pt}
\maketitle
\vspace{-1em}
\begin{abstract} 
We study quadratically regularized optimal transport with quadratic cost in the regime of small regularization $\varepsilon$, where the support of the optimal coupling is sparse. For smooth marginal densities in $\mathbb{R}^d$, we show that the support contains the graph of the Brenier map. After centering by the Brenier image and rescaling by $\ell=\varepsilon^{1/(d+2)}$, the sections of the support have explicit ellipsoidal limits for $\varepsilon\to0$, except at boundary points, where the limits are half-space profiles. The optimal dual potentials admit expansions at order~$\ell^2$, uniformly up to the boundary. We identify the leading coefficients, which consist of a common local profile and opposite global corrections determined by a linear Neumann problem. Finally, we analyze two approximations to the Brenier map, namely the gradient of the dual potential and the conditional mean of the coupling. We obtain the sharp $L^p$ rate $\ell^{1+1/p}$ with exact leading constants and further identify the leading interior and boundary biases. Taken together, our results illustrate that quadratic regularization induces an accurate sparse approximation of classical optimal transport. 
\end{abstract}
\noindent\textit{Keywords} optimal transport; quadratic regularization; Brenier graph; asymptotics\\
\noindent\textit{AMS 2020 Subject Classification} primary 49Q22; secondary 49N15, 90C25
\medskip
\setcounter{tocdepth}{2}
\tableofcontents
\medskip

\Needspace{12\baselineskip}
\section{Introduction}\label{sec:main}
Optimal transport provides a flexible framework for comparing probability measures and has become a standard tool in statistics, machine learning, image processing, and other applications. In many of these areas, regularizing transport problems is essential for computational tractability and improved statistical properties. The present work studies quadratically regularized optimal transport (QOT) with quadratic cost. Let $\mu,\nu$ be probability measures with compact supports $X,Y\subset\R^d$, and let $\Pi(\mu,\nu)$ denote their couplings. For $\eps>0$, consider
\begin{equation}\label{eq:problem}
\inf_{\pi\in\Pi(\mu,\nu)}\left\{\frac12\int_{X\times Y}|x-y|^2\dd\pi+\frac\eps2\int_{X\times Y}\left(\frac{\dd\pi}{\dd(\mu\otimes\nu)}\right)^2\dd(\mu\otimes\nu)\right\},
\end{equation}
where $|\cdot|$ is the Euclidean norm and the objective is $+\infty$ if $\pi\not\ll\mu\otimes\nu$. We denote the unique minimizer by $\pi_\eps$.  The quadratic penalty makes the problem strictly convex while allowing sparse support $\spt\pi_\eps$, in contrast to the full support of entropically regularized couplings \cite{BSR,Nutz}. This sparsity immediately raises a fidelity question: does the support still contain the pairs $(x,y)$ matched by unregularized optimal transport?

For smooth positive densities on uniformly convex supports, unregularized optimal transport is given by the Brenier map $T:X\to Y$. Let $S_{\eps,x}$ be the section of $\spt\pi_\eps$ above~$x$; intuitively, the range of images of $x$ under $\pi_\eps$. The recent localization estimates in Part I (see \cite{GNGeometry}) show that this section lies within distance $O(\ell)$ of the unregularized image $T(x)$, where $\ell=\eps^{1/(d+2)}$. However, before the present work, it was a longstanding open question whether $T(x)$ actually belongs to the section. We prove that it does, for all sufficiently small $\eps$: for any marginals $\mu,\nu$ satisfying \cref{ass:data}, there is $\eps_0>0$ such that
\[
 T(x)\in S_{\eps,x}\qquad\text{for all }x\in X\text{ and }0<\eps<\eps_0.
\]
In fact, \cref{thm:main-geometry} shows that the convex set $S_{\eps,x}$ contains a relative neighborhood of $T(x)$ of radius comparable to $\ell$, the same order as the diameter of $S_{\eps,x}$. This is the first indication that $S_{\eps,x}$ is approximately centered at $T(x)$ for small $\eps$. From an applied point of view, it suggests that quadratic regularization induces sparsity without sacrificing fidelity.

To further validate this claim, we identify the limiting shapes (\cref{thm:main-geometry}). After subtracting $T(x)$ and rescaling distances by $\ell$, sections for interior $x$ converge to explicit ellipsoids centered at the origin. Under the same centering and rescaling, the conditional target laws $\pi_\eps^x$ converge to laws whose densities are (positive parts of) explicit quadratic functions. 

The density $h_\eps$ of the coupling $\pi_\eps$ and its support are described in terms of convex dual potentials $u_\eps:X\to\R$ and $v_\eps:Y\to\R$ via~\eqref{eq:slack}. We therefore investigate the expansion of the potentials around the limiting Brenier potentials (\cref{thm:corrector}). The leading corrections for the two potentials consist of a common local profile and opposite global terms determined by a linear Neumann problem. While expansions are shown to hold uniformly (including at the boundary), it is noteworthy that local geometry alone does not determine the individual corrections.

Finally, we return to the original goal of approximating unregularized optimal transport. The regularized optimizer gives rise to two  approximations for the Brenier map: the gradient of the potential and the conditional mean of the coupling. We show in \cref{thm:main-bias} that both have an approximation error of order $\ell^{1+1/p}$ in $L^p(\mu)$. We further identify their leading interior biases at scale $\ell^2$ and their boundary profiles at scale $\ell$, while estimates connecting the interior and boundary regimes yield the exact leading constants for the $L^p(\mu)$ errors. %

Beyond these results for quadratic regularization, our approach can substantially advance the state of the art on entropic regularization, where only partial results in the direction of \cref{thm:corrector,thm:main-bias} are available. See \cref{se:previousResults} for a comparison with related literature.

\subsection{Assumptions and main results}\label{sec:main-results}

We impose the following assumptions on the marginals $\mu,\nu$ and their supports $X,Y$.

\begin{assumption}\label{ass:data}
Let $0<\alpha<1$. The sets $X,Y\subset\R^d$ are closures of bounded uniformly convex domains with $C^{3,\alpha}$ boundaries. The probability measures $\mu=\rho_0\1_X\dd x$ and $\nu=\rho_1\1_Y\dd y$ have densities $\rho_0\in C^{1,\alpha}(X)$ and $\rho_1\in C^{1,\alpha}(Y)$ that are uniformly bounded above and away from zero.
\end{assumption}

For $d=1$, the sets $X,Y$ are simply nondegenerate compact intervals. The optimizer $\pi_\eps$ of \eqref{eq:problem} admits convex potentials $u_\eps:X\to\R$ and $v_\eps:Y\to\R$. More precisely, $\dd\pi_\eps=h_\eps\dd(\mu\otimes\nu)$, where
\begin{equation}\label{eq:slack}
 h_\eps(x,y)=\frac{(q_\eps(x,y))_+}{\eps},\qquad q_\eps(x,y)=x\cdot y-u_\eps(x)-v_\eps(y).
\end{equation}
Here $a_+=\max\{a,0\}$, and we call $q_\eps$ the slack. The potentials are $C^{1,1}$ up to the boundary; see \cref{prop:inputs}.

Let $u_0,v_0$ be the convex potentials for classical (unregularized) transport, with Brenier map $T=\nabla u_0$ and inverse $S=\nabla v_0=T^{-1}$. 
The unregularized potentials are $C^{3,\beta}$ up to the boundary for some $\beta\in(0,\alpha)$. Set $H=DT=D^2u_0$, which is uniformly positive definite. 

Each pair of potentials is determined up to opposite additive constants. We fix them by
\begin{equation}\label{eq:gauge}
 \int_Xu_\eps\dd\mu=\int_Yv_\eps\dd\nu,\qquad \int_Xu_0\dd\mu=\int_Yv_0\dd\nu.
\end{equation}
Throughout, we use the length scale $\ell=\eps^{1/(d+2)}$, and limits are taken as $\eps\downarrow0$ unless stated otherwise. %

\paragraph{Graph inclusion and section profiles.}
For $x\in X$, define
\begin{equation}\label{eq:forward-sections}
 S_{\eps,x}=\{y\in Y:q_\eps(x,y)\ge0\},\qquad \pi_\eps^x(\dd y)=h_\eps(x,y)\nu(\dd y).
\end{equation}
By \cref{prop:inputs}, $\spt\pi_\eps=\{q_\eps\ge0\}$, so $S_{\eps,x}$ is indeed a section of the support, and the conditional law $\pi_\eps^x$ is a probability measure for every $x\in X$. To describe the limiting sections and conditional laws, let $\omega_d$ be the volume of the Euclidean unit ball and set
\begin{equation}\label{eq:scale}
\begin{gathered}
 h_d=\left(\frac{d+2}{2^{1+d/2}\omega_d}\right)^{2/(d+2)},\qquad
 \sigma(x)=\left(\frac{\sqrt{\det H(x)}}{\rho_0(x)}\right)^{1/(d+2)},\\[.2em]
 h(x)=h_d\sigma(x)^2=h_d[\rho_0(x)\rho_1(T(x))]^{-1/(d+2)}.
\end{gathered}
\end{equation}
The second expression for $h$ follows from the Monge--Amp\`ere equation
\begin{equation}\label{eq:transport-jacobian}
 \rho_1(T(x))\det H(x)=\rho_0(x)\qquad(x\in X).
\end{equation}
Define
\begin{equation}\label{eq:main-ellipsoid-density}
 \mathscr E_x=\{w\in\R^d:w^TH(x)^{-1}w\le2h(x)\},\qquad k_x(w)=\rho_1(T(x))\left(h(x)-\tfrac12w^TH(x)^{-1}w\right)_+.
\end{equation}
Then $\mathscr E_x$ is an ellipsoid centered at the origin, and by \eqref{eq:scale}, $k_x$ is a probability density supported on $\mathscr E_x$. Let $\lambda_{\eps,x}$ be the law of $(y-T(x))/\ell$ under $\pi_\eps^x$, describing how the regularized coupling distributes mass around the unregularized image $T(x)$. We write $d_H$ for Hausdorff distance between nonempty compact sets and $\|\lambda-\lambda'\|_{\TV}=\sup_A|\lambda(A)-\lambda'(A)|$ for total variation distance.

\begin{theorem}[Brenier graph inclusion and section profiles]\label{thm:main-geometry}
Under \cref{ass:data}, there are $c,r,R,\eps_0>0$ such that, for $0<\eps<\eps_0$ and every $x\in X$,
\begin{equation}\label{eq:main-graph-inclusion}
 q_\eps(x,T(x))\ge c\ell^2,\qquad Y\cap B(T(x),r\ell)\subset S_{\eps,x}\subset Y\cap\B(T(x),R\ell).
\end{equation}
In particular, $\operatorname{graph}T\subset\{q_\eps>0\}\subset\spt\pi_\eps$.

For every nonempty compact $K\subset\intr X$,
\begin{equation}\label{eq:main-interior-profiles}
 \sup_{x\in K}\left[d_H\bigl(\ell^{-1}(S_{\eps,x}-T(x)),\mathscr E_x\bigr)+\|\lambda_{\eps,x}-k_x(w)\dd w\|_{\TV}\right]\longrightarrow0.
\end{equation}
\end{theorem}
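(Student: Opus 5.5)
The plan is to build everything on the Part I localization estimates, summarized in \cref{prop:inputs}. First, every section satisfies $S_{\eps,x}\subset \B(T(x),R\ell)$. Second, the potentials are uniformly $C^{1,1}$ and convex. Third, the potentials are close to $(u_0,v_0)$, with $\|u_\eps-u_0\|_\infty+\|v_\eps-v_0\|_\infty=O(\ell^2)$ and $\|\nabla u_\eps-T\|_\infty=O(\ell)$. Together these give the outer inclusion in \eqref{eq:main-graph-inclusion}.

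For the lower bound $q_\eps(x,T(x))\ge c\ell^2$, I work with the marginal constraint $\int_Y (q_\eps(x,y))_+\rho_1(y)\dd y=\eps$. The function $y\mapsto q_\eps(x,y)$ is concave, since $v_\eps$ is convex, and $C^{1,1}$ with $-D^2_yq_\eps\le \Lambda\Id$. Let $y^*$ be its maximizer over $Y$ and $M=q_\eps(x,y^*)$. The section is a convex set of diameter $\lesssim\ell$ on which $q_\eps\le M$, so the constraint gives $\eps\lesssim M\ell^d$, hence $M\gtrsim\ell^2$. Uniform convexity of $Y$ together with the $C^{1,1}$ bound also gives the matching upper bound $M\lesssim\ell^2$.

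The key step is to show $|y^*-T(x)|\le \theta\ell$ with $\theta$ small relative to $\sqrt{M}$. Then concavity and the bound $-D^2_yq\le\Lambda$ yield $q_\eps(x,y)\ge M-\Lambda\theta^2\ell^2-\ldots$ on $B(T(x),r\ell)$. I expect this centering to be the main obstacle. The localization only gives $O(\ell)$, which is not small, so I will get it from the rescaled limit.

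To obtain the limit, rescale around a point. Set
\[
\tilde u(\xi)=\ell^{-2}\bigl[u_\eps(x+\ell\xi)-u_0(x)-\ell\,\xi\cdot T(x)\bigr],
\]
and define $\tilde v$ analogously in $w=(y-T(x))/\ell$. Both marginal equations become
\[
\int\bigl(\xi\cdot w-\tilde u(\xi)-\tilde v(w)\bigr)_+\rho_1\dd w=1,
\]
together with its dual, with densities frozen in the limit. By the uniform $C^{1,1}$ bounds and Arzel\`a--Ascoli, subsequential limits exist and solve the blown-up problem on $\R^d$ (or on a half-space, if $x$ approaches $\partial X$). This problem has quadratic growth, $\tilde u\sim\frac12\xi^TH\xi$ and $\tilde v\sim\frac12w^TH^{-1}w$, inherited from the $O(\ell^2)$ closeness to $u_0,v_0$.

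I then invoke a Liouville-type rigidity. Solutions with such growth are quadratic, with equal constant offsets $-h/2$ after normalization. This comes from convexity plus the translation-invariant constant-mass constraint, as in the entropic/quadratic blow-up arguments: the difference of two solutions is sublinear and harmonic-like for a uniformly elliptic linearized operator. Here the offsets are split evenly between the two potentials, each taking $-h/2$.

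The limiting slack is then $h-\frac12 w^TH^{-1}w$, whose maximizer is $w=0$. This gives the centering $|y^*-T(x)|=o(\ell)$, and hence the inner inclusion. At the boundary the same argument applies, with a half-space profile. There the maximum over $Y$ is still $\gtrsim\ell^2$ at $T(x)$, because a half-space section of the ellipsoid still contains a neighborhood of the origin's trace. A contradiction/compactness argument over $x\in X$ makes all constants uniform. The graph inclusion follows, since $h_\eps>0$ on graph$\,T$.

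For \eqref{eq:main-interior-profiles} with $x\in K$, the same blow-up with uniqueness of the limit gives $\ell^{-2}q_\eps(x,T(x)+\ell w)\to h(x)-\frac12w^TH(x)^{-1}w$ locally uniformly in $w$, uniformly over $x\in K$; otherwise a subsequence with $x_n\to x_\infty$ contradicts uniqueness. The constant $h(x)$ is then forced by the mass constraint, which is exactly \eqref{eq:scale}. Hausdorff convergence of the superlevel sets $\{\ell^{-2}q\ge0\}$ follows from uniform convergence, because the limit is strictly concave with nonzero gradient on $\partial\mathscr E_x$.

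The densities of $\lambda_{\eps,x}$ are
\[
\rho_1(T(x)+\ell w)\,\bigl(\ell^{-2}q_\eps\bigr)_+,
\]
using $\eps=\ell^{d+2}$ and the change of variables $\ell^d\dd w$. These converge uniformly to $k_x$ on a common bounded set, so Scheff\'e's lemma gives the TV convergence.
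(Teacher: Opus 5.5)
Your architecture matches the paper's: blow-up of the potentials at scale $\ell$ around possibly moving base points, compactness from the Part~I bounds, rigidity of the limit problem, a compactness--contradiction argument for uniformity, and density convergence for the profiles. However, the rigidity step carries the entire graph inclusion, and you assert it rather than prove it. At the boundary your description of it is wrong. The half-space limit is not the interior slack $h-\tfrac12 w^TH^{-1}w$ cut off by a half-space. At a base point on $\partial X$, that function's positive part has mass $\tfrac12$ against $\rho_1(T(x))\dd w$, by central symmetry of the ellipsoid, so it violates the unit-mass constraint. The actual limit has diagonal slack $2\varphi_d(t)$ in normalized units, where $\varphi_d$ solves the nonlinear equation \eqref{eq:scalar-halfspace-profile}. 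Nothing in your proposal shows that the half-space model has a bounded solution with positive diagonal slack, let alone that every bounded solution does. The paper constructs such a reference solution in \cref{lem:halfspace-exists}, using an auxiliary self-transport problem on a ball tangent to $\partial\Hh$. Symmetry forces $u_\eps=v_\eps$. At a minimum point $x_*$ of $f_\eps=|x|^2/2-u_\eps$, the barycenter formula and the supporting-plane inequality give $\nabla u_\eps(x_*)=x_*$ and $q_\eps(x_*,\cdot)\le2f_\eps(x_*)$. The marginal constraint and the $O(\ell^d)$ section volume then force $\min f_\eps\ge c\ell^2$, and rescaling at the tangency point yields $\phi$ with $\inf\phi>0$.

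Even in the interior, ``harmonic-like for a uniformly elliptic linearized operator'' does not give uniqueness. Writing $a_+-b_+=W(a-b)$ with $W=\int_0^1\1_{\{(1-t)b+ta>0\}}\dd t$ produces a \emph{nonlocal coupled} system for $A=F-\phi$ and $B=G-\phi$. Its kernel is bounded below near the diagonal only because the reference slack $\phi(\xi)+\phi(\zeta)-\tfrac12|\xi-\zeta|^2$ is positive there. This is why you need a positive reference solution before linearizing, on the half-space as well. After eliminating $B$, you still need a Liouville theorem for bounded solutions of a symmetric finite-range averaging equation with variable kernel, on $\R^d$ and on a half-space. This is not a standard elliptic fact; the paper proves it in \cref{lem:liouville} via Murugan--Saloff-Coste.

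Quadratic growth of the limit potentials does not by itself give centering. The affine shifts of \cref{rem:boundedness} are themselves quadratic, move the section center, and make the diagonal slack negative for large shifts. What excludes them is boundedness of $F,G$, which you have from the $O(\ell^2)$ bounds but must build into the rigidity statement. Finally, the correct conclusion is $F=\phi+a$, $G=\phi-a$ with an arbitrary constant $a$, not equal offsets $-h/2$ for each potential; the split is governed globally by the Neumann corrector. This last point is harmless here, since $a$ cancels in the slack.
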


The restriction to sufficiently small $\eps$ is essential: for every $d\ge2$, the inclusion $\operatorname{graph}T\subset\spt\pi_\eps$ can fail at a fixed positive parameter; see \cref{prop:graph-exclusion}.

The inclusions \eqref{eq:main-graph-inclusion} hold up to the boundary, and this clearly necessitates the intersection of the (inner) ball with~$Y$. On the other hand, \eqref{eq:main-interior-profiles} does not hold up to the boundary. The ellipsoidal approximation remains valid along $x=x_\eps$ whenever $\dist(x_\eps,\partial X)/\ell\to\infty$, but within the $O(\ell)$ boundary layer, the sections and conditional laws are instead approximated by half-space profiles which are detailed in \cref{thm:section-laws}. 

The section profiles also determine the limiting curvature of the potentials. Using the formulas that express the Hessians as integrals over the section boundaries, \cref{prop:interior-hessian} gives $D^2u_\eps\to D^2u_0$ in $L^\infty_{\mathrm{loc}}(\intr X)$ and $D^2v_\eps\to D^2v_0$ in $L^\infty_{\mathrm{loc}}(\intr Y)$. Together with the uniform Hessian bounds, this yields, for every $1\le p<\infty$,
\begin{equation}\label{eq:main-Sobolev}
 u_\eps\longrightarrow u_0\quad\text{in }W^{2,p}(X),\qquad v_\eps\longrightarrow v_0\quad\text{in }W^{2,p}(Y).
\end{equation}
Here and below, $L^p$ norms are taken with respect to Lebesgue measure when no measure is specified, and Sobolev spaces on closed domains are understood on their interiors.

\paragraph{Expansion of the potentials.} We show that the errors $u_\eps-u_0$ and $v_\eps-v_0$ are of order $\ell^2$ and identify the corrections at this scale, uniformly up to the boundary. To that end, we introduce a half-space profile that will describe the boundary contribution. Let $e_d$ be the last coordinate vector and $\Hh=\{\zeta\in\R^d:\zeta_d\ge0\}$. There is a unique bounded continuous function $\varphi_d:[0,\infty)\to\R$ satisfying
\begin{equation}\label{eq:scalar-halfspace-profile}
 \int_\Hh\left(\varphi_d(t)+\varphi_d(\zeta_d)-\tfrac12|te_d-\zeta|^2\right)_+\dd\zeta=1\qquad(t\ge0).
\end{equation}
The profile $\varphi_d$ is positive and continuously differentiable, with $\varphi_d(t)\to h_d/2$ as $t\to\infty$; see \cref{lem:profile-regularity,prop:profile-exponential}. We set $\varphi_d(\infty)=h_d/2$.

Let $\delta(x)=\dist(x,\partial X)$. Fix $\delta_*>0$ so that every point in $\mathcal U=\{x\in X:\delta(x)<\delta_*\}$ has a unique nearest point on $\partial X$, and let $n(x)$ be the inward unit normal at that point. Define
\begin{equation}\label{eq:boundary-coordinates}
 \gamma(x)=\sqrt{n(x)^TH(x)^{-1}n(x)}\quad(x\in\mathcal U),\qquad
 t_\eps(x)=\begin{cases}\delta(x)/(\ell\sigma(x)\gamma(x)),&x\in\mathcal U,\\ \infty,&x\notin\mathcal U,\end{cases}
\end{equation}
so that $t_\eps(x)$ is the rescaled inward distance from the boundary. For $x\notin\mathcal U$, we extend by setting $n(x)=e_d$ and $\gamma(x)=(e_d^TH(x)^{-1}e_d)^{1/2}$. Write
\begin{equation}\label{eq:potential-errors}
 r_\eps=u_\eps-u_0,\qquad s_\eps=v_\eps-v_0,\qquad \mathfrak p_\eps(x)=\sigma(x)^2\varphi_d(t_\eps(x)).
\end{equation}
Outside $\mathcal U$, we have $\mathfrak p_\eps=h/2$ by definition. On every compact subset of $\intr X$, the rescaled depth $t_\eps$ tends uniformly to infinity, so \eqref{eq:profile-deep-limit} gives $\mathfrak p_\eps\to h/2$ locally uniformly there.

The following statement compares $r_\eps$ and $s_\eps\circ T$, so that both quantities are defined on~$X$. It identifies the boundary contribution $\mathfrak p_\eps$ as a common correction to both errors, while the opposite global corrections $\pm g_0$ are determined by the mean-zero solution of a linear Neumann problem. We call $g_0$ the Neumann corrector.

\begin{theorem}[Potential expansions]\label{thm:corrector}
Under \cref{ass:data}, there is a unique $g_0\in H^1(X)$ with $\int_Xg_0\dd\mu=0$ satisfying
\begin{equation}\label{eq:corrector-Neumann}
 \int_X\rho_0 H^{-1}\left(\nabla g_0-\frac{h}{2(d+4)}\nabla\log\det H\right)\cdot\nabla\psi\dd x=0\qquad(\psi\in H^1(X)).
\end{equation}
This solution belongs to $C^{1,\beta}(X)$ for some $\beta\in(0,\alpha)$. Under the normalization \eqref{eq:gauge}, the potential errors~\eqref{eq:potential-errors} satisfy
\begin{equation}\label{eq:corrector-uniform}
\begin{aligned}
 r_\eps(x)&=\ell^2[g_0(x)-\mathfrak p_\eps(x)]+o(\ell^2),\\
 s_\eps(T(x))&=\ell^2[-g_0(x)-\mathfrak p_\eps(x)]+o(\ell^2)
\end{aligned}
\end{equation}
for all $x\in X$, where the remainders are uniform in $x$. Consequently, 
\begin{equation}\label{eq:global-bulk-bias}
 \ell^{-2}r_\eps\longrightarrow g_0-h/2,\qquad \ell^{-2}(s_\eps\circ T)\longrightarrow-g_0-h/2
\end{equation}
locally uniformly on $\intr X$ and also in $L^p(\mu)$ for every $1\le p<\infty$.
\end{theorem}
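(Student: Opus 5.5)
The plan is to derive the expansion from the two marginal constraints written in slack form. Since $\pi_\eps$ has marginals $\mu,\nu$, for every $x\in X$ we have $\int_Y (q_\eps(x,y))_+\rho_1(y)\dd y=\eps$, and symmetrically in $y$. Write $q_\eps(x,y)=x\cdot y-u_0(x)-v_0(y)-r_\eps(x)-s_\eps(y)$ and substitute $y=T(x)+\ell w$. Then
\[
x\cdot y-u_0(x)-v_0(y)=-\tfrac12\ell^2w^TH(x)^{-1}w+O(\ell^3|w|^3)+\text{(cubic term in $D^3v_0$)},
\]
because $v_0$ is $C^{3,\beta}$ and $\nabla v_0(T(x))=x$. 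By \cref{thm:main-geometry}, only $|w|\le R$ contributes. A priori bounds $|r_\eps|,|s_\eps|\le C\ell^2$ should follow from the same inclusion: $q_\eps(x,T(x))\ge c\ell^2$ together with $S_{\eps,x}\subset B(T(x),R\ell)$. Define the rescaled errors $a_\eps=-\ell^{-2}r_\eps$ and $b_\eps=-\ell^{-2}s_\eps\circ T$. Using the $C^{1,1}$ bounds (\cref{prop:inputs}) and the $W^{2,p}$ convergence \eqref{eq:main-Sobolev}, one also gets $s_\eps(T(x)+\ell w)=s_\eps(T(x))+\ell\nabla s_\eps(T(x))\cdot w+o(\ell^2)$ with $\nabla s_\eps=O(\ell)$. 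After dividing by $\ell^{d+2}=\eps$, the $x$-constraint reads
\[
\int_{\ell^{-1}(Y-T(x))}\Bigl(a_\eps(x)+b_\eps(x)-\tfrac12w^TH^{-1}w-\ell^{-1}\nabla s_\eps\cdot w+O(\ell)\Bigr)_+\rho_1(T(x))\dd w=1 .
\]

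\textbf{Step 1 (zeroth order: the common profile).} In the interior the domain of integration is all of $\R^d$. An odd linear term perturbs the integral only at second order, so the constraint gives $a_\eps+b_\eps\to h$. The analogous $y$-constraint gives the same relation. Near the boundary, take the blow-up at $x_0\in\partial X$ and straighten $\partial X$, $\partial Y$ by the affine change $\zeta=\sigma^{-1}\Lambda w$ that normalizes $H$ and $\rho_1$. Since $T$ maps $\partial X$ to $\partial Y$ with normal $H^{-1}n/\gamma$ direction, this produces the half-space problem \eqref{eq:scalar-halfspace-profile}. A compactness argument applies here: $a_\eps,b_\eps$ are uniformly Lipschitz at scale $\ell$, so subsequential limits of $\sigma^{-2}a_\eps$ and $\sigma^{-2}b_\eps$ in the rescaled boundary coordinate $t$ exist. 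These limits satisfy the coupled pair of half-space equations, whose only bounded solution is symmetric, namely $\varphi_d(t)+\text{const}$ and $\varphi_d(t)-\text{const}$. This is where the uniqueness part of \cref{lem:profile-regularity} is used. The conclusion is that $a_\eps-\mathfrak p_\eps$ and $b_\eps-\mathfrak p_\eps$ are, up to $o(1)$, opposite functions $\mp g_\eps$, with $g_\eps$ varying slowly (on scale $\gg\ell$).

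\textbf{Step 2 (first order: identifying $g_\eps$ via the Neumann problem).} The antisymmetric part is invisible at zeroth order, so it must be determined at order $\ell$, where the odd terms enter. I would take the $x$-constraint minus the pushforward under $T$ of the $y$-constraint, integrate against a test function $\psi(x)$, and divide by $\ell^2$. Equivalently, use the first-order optimality condition paired with $\psi$. The terms that survive are of three kinds:
\begin{itemize}
\item the Taylor expansion of $\rho_1$ and of $\nabla s_\eps$, which gives a divergence form $\rho_0H^{-1}\nabla g\cdot\nabla\psi$;
\item the cubic term $D^3v_0$ integrated against the fourth moment of the ellipsoid profile $k_x$; this produces the factor $h/(d+4)$ and, via Jacobi's formula, $\nabla\log\det H$;
\item boundary contributions, which cancel thanks to the symmetric half-space profile.
\end{itemize}
Passing to the limit along subsequences, $\ell^{-2}$-normalized limits $g$ satisfy \eqref{eq:corrector-Neumann}. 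The gauge \eqref{eq:gauge}, combined with the symmetric $\mathfrak p_\eps$, forces $\int g\dd\mu=0$. Existence, uniqueness and $C^{1,\beta}$ regularity follow from Lax--Milgram, since the compatibility condition holds with $\psi=1$, together with Schauder theory for conormal problems. Uniqueness upgrades subsequential convergence to full convergence, and uniform convergence follows by Arzel\`a--Ascoli. Finally, \eqref{eq:global-bulk-bias} follows because $\mathfrak p_\eps\to h/2$ locally uniformly, and the boundary layer has $\mu$-measure $O(\ell)$.

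\textbf{Main obstacle.} The hardest step is Step 2 uniformly up to the boundary. It requires showing that the boundary layer contributes no spurious flux to the weak equation, and that the error terms are $o(\ell^2)$ in a sense strong enough to pass to the limit. For this I would rely on exponential convergence of $\varphi_d$ to $h_d/2$ (\cref{prop:profile-exponential}), so that the boundary layer contributes only $O(\ell)$ relative mass, with bounded integrand.
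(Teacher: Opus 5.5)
\textbf{Gap in Step~2 and in the uniform convergence.} Your architecture matches the paper's: blow-up and classification for the common part, subtraction of the two marginal equations tested against $\psi$ for the opposite part, Jacobi's formula for the cubic term, and conormal Schauder theory for regularity. But Step~2 and the final uniform convergence have a genuine gap. You never derive any estimate on $g_\eps=(r_\eps-s_\eps\circ T)/(2\ell^2)$ beyond $\|g_\eps\|_\infty\le C$ and, from Step~1, an $o(1)$ oscillation over distances $O(\ell)$. That gives no equicontinuity at fixed scales; a function like $\sin(x_1/\sqrt\ell)$ is compatible with it. So Arzel\`a--Ascoli does not apply, and you have no compactness that would produce a limit in $H^1(X)$ or let you pass to the limit in $\ell$-scale increments.

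The ``Taylor expansion of $\nabla s_\eps$'' that is supposed to produce $\rho_0H^{-1}\nabla g\cdot\nabla\psi$ would need $\nabla g_\eps$ bounded and second-order control of $s_\eps$ with $o(\ell^3)$ remainders. A priori you only have $\nabla g_\eps=O(\ell^{-1})$. The $C^{1,1}$ bounds and the $W^{2,p}$ convergence give at best $o(\ell^2)$ Taylor remainders, and only $O(\ell^2)$ in the boundary layer. Moreover, $\tau_\eps$ is known only to $o(\ell^2)$, so the subtraction must be exact rather than performed after expansion.

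The missing ingredient is the paper's exact nonlocal formulation. After the substitution $y=T(z)$ both marginal equations are taken against $\mu$, and the secant identity turns their difference into $\int_XJ_\eps(x,z)[\ell^2(g_\eps(z)-g_\eps(x))-\mathfrak a(x,z)]\dd\mu(z)=0$. Here $J_\eps$ is symmetric, equals $\ell^{-d}$ for $|x-z|<a\ell$, and vanishes for $|x-z|\ge R\ell$. Thus $\tau_\eps$ cancels exactly, and $\mathfrak a=O(|x-z|^3)$ is antisymmetric.
\begin{itemize}
\item Symmetrizing gives the Dirichlet-form identity \eqref{eq:corrector-weak-edges}.
\item Testing it with $\psi=g_\eps$ yields the energy bound $\iint J_\eps\,\ell^{-2}|g_\eps(z)-g_\eps(x)|^2\dd\mu(x)\dd\mu(z)\le C$.
\item Smoothing at scale $\ell$ plus Rellich (\cref{lem:edge-compactness}) then gives $L^2$ compactness, an $H^1$ limit, and weak convergence of the difference quotients to $\nabla g\cdot w$. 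That weak convergence is exactly what the bilinear form needs to pass to the limit.
\item The secant weight tends to the indicator of $E_x$, so the relevant moments are those of the uniform measure on $E_x$, not of $k_x$.
\item The boundary strip drops out because its measure vanishes while the integrands are $L^2$-controlled. No cancellation from the half-space profile is involved.
\end{itemize}
Finally, $L^2$ convergence must be upgraded to uniform convergence by a separate argument. The paper uses a De~Giorgi-type truncation for the nonlocal equation with antisymmetric forcing of size $O(\ell)$ (\cref{lem:uniform-truncation}), based on a nonlocal Sobolev inequality.

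\textbf{Flaws in Step~1.} In the interior, $\nabla s_\eps=O(\ell)$, so the term $c\cdot w$ with $c=\ell^{-1}\nabla s_\eps(T(x))$ is of order one. Completing the square shows that it shifts the constraint by $\tfrac12c^{T}H(x)c$; parity does not remove it. The affine pairs in \cref{rem:boundedness} solve both model equations with exactly such a term. What excludes them is the coupled system together with global boundedness, i.e.\ \cref{prop:classification} applied with $\Omega=\R^d$, just as you do at the boundary. That uniqueness comes from the Liouville theorem \cref{lem:liouville}; \cref{lem:profile-regularity} contains no uniqueness statement. Also, the graph inclusion bounds only the sum $r_\eps+s_\eps\circ T$ from both sides, not each error. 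The individual $O(\ell^2)$ bounds must be taken from \cref{prop:inputs}.
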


Equation~\eqref{eq:corrector-Neumann} is a weak Neumann problem: the flux $\rho_0 H^{-1}\bigl(\nabla g_0-\tfrac{h}{2(d+4)}\nabla\log\det H\bigr)$ has zero divergence in $\intr X$ and zero normal component on $\partial X$. The expansions \eqref{eq:corrector-uniform} separate the common local term $-\mathfrak p_\eps$ from the opposite global corrections $\pm g_0$. It follows that $r_\eps$ and $s_\eps\circ T$ agree up to $o(\ell^2)$ if and only if $g_0=0$, or equivalently if $\det DT$ is constant; see \cref{prop:equal}. %

\paragraph{Bias of the transport map approximations.}
We consider two approximations to the Brenier map: the potential gradient $z_\eps(x)$ and the conditional mean $m_\eps(x)$,
\begin{equation}\label{eq:forward-approximations}
 z_\eps(x)=\nabla u_\eps(x),\qquad m_\eps(x)=\int_Y y\,\pi_\eps^x(\dd y).
\end{equation}
While $m_\eps(x)$ is the average of $y$ under $\pi_\eps^x$, one can express $z_\eps(x)$ as the average of $y$ under the normalized restriction $\nu(\cdot\cap S_{\eps,x})/\nu(S_{\eps,x})$ of the marginal; cf.\ \eqref{eq:barycenters}. We show that both approximate $T(x)$ at the same rate, and we identify the leading terms (which differ).

Some notation is necessary to state those results. The fields
\begin{equation}\label{eq:biases}
 b_z=\nabla g_0-\tfrac12\nabla h,\qquad b_m=\nabla g_0-\tfrac{d}{2(d+4)}\nabla h
\end{equation}
describe the leading errors in the interior, as stated in \eqref{eq:main-local-bias} below. In the boundary layer, the leading terms occur at scale $\ell$ and are parallel to $n(x)$. After removing the common factor $\ell\sigma(x)/\gamma(x)$, their signed amplitudes are $-\varphi_d'(t_\eps(x))$ and $\beta_d(t_\eps(x))$, respectively, where $\beta_d$ is the mean normal displacement in the half-space model:
\begin{equation}\label{eq:halfspace-mean-profile}
 \beta_d(t)=\int_{\{w_d\ge-t\}}w_d\left(\varphi_d(t)+\varphi_d(t+w_d)-\tfrac12|w|^2\right)_+\dd w\qquad(t\ge0).
\end{equation}
See also \cref{cor:boundary-gradients}.
For $1\le p<\infty$, define
\begin{equation}\label{eq:main-bias-constants}
 A_{d,p}=\int_0^\infty|\varphi_d'(t)|^p\dd t,\qquad B_{d,p}=\int_0^\infty|\beta_d(t)|^p\dd t.
\end{equation}
Both constants are finite and strictly positive by \cref{lem:profile-regularity,prop:profile-exponential}. We write $\mathcal H^{d-1}$ for $(d-1)$-dimensional Hausdorff measure (counting measure when $d=1$).

\begin{theorem}[Local and global bias asymptotics]\label{thm:main-bias}
Under \cref{ass:data}, the potential gradient $z_\eps(x)$ and the conditional mean $m_\eps(x)$ satisfy, for every compact $K\subset\intr X$,
\begin{equation}\label{eq:main-local-bias}
 \ell^{-2}(z_\eps-T)\longrightarrow b_z,\qquad \ell^{-2}(m_\eps-T)\longrightarrow b_m\qquad\text{strongly in }L^2(K).
\end{equation}
For every $1<p<\infty$,
\begin{equation}\label{eq:main-Lp}
\begin{aligned}
 \ell^{-(p+1)}\|z_\eps-T\|_{L^p(\mu)}^p
 &\longrightarrow A_{d,p}\int_{\partial X}\rho_0\sigma^{p+1}\gamma^{1-p}\dd\mathcal H^{d-1},\\
 \ell^{-(p+1)}\|m_\eps-T\|_{L^p(\mu)}^p
 &\longrightarrow B_{d,p}\int_{\partial X}\rho_0\sigma^{p+1}\gamma^{1-p}\dd\mathcal H^{d-1}.
\end{aligned}
\end{equation}
At $p=1$,
\begin{equation}\label{eq:main-L1}
\begin{aligned}
 \ell^{-2}\|z_\eps-T\|_{L^1(\mu)}
 &\longrightarrow \int_X|b_z|\dd\mu+A_{d,1}\int_{\partial X}\rho_0\sigma^2\dd\mathcal H^{d-1},\\
 \ell^{-2}\|m_\eps-T\|_{L^1(\mu)}
 &\longrightarrow \int_X|b_m|\dd\mu+B_{d,1}\int_{\partial X}\rho_0\sigma^2\dd\mathcal H^{d-1}.
\end{aligned}
\end{equation}
In particular, for each $1\le p\le\infty$, %
\begin{equation}\label{eq:main-bias-rates}
 \|z_\eps-T\|_{L^p(\mu)}\asymp\|m_\eps-T\|_{L^p(\mu)}\asymp\ell^{1+1/p}=\eps^{(1+1/p)/(d+2)}.
\end{equation}
\end{theorem}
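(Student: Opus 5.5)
The proof splits into three regimes: interior (scale $\ell^2$), boundary layer (scale $\ell$, depth $O(\ell)$), and a matching zone. Throughout I take \cref{thm:corrector} and \cref{thm:main-geometry} as given, together with the half-space profile results (\cref{thm:section-laws}, \cref{cor:boundary-gradients}, \cref{lem:profile-regularity}, \cref{prop:profile-exponential}).

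\textbf{Interior limits \eqref{eq:main-local-bias}.} For $z_\eps$ I differentiate the expansion \eqref{eq:corrector-uniform}. Uniform convergence of $\ell^{-2}r_\eps$ to $g_0-h/2$ on $K$ alone does not give convergence of gradients. It does, however, combine with the uniform semiconvexity of $\ell^{-2}r_\eps$, which I would obtain from the interior Hessian control. The key relation is $D^2u_\eps-D^2u_0=O(\ell^2)$ in a weak sense, obtained from the section-boundary Hessian formula. Since $\nabla u_\eps$ is monotone and $C^{0,1}$, I would get the $L^2$ gradient convergence from a one-sided estimate. Concretely, write $\ell^{-2}r_\eps=f_\eps$ with $f_\eps\to f$ locally uniformly and $D^2 f_\eps\ge -C$ locally. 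Semiconvex functions converging uniformly have gradients converging a.e. and hence in $L^2(K)$, by Alexandrov-type stability of subdifferentials. This gives $\ell^{-2}(z_\eps-T)\to\nabla g_0-\tfrac12\nabla h=b_z$. The lower Hessian bound is the nontrivial input: I would derive it from the representation of $D^2u_\eps$ as an integral over $\partial S_{\eps,x}$ together with the ellipsoidal asymptotics, expanding to order $\ell^2$.

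For $m_\eps$ I would write
\[
m_\eps(x)-z_\eps(x)=\int (y-z_\eps(x))\,\pi_\eps^x(\dd y).
\]
I then substitute $y=T(x)+\ell w$ and expand the slack $q_\eps(x,T(x)+\ell w)$ to third order in $\ell$. The expansion uses the $C^{3,\beta}$ regularity of $v_0$, the derivatives of $\rho_1$, and the expansion of $v_\eps$ from \cref{thm:corrector}. The $O(\ell)$ term of this difference vanishes by the symmetry of $k_x$. The $O(\ell^2)$ term comes from the cubic term $D^3v_0$ and from $\nabla\rho_1$, integrated against moments of the truncated quadratic density. A moment computation over the ellipsoid, using $\int w\otimes w\,k_x\propto h H/(d+4)$, should give $m_\eps-z_\eps=\ell^2\bigl(\tfrac12-\tfrac{d}{2(d+4)}\bigr)\nabla h+o(\ell^2)$. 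This yields $b_m$. The moments of the constraint $\pi_\eps^x(Y)=1$ also enter, which is where $\nabla\log\det H$ appears.

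\textbf{Boundary layer and $L^p$ constants.} In the layer $\{\delta\le M\ell\}$, \cref{cor:boundary-gradients} gives
\[
z_\eps-T=-\ell\tfrac{\sigma}{\gamma}\varphi_d'(t_\eps)\,n+o(\ell),\qquad m_\eps-T=\ell\tfrac{\sigma}{\gamma}\beta_d(t_\eps)\,n+o(\ell),
\]
uniformly. In the intermediate zone $M\ell\le\delta\le\delta_*$, I would use the exponential decay of $\varphi_d'$ and $\beta_d$ (\cref{prop:profile-exponential}) to bound the errors by $C\ell(e^{-ct_\eps}+\ell)$. Applying the coarea formula in normal coordinates,
\[
\int_{\{\delta<\eta\}}|z_\eps-T|^p\rho_0\dd x\approx\int_{\partial X}\rho_0\int_0^{\eta}\Bigl(\ell\tfrac{\sigma}{\gamma}\Bigr)^p|\varphi_d'(\delta/(\ell\sigma\gamma))|^p\dd\delta\dd\mathcal H^{d-1}.
\]
The substitution $\delta=\ell\sigma\gamma t$ gives $\ell^{p+1}\sigma^{p+1}\gamma^{1-p}A_{d,p}$. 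For $p>1$ the interior contributes $O(\ell^{2p})=o(\ell^{p+1})$, which gives \eqref{eq:main-Lp}. For $p=1$ both contributions are of order $\ell^2$ and they add, giving \eqref{eq:main-L1}. The interior $L^1$ convergence on $\{\delta\ge\eta\}$ follows from \eqref{eq:main-local-bias}, with $\eta\to0$ handled by a uniform $O(\ell^2)$ bound off the layer. The rates \eqref{eq:main-bias-rates} follow, and $p=\infty$ is taken from the layer estimate.

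\textbf{Main obstacle.} I expect the hardest step to be the matching zone. There one needs a uniform estimate $|z_\eps-T|\le C\ell(e^{-ct_\eps}+\ell)$, and similarly for $m_\eps$, that interpolates between the half-space and interior regimes. I would try a comparison argument: the half-space profile composed with boundary coordinates serves as a sub- or supersolution of the discrete equation for $u_\eps$, with errors controlled by curvature and the density gradient times $\ell$.
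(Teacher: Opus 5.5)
Your treatment of the boundary strip is the same as the paper's: the expansion from \cref{cor:boundary-gradients}, normal coordinates with $\delta=\ell\sigma\gamma t$, the $p>1$ versus $p=1$ dichotomy, and $p=\infty$ from the layer. The gaps are in the interior inputs, which rest on facts that are not available.

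For $z_\eps$, the semiconvexity $D^2r_\eps\ge-C\ell^2$ on $K$ is the entire difficulty, and the section-boundary Hessian formula does not give it. Combined with the ellipsoidal limits, that formula only yields $D^2u_\eps-H=o(1)$ (\cref{prop:interior-hessian}). Relative accuracy $\ell^2$ in the formula requires the normalized section to accuracy $\ell^2$, i.e.\ the slack to $O(\ell^4)$ pointwise. Since $q_\eps(x,T(z))$ contains $\tau_\eps(z)+\ell^2[g_\eps(z)-g_\eps(x)]$ with $|z-x|\sim\ell$, this amounts to uniform $C^{1,1}$-type control of $g_\eps$ and $\ell^{-2}\tau_\eps$ at scale $\ell$. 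A priori $D^2g_\eps=O(\ell^{-2})$, and \cref{thm:corrector} gives uniform convergence with no rate. (A bound valid only ``in a weak sense'' would not give semiconvexity anyway.)

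For $m_\eps$, your target $m_\eps-z_\eps=\tfrac{2}{d+4}\ell^2\nabla h$ is correct, but the third-order slack expansion cannot come from \cref{thm:corrector}, whose remainder is $o(\ell^2)$. Linear-in-$w$ corrections cancel in $m_\eps-z_\eps$. However, the nonlinear part of $s_\eps$ on a section is only known to be $o(\ell^2)$ pointwise (from $D^2s_\eps=o(1)$). That perturbs $m_\eps-z_\eps$ by $o(\ell)$, not $o(\ell^2)$.

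The paper obtains the needed $\ell^3$ information only in $L^2$, from two ingredients you are missing.
\begin{enumerate}
\item \cref{thm:height} gives $\tau_\eps=\ell^2h/2+e_\eps+b_\eps$ with $\|e_\eps\|_{L^2}=o(\ell^3)$. It does so by symmetrizing the marginal equations, showing the switching term is negligible \eqref{eq:switching-small}, and inverting $I+P_\ell$ (\cref{lem:plus}).
\item The energy convergence \eqref{eq:short-energy} of $g_\eps-g_0$ at scale $\ell$, together with smoothing (\cref{lem:sampling}), gives $g_\eps(S(y))-g_\eps(x)=\nabla g_0(x)\cdot(S(y)-x)+o(\ell)$ in $L^2$.
\end{enumerate}
These feed \cref{thm:slack,thm:radial} and the barycenter formulas, which is why \eqref{eq:main-local-bias} is an $L^2(K)$ statement.

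The matching zone is likewise left open, and your sketch would not close it. Exponential decay of $\varphi_d'$ and $\beta_d$ does not control the error there. \Cref{cor:boundary-gradients} has only a uniform $o(\ell)$ remainder, whose $p$th power integrated over a set of volume $O(1)$ is $o(\ell^p)$ rather than $o(\ell^{p+1})$. A comparison principle would bound potentials; it does not produce gradient bounds of size $\ell^2$ without the missing $\ell^3$ slack control.

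The paper neither proves nor needs a uniform bound $|V_\eps|\le C\ell(e^{-ct_\eps}+\ell)$. Your intuition that boundary forcing decays exponentially is implemented for the graph slack $\tau_\eps$, through a linear equation. The forcing for $\tau_\eps-\ell^2h/2$ splits into an $O(\ell^2)$ piece supported in $\{\delta\le R_0\ell\}$ and an $o(\ell^3)$ piece in $L^2$. The Neumann series for $(I+P_\ell)^{-1}$ has the exponential tail \eqref{eq:inverse-tail}. Propagated through the slack and radial expansions, this yields \eqref{eq:field-decomp}: on $\{\delta>L_0\ell\}$,
\[
V_\eps=\ell^2b_V+\mathcal R_\eps+\mathcal B_\eps,\qquad \|\mathcal R_\eps\|_{L^2}=o(\ell^2),\qquad \|\mathcal R_\eps\|_\infty\le C\ell,\qquad |\mathcal B_\eps|\le C\ell e^{-c\delta/\ell}.
\]
Integrated control then suffices:
\begin{itemize}
\item H\"older gives $\int|\mathcal R_\eps|^p\dd\mu=o(\ell^{p+1})$ for all $p\ge1$, using $\|\mathcal R_\eps\|_\infty^{p-2}\|\mathcal R_\eps\|_{L^2}^2$ when $p>2$.
\item The $\mathcal B_\eps$ part contributes at most $C\ell^{p+1}e^{-cL}$ on $\{\delta>L\ell\}$.
\item One lets $\eps\downarrow0$ before $L\to\infty$.
\end{itemize}
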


We observe that for $p>1$, the leading error comes entirely from the boundary layer. At $p=1$, the interior contributes at the same order through $\int_X|b_z|\dd\mu$ or $\int_X|b_m|\dd\mu$, respectively. \Cref{thm:Lp} strengthens the norm asymptotics to weak convergence of the normalized error measures. The limiting measures are supported on the boundary for $p>1$, whereas at $p=1$ they also have the interior parts $|b_z|\mu$ and $|b_m|\mu$, respectively.

Interchanging $\mu$ and $\nu$ gives the corresponding approximations to the inverse Brenier map $S=T^{-1}:Y\to X$. The forward and reverse local bias expansions also imply that the limits in \eqref{eq:global-bulk-bias} hold strongly in $H^1_{\mathrm{loc}}(\intr X)$; see \cref{cor:H1}.

\subsection{Related literature}\label{se:previousResults}

\emph{Quadratically regularized OT.} Starting with \cite{Muzellec.2017.AAAI,BSR,EssidSolomon.18,LMM}, quadratic regularization was introduced to allow for sparse solutions. 
The scale $\ell^2=\eps^{2/(d+2)}$ first arises in the asymptotics of the optimal QOT value. Eckstein and Nutz \cite{EN} obtain an upper bound at this order by quantization. Garriz-Molina, Gonz\'alez-Sanz and Mordant \cite{GGM} identify the leading coefficient under smoothness assumptions, while Cardoso-Perell\'o, Gonz\'alez-Sanz and Nutz \cite{CGN} weaken those assumptions and generalize the regularization. Profiles with a quadratic cap appear in the manifold-learning construction of Zhang, Mordant, Matsumoto and Schiebinger \cite{ZMMSManifoldQOT}, who study QOT in the case of self-transport (i.e., $\mu=\nu$). 
Regarding the size and location of the support, Gonz\'alez-Sanz and Nutz \cite{GSN} prove in the one-dimensional case that section lengths are of order $\eps^{1/3}$. Wiesel and Xu \cite{WX} obtain concentration and location bounds in higher dimensions. The sharp scale $\eps^{1/(d+2)}$ is obtained for self-transport, a result generalized to marginals with affine Brenier map~$T$ by Nguyen-Chi, Nguyen and Nguyen \cite{NCN}. Regarding the dual potentials, Gvalani and Koch \cite{GK} prove interior Hessian bounds and local $C^{1,\gamma}$ convergence, while Gonz\'alez-Sanz, Gvalani and Koch \cite{GGK} establish the sharp scale for interior sections. Global versions of those results are obtained in our Part I \cite{GNGeometry}, whose bounds are the starting point of the present paper. We are not aware of previous works in QOT obtaining leading coefficients or asymptotic profiles for the potentials, sections, conditional laws, or approximations of the Brenier map (see below for entropic OT).

The aforementioned works do not address the inclusion of the Brenier graph. For discrete QOT, the solution for small $\eps$ coincides with an unregularized solution; in that sense, the inclusion is clear. Gonz\'alez-Sanz and Nutz \cite{GNExactRegularization} give a smallness threshold for such $\eps$. Gonz\'alez-Sanz, Nutz and Riveros Valdevenito \cite{GNRMonotonicity} show that supports need not be nested w.r.t.\ $\eps$ and give a discrete example with 5-point marginals where, for some $\eps$, the regularized support does not include the unregularized support. Our \cref{prop:graph-exclusion} gives smooth examples for $d\geq2$.

\emph{Entropically regularized OT.} For entropic regularization, the coupling has full support, hence this aspect is not directly comparable. Concentration around the Brenier map can instead be described for the optimal density, as for instance by the large deviation principle of Bernton, Ghosal and Nutz \cite{BGNLargeDeviations}. Malamut and Sylvestre \cite{MSCouplingConvergence} identify the sharp $\sqrt{\eps}$ scale for convergence of the coupling in $W_2$, with a matching upper bound when the Brenier map is Lipschitz. Qualitative convergence of the entropic potentials for $\eps\to0$ was studied by \cite{GigliTamanini.21,NWPotentials,CCGTGradients}, among others. L\'opez-Rivera \cite{LRUniform} obtains locally uniform bounds of order $\eps^{1/(d+4)}$ for a normalized potential and its gradient, under curvature bounds for the marginal log-densities on $\R^d$. Gonz\'alez-Sanz, Mordant and Sheng \cite{GMS} prove the sharp $\eps\log(1/\eps)$ rate in $L^1(\mu\otimes\nu)$ for the sum of the two potential errors in a compact setting.

Closer to the present work, more precise expansions identify the leading corrections. For marginal densities with a common compact convex support, Mordant \cite{MordantExpansions} states the coefficients at order $\eps$ and $\eps\log\eps$ for expansions in $L^2$, while remarking that uniform expansions would require additional boundary corrections. On the flat torus, Yao, Kim and Schiebinger \cite[Theorem~1.6 and Corollary~1.9]{YKSPotentials} obtain $L^2$ expansions to arbitrary order for nearby marginals along smooth density curves, thus including higher-order expansions for self-transport. Those results can be compared to our \cref{thm:corrector}, which gives expansions that are uniform on the domains and identifies the boundary contribution. We conjecture that an entropic analogue of \cref{thm:corrector} holds, with potential corrections of order $\eps$ after subtracting the common term $\frac{d\eps}{4}\log(2\pi\eps)$ and a boundary layer of width $\sqrt{\eps}$. While in QOT the corrector is nonlocal and determined by the Neumann problem \eqref{eq:corrector-Neumann}, the entropic corrector appears to be simpler, with the explicit expression $\frac14(\log\det DT-\int_X\log\det DT\,\dd\mu)$.

In entropic OT, one commonly uses the conditional mean of the coupling as an approximation of the Brenier map. (The conditional mean equals the gradient of the entropic convex potential, whereas in QOT these are two different approximations~\eqref{eq:forward-approximations}.) Pooladian and Niles-Weed \cite{PNMaps} study this approximation and obtain an upper bound for the bias in $L^2$. Later works also study the leading bias, and are therefore most comparable to ours. Agarwal, Harchaoui, Mulcahy and Pal \cite{AHMPSelf} treat self-transport on $\R^d$ using Langevin diffusion. Extending to distinct marginals, Mulcahy and Pal \cite{MPEntropicDiffusion} prove that the entropic conditional mean admits the expansion $m_\eps=T+\frac{\eps}{2}\nabla\log\rho_0+o(\eps)$ in $L^2(\mu)$ in their setting. The assumptions include uniform log-concavity for the  marginals, so that there is no boundary. Note that the leading coefficient depends only on the source density. The interior QOT biases in our \cref{thm:main-bias}, by contrast, also involve the global Neumann corrector $g_0$. On the other hand, our results show the influence of a boundary layer. We conjecture that similar phenomena arise in the entropic case when the marginal supports have a boundary, and more generally that an entropic analogue of \cref{thm:main-bias} can be shown. The expansion would then carry an additional boundary term of order $\sqrt{\eps}$, and the approximation would slow to $\|m_\eps-T\|_{L^p(\mu)}\asymp \eps^{(1+1/p)/2}$. 

\emph{Semidiscrete OT.} The $L^p$ rate in \cref{thm:main-bias} depends on $p$. Such a dependence has previously been observed in semidiscrete transport. In the unregularized case, Sadhu, Goldfeld and Kato \cite{SGKSemidiscreteStability} identify the first-order variation of the $p$th power of the $L^p$ distance between transport maps under perturbations of the dual weights; their statistical limits have the scale $n^{-1/(2p)}$. The dependence on $p$ reflects jumps of the transport map across Laguerre interfaces. Entropic regularization produces continuous transitions around the same interfaces: Pooladian, Divol and Niles-Weed \cite{PDNSemidiscreteMaps} prove that the squared $L^2$ population bias is $O(\eps)$ and show that this order is sharp. The semidiscrete geometry also determines finer entropic asymptotics. Altschuler, Niles-Weed and Stromme \cite{ANSSemidiscreteAsymptotics} obtain an exact expansion at order $\eps^2$ for the (unpenalized) transport cost of the entropic optimizer, with a coefficient given by certain integrals over Laguerre interfaces. Sadhu, Goldfeld and Kato \cite{SGKEntropicApproximation} show that cancellation across these interfaces yields an $O(\eps^2\log^3(1/\eps))$ bound for the error $m_\eps-T$ when tested against bounded Lipschitz vector fields. In contrast to these semidiscrete phenomena arising from the Laguerre interfaces, the present work considers a smooth limit. The $L^p$ asymptotics in \cref{thm:main-bias} are dominated by a boundary error of amplitude $\ell$ in a layer of width $\ell$, except for $p=1$, where the interior error of order $\ell^2$ also contributes.

\emph{Unregularized OT.} The Neumann corrector in \cref{thm:corrector} connects our potential expansions to linearization of unregularized transport. For transport between Lebesgue measure and measures that are close to it, Goldman, Huesmann and Otto \cite{GHOLinearization} approximate transport displacements $T-\id$ by gradients of functions solving linear Poisson equations. Gonz\'alez-Sanz and Sheng \cite{GS} study differentiability of transport potentials under smooth perturbations of the data, with derivatives characterized by linear elliptic equations and oblique boundary conditions. 
In our Neumann problem \eqref{eq:corrector-Neumann}, the forcing arises from regularization rather than perturbation of the marginals as in the aforementioned works. Finally, the Sobolev convergence \eqref{eq:main-Sobolev} can be compared to the work of De Philippis and Figalli \cite{DFStrongSobolev}, who establish local $W^{2,1}$ stability for the Monge--Amp\`ere equation under $L^1$ convergence of the right-hand sides and derive local Sobolev convergence of the optimal transport maps.

\subsection{Proof strategy}\label{sec:proof-strategy}
We build on the upper bounds in our Part I \cite{GNGeometry}, recalled in \cref{prop:inputs,lem:prelocalization} below: the potential errors $r_\eps=u_\eps-u_0$ and $s_\eps=v_\eps-v_0$ are $O(\ell^2)$, their gradients are $O(\ell)$, and the Hessians are uniformly bounded. After rescaling distances by $\ell$ and potential errors by $\ell^2$, these bounds give local $C^1$ compactness. We identify the limits as unique solutions of certain equations and use their properties to prove the graph inclusion. After that, we determine the potential corrections and the remainders needed for the bias asymptotics.

\paragraph{Brenier graph inclusion.}
Our aim is to bound $\ell^{-2}q_\eps(x,T(x))$ below by a positive constant, uniformly in $x\in X$. Writing $y=T(z)$ puts both variables on $X$ and turns the Brenier graph into the diagonal $x=z$. In these coordinates,
\[
\begin{aligned}
 q_\eps(x,T(z))&=-r_\eps(x)-s_\eps(T(z))-D(x,z), \qquad \mbox{where}\\
 D(x,z)&=u_0(x)-u_0(z)-T(z)\cdot(x-z).
\end{aligned}
\]
The gap $D$ vanishes on the diagonal and is quadratic to leading order near it.

Near a base point $x_0$, which may vary with $\eps$, we introduce coordinates
\[
 x=x_0+\ell\sigma(x_0)H(x_0)^{-1/2}\xi,\qquad z=x_0+\ell\sigma(x_0)H(x_0)^{-1/2}\zeta.
\]
After division by $\ell^2\sigma(x_0)^2$, Taylor expansion gives the limiting gap $|\xi-\zeta|^2/2$. Along a subsequence, the negative errors $-r_\eps(x)$ and $-s_\eps(T(z))$, divided by the same factor, converge to functions $F$ and $G$. The limiting slack is therefore
\[
 Q(\xi,\zeta)=F(\xi)+G(\zeta)-\tfrac12|\xi-\zeta|^2.
\]
The limiting domain $\Omega$ is the whole space if $\dist(x_0,\partial X)/\ell\to\infty$, and a half-space if this ratio has a finite limit. Since $T_\#\mu=\nu$, both marginal constraints involve the same measure in the source coordinates. The choice of $\sigma$ makes the limiting density coefficient equal to one, so these constraints become
\[
 \int_\Omega Q(\xi,\zeta)_+\dd\zeta=1\quad(\xi\in\Omega),\qquad \int_\Omega Q(\xi,\zeta)_+\dd\xi=1\quad(\zeta\in\Omega).
\]
The global $O(\ell^2)$ bounds for $r_\eps$ and $s_\eps$ ensure that $F$ and $G$ are bounded on all of $\Omega$.

The key step is to show that these equations determine $Q$ uniquely among bounded pairs $(F,G)$. We first construct a symmetric solution $F=G=\phi$ with $\inf_\Omega\phi>0$. On the whole space, $\phi=h_d/2$. For the half-space, we take an auxiliary problem with identical uniform marginals on a ball and rescale near a boundary point. Symmetry makes the two regularized potentials equal. At a minimum of their common negative error, convexity and the barycenter formula for the potential gradient bound the slack above by twice that minimum. The marginal constraint and the $O(\ell^d)$ section-volume bound then force the minimum to be at least $c\ell^2$. This positive lower bound survives rescaling; see \cref{lem:halfspace-exists}. The resulting half-space solution is $\phi(\xi)=\varphi_d(\dist(\xi,\partial\Omega))$, corresponding to the boundary profile introduced in \eqref{eq:scalar-halfspace-profile}.

To prove uniqueness, we subtract the marginal equations for $(\phi,\phi)$ from those for $(F,G)$. Factoring each difference of positive parts as $a_+-b_+=\theta(a-b)$, with $\theta\in[0,1]$, gives a coupled linear system for $A=F-\phi$ and $B=G-\phi$. Eliminating $B$ shows that each value of $A$ is a weighted average of its values at nearby points. The kernel in this averaging equation is symmetric and bounded, vanishes beyond a fixed distance, and is uniformly positive for sufficiently close pairs. The last property follows from the positive lower bound for $\phi$. The Liouville theorem in \cref{lem:liouville} implies that $A$ is constant, and the other equation then gives $B=-A$. Thus, as stated in \cref{prop:classification},
\[
 F=\phi+a,\qquad G=\phi-a,\qquad Q(\xi,\xi)=2\phi(\xi)\ge c>0.
\]
The opposite constants $\pm a$ cancel from the slack. Since the base points may vary with $\eps$, compactness turns this bound for rescaled limits into $q_\eps(x,T(x))\ge c\ell^2$ uniformly on $X$, as desired.

The limiting slack also yields the section and conditional-law profiles. In the rescaled source coordinates, the limiting section above $\xi$ is $\{\zeta\in\Omega:Q(\xi,\zeta)\ge0\}$, and the limiting conditional law has density $Q(\xi,\zeta)_+$ with respect to $\dd\zeta$. In the interior, $Q(\xi,\zeta)=h_d-\tfrac12|\xi-\zeta|^2$, so the limiting sections are balls centered at $\xi$, and converting to target coordinates gives the ellipsoids and densities in \eqref{eq:main-ellipsoid-density}. Near the boundary, the analogous description holds with $Q$ determined by the half-space solution $\phi$.

\paragraph{The Neumann corrector.} 
The above classification determines the common part $-\tau_\eps/\ell^2=(r_\eps+s_\eps\circ T)/(2\ell^2)$ of the rescaled potential errors, but leaves opposite constants whose values may depend on the base point. To determine how the corrections are split between the two potentials, we cancel the common profile by considering
\[
g_\eps=\frac{r_\eps-s_\eps\circ T}{2\ell^2}.
\]
Our goal is to show that $g_\eps$ converges uniformly to the mean-zero solution $g_0$ of \eqref{eq:corrector-Neumann}.

Write $Q_\eps(x,z)=q_\eps(x,T(z))$. Since $T_\#\mu=\nu$, the integrals of $(Q_\eps(x,z))_+$ and $(Q_\eps(z,x))_+$ against $\mu(\dd z)$ both equal $\eps$. To subtract these marginal identities, we use the elementary identity
$a_+-b_+=(a-b)\int_0^1\1_{\{(1-t)b+ta>0\}}\dd t$. Defining
\[
\begin{aligned}
J_\eps(x,z)&=\ell^{-d}\int_0^1\1_{\{(1-t)Q_\eps(z,x)+tQ_\eps(x,z)>0\}}\dd t,\\
\mathfrak a(x,z)&=\tfrac12[D(x,z)-D(z,x)],
\end{aligned}
\]
and substituting the expressions for the slacks, we obtain
\[
\int_XJ_\eps(x,z)\left\{\ell^2[g_\eps(z)-g_\eps(x)]-\mathfrak a(x,z)\right\}\dd\mu(z)=0.
\]
The kernel $J_\eps$ is symmetric. Positivity of both slacks near the diagonal gives $J_\eps(x,z)=\ell^{-d}$ when $|x-z|<a\ell$, while support localization gives $J_\eps(x,z)=0$ when $|x-z|>R\ell$, for fixed $0<a<R$; see \eqref{eq:omega-ellipticity}. Moreover, the quadratic terms of $D(x,z)$ and $D(z,x)$ cancel. Their difference is therefore of cubic order: for bounded $w$ with $x+\ell w\in X$, 
\begin{equation}\label{eq:corrector-cubic-intro}
\ell^{-3}\mathfrak a(x,x+\ell w)\longrightarrow\tfrac1{12}D^3u_0(x)[w,w,w].
\end{equation}

We derive a weak formulation by multiplying the equation for $g_\eps$ by a test function $\psi(x)$, integrating against $\mu(\dd x)$, and averaging with the identity obtained by interchanging~$x$ and~$z$. Symmetry of $J_\eps$ and antisymmetry of $\mathfrak a$ then give
\begin{equation}\label{eq:corrector-weak-intro}
\iint_{X\times X}J_\eps(x,z)\left\{\ell^2[g_\eps(z)-g_\eps(x)]-\mathfrak a(x,z)\right\}[\psi(z)-\psi(x)]\dd\mu(x)\dd\mu(z)=0.
\end{equation}
Taking $\psi=g_\eps$ in \eqref{eq:corrector-weak-intro}, the cubic bound and Cauchy--Schwarz give
\[
\iint_{X\times X}J_\eps(x,z)\left|\frac{g_\eps(z)-g_\eps(x)}{\ell}\right|^2\dd\mu(x)\dd\mu(z)\le C.
\]
This energy estimate controls the average squared difference quotients over nearby pairs. Together with the uniform bound on $g_\eps$, it allows us to approximate $g_\eps$ by functions bounded in $H^1(X)$, with an $L^2$ approximation error tending to zero. Rellich compactness therefore gives a subsequential limit $g_\eps\to g$ in $L^2(\mu)$, with $g\in H^1(X)$.

To identify $g$, we set $z=x+\ell w$ in \eqref{eq:corrector-weak-intro}. For interior $x$, both slacks divided by $\ell^2$ tend to $h(x)-w^TH(x)w/2$. Consequently, $\ell^dJ_\eps(x,x+\ell w)$ tends a.e.\ to the indicator of the ellipsoid
\[
E_x=\{w\in\R^d:w^TH(x)w/2<h(x)\}.
\]
The compactness argument also shows that the difference quotients $(g_\eps(x+\ell w)-g_\eps(x))/\ell$ converge weakly in $L^2$ to $\nabla g(x)\cdot w$. The difference quotients of a smooth test function converge strongly to $\nabla\psi(x)\cdot w$. Combining these limits with the cubic expansion \eqref{eq:corrector-cubic-intro} gives
\[
\int_X\rho_0(x)^2\int_{E_x}\left[\nabla g(x)\cdot w-\tfrac1{12}D^3u_0(x)[w,w,w]\right]\nabla\psi(x)\cdot w\dd w\dd x=0.
\]
The term containing $\nabla g$ involves second moments of $w$, while the cubic term paired with $\nabla\psi(x)\cdot w$ involves fourth moments. Evaluating them yields the coefficient $\rho_0H^{-1}$ and the vector field $\tfrac{h}{2(d+4)}\nabla\log\det H$ in \eqref{eq:corrector-Neumann}. The test functions need not vanish on $\partial X$, so this limit gives the zero normal flux condition as well as the equation in the interior. The normalization \eqref{eq:gauge} gives $\int_Xg_\eps\dd\mu=0$, hence $\int_Xg\dd\mu=0$. Uniqueness of the mean-zero Neumann solution identifies every subsequential limit with $g_0$, proving $g_\eps\to g_0$ in $L^2(\mu)$. To obtain the potential expansions \eqref{eq:corrector-uniform}, the proof then strengthens the $L^2(\mu)$ convergence to uniform convergence on $X$. For the sake of brevity, we do not detail this step here.

Separately, we also use the weak formulation \eqref{eq:corrector-weak-intro} 
to derive 
the stronger control of increments stated in \eqref{eq:short-energy}, which is needed for the second-order bias expansions below.

\paragraph{Second-order bias and boundary decay.}
The potential expansions do not by themselves control the gradients. Instead, we obtain the biases of $z_\eps$ and $m_\eps$ via their barycenter formulas:
\begin{equation}\label{eq:barycenter-formulas-proofstrat}
 \begin{aligned}
 z_\eps(x)-T(x)&=\frac{\int_{S_{\eps,x}}(y-T(x))\dd\nu(y)}{\nu(S_{\eps,x})},\qquad
 m_\eps(x)-T(x)&=\int_Y(y-T(x))\frac{(q_\eps(x,y))_+}{\eps}\dd\nu(y).
 \end{aligned}
\end{equation}

We first sharpen the approximation on the Brenier graph. Set
\[
 \tau_\eps=-\frac{r_\eps+s_\eps\circ T}{2}=\frac{q_\eps(\cdot,T(\cdot))}{2},
\]
whose leading interior term is $\ell^2h/2$. With $Q_\eps(x,z)=q_\eps(x,T(z))$, the average of the two slacks is
\[
 U_\eps(x,z)=\frac{Q_\eps(x,z)+Q_\eps(z,x)}2
 =\tau_\eps(x)+\tau_\eps(z)-\frac{D(x,z)+D(z,x)}2.
\]
We wish to control $(U_\eps(x,z))_+$. Writing
\[
 C_\eps(x,z)=\frac{(Q_\eps(x,z))_++(Q_\eps(z,x))_+}{2}-(U_\eps(x,z))_+,
\]
averaging the two marginal equations yields the identity
\begin{equation}\label{eq:averaged-marginal-intro}
 \int_X(U_\eps(x,z))_+\dd\mu(z)=\eps-\int_XC_\eps(x,z)\dd\mu(z).
\end{equation}
We prove in \eqref{eq:switching-small} that the correction $C_\eps$ is small enough not to affect the expansion we seek.

We next substitute $\ell^2h/2$ for $\tau_\eps$ in $U_\eps$ and estimate the error in \eqref{eq:averaged-marginal-intro}. In the coordinates $z=x+\ell w$, the leading term satisfies the marginal constraint by the definition of $h$. The first-order terms from the variations of $h$, the gap, and $\rho_0$ are odd in $w$, so their integrals vanish whenever the integration ball $B(x,R\ell)$ lies in $X$. Near $\partial X$, the restriction $x+\ell w\in X$ destroys this symmetry. Using the notation $\delta(x)=\dist(x,\partial X)$, we therefore separate the error supported in the strip $\{\delta\le R\ell\}$ from the smaller error in its complement.

The difference between $U_\eps$ and this approximation is the sum of $\tau_\eps-\ell^2h/2$ at $x$ and $z$. Factoring the difference of their positive parts gives a symmetric nonnegative kernel $W_\ell$. Dividing by its integral in $z$ yields
\begin{equation}\label{eq:height-linear-intro}
 \begin{aligned}
 (I+P_\ell)(\tau_\eps-\ell^2h/2)&=\mathcal F_\ell,\qquad
 P_\ell f(x)&=\frac{\int_XW_\ell(x,z)f(z)\dd\mu(z)}{\int_XW_\ell(x,z)\dd\mu(z)}.
 \end{aligned}
\end{equation}
Here $I$ is the identity, and $\mathcal F_\ell$ combines the correction involving $C_\eps$ with the error made by substituting $\ell^2h/2$ in the marginal equation. The kernel $W_\ell$ is uniformly positive for sufficiently close pairs and vanishes beyond distance $O(\ell)$; its integral is comparable to $\ell^d$. Thus $P_\ell$ is an average over nearby points. The preceding estimates split $\mathcal F_\ell$ into an $o(\ell^3)$ term in $L^2(X)$ and an $O(\ell^2)$ term supported in a boundary strip of width $O(\ell)$.

Lemma~\ref{lem:plus}, applied with $l=\ell$, gives uniform bounds for $(I+P_\ell)^{-1}$ on $L^2$ and $L^\infty$. It also represents the inverse by
\[
 (I+P_\ell)^{-1}=\frac12\sum_{j=0}^\infty\left(\frac{I-P_\ell}{2}\right)^j,
\]
with geometrically decreasing operator norms. The spatial decay follows from this representation: each application of $P_\ell$ averages only over distances $O(\ell)$, so the $j$th term can carry a function at most distance $O(j\ell)$ beyond its support. A contribution supported near the boundary can therefore reach a distant point only through the small, high-index terms of the series. Applying these estimates to \eqref{eq:height-linear-intro} gives
\[
 \tau_\eps=\ell^2h/2+e_\eps+b_\eps,\qquad
 \|e_\eps\|_{L^2(X)}=o(\ell^3),\qquad
 |b_\eps(x)|\le C\ell^2e^{-c\delta(x)/\ell};
\]
see \eqref{eq:height-decomp}. 
This completes our discussion of the slack $q_\eps(\cdot,T(\cdot))$ on the graph.

To expand the slack away from the graph, we use target coordinates
\[
 y=T(x)+\ell B_xw,\qquad B_x=\sigma(x)H(x)^{1/2}.
\]
Putting $z=S(y)$ gives
\[
 q_\eps(x,y)=\tau_\eps(x)+\tau_\eps(z)+\ell^2[g_\eps(z)-g_\eps(x)]-D(x,z).
\]
Since $z-x=O(\ell)$, the increment of $g_\eps$ contributes at order $\ell^3$. We must therefore approximate it with an $o(\ell)$ error in $L^2$. The estimate \eqref{eq:short-energy} averages over nearby pairs and cannot directly be evaluated at the prescribed point $z=S(y)$. To obtain this control, we smooth $g_\eps-g_0$ at scale $\ell$. The smoothing error is $o(\ell)$ in $L^2$, and the gradient of the smoothed function tends to zero in $L^2$. For bounded $w$, the maps $x\mapsto S(T(x)+\ell B_xw)$ are $O(\ell)$ perturbations of the identity with derivatives $\Id+O(\ell)$. On convex regions sufficiently far from the boundary, change of variables controls the smoothing error after composition with these maps. Integrating the gradient of the smooth part along the segments from $x$ to $S(y)$ then gives
\[
 g_\eps(S(y))-g_\eps(x)=\nabla g_0(x)\cdot(S(y)-x)+o(\ell)
\]
in $L^2$ (w.r.t.\ $x$) on these interior regions, uniformly for $w$ in bounded sets; see \cref{lem:sampling}. Combining this estimate with the expansion of $\tau_\eps$ and a cubic Taylor expansion of the gap determines the order-$\ell^3$ term of the slack. The remainder again separates into a small $L^2$ term and an exponentially decaying boundary term; see \cref{thm:slack}.

The slack expansion also determines the radial displacement of the section boundary $\partial S_{\eps,x}$ from its limiting ellipsoid. In the target coordinates above, the normalized slack
\[
 \frac{q_\eps(x,T(x)+\ell B_xw)}{\ell^2\sigma(x)^2}
\]
has leading term $h_d-|w|^2/2$, whose zero set is the sphere of radius $a_d=\sqrt{2h_d}$. For $|\theta|=1$, let $R_\eps(x,\theta)$ be the radius at which the normalized slack vanishes along $w=r\theta$. Since the radial derivative of the leading term at $r=a_d$ equals $-a_d$, the $C^1$ profile estimates keep the radial derivative of the normalized slack bounded away from zero when $\delta(x)\gg\ell$. The mean-value theorem then yields an expansion of $R_\eps(x,\theta)-a_d$ with the same remainder estimates as above; see \cref{thm:radial}.

Finally, we insert these expansions into the barycenter formulas~\eqref{eq:barycenter-formulas-proofstrat}. For $z_\eps$, both the displacement of the section boundary and the variation of the target measure $\nu$ contribute to the integral, and the result must then be divided by the section mass. For $m_\eps$, we directly expand the integrand. The leading interior profiles are centered, so their first moments vanish. The corrections of order $\ell$ in the rescaled coordinates become biases of order $\ell^2$ after multiplication by the spatial scale $\ell B_x$. Evaluating these moments gives $b_z$ and $b_m$ in \eqref{eq:biases}, while retaining the separate interior and boundary remainders. The latter separation is essential for the global asymptotics discussed next.

\paragraph{Global bias asymptotics.}
We obtain the global limits in \cref{thm:main-bias} by integrating the errors separately near the boundary and away from it. Fix $1\le p<\infty$, let $V_\eps$ denote either $z_\eps-T$ or $m_\eps-T$, and let $b_V$ denote $b_z$ or $b_m$, respectively. For fixed $L>0$, the boundary expansions \eqref{eq:boundary-input} approximate $V_\eps$ on the strip $\{\delta\le L\ell\}$ by
$-\ell\sigma\varphi_d'(t_\eps)n/\gamma$
and
$\ell\sigma\beta_d(t_\eps)n/\gamma$,
respectively, with a uniform $o(\ell)$ remainder. This suffices to compute the leading $p$th-power error on the strip. Over all of $X$, however, the resulting integral error need not be negligible after division by $\ell^{p+1}$. 

The preceding second-order analysis gives stronger estimates on the complement of the strip. \Cref{thm:centers} shows $V_\eps=\ell^2b_V+\mathcal R_\eps+\mathcal B_\eps$ on $X_\ell=\{x\in X:\delta(x)>L_0\ell\}$, for fixed $L_0>0$, where
\[
 \begin{gathered}
 \|\mathcal R_\eps\|_{L^2(X_\ell)}=o(\ell^2),\qquad
 \|\mathcal R_\eps\|_{L^\infty(X_\ell)}\le C\ell,\qquad
 |\mathcal B_\eps(x)|\le C\ell e^{-c\delta(x)/\ell}.
 \end{gathered}
\]
The two remainders are controlled differently. On the one hand, $\int_{X_\ell}|\mathcal R_\eps|^p\dd\mu=o(\ell^{p+1})$. For~$\mathcal B_\eps$, integrating its exponential bound w.r.t.\ the distance $\delta$ from the boundary gives
$
 \ell^{-(p+1)}\int_{\{\delta>L\ell\}}|\mathcal B_\eps|^p\dd\mu
 \le C_pe^{-c_pL}
$
for sufficiently small $\eps$, with $L>L_0$ fixed. Thus its contribution outside the strip becomes arbitrarily small as $L$ increases.

We first let $\eps\downarrow0$ with $L>L_0$ fixed, using the boundary expansion on $\{\delta\le L\ell\}$ and the second-order decomposition on its complement. We then let $L\to\infty$, recovering the full boundary profile integrals and removing the remaining exponential error. The $p$th power of the interior term $\ell^2b_V$ has  integral $O(\ell^{2p})$, which vanishes after division by $\ell^{p+1}$ when $p>1$. At $p=1$, the integral over the complement instead contributes $\int_X|b_V|\dd\mu$. This explains the two different forms of the limits in \eqref{eq:main-Lp} and \eqref{eq:main-L1}. The $L^\infty$ rates follow from the uniform $O(\ell)$ bounds and the nonzero boundary profiles.

Including continuous scalar test functions gives the weak-* limits of $\ell^{-(p+1)}|V_\eps|^p\mu$. Integrating $V_\eps$ itself against vector-valued test functions gives the vector-measure limit, retaining the direction of the error; see \cref{thm:Lp}.

\subsection{Organization}\label{sec:organization}
Section~\ref{sec:preliminaries} collects background results and formulas for various  quantities related to the rescaled potential errors. Section~\ref{sec:models} constructs and classifies bounded solutions of the whole-space and half-space models and proves exponential convergence of the half-space profile. Section~\ref{sec:profiles} proves graph inclusion and uniform limits for the rescaled potentials, sections, and conditional laws, including at the boundary. It also derives the boundary bias formulas and proves Hessian and Sobolev convergence. Section~\ref{sec:macroscopic} identifies the Neumann corrector and proves the uniform potential expansions and nonlocal energy convergence. Section~\ref{sec:centering} gives the second-order bias expansions, with separate estimates for the interior and boundary remainders. Section~\ref{sec:norms} combines these estimates with the boundary profiles to obtain the absolute and signed measure limits and the global $L^p$ asymptotics. Appendix~\ref{app:graph-exclusion} constructs smooth positive densities on a common ball for which graph inclusion fails at a fixed positive regularization parameter.

\section{Preliminaries}\label{sec:setting}\label{sec:preliminaries}

We write $B(x,r)$ and $\B(x,r)$ for the open and closed balls, respectively, $B_r=B(0,r)$, $\Sn=\{\theta\in\R^d:|\theta|=1\}$, and $|E|$ for the Lebesgue measure of a measurable set $E$. A convex body is a compact convex set with nonempty interior. 
When $d=1$, integrals over $\R^{d-1}=\R^0=\{0\}$ mean evaluation at zero.

The symbols $\id$ and $\Id$ denote the identity map and matrix, respectively. For symmetric matrices, $A\preceq B$ means that $B-A$ is positive semidefinite. Matrix norms are operator norms. We write $\Lip(f)$ for the Lipschitz constant of $f$ and $C_b(D)$ for the bounded continuous real-valued functions on $D$.

Regularity on a closed domain means regularity up to the boundary. In particular, a function in $C^{1,1}$ then has a Lipschitz gradient on the closed set. Boundary derivatives are understood as continuous extensions of interior derivatives whenever these extensions exist. On intervals, derivatives at the endpoints are one-sided. Hessians are weak derivatives in the interior, and their bounds hold almost everywhere with respect to Lebesgue measure unless stated otherwise.

For continuous functions, $\|\cdot\|_\infty$ denotes the supremum on the indicated domain. For measurable functions, it denotes the essential supremum. Densities are extended by zero outside their domains. Unless stated otherwise, constants $c,C>0$ may depend on the domains and densities in \cref{ass:data} and on any fixed exponent or compact set under consideration. They are independent of sufficiently small $\eps$ and of the free spatial variables, and may change from line to line. The symbol $A\asymp B$ means that $cB\le A\le CB$ for sufficiently small $\eps$.

\subsection{Regularity and localization}\label{sec:inputs}
For $(x,y)\in X\times Y$, the unregularized potentials satisfy
\begin{equation}\label{eq:unregularized-duality}
 u_0(x)+v_0(y)\ge x\cdot y,\qquad u_0(x)+v_0(T(x))=x\cdot T(x),\qquad T_\#\mu=\nu.
\end{equation}
Recall the section and conditional law \eqref{eq:forward-sections}, and the approximations \eqref{eq:forward-approximations} of $T$. Fixing the target point $y\in Y$ instead of the source point $x$ gives the reverse section and conditional law, together with two approximations to the inverse map $S=T^{-1}$:
\begin{equation}\label{eq:sections}
\begin{gathered}
 K_{\eps,y}=\{x\in X:q_\eps(x,y)\ge0\},\qquad \widehat\pi_\eps^y(\dd x)=h_\eps(x,y)\mu(\dd x),\\
 \widehat z_\eps(y)=\nabla v_\eps(y),\qquad \widehat m_\eps(y)=\int_Xx\,\widehat\pi_\eps^y(\dd x).
\end{gathered}
\end{equation}
We retain the normalization \eqref{eq:gauge}.

\begin{proposition}[Potential estimates and marginal identities]\label{prop:inputs}
Under \cref{ass:data}, the maps $T:X\to Y$ and $S:Y\to X$ are mutually inverse bi-Lipschitz maps with $T(\partial X)=\partial Y$. For some $\beta\in(0,\alpha)$, $u_0\in C^{3,\beta}(X)$ and $v_0\in C^{3,\beta}(Y)$, and the eigenvalues of both Hessians are bounded above and away from zero. For each $\eps>0$, the regularized potentials satisfy $u_\eps\in C^{1,1}(X)$ and $v_\eps\in C^{1,1}(Y)$. For all sufficiently small $\eps$,
\begin{align}
 \|u_\eps-u_0\|_\infty+\|v_\eps-v_0\|_\infty&\le C\ell^2,\label{eq:G-inputs}\\
 c\Id\preceq D^2u_\eps\preceq C\Id\quad\text{a.e. on }X,&\qquad c\Id\preceq D^2v_\eps\preceq C\Id\quad\text{a.e. on }Y.\label{eq:curvature-input}
\end{align}
For every $\eps>0$ and every $x\in X$, $y\in Y$, the sections $S_{\eps,x}$ and $K_{\eps,y}$ are convex bodies. We have the marginal equations 
\begin{align}
\int_Y(q_\eps(x,y))_+\dd\nu(y)&=\eps,&\int_X(q_\eps(x,y))_+\dd\mu(x)&=\eps,\label{eq:marginal}
\end{align}
and the barycenter representations
\begin{align}
 \nabla u_\eps(x)&=\frac{\int_{S_{\eps,x}}y\dd\nu(y)}{\nu(S_{\eps,x})},&\nabla v_\eps(y)&=\frac{\int_{K_{\eps,y}}x\dd\mu(x)}{\mu(K_{\eps,y})}.\label{eq:barycenters}
\end{align}
The zero set of $q_\eps(x,\cdot)$ in $Y$ is contained in $\partial S_{\eps,x}$, and the zero set of $q_\eps(\cdot,y)$ in $X$ is contained in $\partial K_{\eps,y}$. Both zero sets have Lebesgue measure zero. The support of the optimizer $\pi_\eps$ is
\begin{equation}\label{eq:preliminary-support}
 \spt\pi_\eps=\{(x,y)\in X\times Y:q_\eps(x,y)\ge0\}.
\end{equation}
\end{proposition}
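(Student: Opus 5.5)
This proposition is largely a collection of known inputs, so I will assemble it from classical regularity theory and from Part~I \cite{GNGeometry} rather than prove it from scratch.

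\emph{Unregularized potentials.} Under \cref{ass:data}, Caffarelli's boundary regularity theory for optimal transport between uniformly convex $C^{3,\alpha}$ domains gives $u_0\in C^{2,\beta}(X)$, with $T$ a diffeomorphism of the closures and $T(\partial X)=\partial Y$. With $C^{1,\alpha}$ densities, a bootstrap on the Monge--Amp\`ere equation \eqref{eq:transport-jacobian} and its second boundary value condition upgrades this to $C^{3,\beta}$. Uniform positivity of $D^2u_0$ follows from the equation combined with the $C^2$ bound. Since $D^2v_0=(D^2u_0)^{-1}\circ S$, the same holds for $v_0$, and the bi-Lipschitz property of $T$ and $S$ is then immediate.

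\emph{Regularized problem.} I will quote the dual formulation of QOT. The optimizer has density $(x\cdot y-u_\eps-v_\eps)_+/\eps$, where the potentials are $c$-concave-type transforms of each other through the marginal equations. These equations are exactly \eqref{eq:marginal}, obtained by integrating the density in one variable.

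Convexity of $u_\eps$ follows because $u_\eps(x)$ is defined implicitly by $\int(x\cdot y-u-v_\eps(y))_+\dd\nu=\eps$. The left side is jointly convex in $(x,u)$ and increasing in $-u$, so the implicit function $u_\eps$ is convex. Differentiating this implicit relation gives
\[
\nabla u_\eps\,\nu(S_{\eps,x})=\int_{S_{\eps,x}}y\dd\nu,
\]
which is \eqref{eq:barycenters}. This also shows $u_\eps\in C^1$, and Lipschitz dependence of the barycenter of a convex section on $x$ gives $C^{1,1}$.

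The section $S_{\eps,x}$ is a superlevel set of a concave function of $y$ (because $v_\eps$ is convex), intersected with the convex set $Y$. It has positive $\nu$-mass by the marginal equation, so it is a convex body. Its zero set lies in the relative boundary because $q_\eps(x,\cdot)$ is concave and not constant. Hence the zero set has Lebesgue measure zero, and the support identity \eqref{eq:preliminary-support} follows from continuity of $q_\eps$.

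\emph{Quantitative estimates.} The bounds \eqref{eq:G-inputs} and \eqref{eq:curvature-input} are precisely the global estimates of Part~I, which I will cite. There, the $O(\ell^2)$ bound comes from comparing the section volume $\asymp\ell^d$ with the marginal constraint. The two-sided Hessian bounds come from writing the Hessian as a boundary integral over $\partial S_{\eps,x}$.

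\emph{Main obstacle.} The real difficulty is the uniform Hessian bounds up to the boundary and the $C^{3,\beta}$ boundary regularity. Both are imported rather than reproved, so the proof here reduces to checking that \cref{ass:data} meets the hypotheses of those results.
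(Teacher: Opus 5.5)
Your proposal takes the same route as the paper, whose proof is purely a citation: the properties of $T,S,u_0,v_0$ are classical (\cite[Lemma~2.1]{GNGeometry}), the bounds \eqref{eq:G-inputs} and \eqref{eq:curvature-input} under the normalization \eqref{eq:gauge} come from \cite[Theorems~1.3 and~1.2]{GNGeometry}, and the $C^{1,1}$ regularity, the identities \eqref{eq:marginal}--\eqref{eq:barycenters}, and the section and support statements come from \cite[Section~2.2]{GNGeometry}. One small correction to your sketch: the zero set of $q_\eps(x,\cdot)$ lies in $\partial S_{\eps,x}$ because this concave function is \emph{positive somewhere}, which \eqref{eq:marginal} guarantees, and not merely because it is nonconstant; for instance, $y\mapsto\min\{0,-y_1\}$ is concave and nonconstant but vanishes on a half-space.
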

\begin{proof}
The properties of $T,S$ and $u_0,v_0$ are classical; see \cite[Lemma~2.1]{GNGeometry}. Under the normalization \eqref{eq:gauge}, the estimates \eqref{eq:G-inputs} and \eqref{eq:curvature-input} follow from \cite[Theorems~1.3 and~1.2]{GNGeometry}, respectively. The $C^{1,1}$ regularity of the regularized potentials, the marginal and barycenter identities \eqref{eq:marginal}--\eqref{eq:barycenters}, and the assertions about the sections and support are well known; see \cite[Section~2.2]{GNGeometry} for details.
\end{proof}

The formulas in \eqref{eq:forward-sections} and \eqref{eq:sections} specify the conditional laws at every point. By \eqref{eq:marginal}, each has total mass one. Equation~\eqref{eq:barycenters} expresses each potential gradient as the mean of the corresponding marginal restricted to its section and normalized to have mass one.

\begin{lemma}[Support localization]\label{lem:prelocalization}
The unregularized duality gap $G_0(x,y)=u_0(x)+v_0(y)-x\cdot y$ satisfies
\begin{equation}\label{eq:unregularized-gap}
 c|y-T(x)|^2\le G_0(x,y)\le C|y-T(x)|^2\qquad((x,y)\in X\times Y).
\end{equation}
For all sufficiently small $\eps$ and all $(x,y)\in X\times Y$,
\begin{gather}
 q_\eps(x,y)\ge0\ \Longrightarrow\ |y-T(x)|+|x-S(y)|\le R\ell,\label{eq:prelocalization}\\
 \|\nabla u_\eps-T\|_\infty+\|\nabla v_\eps-S\|_\infty\le C\ell,\qquad |S_{\eps,x}|+|K_{\eps,y}|\le C\ell^d,\label{eq:pregrad}\\
 q_\eps(x,y)\le C\ell^2.\label{eq:height-upper}
\end{gather}
\end{lemma}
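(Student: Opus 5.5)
The plan is to derive all four claims from the two global inputs of \cref{prop:inputs}: the uniform bound \eqref{eq:G-inputs} on the potential errors and the gap structure of the unregularized potentials.

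\emph{Gap bound \eqref{eq:unregularized-gap}.} Write $G_0(x,y)=u_0(x)+v_0(y)-x\cdot y$ with $y=T(z)$, so that $G_0(x,T(z))=u_0(x)-u_0(z)-T(z)\cdot(x-z)=D(x,z)$. Since $X$ is convex and $c\Id\preceq D^2u_0\preceq C\Id$, Taylor's formula with integral remainder along the segment $[z,x]\subset X$ gives $c|x-z|^2\le D(x,z)\le C|x-z|^2$. The bi-Lipschitz property of $T$ then converts $|x-z|=|x-S(y)|$ into $|y-T(x)|$ up to constants.

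\emph{Localization \eqref{eq:prelocalization} and the slack bound \eqref{eq:height-upper}.} Rewrite $q_\eps(x,y)=-G_0(x,y)-r_\eps(x)-s_\eps(y)$. Applying \eqref{eq:G-inputs} gives
\[
q_\eps(x,y)\le C\ell^2-G_0(x,y)\le C\ell^2-c|y-T(x)|^2 .
\]
Both claims follow at once. The slack is at most $C\ell^2$ everywhere, since $G_0\ge0$. Nonnegativity of the slack forces $|y-T(x)|\le R\ell$, and then $|x-S(y)|\le\Lip(S)\,|y-T(x)|$. The same estimate yields the volume bound $|S_{\eps,x}|\le|B(T(x),R\ell)|\le C\ell^d$, and symmetrically $|K_{\eps,y}|\le C\ell^d$.

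\emph{Gradient bound in \eqref{eq:pregrad}.} This step uses the barycenter formula \eqref{eq:barycenters}. The point $\nabla u_\eps(x)$ is an average of $y$ over $S_{\eps,x}$, and $S_{\eps,x}$ is nonempty because $\nu(S_{\eps,x})>0$ by the marginal identity \eqref{eq:marginal}. Every $y$ in $S_{\eps,x}$ lies within $R\ell$ of $T(x)$, so the average lies in the convex ball $\B(T(x),R\ell)$. Hence $|\nabla u_\eps(x)-T(x)|\le R\ell$, and the statement for $v_\eps$ follows in the same way.

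I do not expect a genuine obstacle here. The only points needing care are that the Taylor estimate for $D$ uses convexity of $X$, so the segment stays inside the domain, and that $T,S$ are bi-Lipschitz. The whole lemma essentially rests on the input \eqref{eq:G-inputs} from Part~I.
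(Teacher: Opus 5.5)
Your proof is correct and matches the paper's argument step for step. The identity $q_\eps=-G_0-r_\eps-s_\eps$ together with \eqref{eq:G-inputs} gives the height bound and the localization, the ball inclusions give the volume bounds, and the barycenter formula gives the gradient bounds. The one cosmetic difference is in \eqref{eq:unregularized-gap}. There the paper linearizes $v_0$ at $T(x)$ along $[T(x),y]\subset Y$, using $\nabla v_0(T(x))=x$, which gives the bound in $|y-T(x)|$ directly without your bi-Lipschitz conversion from $|x-S(y)|$.
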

\begin{proof}
By \eqref{eq:unregularized-duality} and $\nabla v_0(T(x))=x$, the gap $G_0(x,y)$ is the remainder after linearizing $v_0$ at $T(x)$. Convexity keeps the segment $[T(x),y]$ inside $Y$, so the Hessian bounds for $v_0$ in \cref{prop:inputs} give \eqref{eq:unregularized-gap}. The potential errors defined in \eqref{eq:potential-errors} satisfy
\begin{equation}\label{eq:gap-slack-identity}
 q_\eps(x,y)=-G_0(x,y)-r_\eps(x)-s_\eps(y).
\end{equation}
Since $G_0\ge0$, the potential bound \eqref{eq:G-inputs} gives \eqref{eq:height-upper}. On $\{q_\eps\ge0\}$, the identity \eqref{eq:gap-slack-identity} also gives $G_0\le C\ell^2$, and hence $|y-T(x)|\le C\ell$ by \eqref{eq:unregularized-gap}. The Lipschitz bound $|x-S(y)|=|S(T(x))-S(y)|\le\Lip(S)|y-T(x)|$ then proves \eqref{eq:prelocalization}. Thus $S_{\eps,x}$ lies in a ball of radius $C\ell$ about $T(x)$, and $K_{\eps,y}$ lies in such a ball about $S(y)$. These inclusions give the volume bounds in \eqref{eq:pregrad}. The normalized restricted marginals in \eqref{eq:barycenters} are supported in the same balls, so their means satisfy the gradient bounds.
\end{proof}

\subsection{Potential errors and scaling}
Composing $s_\eps$ with $T$ puts both potential errors from \eqref{eq:potential-errors} on $X$. We separate their sum and difference by setting
\begin{equation}\label{eq:errors}
 \tau_\eps=-\tfrac12(r_\eps+s_\eps\circ T),\qquad \eta_\eps= \tfrac12(r_\eps-s_\eps\circ T) = \ell^2g_\eps,\qquad g_\eps=\frac{r_\eps-s_\eps\circ T}{2\ell^2}.
\end{equation}
The errors are recovered as $r_\eps=-\tau_\eps+\eta_\eps$ and $s_\eps\circ T=-\tau_\eps-\eta_\eps$. Thus $-\tau_\eps$ is the common contribution, while $\eta_\eps=\ell^2g_\eps$ enters with opposite signs. The normalization \eqref{eq:gauge} and $T_\#\mu=\nu$ give $\int_Xg_\eps\dd\mu=0$. The potential estimate \eqref{eq:G-inputs} gives
\begin{equation}\label{eq:preliminary-error-bounds}
 \|g_\eps\|_\infty+\ell^{-2}\|\tau_\eps\|_\infty\le C.
\end{equation}

Writing $y=T(z)$ also puts the duality gap on $X\times X$. We define its symmetric and antisymmetric parts with respect to exchanging $x$ and $z$ by
\begin{equation}\label{eq:D}
\begin{gathered}
 D(x,z)=u_0(x)-u_0(z)-T(z)\cdot(x-z)=G_0(x,T(z)),\\
 D^{\rm s}(x,z)=\tfrac12[D(x,z)+D(z,x)],\qquad \mathfrak a(x,z)=\tfrac12[D(x,z)-D(z,x)].
\end{gathered}
\end{equation}
The regularized slack in these source coordinates is obtained by substituting \eqref{eq:errors} and \eqref{eq:D} into \eqref{eq:gap-slack-identity}:
\begin{equation}\label{eq:Q}
 Q_\eps(x,z):=q_\eps(x,T(z))=\tau_\eps(x)+\tau_\eps(z)+\ell^2[g_\eps(z)-g_\eps(x)]-D(x,z).
\end{equation}
Under the change of variables $y=T(z)$, the marginal identities \eqref{eq:marginal} both involve $\mu$, since $T_\#\mu=\nu$:
\begin{equation}\label{eq:Q-marginals}
 \int_X(Q_\eps(x,z))_+\dd\mu(z)=\eps\quad(x\in X),\qquad \int_X(Q_\eps(x,z))_+\dd\mu(x)=\eps\quad(z\in X).
\end{equation}
Moreover, for all $x,z\in X$,
\begin{equation}\label{eq:common-gap-bounds}
 Q_\eps(x,x)=2\tau_\eps(x),\qquad D(x,z)\asymp|x-z|^2,\qquad |\mathfrak a(x,z)|\le C|x-z|^3.
\end{equation}
The first identity follows from \eqref{eq:Q} and identifies $\tau_\eps(x)$ as half the slack at the Brenier pair $(x,T(x))$. The two-sided bound for $D$ follows from Taylor's formula and the Hessian bounds for $u_0$ in \cref{prop:inputs}. Since $H$ is Lipschitz, both $D(x,z)$ and $D(z,x)$ equal $\tfrac12(x-z)^TH(z)(x-z)+O(|x-z|^3)$, uniformly on $X\times X$. Their common quadratic term cancels in the difference, giving the cubic bound for $\mathfrak a$.

For the rescaled profiles, we use a radial quadratic density and linear changes of source and target coordinates. With $h_d,\sigma,h$ from \eqref{eq:scale}, set
\begin{equation}\label{eq:rescaling-matrices}
\begin{gathered}
 a_d=\sqrt{2h_d},\qquad \kappa_d(w)=(h_d-|w|^2/2)_+,\\
 \Lambda_x=\sigma(x)H(x)^{-1/2},\qquad B_x=\sigma(x)H(x)^{1/2}.
\end{gathered}
\end{equation}
The matrices $\Lambda_x$ and $B_x$ normalize source and target displacements, respectively. In the coordinates $z=x+\ell\Lambda_x\xi$ and $y=T(x)+\ell B_xw$, the quadratic parts of $D(x,z)$ and $G_0(x,y)$ are $\ell^2\sigma(x)^2|\xi|^2/2$ and $\ell^2\sigma(x)^2|w|^2/2$. Since $H(x)\Lambda_x=B_x$, the linearization of $T$ takes the source displacement $\ell\Lambda_x\xi$ to the target displacement $\ell B_x\xi$.

The function $\kappa_d$ is positive on $B_{a_d}$ and zero outside that ball. Radial integration gives
\begin{equation}\label{eq:cap-normalization}
 \int_{\R^d}\kappa_d(w)\dd w=\frac{\omega_da_d^{d+2}}{d+2}=1.
\end{equation}
Thus $\kappa_d$ is a probability density. Combining the Monge--Amp\`ere equation \eqref{eq:transport-jacobian} with \eqref{eq:scale} gives
\begin{equation}\label{eq:cap-change-of-variables}
 \frac{\kappa_d(B_x^{-1}w)}{\det B_x}=\rho_1(T(x))\left(h(x)-\tfrac12w^TH(x)^{-1}w\right)_+=k_x(w).
\end{equation}
The linear map $w\mapsto B_xw$ therefore sends the ball $\B(0,a_d)$ to the ellipsoid $\mathscr E_x$ and the probability measure $\kappa_d(w)\dd w$ to $k_x(w)\dd w$. This expresses the interior conditional profiles from \eqref{eq:main-ellipsoid-density} as linear images of a single radial law.

By \cref{ass:data,prop:inputs}, the coefficient fields $H,\rho_0,\sigma,h,\Lambda,B$ belong to $C^{1,\beta}(X)$. The scalar functions $\rho_0,\sigma,h$ and the eigenvalues of $H,\Lambda,B$ are bounded above and away from zero.

\section{Whole-space and half-space models}\label{sec:models}

We study the model equations that arise by rescaling the marginal identities \eqref{eq:Q-marginals} near the Brenier graph. Their domains are the whole space or a half-space, depending on the distance to the boundary after rescaling. We classify the bounded solutions, construct the scalar profile $\varphi_d$ from \eqref{eq:scalar-halfspace-profile}, and prove its exponential convergence to $h_d/2$. This section analyzes the model itself; the connection with the rescaled limits will be established in \cref{sec:profiles}.

\subsection{Classification of bounded solutions}
In our model, the slack is $F(\xi)+G(\zeta)-|\xi-\zeta|^2/2$, and its positive part must integrate to $\tau$ in either variable. A symmetric solution has $F=G$. We allow arbitrary $\tau>0$, although the rescaled limits in \cref{sec:profiles} have $\tau=1$.

\begin{proposition}[Classification of bounded solutions]\label{prop:classification}
Let $\Omega=\R^d$ or a closed half-space, and let $\tau>0$. There is a unique $\phi_{\Omega,\tau}\in C_b(\Omega)$ satisfying
\begin{equation}\label{eq:symmetric-general-model}
 \int_\Omega\left(\phi_{\Omega,\tau}(\xi)+\phi_{\Omega,\tau}(\zeta)-\tfrac12|\xi-\zeta|^2\right)_+\dd\zeta=\tau\qquad(\xi\in\Omega).
\end{equation}
Every pair $F,G\in C_b(\Omega)$ satisfying
\begin{equation}\label{eq:model-pair}
\begin{aligned}
 \int_\Omega(F(\xi)+G(\zeta)-\tfrac12|\xi-\zeta|^2)_+\dd\zeta&=\tau,\\
 \int_\Omega(F(\zeta)+G(\xi)-\tfrac12|\xi-\zeta|^2)_+\dd\zeta&=\tau
\end{aligned}
\qquad(\xi\in\Omega)
\end{equation}
has the form
\begin{equation}\label{eq:classification}
 F=\phi_{\Omega,\tau}+a,\qquad G=\phi_{\Omega,\tau}-a
\end{equation}
for a constant $a\in\R$. On the whole space,
\begin{equation}\label{eq:whole-constant}
 \phi_{\R^d,\tau}=\frac{h_d}{2}\tau^{2/(d+2)}.
\end{equation}
For a half-space, let $L:\Omega\to\Hh$ be an affine isometry onto $\Hh$. Then
\begin{equation}\label{eq:halfspace-scaling}
 \phi_{\Omega,\tau}(\xi)=\tau^{2/(d+2)}\phi_{\Hh,1}\bigl(\tau^{-1/(d+2)}L(\xi)\bigr).
\end{equation}
In both cases,
$c_d\tau^{2/(d+2)}\le\phi_{\Omega,\tau}\le C_d\tau^{2/(d+2)}$ for positive constants depending only on $d$.
\end{proposition}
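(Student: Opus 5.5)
The proof has four steps: reduce to $\tau=1$ by scaling, construct a symmetric solution, prove a Liouville-type uniqueness theorem, and read off the bounds.

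\textbf{Scaling.} If $F,G$ solve \eqref{eq:model-pair} on $\Omega$ with parameter $\tau$, set $\lambda=\tau^{1/(d+2)}$ and
\[
\tilde F(\xi)=\lambda^{-2}F(\lambda\xi),\qquad \tilde G(\xi)=\lambda^{-2}G(\lambda\xi).
\]
The change of variables $\zeta=\lambda\zeta'$ shows that $(\tilde F,\tilde G)$ solve the same system on $\lambda^{-1}\Omega$ with parameter $1$. An affine isometry $L$ preserves $|\xi-\zeta|$ and Lebesgue measure, so every half-space may be replaced by $\Hh$. Hence \eqref{eq:halfspace-scaling} and the $\tau^{2/(d+2)}$ scaling of the bounds follow once the case $\tau=1$, $\Omega\in\{\R^d,\Hh\}$ is settled.

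\textbf{Existence.} On $\R^d$ a constant $\phi$ works. Indeed, $\int(2\phi-|w|^2/2)_+\dd w$ equals \eqref{eq:cap-normalization} rescaled, and it is $1$ exactly when $2\phi=h_d$, which gives \eqref{eq:whole-constant}.

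On $\Hh$ I will follow the route sketched in the proof strategy, in the spirit of \cref{lem:halfspace-exists}. Take $\mu=\nu$ equal to the uniform law on a ball. By symmetry of the problem, $u_\eps=v_\eps$, and $T=\id$. Rescale around a boundary point $x_\eps$ with $\dist(x_\eps,\partial X)/\ell\to t$. The input bounds (\cref{prop:inputs}, \cref{lem:prelocalization}) give bounded, locally equi-Lipschitz rescaled errors. A subsequence therefore converges locally uniformly to a symmetric pair $F=G=\phi$ solving \eqref{eq:symmetric-general-model}; dominated convergence is justified by the localization \eqref{eq:prelocalization}.

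The positive lower bound $\inf\phi>0$ comes from a minimum-point argument. Take $x$ minimizing the common negative error. By convexity and the barycenter formula \eqref{eq:barycenters}, the slack is bounded above by twice that minimum. The marginal identity together with $|S_{\eps,x}|\le C\ell^d$ then forces the minimum to be at least $c\ell^2$, and this lower bound survives rescaling.

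Finally, I will argue that $\phi$ depends only on $\xi_d$. This follows from rotation invariance of the ball combined with the uniqueness proved in the next step: translates of $\phi$ parallel to $\partial\Hh$ also solve the equation, so they must coincide with $\phi$.

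\textbf{Uniqueness and classification.} This is the main step. Let $(F,G)$ be a bounded solution and set $A=F-\phi$, $B=G-\phi$. Write each difference of positive parts as
\[
a_+-b_+=\theta\,(a-b),\qquad \theta=\int_0^1\1_{\{(1-s)b+sa>0\}}\dd s\in[0,1].
\]
This yields two linear equations:
\[
A(\xi)m_1(\xi)+\int K_1(\xi,\zeta)B(\zeta)\dd\zeta=0,\qquad
B(\xi)m_2(\xi)+\int K_2(\xi,\zeta)A(\zeta)\dd\zeta=0.
\]
Here $K_1(\xi,\zeta)=K_2(\zeta,\xi)$ and $m_i(\xi)=\int K_i(\xi,\zeta)\dd\zeta$.

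The kernels have the following properties.
\begin{itemize}
\item They vanish for $|\xi-\zeta|>R$, because both slacks are negative there, by boundedness of $F,G,\phi$.
\item They equal $1$ for $|\xi-\zeta|<r$. This holds because $\phi(\xi)+\phi(\zeta)-|\xi-\zeta|^2/2\ge 2\inf\phi-r^2/2>0$ there. For the other slack, I first need that $F(\xi)+G(\xi)$ is bounded below. I would try to get this from the marginal equation: a nonpositive diagonal value, combined with the upper bound, forces too little mass. This requires care, and possibly an iterative argument.
\end{itemize}

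Substituting $B$ from the second equation into the first expresses $A(\xi)$ as a weighted average $A(\xi)=\int P(\xi,\eta)A(\eta)\dd\eta$. The weights are probabilities, and $P$ is uniformly positive on $|\xi-\eta|<r$. A Liouville argument for bounded harmonic functions of such a Markov kernel, \cref{lem:liouville}, which I expect to rely on the symmetry of the kernel, then shows that $A$ is constant. The second equation gives $B=-A$, which is \eqref{eq:classification}. Uniqueness of $\phi_{\Omega,\tau}$ follows by taking $F=G$, which forces $a=0$.

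\textbf{Bounds.} With uniqueness in hand, the upper bound comes from $\int(2\sup\phi-|w|^2/2)_+\ge1$ being violated if $\sup\phi$ is large. The lower bound is the positivity established above, with $c_d,C_d$ depending only on $d$ after scaling.

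The main obstacle is the Liouville step. It needs uniform ellipticity of the composed kernel, which in turn needs a priori positivity of the diagonal slack for an arbitrary bounded pair $(F,G)$, not just for $\phi$.
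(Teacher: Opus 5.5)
Your architecture is the paper's: reduce to a positive reference solution, build the half-space profile from the uniform-ball problem via the minimum-point argument, factor $a_+-b_+=\theta(a-b)$, eliminate $B$ to get a symmetric finite-range averaging equation for $A$, and apply \cref{lem:liouville}. The gap is at the step you flag as the main obstacle, and it comes from asking for more than is needed.

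\cref{lem:liouville} does not need the kernel to equal $1$ near the diagonal. It only needs a uniform positive lower bound. That bound follows from positivity of the reference slack alone, with no information on $F(\xi)+G(\xi)$. Write $b=\phi(\xi)+\phi(\zeta)-\tfrac12|\xi-\zeta|^2$ and $a=F(\xi)+G(\zeta)-\tfrac12|\xi-\zeta|^2$. If $|\xi-\zeta|<r$ with $r^2\le 2\inf\phi$, then $b\ge\inf\phi=:c_0>0$, while $|a|\le C_1:=\|F\|_\infty+\|G\|_\infty+r^2/2$. Hence $(1-s)b+sa\ge(1-s)c_0-sC_1>0$ for $s<c_0/(c_0+C_1)$, so $\theta\ge c_0/(c_0+C_1)$. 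This is exactly \eqref{eq:W-bounds} in the paper, and it is one of the places where boundedness of $(F,G)$ enters; the other is the finite range. Consequently $m,n\asymp1$. The eliminated kernel $J(\xi,\xi')=\int_\Omega W(\xi,\zeta)W(\xi',\zeta)n(\zeta)^{-1}\dd\zeta$ is then symmetric, bounded, vanishes for $|\xi-\xi'|>2R$, and is bounded below for $|\xi-\xi'|<r/2$. So \cref{lem:liouville} applies directly.

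The route you sketch instead is to derive $\inf(F+G)>0$ on the diagonal directly from the marginal equations. You do not carry it out, and it is unclear that it can be done for arbitrary bounded continuous $F,G$. No convexity is assumed, and \cref{rem:boundedness} shows the diagonal slack can be negative for unbounded solutions, so any such argument would have to be genuinely global. In the paper, positivity of the diagonal slack ($F+G=2\phi$) is an output of the classification, not an input.

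A minor point: your upper bound is stated backwards. The inequality $\int(2\sup\phi-|w|^2/2)_+\dd w\ge1$ bounds $\sup\phi$ from below, not from above. The upper bound is already supplied by your construction, since the rescaled errors are bounded by \eqref{eq:G-inputs}. Alternatively, evaluate the equation at a near-maximizer $\xi$ of $\phi$, use $\phi\ge0$, and use $|\Omega\cap B(\xi,\rho)|\ge c\rho^d$ with $\rho^2=\phi(\xi)$. This gives $c\,\phi(\xi)^{(d+2)/2}\le1$.
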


The function $\phi_{\Hh,1}$ is described in detail in \cref{sec:profile-properties}. The proof compares an arbitrary solution with a positive symmetric one. Leveraging results and arguments of \cite{MSC2015}, we first prove a Liouville theorem for the averaging equation that arises from this comparison.

\begin{lemma}[Liouville theorem for finite-range kernels]\label{lem:liouville}
Let $\Omega=\R^d$ or a closed half-space. Suppose that $J:\Omega\times\Omega\to[0,\infty)$ is measurable and symmetric, and that
\begin{equation}\label{eq:liouville-kernel}
 c\1_{\{|x-y|<a\}}\le J(x,y)\le C\1_{\{|x-y|\le b\}}\qquad(x,y\in\Omega)
\end{equation}
for constants $c,C>0$ and $0<a\le b<\infty$. Every $u\in L^\infty(\Omega)$ satisfying
\begin{equation}\label{eq:liouville-equation}
 \int_\Omega J(x,y)[u(y)-u(x)]\dd y=0\qquad\text{for a.e. }x\in\Omega
\end{equation}
is almost everywhere constant. If $u$ is continuous on $\Omega$, it is constant everywhere.
\end{lemma}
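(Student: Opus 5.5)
Our plan is to prove \cref{lem:liouville} by a random-walk (Markov chain) argument in the spirit of \cite{MSC2015}. Set $m(x)=\int_\Omega J(x,y)\dd y$. By \eqref{eq:liouville-kernel}, $m$ is bounded above by $C|B_b|$. It is also bounded below by a positive constant, because for $x\in\Omega$ the set $B(x,a)\cap\Omega$ contains a fixed fraction of $B(x,a)$; this is clear for the whole space, and for a half-space at least half the ball remains. The equation \eqref{eq:liouville-equation} then says $u=Pu$ a.e., where
\[
Pf(x)=\frac{1}{m(x)}\int_\Omega J(x,y)f(y)\dd y .
\]
Here $P$ is a Markov operator, reversible with respect to $m(x)\dd x$ by the symmetry of $J$, with jumps of length at most $b$. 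Thus $u$ is a bounded harmonic function for a reversible finite-range walk with uniformly elliptic local kernel. The domain, $\R^d$ or a half-space, is a uniform domain with polynomial volume growth.

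We will show that bounded harmonic functions are constant via a parabolic Harnack inequality, or more elementarily via an oscillation (coupling) estimate. The first step is to establish a Nash/Poincaré inequality at scale $r$ on $B(x,r)\cap\Omega$ for the Dirichlet form
\[
\mathcal E(f)=\tfrac12\iint J(x,y)(f(y)-f(x))^2\dd x\dd y .
\]
The local lower bound $c\1_{\{|x-y|<a\}}$ and convexity of $B(x,r)\cap\Omega$ give a discrete-chain Poincaré inequality with constant $Cr^2$. Together with volume doubling, this yields Gaussian heat-kernel bounds and a Harnack inequality for $P$, for times $n\gtrsim (r/a)^2$, by the theory in \cite{MSC2015} (Delmotte-type results for graphs and their continuous analogues). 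Since $u=P^nu$ for all $n$, we get
\[
|u(x)-u(x')|\le \|P^n(x,\cdot)-P^n(x',\cdot)\|_{\TV}\,2\|u\|_\infty .
\]
The Harnack/heat-kernel estimates give $\|P^n(x,\cdot)-P^n(x',\cdot)\|_{\TV}\to0$ as $n\to\infty$ for any fixed $x,x'$. Hence $u(x)=u(x')$ for a.e.\ pair, and $u$ is constant a.e.

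A more hands-on alternative, which avoids the general theory, is the following. Because $J\ge c\1_{\{|x-y|<a\}}$, the kernel of $P^k$ dominates a fixed multiple of Lebesgue measure on $B(x,a)\cap B(x',a)\cap\Omega$ whenever $|x-x'|<a$. Doeblin's coupling then gives contraction of oscillation over short distances. The real difficulty is passing from local contraction to global constancy. Bounded harmonic functions on $\R^d$ with finite range are handled by the classical Choquet--Deny/coupling argument, but the kernel here is not translation invariant, so I would rely on the heat-kernel/Harnack route. The main obstacle is therefore verifying the hypotheses (volume doubling and the scale-invariant Poincaré inequality up to the boundary of the half-space) so that the result of \cite{MSC2015} applies.

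For the continuity statement, if $u$ is continuous and equals a constant $\kappa$ a.e., then $\{u\ne\kappa\}$ is open and null. It is therefore empty, since $\Omega$ is the closure of its interior, so $u\equiv\kappa$ everywhere.
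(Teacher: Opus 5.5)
Your proposal follows essentially the same route as the paper: rewrite the equation as $u=Pu$ for the reversible Markov operator with density $J(x,\cdot)/m(x)$, verify volume doubling and a scale-$r$ Poincar\'e inequality on $\R^d$ and on the half-space, and then invoke the Harnack/Liouville theory of \cite{MSC2015} for bounded harmonic functions; the continuity addendum is immediate in both. The paper differs only in how it carries out the step you flag as the main obstacle and in how it concludes:
\begin{itemize}
\item it transfers the nonlocal Poincar\'e inequality to the half-space by the reflection $(x',x_d)\mapsto(x',|x_d|)$, rather than by convexity of $B(x,r)\cap\Omega$, and uses convexity only for the $\beta$-geodesic condition;
\item it passes to $P^k$ via \cite[Lemma~4.9]{MSC2015} to obtain a compatible kernel;
\item it concludes directly from \cite[Proposition~6.19]{MSC2015}, rather than through total-variation convergence of $P^n(x,\cdot)$.
\end{itemize}
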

\begin{proof} 
 Set $m(x)=\int_\Omega J(x,y)\dd y$, $\vartheta(\dd x)=m(x)\dd x$, $p(x,y)=J(x,y)/(m(x)m(y))$, and $Pf(x)=\int_\Omega p(x,y)f(y)\dd\vartheta(y)$.
Since $|\Omega\cap B(x,r)|\asymp r^d$, uniformly in $x\in\Omega$, the bounds
\eqref{eq:liouville-kernel} imply $m\asymp1$. Hence $p$ is symmetric,
$p(x,y)\vartheta(\dd y)$ is a probability measure for every $x$, and
$V(x,r):=\vartheta(\Omega\cap B(x,r))\asymp r^d$. In particular,
$(\Omega,|\cdot|,\vartheta)$ satisfies both local and large-scale volume doubling in
the sense of \cite[Definition~2.5]{MSC2015}. Equation
\eqref{eq:liouville-equation} is equivalent to $Pu=u$ almost everywhere. Since the
transition measures are absolutely continuous with respect to $\vartheta$, this implies
$P(Pu)=Pu$ everywhere. Thus $\widetilde u:=Pu$ is a bounded pointwise
$P$-harmonic representative of $u$.

For $\Omega=\mathbb R^d$, the classical Euclidean Poincar\'e inequality is
$(P)_{\mathrm{Rm}}$ in the sense of \cite[Definition~3.1]{MSC2015}; hence,
by \cite[Proposition~3.20]{MSC2015}, $(P)_{h_0}$ holds for $h_0=a/2$. In the half-space case, an isometry reduces to $\Omega=\{x_d\ge0\}$. For $x_0\in\Omega$, write $D_R=\Omega\cap B(x_0,R)$. Let $\mathfrak r(x',x_d)=(x',|x_d|)$ and extend $f\in L^\infty_{\rm loc}(\Omega)$ to $\R^d$ by $F=f\circ\mathfrak r$. With the notation $f_{E,\lambda}:=\frac{1}{\lambda(E)}\int_E f\,\dd\lambda$, we then have
\[
 \int_{D_R}|f-f_{D_R,\dd x}|^2\dd x
 \le \int_{B(x_0,R)}|F-F_{B(x_0,R),\dd x}|^2\dd x.
\]
Apply the nonlocal Poincar\'e inequality for $\R^d$ at scale $h_0$ to the right-hand side. The map $\mathfrak r$ is $1$-Lipschitz, fixes $x_0$, and has at most two preimages almost everywhere. Applying the reflection map $\mathfrak r$ to both integration variables and using $|\Omega\cap B(x,h_0)|\le|B(0,h_0)|$ therefore gives
\[
\begin{aligned}
 &\int_{B(x_0,\kappa R)}\frac{1}{|B(0,h_0)|}
 \int_{B(x,h_0)}|F(y)-F(x)|^2\dd y\,\dd x\\
 &\qquad\le4\int_{D_{\kappa R}}\frac{1}{|\Omega\cap B(x,h_0)|}
 \int_{\Omega\cap B(x,h_0)}|f(y)-f(x)|^2\dd y\,\dd x.
\end{aligned}
\]
Thus $(P)_{h_0}$ holds on the half-space as well.
Consequently, for some $C,\kappa,R_0>0$,
\[
 \int_{D_R}|f-f_{D_R,\dd x}|^2\dd x
 \le CR^2
 \int_{D_{\kappa R}}
 \frac{1}{|\Omega\cap B(x,h_0)|}
 \int_{\Omega\cap B(x,h_0)}
 |f(y)-f(x)|^2\dd y\,\dd x,
 \qquad R\ge R_0 .
\]
This inequality is unchanged, up to its constant, when Lebesgue measure is replaced by
$\vartheta$. Indeed, write $0<\lambda\le m\le\Lambda$. Since
$f_{D_R,\vartheta}$ minimizes the $L^2(\vartheta)$ error,
$\int_{D_R}|f-f_{D_R,\vartheta}|^2\dd\vartheta
\le\Lambda\int_{D_R}|f-f_{D_R,\dd x}|^2\dd x$.
Moreover
$\lambda|\Omega\cap B(x,h_0)|\le V(x,h_0)
\le\Lambda|\Omega\cap B(x,h_0)|$ and
$\dd\vartheta(x)\dd\vartheta(y)\ge\lambda^2\dd x\,\dd y$.
Consequently,
\begin{equation}\label{eq:liouville-poincare}
 \int_{D_R}|f-f_{D_R,\vartheta}|^2\dd\vartheta
 \le C R^2
 \int_{D_{\kappa R}}
 \frac{1}{V(x,h_0)}
 \int_{\Omega\cap B(x,h_0)}
 |f(y)-f(x)|^2
 \dd\vartheta(y)\dd\vartheta(x),
\end{equation}
after changing $C$. This is precisely the Poincar\'e inequality $(P)_{h_0}$ of
\cite[Definition~3.4]{MSC2015}.

It remains to check the hypotheses on the Markov kernel. Since $\Omega$ is convex,
straight segments show that it is $\beta$-geodesic, in the sense of
\cite[Definition~2.3]{MSC2015}, for every $\beta>0$. In particular, we may take
$\beta=h_0/2<h_0$. Furthermore, using $m\asymp1$, $V(x,r)\asymp r^d$, and
\eqref{eq:liouville-kernel}, there are constants $c_1,C_1>0$ such that
\[
 \frac{c_1}{V(x,h_0)}\1_{B(x,h_0)}(y)
 \le p(x,y)
 \le \frac{C_1}{V(x,b)}\1_{B(x,b)}(y).
\]
Thus $P$ is weakly $(h_0,b)$-compatible in the sense of
\cite[Definition~4.7]{MSC2015}. Since $h_0>\beta$, 
\cite[Lemma~4.9]{MSC2015} yields $k\ge1$ such that $P^k$ has an
$(h_0,kb)$-compatible kernel. As $P\widetilde u=\widetilde u$, we also have
$P^k\widetilde u=\widetilde u$. We have therefore verified all the hypotheses of
\cite[Proposition~6.19]{MSC2015} for $P^k$ on
$(\Omega,|\cdot|,\vartheta)$. That proposition implies that the bounded
$P^k$-harmonic function $\widetilde u$ is constant. Since
$\widetilde u=u$ almost everywhere, $u$ is almost everywhere constant. If $u$ is
continuous, it is constant everywhere.
\end{proof}

Next, we construct the positive half-space solution.

\begin{lemma}[Existence of a half-space profile]\label{lem:halfspace-exists}
Let $\Hh=\{\xi_d\ge0\}$. There is a function $\phi_{\Hh,1}\in C^{1,1}(\Hh)$ satisfying
\begin{equation}\label{eq:symmetric-model}
 \int_\Hh\left(\phi_{\Hh,1}(\xi)+\phi_{\Hh,1}(\zeta)-\tfrac12|\xi-\zeta|^2\right)_+\dd\zeta=1\qquad(\xi\in\Hh).
\end{equation}
For constants $c,C>0$ depending only on $d$,
\begin{equation}\label{eq:halfspace-profile-bounds}
 c\le\phi_{\Hh,1}\le C,\qquad \|\nabla\phi_{\Hh,1}\|_\infty+\Lip(\nabla\phi_{\Hh,1})\le C,\qquad c\Id\preceq D^2U\preceq C\Id\quad\text{a.e. on }\Hh,
\end{equation}
where $U(\xi)=|\xi|^2/2-\phi_{\Hh,1}(\xi)$.
\end{lemma}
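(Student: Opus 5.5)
We construct $\phi_{\Hh,1}$ as a rescaled limit of an auxiliary symmetric regularized problem, following the outline in \cref{sec:proof-strategy}. Let $\mu=\nu$ be the uniform probability measure on a ball $X=Y=\B(0,1)$. The unregularized Brenier map is then $T=\id$, with $u_0=v_0=|x|^2/2$, $H=\Id$, and $\sigma$ constant. The marginals satisfy \cref{ass:data}, so \cref{prop:inputs} and \cref{lem:prelocalization} apply.

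\textbf{Step 1: symmetry and a positive lower bound on the graph.} The problem is symmetric under exchanging $x$ and $y$, and its optimizer is unique. Hence the gauge \eqref{eq:gauge} forces $u_\eps=v_\eps$, so $r_\eps=s_\eps$, $g_\eps=0$, and $Q_\eps(x,z)=\tau_\eps(x)+\tau_\eps(z)-\tfrac12|x-z|^2$ with $\tau_\eps=-r_\eps$.

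The key estimate is $\min_X\tau_\eps\ge c\ell^2$. Let $x_*$ minimize $-\tau_\eps=r_\eps$ over the compact set $X$. I plan to use convexity of $u_\eps$ together with the barycenter formula \eqref{eq:barycenters} to bound the slack: for every $y$,
\[
 q_\eps(x_*,y)\le2\tau_\eps(x_*).
\]
The reason is that $u_\eps(y)\ge u_\eps(x_*)+\nabla u_\eps(x_*)\cdot(y-x_*)$. Writing $q_\eps(x_*,y)=\tau_\eps(x_*)+\tau_\eps(y)-\tfrac12|x_*-y|^2$, the minimality of $r_\eps$ at $x_*$ (so $\tau_\eps(y)\le\tau_\eps(x_*)$) then gives the bound directly. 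The marginal equation \eqref{eq:marginal} and the volume bound $|S_{\eps,x_*}|\le C\ell^d$ from \eqref{eq:pregrad} then yield
\[
 \eps\le 2\tau_\eps(x_*)\,C\ell^d,
\]
that is, $\tau_\eps(x_*)\ge c\ell^2$ since $\eps=\ell^{d+2}$. Combined with \eqref{eq:preliminary-error-bounds}, this gives $c\ell^2\le\tau_\eps\le C\ell^2$ on $X$.

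\textbf{Step 2: rescaling at a boundary point.} Fix $x_0\in\partial X$ with inner normal $e_d$. Set
\[
 \phi_\eps(\xi)=\frac{\tau_\eps(x_0+\ell\sigma\xi)}{\ell^2\sigma^2}
\]
on the rescaled domain $\Omega_\eps=\ell^{-1}\sigma^{-1}(X-x_0)$. The domains $\Omega_\eps$ converge locally in Hausdorff distance to $\Hh$. The constant $\sigma$ normalizes the density so that the marginal identity \eqref{eq:Q-marginals} becomes
\[
 \int_{\Omega_\eps}\bigl(\phi_\eps(\xi)+\phi_\eps(\zeta)-\tfrac12|\xi-\zeta|^2\bigr)_+\dd\zeta=1.
\]
By Step 1, $c\le\phi_\eps\le C$.

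For the derivative bounds, write $\tau_\eps=|x|^2/2-u_\eps$. Then
\[
 U_\eps(\xi):=\frac{|\xi|^2}{2}-\phi_\eps(\xi)
\]
is, up to an affine function, the rescaled $u_\eps$. By \eqref{eq:curvature-input}, $c\Id\preceq D^2U_\eps\preceq C\Id$ a.e. By \eqref{eq:pregrad}, $\nabla\phi_\eps$ is bounded after removing an affine part. Since $x_0$ is fixed and $\nabla u_0(x_0)=x_0$, this affine part is exactly absorbed, and $\|\nabla\phi_\eps\|_\infty\le C$ follows from $|\nabla u_\eps-T|\le C\ell$ rescaled by $\ell$.

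\textbf{Step 3: passage to the limit.} Arzel\`a--Ascoli gives a subsequence along which $\phi_\eps\to\phi$ locally in $C^1$ on $\Hh$. The bounds in \eqref{eq:halfspace-profile-bounds} pass to the limit: the $C^{1,1}$ bound and the Hessian bounds for $U$ are preserved under uniform convergence of convex functions and their gradients.

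To pass to the limit in the integral equation, note that the integrand vanishes for $|\xi-\zeta|>2\sqrt C$, so only a bounded region contributes. The integrand converges pointwise, and $\1_{\Omega_\eps}\to\1_\Hh$ a.e. Dominated convergence therefore yields \eqref{eq:symmetric-model}.

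\textbf{Main obstacle.} The hard part is the uniform lower bound in Step 1, since it is what makes the limit $\phi$ strictly positive. The argument above must be checked carefully at a possibly boundary minimizer $x_*$. I expect the one-sided convexity inequality to suffice there, because $X$ is convex and the inequality holds for all $y\in Y=X$. The remaining steps are routine compactness.
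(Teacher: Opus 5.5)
Your Step 1 has a sign error that leaves the key estimate unproved. You take $x_*$ to minimize $r_\eps=-\tau_\eps$, i.e.\ to \emph{maximize} $\tau_\eps$, and then use $\tau_\eps(y)\le\tau_\eps(x_*)$. With that choice the bound $q_\eps(x_*,y)\le2\tau_\eps(x_*)$ is trivial, and convexity and the barycenter formula play no role. The marginal equation then only yields $\max_X\tau_\eps\ge c\ell^2$. This gives no information about $\min_X\tau_\eps$, so the conclusion $c\ell^2\le\tau_\eps$ on $X$ does not follow. That lower bound is exactly what gives $c\le\phi_{\Hh,1}$, and the appendix shows it can fail for non-small $\eps$.

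You must instead take $x_*$ to be a \emph{minimizer} of $\tau_\eps$, and then the slack bound is no longer automatic. With $z=\nabla u_\eps(x_*)$, the supporting plane only gives $q_\eps(x_*,y)\le2\tau_\eps(x_*)+\nabla\tau_\eps(x_*)\cdot(y-x_*)$. At a minimizer on $\partial X$, the first-order condition makes the last term nonnegative, which is the wrong sign. So the one-sided convexity inequality does \emph{not} suffice at a boundary minimizer, contrary to what you expect.

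The missing idea is to show $\nabla u_\eps(x_*)=x_*$ using \eqref{eq:barycenters}. The point $z$ is the mean of a probability measure on the convex set $X$, so $z\in X$. Evaluating the supporting-plane inequality at $y=z$ gives $\tau_\eps(z)\le\tau_\eps(x_*)-\frac12|z-x_*|^2$, and minimality forces $z=x_*$. Then $\tau_\eps(y)\le\tau_\eps(x_*)+\frac12|y-x_*|^2$ for all $y\in X$, so $q_\eps(x_*,\cdot)\le2\tau_\eps(x_*)$, and your volume argument gives $\min_X\tau_\eps\ge c\ell^2$. This is exactly the paper's argument.

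With Step 1 repaired, your Steps 2--3 coincide with the paper's proof. The paper uses a unit-volume ball tangent to $\partial\Hh$ at $0$ and rescales by $\ell$ only, which is equivalent to your normalization by the constant $\sigma$. One small point: the rescaled domains meet $\partial\Hh$ only at the tangency point. So compactness and the limiting equation are first obtained on $\intr\Hh$, and the identity is then extended to $\partial\Hh$ by continuity of both sides.
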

\begin{proof}
We construct the solution by rescaling near a boundary point in a problem with identical marginals. Put $R_*=\omega_d^{-1/d}$ and $D_*=\B(R_*e_d,R_*)$. This unit volume ball lies in $\Hh$ and is tangent to its boundary at zero. Take both marginals in \eqref{eq:problem} to be Lebesgue measure on $D_*$. The unregularized map is the identity, and \cref{prop:inputs,lem:prelocalization} apply with constants depending only on $d$, for example with $\alpha=1/2$. Symmetry and uniqueness under \eqref{eq:gauge} give $u_\eps=v_\eps$, while $u_0=v_0=|x|^2/2$. With $f_\eps(x)=|x|^2/2-u_\eps(x)$,
\begin{equation}\label{eq:self-slack}
 q_\eps(x,y)=f_\eps(x)+f_\eps(y)-\tfrac12|x-y|^2.
\end{equation}
Equations~\eqref{eq:G-inputs} and \eqref{eq:pregrad} give $\|f_\eps\|_\infty\le C\ell^2$ and $\|\nabla f_\eps\|_\infty\le C\ell$.

For the lower bound, let $x_*$ minimize $f_\eps$ on $D_*$ and set $z=\nabla u_\eps(x_*)$. The barycenter identity \eqref{eq:barycenters} gives $z\in D_*$. Evaluating the supporting-plane inequality for $u_\eps$ at $x_*$ and $z$ gives
\[
 f_\eps(z)\le f_\eps(x_*)-\tfrac12|z-x_*|^2.
\]
Minimality forces $z=x_*$. Evaluating the same supporting plane at an arbitrary $y\in D_*$ then gives $q_\eps(x_*,y)\le2f_\eps(x_*)$. Since the marginal integral \eqref{eq:marginal} is positive, $f_\eps(x_*)>0$. The marginal density is one, so the bound \eqref{eq:pregrad} for the section volume yields
\begin{equation}\label{eq:self-positive-height}
 \ell^{d+2}=\eps\le2f_\eps(x_*)|S_{\eps,x_*}|\le C\ell^df_\eps(x_*).
\end{equation}
Thus $c\ell^2\le f_\eps\le C\ell^2$ on $D_*$.

Rescale distances from the tangency point by $\ell$ and potential errors by $\ell^2$. On $D_\eps=\ell^{-1}D_*$, set $F_\eps(\xi)=\ell^{-2}f_\eps(\ell\xi)$ and $U_\eps(\xi)=|\xi|^2/2-F_\eps(\xi)=\ell^{-2}u_\eps(\ell\xi)$. The bounds just proved and \eqref{eq:curvature-input} give
\begin{equation}\label{eq:halfspace-approximation-bounds}
 c\le F_\eps\le C,\qquad \|\nabla F_\eps\|_\infty+\Lip(\nabla F_\eps)\le C,\qquad c\Id\preceq D^2U_\eps\preceq C\Id\quad\text{a.e. on }D_\eps.
\end{equation}
Since $D_\eps=\{\xi:\ell|\xi|^2\le2R_*\xi_d\}$, every compact subset of $\intr\Hh$ eventually lies in $D_\eps$, and $\1_{D_\eps}\to\1_\Hh$ almost everywhere. By \eqref{eq:halfspace-approximation-bounds} and Arzel\`a--Ascoli, along a sequence $\eps\downarrow0$ the functions $F_\eps$ converge in $C^1$ on compact subsets of $\intr\Hh$ to a function $\phi_{\Hh,1}$. The uniform value and gradient bounds pass to the limit. The Lipschitz bounds extend $\phi_{\Hh,1}$ and $\nabla\phi_{\Hh,1}$ continuously to $\Hh$, giving $\phi_{\Hh,1}\in C^{1,1}(\Hh)$. Passing the two-sided Taylor inequalities for $U_\eps$ to the limit at interior points gives the Hessian bounds for $U$. Thus \eqref{eq:halfspace-profile-bounds} holds.

For fixed $\xi\in\intr\Hh$, rescaling \eqref{eq:marginal} gives
\[
 \int_{D_\eps}\left(F_\eps(\xi)+F_\eps(\zeta)-\tfrac12|\xi-\zeta|^2\right)_+\dd\zeta=1
\]
for all sufficiently small $\eps$ along this sequence. The uniform bound for $F_\eps$ confines the integrand to a fixed ball about $\xi$. Extend the integrand by zero outside $D_\eps$. Interior convergence and convergence of the domain indicators allow dominated convergence, proving \eqref{eq:symmetric-model} for $\xi\in\intr\Hh$. Its left-hand side is continuous in $\xi$ on $\Hh$, by boundedness and continuity of $\phi_{\Hh,1}$ and the same support bound, so the identity extends to $\partial\Hh$.
\end{proof}

\begin{proof}[Proof of \cref{prop:classification}]
On $\R^d$, the constant in \eqref{eq:whole-constant} solves \eqref{eq:symmetric-general-model} by the normalization \eqref{eq:cap-normalization} and the dilation $w\mapsto\tau^{1/(d+2)}w$. On a half-space, let $\phi_{\Hh,1}$ be the solution from \cref{lem:halfspace-exists}. Changing variables shows that $\xi\mapsto\tau^{2/(d+2)}\phi_{\Hh,1}(\tau^{-1/(d+2)}L(\xi))$ solves \eqref{eq:symmetric-general-model}. In either case, denote this positive bounded reference solution by $\phi_0$.

Let $F,G$ be any bounded solution pair of \eqref{eq:model-pair}. Its slack and the reference slack are
\[
 Q(\xi,\zeta)=F(\xi)+G(\zeta)-\tfrac12|\xi-\zeta|^2,\qquad Q_0(\xi,\zeta)=\phi_0(\xi)+\phi_0(\zeta)-\tfrac12|\xi-\zeta|^2.
\]
The fundamental theorem of calculus yields
\begin{equation}\label{eq:W-secant}
 (Q)_+-(Q_{0})_+=W(Q-Q_0),\qquad W(\xi,\zeta)=\int_0^1\1_{\{(1-t)Q_0(\xi,\zeta)+tQ(\xi,\zeta)>0\}}\dd t.
\end{equation}
The weight $W$ lies in $[0,1]$. Boundedness of $F,G,\phi_0$ and $\inf\phi_0>0$ give, for some $c>0$ and $0<r<R<\infty$,
\begin{equation}\label{eq:W-bounds}
 c\1_{\{|\xi-\zeta|<r\}}\le W(\xi,\zeta)\le\1_{\{|\xi-\zeta|\le R\}}.
\end{equation}

Write $A=F-\phi_0$ and $B=G-\phi_0$. Let $m(\xi)=\int_\Omega W(\xi,\zeta)\dd\zeta$ and $n(\zeta)=\int_\Omega W(\xi,\zeta)\dd\xi$. Integrating in \eqref{eq:W-secant} and using that both pairs solve \eqref{eq:model-pair} gives 
\begin{equation}\label{eq:classification-linear-system}
\begin{aligned}
 m(\xi)A(\xi)+\int_\Omega W(\xi,\zeta)B(\zeta)\dd\zeta&=0,\\
 n(\zeta)B(\zeta)+\int_\Omega W(\xi,\zeta)A(\xi)\dd\xi&=0.
\end{aligned}
\end{equation}
The bounds \eqref{eq:W-bounds} and $|\Omega\cap B(\xi,r)|\asymp r^d$ give $m,n\asymp1$. Solve the second equation of \eqref{eq:classification-linear-system} for $B$ and substitute it into the first. This gives
\begin{equation}\label{eq:eliminated}
 \int_\Omega J(\xi,\xi')[A(\xi')-A(\xi)]\dd\xi'=0,\qquad J(\xi,\xi')=\int_\Omega\frac{W(\xi,\zeta)W(\xi',\zeta)}{n(\zeta)}\dd\zeta.
\end{equation}
The kernel $J$ is symmetric,  uniformly bounded and vanishes for $|\xi-\xi'|>2R$. If $|\xi-\xi'|<r/2$, both factors in its defining integral are bounded below on $\Omega\cap B(\xi,r/2)$, giving a uniform lower bound for $J$. Thus \cref{lem:liouville} makes $A$ constant, and the second equation in \eqref{eq:classification-linear-system} gives $B=-A$. This proves \eqref{eq:classification}. For a symmetric solution, $A=B$, so both constants vanish.  Uniqueness follows, and the formulas and bounds follow from those on $\phi_0$.
\end{proof}

\begin{remark}[Necessity of boundedness]\label{rem:boundedness}
The boundedness assumption in \cref{prop:classification} is essential for uniqueness. More specifically, it rules out translated profiles as follows. On $\R^d$, put $h_\tau=h_d\tau^{2/(d+2)}$. For $b\in\R^d$, the affine functions $F(\xi)=h_\tau/2-|b|^2/4+b\cdot\xi$ and $G(\zeta)=h_\tau/2-|b|^2/4-b\cdot\zeta$ give the slack $h_\tau-|\xi-\zeta-b|^2/2$. Translation and \eqref{eq:cap-normalization} show that both marginal integrals of its positive part equal $\tau$. For fixed $\xi$, the section is centered at $\zeta=\xi-b$ rather than $\zeta=\xi$. When $b\ne0$, the potentials are unbounded, and the diagonal slack is negative if $|b|^2>2h_\tau$. %
\end{remark}

\subsection{Properties of the half-space profile}\label{sec:profile-properties}

By \cref{prop:classification}, $\phi_{\Hh,1}$ is the unique bounded continuous solution of \eqref{eq:symmetric-model}. Write $\xi=(\xi_{-d},\xi_d)$. For every $\xi'\in\R^{d-1}$, the function $\xi\mapsto\phi_{\Hh,1}(\xi_{-d}+\xi',\xi_d)$ solves the same equation. Uniqueness therefore gives 
\[
\phi_{\Hh,1}(\xi)=\phi_{\Hh,1}(0,\xi_d)=:\varphi_d(\xi_d),
\]
where $\varphi_d$ satisfies \eqref{eq:scalar-halfspace-profile}. Conversely, any bounded continuous solution $\widetilde\varphi_d$ of \eqref{eq:scalar-halfspace-profile} yields a bounded continuous solution $\xi\mapsto\widetilde\varphi_d(\xi_d)$ of \eqref{eq:symmetric-model}. Uniqueness in \cref{prop:classification} then gives $\widetilde\varphi_d=\varphi_d$.

For $0\le t<\infty$ and a unit vector $N$, consider
\eqref{eq:symmetric-general-model} with $\tau=1$ on the half-space
$  \Hh_{t,N}:=\{w\in\R^d:N\cdot w\ge -t\}.$
By \cref{prop:classification}, its unique bounded solution is
$\phi_{t,N}(w)=\phi_{\Hh_{t,N},1}(w)=\varphi_d(t+N\cdot w)$, since
$\Hh_{t,N}$ is isometric to $\Hh$ and
$\phi_{\Hh,1}(\xi)=\varphi_d(\xi_d)$. Conditioning at $0\in\Hh_{t,N}$, the
corresponding slack, section, and conditional density are therefore
\begin{equation}\label{eq:full-boundary-model}
\begin{gathered}
 q_{t,N}(w)=\varphi_d(t)+\varphi_d(t+N\cdot w)-\frac{|w|^2}{2},\\
 \mathcal C_{t,N}=\{w\in\Hh_{t,N}:q_{t,N}(w)\ge0\},\qquad
 \kappa_{t,N}(w)=(q_{t,N}(w))_+\1_{\Hh_{t,N}}(w).
\end{gathered}
\end{equation}
By \eqref{eq:halfspace-mean-profile} and rotational invariance,
$\beta_d(t)=\int_{\R^d}(N\cdot w)\kappa_{t,N}(w)\dd w$. At $t=\infty$, we use the whole-space model: $\Hh_{\infty,N}=\R^d$, $q_{\infty,N}(w)=h_d-|w|^2/2$, $\mathcal C_{\infty,N}=\B(0,a_d)$ and $\kappa_{\infty,N}=\kappa_d$. We retain $\varphi_d(\infty)=h_d/2$ and set $\varphi_d'(\infty)=\beta_d(\infty)=0$. Proposition~\ref{prop:profile-exponential} will show that these definitions agree with the limits as $t\to\infty$. The next lemma distinguishes the centroid of the section, computed with uniform weight, from the mean of its conditional density.

\begin{lemma}[Regularity of the half-space profile]\label{lem:profile-regularity}
The function $\varphi_d$ belongs to $C^{1,1}([0,\infty))$ and is bounded above and away from zero. The function $U(\xi)=|\xi|^2/2-\varphi_d(\xi_d)$ satisfies the Hessian bounds in \eqref{eq:halfspace-profile-bounds}. For every $t\in[0,\infty]$ and every unit vector $N$, the set $\mathcal C_{t,N}$ is a convex body. There are constants $r_0,R_0,c_0>0$, depending only on $d$, such that
\begin{equation}\label{eq:model-section-bounds}
 B(r_0N,r_0/2)\subset\mathcal C_{t,N}\subset\B(0,R_0),\qquad q_{t,N}\ge c_0\quad\text{on }B(r_0N,r_0/2).
\end{equation}
The densities $\kappa_{t,N}$ have unit mass, $\beta_d$ is bounded and continuous on $[0,\infty)$, and
\begin{equation}\label{eq:model-centroid}
 \frac1{|\mathcal C_{t,N}|}\int_{\mathcal C_{t,N}}w\dd w=-\varphi_d'(t)N,\qquad \int_{\R^d}w\kappa_{t,N}(w)\dd w=\beta_d(t)N.
\end{equation}
In particular, $\varphi_d'(0)<0$ and $\beta_d(0)>0$.
\end{lemma}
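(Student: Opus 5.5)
Most of the regularity and convexity assertions are inherited from \cref{lem:halfspace-exists} together with the identification $\phi_{\Hh,1}(\xi)=\varphi_d(\xi_d)$. The bounds \eqref{eq:halfspace-profile-bounds} give $\varphi_d\in C^{1,1}([0,\infty))$, bounded above and away from zero, and they also give the Hessian bounds for $U(\xi)=|\xi|^2/2-\varphi_d(\xi_d)$. For the convex-body claim, fix $t$ and $N$; after a rotation we may take $N=e_d$, and we shift by $te_d$. Then $q_{t,N}(w)=\varphi_d(t)+|te_d|^2/2-t^2/2-U(w+te_d)+\dots$ is, up to an affine function of $w$, equal to $-U(w+te_d)$ plus a constant, so it is concave by $D^2U\succeq c\Id$. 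Hence its superlevel set intersected with the half-space is convex. It is compact because $q_{t,N}\le C-|w|^2/2$, which gives the outer radius $R_0$. The case $t=\infty$ is the explicit ball.

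For the inner ball, take $r_0$ small compared with $\inf\varphi_d$. On $B(r_0N,r_0/2)$ we have $N\cdot w\ge r_0/2>0$, so these points lie in $\Hh_{t,N}$, and $|w|\le 3r_0/2$. This gives $q_{t,N}\ge 2\inf\varphi_d-9r_0^2/8\ge c_0$, and therefore $\mathcal C_{t,N}$ has nonempty interior. Unit mass of $\kappa_{t,N}$ is just \eqref{eq:symmetric-general-model} at $\xi=0$ on $\Hh_{t,N}$, using $\phi_{t,N}(w)=\varphi_d(t+N\cdot w)$. Boundedness of $\beta_d$ follows because $\kappa_{t,N}$ is a probability density supported in $\B(0,R_0)$. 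Continuity of $\beta_d$ on $[0,\infty)$ follows by dominated convergence in \eqref{eq:halfspace-mean-profile}, since $\varphi_d$ is continuous and the integrand is bounded with compact support. The second identity in \eqref{eq:model-centroid} is the definition of $\beta_d$ combined with rotational invariance: the components of the mean orthogonal to $N$ vanish by reflection symmetry in the tangential variables.

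The first identity in \eqref{eq:model-centroid} is the main step. I would derive it by differentiating the scalar equation \eqref{eq:scalar-halfspace-profile} in $t$. Write that equation as $\int_{\Hh}(\varphi_d(t)+\varphi_d(\zeta_d)-\tfrac12|te_d-\zeta|^2)_+\dd\zeta=1$. Since the zero set of the integrand has measure zero by concavity, and the integrand is Lipschitz in $t$, differentiation under the integral is justified. It gives
\[
0=\int_{\{Q\ge 0\}}\bigl(\varphi_d'(t)-(t-\zeta_d)\bigr)\dd\zeta .
\]
Substituting $w=\zeta-te_d$ turns the integration region into $\mathcal C_{t,e_d}$ and the integrand into $\varphi_d'(t)+w_d$. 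Hence $\varphi_d'(t)|\mathcal C_{t,N}|=-\int_{\mathcal C_{t,N}}N\cdot w\dd w$. The tangential components of the centroid again vanish by symmetry, which proves the claim. Alternatively, the same identity is the rescaled limit of the barycenter formula \eqref{eq:barycenters} for $\nabla u_\eps$ in the ball construction, but the direct differentiation is cleaner. A delicate point is differentiability at $t=0$, which is one-sided; this is fine since $\varphi_d\in C^{1,1}$.

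For the signs, at $t=0$ the set $\mathcal C_{0,N}\subset\{N\cdot w\ge0\}$ is a convex body containing the ball $B(r_0N,r_0/2)$. Its centroid therefore has strictly positive $N$-component, which gives $\varphi_d'(0)<0$. Similarly, $\kappa_{0,N}$ is nonnegative, supported in $\{N\cdot w\ge0\}$, and positive on that ball, so $\beta_d(0)>0$.
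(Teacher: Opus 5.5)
Your proposal is correct and follows the paper's proof essentially step by step. The only difference is in the centroid identity: you differentiate the scalar equation \eqref{eq:scalar-halfspace-profile} in $t$ and recover the tangential components from reflection symmetry, whereas the paper differentiates \eqref{eq:symmetric-model} in the full variable $\xi$ and obtains the vector identity in one step. One small fix: the inner ball must also work at $t=\infty$, where $q_{\infty,N}=h_d-|w|^2/2$, so choose $r_0$ small relative to $\min\{\inf\varphi_d,h_d/2\}$ as the paper does, since $\inf\varphi_d\le h_d/2$ is not yet available at this stage.
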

\begin{proof}
The regularity, positivity and Hessian bounds follow from \cref{lem:halfspace-exists}.

For finite $t$, the Hessian bounds for $U$ make $q_{t,N}$ uniformly concave. At $t=\infty$,  this follows from \eqref{eq:whole-constant}. Set $m_*=\min\{\inf\varphi_d,h_d/2\}>0$ and $M_*=\max\{\sup\varphi_d,h_d/2\}$. For $w\in\Hh_{t,N}$ and $t\in [0,\infty]$, \eqref{eq:full-boundary-model} and \eqref{eq:whole-constant} give
\begin{equation}\label{eq:model-slack-bounds}
 2m_*-|w|^2/2\le q_{t,N}(w)\le2M_*-|w|^2/2.
\end{equation}
Choose $r_0>0$ so that
$9r_0^2/8\le m_*$. If $w\in B(r_0N,r_0/2)$, then
$N\cdot w\ge r_0-|w-r_0N|>r_0/2\ge -t$ and
$|w|\le |w-r_0N|+r_0<3r_0/2$. Hence
$B(r_0N,r_0/2)\subset\operatorname{int}\Hh_{t,N}$ and, by
\eqref{eq:model-slack-bounds},
$q_{t,N}(w)\ge 2m_*-9r_0^2/8\ge m_*$. Thus the first inclusion in
\eqref{eq:model-section-bounds} holds, with $c_0=m_*$. Conversely, if
$w\in\mathcal C_{t,N}$, then $q_{t,N}(w)\ge0$, and the upper bound in
\eqref{eq:model-slack-bounds} yields $|w|^2\le4M_*$. Hence, with
$R_0=2\sqrt{M_*}$, one has
$\mathcal C_{t,N}\subset\B(0,R_0)$, proving
\eqref{eq:model-section-bounds}. For finite $t$, the function $q_{t,N}$ is continuous and concave on the convex
set $\Hh_{t,N}$, hence its superlevel set $\mathcal C_{t,N}$ is closed and
convex. By \eqref{eq:model-section-bounds} it is bounded and contains a ball,
so it is a convex body. The same conclusion holds for $t=\infty$ from the
explicit formula $q_{\infty,N}(w)=h_d-|w|^2/2$.

Finally, let $t<\infty$ and choose an orthogonal map $R$ such that $Re_d=N$.
In \eqref{eq:scalar-halfspace-profile}, make the translation
$\zeta=te_d+z$. Then $\zeta\in\Hh$ is equivalent to $z_d\ge-t$, and the
integrand becomes
\[
 \left(\varphi_d(t)+\varphi_d(t+z_d)-\frac{|z|^2}{2}\right)_+
 =(q_{t,e_d}(z))_+.
\]
Hence \eqref{eq:scalar-halfspace-profile} gives
$\int_{\Hh_{t,e_d}}(q_{t,e_d}(z))_+\dd z=1$. Now set $w=Rz$. Since $R$
preserves Lebesgue measure and Euclidean norm, while
$N\cdot w=(Re_d)\cdot(Rz)=z_d$, one has
$R\Hh_{t,e_d}=\Hh_{t,N}$ and $q_{t,N}(Rz)=q_{t,e_d}(z)$. Therefore, by
\eqref{eq:full-boundary-model},
\[
 \int_{\R^d}\kappa_{t,N}(w)\dd w
 =\int_{\Hh_{t,N}}(q_{t,N}(w))_+\dd w
 =\int_{\Hh_{t,e_d}}(q_{t,e_d}(z))_+\dd z
 =1.
\]
For $t=\infty$, one has $\kappa_{\infty,N}=\kappa_d$, so the unit-mass
property is \eqref{eq:cap-normalization}.

We next prove the first identity in \eqref{eq:model-centroid}. For
$\xi\in\Hh$, set
\[
 Q_\xi(\zeta)
 :=\phi_{\Hh,1}(\xi)+\phi_{\Hh,1}(\zeta)-\frac12|\xi-\zeta|^2,
 \qquad \zeta\in\Hh.
\]
By \eqref{eq:halfspace-profile-bounds}, the map
$\zeta\mapsto Q_\xi(\zeta)$ is uniformly concave. Moreover,
\eqref{eq:symmetric-model} gives
$\int_{\Hh}(Q_\xi)_+\dd\zeta=1$, so $Q_\xi$ is positive somewhere. Hence
the zero set $\{Q_\xi=0\}$ is contained in the boundary of the convex
section $\{Q_\xi\ge0\}$ and is therefore Lebesgue null.

We may differentiate \eqref{eq:symmetric-model} with respect to $\xi$.
Indeed, if $\xi'$ ranges in a fixed neighborhood of $\xi$, boundedness of
$\phi_{\Hh,1}$ implies that $Q_{\xi'}(\zeta)>0$ only when
$|\zeta-\xi'|$ is bounded by a constant independent of $\xi'$. Thus all
positive parts are supported in a fixed bounded subset of $\Hh$. Moreover,
\[
 \nabla_\xi Q_\xi(\zeta)=\nabla\phi_{\Hh,1}(\xi)+\zeta-\xi
\]
is uniformly bounded on this set by the gradient bound in
\eqref{eq:halfspace-profile-bounds}. Since $\{Q_\xi=0\}$ is null, the
positive-part function is differentiable in $\xi$ for almost every
$\zeta$, with
$\nabla_\xi(Q_\xi(\zeta))_+
 =\1_{\{Q_\xi>0\}}\nabla_\xi Q_\xi(\zeta)$.
Dominated convergence applied to \eqref{eq:symmetric-model} therefore
gives
\[
 0=\int_{\{Q_\xi>0\}}
 \bigl(\nabla\phi_{\Hh,1}(\xi)+\zeta-\xi\bigr)\dd\zeta .
\]
Taking $\xi=te_d$, using $\phi_{\Hh,1}(te_d)=\varphi_d(t)$ and
$\nabla\phi_{\Hh,1}(te_d)=\varphi_d'(t)e_d$, and then setting
$w=\zeta-te_d$, yields
\begin{equation}\label{eq:profile-marginal-derivative}
 0=\int_{\mathcal C_{t,e_d}}
 \bigl(\varphi_d'(t)e_d+w\bigr)\dd w .
\end{equation}
For $t=0$, the same argument is applied to the right difference quotient
$\xi=se_d$, $s\downarrow0$; the support and derivative bounds above are
uniform for $0\le s\le s_0$, and $\varphi_d'(0)$ is the right derivative
provided by \cref{lem:halfspace-exists}. Since the boundary of
$\mathcal C_{t,e_d}$ is null, \eqref{eq:profile-marginal-derivative} is
unchanged if $\{q_{t,e_d}>0\}$ is replaced by $\mathcal C_{t,e_d}$. Equation \eqref{eq:profile-marginal-derivative} gives
\[
 \frac1{|\mathcal C_{t,e_d}|}
 \int_{\mathcal C_{t,e_d}}w\dd w
 =-\varphi_d'(t)e_d.
\]
If $R$ is orthogonal and $Re_d=N$, then
$R\mathcal C_{t,e_d}=\mathcal C_{t,N}$ by
\eqref{eq:full-boundary-model}. Changing variables $w=Rz$ therefore gives
the first identity in \eqref{eq:model-centroid}. For the second identity, set
$m_{t,N}:=\int_{\R^d}w\kappa_{t,N}(w)\dd w$. If $Q$ is orthogonal and
$QN=N$, then \eqref{eq:full-boundary-model} gives
$\kappa_{t,N}(Qw)=\kappa_{t,N}(w)$. Consequently
$Qm_{t,N}=m_{t,N}$ for every such $Q$, and hence
$m_{t,N}$ is parallel to $N$. Writing $m_{t,N}=aN$ and taking the scalar
product with $N$, \eqref{eq:halfspace-mean-profile} gives
$a=\beta_d(t)$. Thus
$\int_{\R^d}w\kappa_{t,N}(w)\dd w=\beta_d(t)N$, which is the second
identity in \eqref{eq:model-centroid}. At $t=\infty$, one has $\mathcal C_{\infty,N}=\B(0,a_d)$ and
$\kappa_{\infty,N}=\kappa_d$. Since both the ball and the density $\kappa_d$
are invariant under $w\mapsto-w$, 
\[
 \int_{\mathcal C_{\infty,N}} w\,\dd w=0,
 \qquad
 \int_{\R^d} w\,\kappa_{\infty,N}(w)\dd w=0.
\]
Thus the two identities in \eqref{eq:model-centroid} also hold at
$t=\infty$, with $\varphi_d'(\infty)=\beta_d(\infty)=0$.

It remains to verify the stated properties of $\beta_d$. Let
$t_j\to t<\infty$. If $N\cdot w>-t$, then
$t_j+N\cdot w>0$ for all sufficiently large $j$, and continuity of
$\varphi_d$ gives
$q_{t_j,N}(w)\to q_{t,N}(w)$. If $N\cdot w<-t$, then
$w\notin\Hh_{t_j,N}$ for all sufficiently large $j$, so
$\kappa_{t_j,N}(w)=0=\kappa_{t,N}(w)$. Hence $\kappa_{t_j,N}(w)\to\kappa_{t,N}(w)$ for every
$w\notin\partial\Hh_{t,N}=\{N\cdot w=-t\}$ and 
$|\partial\Hh_{t,N}|=0$. By \eqref{eq:model-section-bounds} and
\eqref{eq:model-slack-bounds},
\[
 |w|\,\kappa_{t_j,N}(w)
 \le 2M_*R_0\,\1_{\B(0,R_0)}(w),
\]
uniformly in $j$. Dominated convergence therefore yields convergence of
the first moments, and hence
$\beta_d(t_j)\to\beta_d(t)$ by \eqref{eq:model-centroid}. Thus
$\beta_d$ is continuous on $[0,\infty)$. Moreover, since $\kappa_{t,N}$ has unit mass and is supported on
$\mathcal C_{t,N}\subset\B(0,R_0)$ by
\eqref{eq:model-section-bounds},
$ |\beta_d(t)|
 \le \int_{\R^d}|w|\kappa_{t,N}(w)\dd w
 \le R_0\int_{\R^d}\kappa_{t,N}(w)\dd w
 =R_0.$
Thus $\beta_d$ is bounded.

Finally, for $t=0$ one has
$\mathcal C_{0,N}\subset\{w:N\cdot w\ge0\}$. By
\eqref{eq:model-section-bounds}, on $B(r_0N,r_0/2)$ one has
$N\cdot w\ge r_0/2$ and $q_{0,N}(w)\ge c_0$. Hence
\[
 \int_{\mathcal C_{0,N}}N\cdot w\,\dd w>0,
 \qquad
 \int_{\R^d}(N\cdot w)\kappa_{0,N}(w)\dd w>0.
\]
Using the two identities in \eqref{eq:model-centroid}, these inequalities
give $-\varphi_d'(0)>0$ and $\beta_d(0)>0$.
\end{proof}

In the proof of \cref{prop:profile-exponential} below, we compare the half-space
profile $\varphi_d$ with the whole-space profile $\varphi_d(\infty)=h_d/2$.
The latter has marginal mass one on $\R^d$ by
\eqref{eq:cap-normalization}. Restricting its marginal integral to the
half-space leaves a missing-mass term supported near the boundary. Subtracting
this restricted whole-space equation from
\eqref{eq:scalar-halfspace-profile} yields an equation of the form
$(I+P_1)e=F$, with $e(t)=\varphi_d(t)-h_d/2$. The following lemma gives
uniform bounds for $(I+P_l)^{-1}$ on $L^2$ and $L^\infty$, and an
exponential bound for $(I+P_l)^{-1}F$ in terms of the distance to
$\operatorname{spt}F$. We state it for bounded convex domains, $\R^d$, and
closed half-spaces, since the bounded case is used in \cref{thm:height}. The free parameter $l>0$ will be set to $1$ in this half-space comparison and to $\ell$ in the QOT application.
\begin{lemma}[Uniform invertibility of $I+P_l$]\label{lem:plus}
Let $\Omega\subset\R^d$ be a convex body, the whole space, or a closed half-space. Let $\lambda(\dd x)=\varrho(x)\dd x$ on $\Omega$, where $\varrho$ is measurable and bounded above and away from zero. Let $W_l:\Omega\times\Omega\to[0,\infty)$ be measurable, symmetric a.e.\ and such that
\begin{equation}\label{eq:plus-kernel}
 c\1_{\{|x-z|<al\}}\le W_l(x,z)\le C\1_{\{|x-z|<Rl\}}\qquad\text{for a.e. }(x,z)\in\Omega^2
\end{equation}
for fixed $c,C>0$ and $0<a\le R<\infty$. Define, for almost every $x\in\Omega$,
\begin{equation}\label{eq:plus-operator}
 m_l(x)=\int_\Omega W_l(x,z)\dd\lambda(z),\qquad P_l f(x)=\frac1{m_l(x)}\int_\Omega W_l(x,z)f(z)\dd\lambda(z).
\end{equation}
Then $m_l\asymp l^d$ almost everywhere, and $I+P_l$ has a uniformly bounded inverse on both $L^2(\lambda)$ and $L^\infty(\lambda)$, where $I$ is the identity operator. These assertions hold for all sufficiently small $l$ on a fixed convex body, and for every $l>0$ on the whole space or a half-space. The constants depend only on $d$, the density bounds, $a,R,c,C$, and, in the bounded case, on $\Omega$.

If $F\in L^\infty(\lambda)$ vanishes almost everywhere outside a measurable set $E\subset\Omega$, then
\begin{equation}\label{eq:inverse-tail}
 |(I+P_l)^{-1}F(x)|\le C e^{-c\dist(x,E)/l}\|F\|_{L^\infty(\lambda)}\qquad\text{for }\lambda\text{-a.e. }x\in\Omega.
\end{equation}
For $E=\varnothing$, the right-hand side is interpreted as zero.
\end{lemma}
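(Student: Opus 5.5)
The plan is to obtain the $L^2$ bound from a coercivity estimate for the symmetric Dirichlet-type form of $I+P_l$. The $L^\infty$ bound and the tail estimate \eqref{eq:inverse-tail} then follow together from an exponentially weighted (Combes--Thomas type) $L^2$ estimate, followed by a local $L^2\to L^\infty$ step.

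First, $m_l\asymp l^d$. The upper bound is immediate from \eqref{eq:plus-kernel}. For the lower bound, we need $|\Omega\cap B(x,al)|\ge c\,l^d$ uniformly in $x$. This is trivial for $\R^d$ and half-spaces. For a convex body it holds for $l$ small, since a convex body contains a cone of fixed aperture and height at each of its points. Next, set $\widetilde\lambda=m_l\lambda$. Symmetry of $W_l$ makes $P_l$ self-adjoint on $L^2(\widetilde\lambda)$, and this norm is equivalent to $l^{d/2}\|\cdot\|_{L^2(\lambda)}$ with fixed constants. A direct computation gives
\[
 \langle (I+P_l)f,f\rangle_{L^2(\widetilde\lambda)}=\tfrac12\iint W_l(x,z)\,(f(x)+f(z))^2\dd\lambda(x)\dd\lambda(z).
\]
To bound this below, use the identity $2f(x)=(f(x)+f(z))+(f(x)+f(w))-(f(z)+f(w))$ for $z,w\in\Omega\cap B(x,al/2)$. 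All three pairs are then within distance $al$, so $W_l\ge c$ on each of them. Square the identity and average over $z,w$ in that set, which has measure $\asymp l^d$. Integrating in $x$ and using the lower bound on $W_l$ gives
\[
 \int f^2\,\dd\widetilde\lambda\le C\iint W_l(x,z)(f(x)+f(z))^2\dd\lambda\dd\lambda .
\]
Hence $I+P_l\ge\delta>0$ as a self-adjoint operator, and $\|(I+P_l)^{-1}\|\le 1/\delta$ on $L^2(\widetilde\lambda)$, hence on $L^2(\lambda)$. The same coercivity shows that the spectrum of $(I-P_l)/2$ lies in $[0,1-\delta/2]$. This yields the geometric convergence of the series representation quoted in the introduction, at least in $L^2$.

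For the weighted estimate, fix $x_0\in\Omega$ and set $\psi(z)=-c_1|z-x_0|/l$. The conjugated operator $e^{\psi}P_le^{-\psi}$ differs from $P_l$ by the operator with kernel $W_l(x,z)\bigl(e^{\psi(x)-\psi(z)}-1\bigr)/m_l(x)$. On the support of $W_l$ one has $|\psi(x)-\psi(z)|\le c_1R$. Schur's test, using $m_l\asymp l^d$, therefore bounds the $L^2(\lambda)$ norm of the difference by $C(e^{c_1R}-1)$. For $c_1$ small this is below $\delta/2$ (with $\delta$ now the coercivity constant in $L^2(\lambda)$ after the norm equivalence), so $e^{\psi}(I+P_l)e^{-\psi}$ is invertible with a uniform bound.

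Apply this to $u=(I+P_l)^{-1}F$. We get $\|e^{\psi}u\|_{L^2}\le C\|e^{\psi}F\|_{L^2}$. Since $F$ vanishes outside $E$, the right side is at most $C\|F\|_\infty\bigl(\int_E e^{-2c_1|z-x_0|/l}\dd z\bigr)^{1/2}$. Splitting the exponent, this is bounded by $C\|F\|_\infty\, l^{d/2}e^{-c\dist(x_0,E)/l}$. Since $e^\psi\ge e^{-c_1R}$ on $B(x_0,Rl)$, the same bound holds for $\|u\|_{L^2(B(x_0,Rl))}$. Finally, write $u=F-P_lu$. For a.e.\ $x$ near $x_0$ this gives
\[
 |P_lu(x)|\le C l^{-d}\int_{B(x,Rl)}|u|\le Cl^{-d/2}\|u\|_{L^2(B(x,Rl))}.
\]
Letting $x_0$ vary over a fine net (or running the argument for a.e.\ $x_0$ directly), we obtain $|u(x)|\le|F(x)|+C e^{-c\dist(x,E)/l}\|F\|_\infty$. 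If $x\in E$, the distance is zero and the bound is trivial. Otherwise $F(x)=0$ a.e., which gives \eqref{eq:inverse-tail}. Taking $E=\Omega$ gives the uniform $L^\infty$ bound, and $E=\varnothing$ gives $u=0$.

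The main obstacle is the coercivity step, which must hold with constants independent of $l$ and near $\partial\Omega$. This is exactly where convexity enters, through the uniform lower volume bound $|\Omega\cap B(x,al/2)|\gtrsim l^d$. All later steps reduce to perturbative Schur bounds that use only the finite range $Rl$ of the kernel. In the bounded case, the volume bound is also the only place where smallness of $l$ is needed.
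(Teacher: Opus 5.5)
Your argument is correct, and it takes a genuinely different route from the paper. The paper never uses a quadratic form. It shows that $2p_l(x,\cdot)$ and the two-step density $p_l^{(2)}(x,\cdot)$ share a fixed mass $\eta>0$ near $x$. This gives $\|\mathcal R_l^2\|_{L^\infty\to L^\infty}\le 1-\eta/2$ for $\mathcal R_l=(I-P_l)/2$. Self-adjointness with respect to the invariant measure $l^{-d}m_l\lambda$ transfers this bound to $L^1$ by duality and to $L^2$ by Riesz--Thorin, so $(I+P_l)^{-1}=\frac12\sum_j\mathcal R_l^j$ converges in both norms. The tail estimate is then pure finite propagation: $\mathcal R_l^jF$ vanishes at distance more than $jRl$ from $E$, so only the geometrically small terms with $j\ge\dist(x,E)/(Rl)$ reach $x$.

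You instead get $L^2$ invertibility from coercivity of the form $\frac12\iint W_l(f(x)+f(z))^2$ via the three-point identity. Nearby triples form odd cycles, so this plays the role of the paper's overlap estimate in excluding the spectrum of $P_l$ near $-1$. You then get decay from a Combes--Thomas conjugation, Schur's test, and the local upgrade $u=F-P_lu$. The paper's route works directly on $L^\infty$, including on unbounded $\Omega$, and produces the series representation used in the introduction. Yours gives an elementary, explicit coercivity constant and obtains decay from a perturbative bound rather than from exact propagation of supports.

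Two details should be made explicit.

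First, $I+P_l$ is self-adjoint only on $L^2(m_l\lambda)$, not on $L^2(\lambda)$. The perturbation $e^{\psi}P_le^{-\psi}-P_l$ should therefore be absorbed in one of two ways:
\begin{itemize}
\item into the coercive form on $L^2(m_l\lambda)$, where its operator norm is comparable to that on $L^2(\lambda)$;
\item or by a Neumann series around the bounded inverse on $L^2(\lambda)$.
\end{itemize}
It should not be absorbed via a ``coercivity constant in $L^2(\lambda)$''.

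Second, on $\R^d$ and half-spaces $L^\infty(\lambda)\not\subset L^2(\lambda)$. So your $L^2$ theory neither defines $(I+P_l)^{-1}F$ for general bounded $F$ nor gives injectivity on $L^\infty$. Injectivity on $L^\infty$ is exactly what \cref{prop:profile-exponential} needs, since there $e$ is only known to be a bounded solution. Both points follow from your conjugation:
\begin{itemize}
\item If $u\in L^\infty$ and $(I+P_l)u=0$, then $e^{\psi}u\in L^2$ solves the conjugated homogeneous equation and hence vanishes.
\item For general $F$, set $u=e^{-\psi_{x_0}}v_{x_0}$, where $v_{x_0}$ solves the conjugated equation with datum $e^{\psi_{x_0}}F\in L^2$. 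This $u$ solves $(I+P_l)u=F$ and is independent of $x_0$, because the ratio of two such weights is bounded. Your pointwise bound then places it in $L^\infty$.
\end{itemize}
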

\begin{proof}
All kernel identities below hold almost everywhere. The volume of $\Omega\cap B(x,r)$ is comparable to $r^d$, uniformly in $x$, for small $r$ on a fixed convex body and for every $r>0$ on the whole space or a half-space. In the bounded case, choose $B(z_*,r_*)\subset\Omega$ and set $D=\diam\Omega$. For $0<r<D$, the map $z\mapsto x+r(z-x)/(2D)$ sends this interior ball into $\Omega\cap B(x,r)$, giving the lower volume bound. Integrating \eqref{eq:plus-kernel} and using the density bounds for $\lambda$ therefore gives $m_l\asymp l^d$.

Write $p_l(x,z)=W_l(x,z)/m_l(x)$ for the transition density of
$P_l$ relative to $\lambda$. By the definition of $m_l$,
$\int_\Omega p_l(x,z)\dd\lambda(z)=1$. Moreover,
\eqref{eq:plus-kernel} and $m_l\asymp l^d$ give
$p_l(x,z)\ge cl^{-d}$ whenever $|x-z|<al$. The transition density of $P_l^2$ is
$p_l^{(2)}(x,z)=\int_\Omega p_l(x,y)p_l(y,z)\dd\lambda(y)$, and
it also satisfies $\int_\Omega p_l^{(2)}(x,z)\dd\lambda(z)=1$. If
$|x-z|<al/4$ and $y\in\Omega\cap B(x,al/4)$, then
$|x-y|<al$ and
$|y-z|\le |y-x|+|x-z|<al/2<al$. Hence both factors in the
definition of $p_l^{(2)}(x,z)$ are bounded below by $cl^{-d}$.
The volume estimate above and the lower density bound for $\lambda$ then
give $p_l^{(2)}(x,z)\ge cl^{-d}$ whenever
$|x-z|<al/4$. Thus, on $\Omega\cap B(x,al/4)$, both
$2p_l(x,z)$ and $p_l^{(2)}(x,z)$ are bounded below by
$cl^{-d}$. Integrating over this set gives a constant
$\eta\in(0,1)$, independent of $l$, such that
\begin{equation}\label{eq:plus-overlap}
 \int_\Omega\min\{2p_l(x,z),p_l^{(2)}(x,z)\}\dd\lambda(z)
 \ge\eta
 \qquad\text{for a.e. }x\in\Omega.
\end{equation}
Set $\mathcal R_l=(I-P_l)/2$. Then
$I+P_l=2(I-\mathcal R_l)$ and
$\mathcal R_l^2=(I-2P_l+P_l^2)/4$. For
$f\in L^\infty(\lambda)$,
\[
 |\mathcal R_l^2f(x)|
 \le\frac14\left(
 |f(x)|+
 \int_\Omega|p_l^{(2)}(x,z)-2p_l(x,z)|
 |f(z)|\dd\lambda(z)\right).
\]
Since $p_l(x,\cdot)$ and $p_l^{(2)}(x,\cdot)$ both have mass one,
the identity $|s-t|=s+t-2\min\{s,t\}$ gives
\[
 \int_\Omega|p_l^{(2)}(x,z)-2p_l(x,z)|\dd\lambda(z)
 =3-2\int_\Omega
 \min\{2p_l(x,z),p_l^{(2)}(x,z)\}\dd\lambda(z).
\]
Using \eqref{eq:plus-overlap}, we obtain
\begin{equation}\label{eq:plus-contraction}
 \|\mathcal R_l^2\|_{L^\infty\to L^\infty}
 \le\frac14(1+3-2\eta)
 =1-\eta/2
 =:\theta<1.
\end{equation}

Now let $\lambda_l(\dd x)=l^{-d}m_l(x)\lambda(\dd x)$.
Since $m_l\asymp l^d$, the measures $\lambda_l$ and $\lambda$
have uniformly comparable densities. For nonnegative measurable $f$, symmetry of $W_l$ and Tonelli's theorem give
\[
 \int_\Omega P_l f(x)\dd\lambda_l(x)
 =l^{-d}\iint_{\Omega^2}
 W_l(x,z)f(z)\dd\lambda(z)\dd\lambda(x)
 =\int_\Omega f(z)\dd\lambda_l(z),
\]
so $\lambda_l$ is invariant for $P_l$. The identity extends to every $f\in L^1(\lambda_l)$. Jensen's inequality and invariance of $\lambda_l$ give $\|P_l f\|_{L^2(\lambda_l)}\le\|f\|_{L^2(\lambda_l)}$. The same symmetry gives
$\langle g,P_l f\rangle_{L^2(\lambda_l)}
=\langle P_l g,f\rangle_{L^2(\lambda_l)}$, so $P_l$ is
self-adjoint on $L^2(\lambda_l)$. Because $\lambda$ and $\lambda_l$ have the same null sets,
\eqref{eq:plus-contraction} is also the corresponding
$L^\infty(\lambda_l)$ bound. Because $P_l$ is self-adjoint on $L^2(\lambda_l)$, so are
$\mathcal R_l=(I-P_l)/2$ and $\mathcal R_l^2$. Hence, for
$f\in L^1(\lambda_l)\cap L^2(\lambda_l)$ and
$g\in L^\infty(\lambda_l)\cap L^2(\lambda_l)$,
\[
 \int_\Omega (\mathcal R_l^2 f)\,g\,\dd\lambda_l
 =
 \int_\Omega f\,(\mathcal R_l^2 g)\,\dd\lambda_l.
\]
Using \eqref{eq:plus-contraction} on the right-hand side gives
\[
 \left|
 \int_\Omega (\mathcal R_l^2 f)\,g\,\dd\lambda_l
 \right|
 \le
 \|f\|_{L^1(\lambda_l)}
 \|\mathcal R_l^2 g\|_{L^\infty(\lambda_l)}
 \le
 \theta\|f\|_{L^1(\lambda_l)}
 \|g\|_{L^\infty(\lambda_l)}.
\]
Taking the supremum over $g$ with
$\|g\|_{L^\infty(\lambda_l)}\le1$, and then extending by density,
yields
$ \|\mathcal R_l^2\|_{L^1(\lambda_l)\to L^1(\lambda_l)}
 \le\theta.$
Together with \eqref{eq:plus-contraction}, the Riesz--Thorin
interpolation theorem \cite[Theorem~1.3.4]{GrafakosClassicalFourierAnalysis} between $L^1(\lambda_l)$ and
$L^\infty(\lambda_l)$ therefore gives
\[
 \|\mathcal R_l^2\|_{L^2(\lambda_l)\to L^2(\lambda_l)}
 \le
 \theta^{1/2}\theta^{1/2}
 =\theta.
\]

The Markov property gives
$\|P_l\|_{L^\infty(\lambda_l)\to L^\infty(\lambda_l)}\le1$.
Therefore $\|\mathcal R_l\|\le1$ on both
$L^\infty(\lambda_l)$ and $L^2(\lambda_l)$. Combining this with
\eqref{eq:plus-contraction} gives, for every $j\ge0$,
$\|\mathcal R_l^j\|\le\theta^{\lfloor j/2\rfloor}$ on both spaces.
Hence the series
\begin{equation}\label{eq:plus-inverse-series}
 (I+P_l)^{-1}
 =\frac12\sum_{j=0}^\infty\mathcal R_l^j
\end{equation}
converges in both operator norms, with a bound independent of $l$.
Since $\lambda_l$ and $\lambda$ are uniformly comparable, this gives
the asserted $L^2(\lambda)$ and $L^\infty(\lambda)$ bounds.

It remains to prove \eqref{eq:inverse-tail}. If $G$ vanishes almost
everywhere outside a measurable set $A\subset\Omega$, then the upper
bound in \eqref{eq:plus-kernel} gives
$P_l G(x)=0$ for almost every $x$ with
$\dist(x,A)>Rl$. Since $G$ also vanishes there,
$\mathcal R_l G(x)=0$ on the same set. Iterating this statement with
$A=\{x:\dist(x,E)\le jRl\}$ gives
\[
 \mathcal R_l^jF(x)=0
 \qquad\text{for a.e. }x\text{ with }\dist(x,E)>jRl.
\]
After removing the countable union of the corresponding null sets, this
holds simultaneously for every $j\ge0$. Fix such an $x$. In \eqref{eq:plus-inverse-series}, only indices
$j\ge \dist(x,E)/(Rl)$ can contribute. Hence, for $\lambda$-almost every $x\in\Omega$, 
\[
 |(I+P_l)^{-1}F(x)|
 \le\frac12\|F\|_{L^\infty(\lambda)}
 \sum_{j\ge \dist(x,E)/(Rl)}
 \theta^{\lfloor j/2\rfloor} \leq   C e^{-c\,\dist(x,E)/l}\|F\|_{L^\infty(\lambda)},
\]
where we used $\theta\in(0,1)$. This proves \eqref{eq:inverse-tail}.
\end{proof}
\Needspace{8\baselineskip}
\begin{proposition}[Exponential decay of the boundary correction]\label{prop:profile-exponential}
There are constants $c,C>0$, depending only on $d$, such that
\begin{equation}\label{eq:profile-exponential}
 |\varphi_d(t)-h_d/2|+|\varphi_d'(t)|+|\beta_d(t)|\le Ce^{-ct}\qquad(t\ge0).
\end{equation}
In particular,
\begin{equation}\label{eq:profile-deep-limit}
 \lim_{t\to\infty}\varphi_d(t)=h_d/2,\qquad \lim_{t\to\infty}\varphi_d'(t)=0.
\end{equation}
Moreover,
\begin{equation}\label{eq:beta-sum-rule}
 \int_0^\infty\beta_d(t)\dd t=\frac{h_d}{d+4}.
\end{equation}
\end{proposition}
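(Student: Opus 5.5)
We compare $\varphi_d$ with the whole-space value $h_d/2$ via \cref{lem:plus} with $l=1$, exactly as announced before that lemma. Set $e(\xi)=\phi_{\Hh,1}(\xi)-h_d/2=\varphi_d(\xi_d)-h_d/2$ on $\Hh$, and write $Q(\xi,\zeta)=\phi_{\Hh,1}(\xi)+\phi_{\Hh,1}(\zeta)-\tfrac12|\xi-\zeta|^2$ and $Q_\infty(\xi,\zeta)=h_d-\tfrac12|\xi-\zeta|^2$. By \eqref{eq:cap-normalization}, $\int_\Hh(Q_\infty(\xi,\zeta))_+\dd\zeta=1-M(\xi)$, where
\[
M(\xi)=\int_{\{\zeta_d<0\}}\kappa_d(\xi-\zeta)\dd\zeta
\]
is the missing mass; it vanishes when $\xi_d\ge a_d$ and is bounded by one. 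Subtracting this identity from \eqref{eq:symmetric-model} and using the secant factorization \eqref{eq:W-secant}, with $W(\xi,\zeta)=\int_0^1\1_{\{(1-s)Q_\infty+sQ>0\}}\dd s$, gives
\[
m(\xi)e(\xi)+\int_\Hh W(\xi,\zeta)e(\zeta)\dd\zeta=M(\xi),\qquad m(\xi)=\int_\Hh W(\xi,\zeta)\dd\zeta .
\]
Dividing by $m$ yields $(I+P_1)e=M/m$ on $\Hh$ with Lebesgue measure. The kernel $W$ is symmetric, and it satisfies the bounds \eqref{eq:plus-kernel} with $l=1$: the upper bound follows from boundedness of both profiles, and the lower bound from $\inf\varphi_d>0$ and $h_d>0$, as in \eqref{eq:W-bounds}. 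The right-hand side is bounded and supported in $E=\{\xi_d\le a_d\}$. Therefore \eqref{eq:inverse-tail} gives $|\varphi_d(t)-h_d/2|\le Ce^{-ct}$.

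Next come the derivative and moment bounds. For $\varphi_d'$, use the centroid identity \eqref{eq:model-centroid}: $-\varphi_d'(t)$ is the normal centroid of $\mathcal C_{t,e_d}$. For $t\ge R_0$, the constraint $w_d\ge-t$ is inactive by \eqref{eq:model-section-bounds}. The section is then $\{w:\varphi_d(t)+\varphi_d(t+w_d)-|w|^2/2\ge0\}$, a perturbation of size $\sup_{s\ge t-R_0}|\varphi_d(s)-h_d/2|\le Ce^{-ct}$ of the symmetric ball $\B(0,a_d)$. The radial gradient of $h_d-|w|^2/2$ is $-a_d\ne0$ on the sphere, so the Hausdorff (hence symmetric-difference) distance to the ball is $O(e^{-ct})$. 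The centroid is therefore $O(e^{-ct})$. The same argument applied to the density $\kappa_{t,e_d}$, which differs from $\kappa_d$ by $O(e^{-ct})$ pointwise on a common bounded set, bounds $\beta_d(t)$. For $t<R_0$, all three quantities are bounded by \cref{lem:profile-regularity}, so adjusting $C$ covers all $t\ge0$. The limits \eqref{eq:profile-deep-limit} follow immediately.

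The sum rule \eqref{eq:beta-sum-rule} is the delicate step. I would try an integrated first-moment identity. Multiply the marginal equation \eqref{eq:scalar-halfspace-profile} at height $t$ by $t$, subtract the whole-space normalization, and integrate over $t\in(0,L)$. Equivalently, compute $\int_0^\infty\beta_d(t)\dd t=\iint_{\{t\ge0,\,s\ge0\}}(s-t)(\varphi_d(t)+\varphi_d(s)-\tfrac12|te_d-\zeta|^2)_+\dd\zeta\dd t$ with $s=\zeta_d$. The integrand is antisymmetric under exchanging $t$ and $s$ except for the horizontal variables, and the symmetry of the kernel $W$ suggests the whole expression reduces to a boundary flux at $t=0$. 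Concretely, I would truncate to $t\le L$, use the exchange $(t,\zeta_d)\leftrightarrow(\zeta_d,t)$ so that all contributions cancel except those from the strip where one of the two points lies beyond $L$, and then let $L\to\infty$ using the exponential decay. In the deep region the surviving term is a second moment of the whole-space cap, $\int_{\R^d}w_d^2\,\kappa_d\dd w$ (times $\tfrac12$ from counting the crossing pairs). Radial integration with $a_d^2=2h_d$ gives $\int w_d^2\kappa_d=\tfrac{a_d^2}{d+4}=\tfrac{2h_d}{d+4}$, so the surviving term equals $h_d/(d+4)$. I expect the main obstacle to be making this cancellation rigorous, because $\beta_d$ is defined with the dependent profile $\varphi_d(t+w_d)$; the bookkeeping of which pairs cross level $L$ must be exact.
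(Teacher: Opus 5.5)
Your proposal is correct. For the decay of $\varphi_d-h_d/2$, the bound on $\beta_d$, and the sum rule, it follows the paper's proof. The one genuine difference is how you control $\varphi_d'$.

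In the first step there is only a cosmetic difference. The paper integrates out the tangential variables $z'$ and applies \cref{lem:plus} on $[0,\infty)$ with the kernel $W(t,s)$, whereas you apply it directly on $\Hh$ with the missing mass $M(\xi)$. The two are equivalent.

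The paper bounds $\varphi_d'$ by interpolation: $|e'(t)|\le\frac2r\|e\|_{L^\infty([t-1,t+1])}+\frac r2\Lip(e')$ with $r=e^{-ct/2}$. This uses only the $C^{1,1}$ bound from \cref{lem:profile-regularity} and halves the decay rate. You instead use the centroid identity \eqref{eq:model-centroid}, which \cref{lem:profile-regularity} proves independently of the present proposition. For $t\ge R_0$, the section $\mathcal C_{t,e_d}$ lies between $\B(0,(a_d^2-2\epsilon_t)^{1/2})$ and $\B(0,(a_d^2+2\epsilon_t)^{1/2})$, where $\epsilon_t=2\sup_{s\ge t-R_0}|\varphi_d(s)-h_d/2|$. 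Since $|\mathcal C_{t,e_d}|\ge c$ by \eqref{eq:model-section-bounds}, its centroid is $O(\epsilon_t)$. Your route keeps the full rate and treats $\varphi_d'$ and $\beta_d$ uniformly, as first moments of the uniform and cap densities on a nearly round section. The cost is that it relies on the differentiated marginal identity rather than on regularity alone.

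For the sum rule, the obstacle you anticipate does not arise. Integrating out $z'$ gives the normal kernel $k_d(t,s)=\int_{\R^{d-1}}(\varphi_d(t)+\varphi_d(s)-\frac12[(t-s)^2+|z'|^2])_+\dd z'$. It is symmetric, bounded, and vanishes for $|t-s|>b$. Moreover $\beta_d(t)=\int_0^\infty(s-t)k_d(t,s)\dd s$; the ``dependent'' term $\varphi_d(t+w_d)$ is simply $\varphi_d(s)$.

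Fubini on the bounded square $[0,L]^2$ then gives exactly $\int_0^L\beta_d=\int_0^L\int_L^\infty(s-t)k_d(t,s)\dd s\dd t$. Substituting $t=L-u$, $s=L+v$ and using dominated convergence with \eqref{eq:profile-deep-limit} yields $\int_0^\infty r^2\int_{\R^{d-1}}\kappa_d(z',r)\dd z'\dd r=\frac12\int z_d^2\kappa_d$. The factor $r^2$ comes from the weight $s-t=r$ times the length $r$ of the set of crossing pairs at separation $r$. The $\frac12$ comes from evenness of $\kappa_d$ in $z_d$.

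Your heuristic of a ``boundary flux at $t=0$'' is misleading. The surviving term is the flux through level $L$. The untruncated double integral over $[0,\infty)^2$ is not absolutely convergent, and naive antisymmetry there would wrongly give zero.
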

\begin{proof}
Set $e(t)=\varphi_d(t)-h_d/2$. We first compare
\eqref{eq:scalar-halfspace-profile} with the marginal equation for the
whole-space profile $\varphi_d(\infty)=h_d/2$. Fix $t\ge0$. In \eqref{eq:scalar-halfspace-profile}, the first point is
$\xi=te_d\in\Hh$, while the integration variable is
$\zeta=(z',s)\in\R^{d-1}\times[0,\infty)$. Thus
$\zeta_d=s$ and
$|\xi-\zeta|^2=|z'|^2+(s-t)^2$. Since
$\varphi_d(t)=h_d/2+e(t)$ and
$\varphi_d(s)=h_d/2+e(s)$,
\eqref{eq:scalar-halfspace-profile} becomes
\[
 1=\int_0^\infty\int_{\R^{d-1}}
 \left(
 h_d+e(t)+e(s)-\frac{(t-s)^2+|z'|^2}{2}
 \right)_+
 \dd z'\dd s.
\]
For real numbers $A$ and $B$, the identity
$(A+B)_+-A_+=B\int_0^1\1_{\{A+\theta B>0\}}\dd\theta$ gives, with
$A=h_d-[(t-s)^2+|z'|^2]/2$ and $B=e(t)+e(s)$,
\begin{equation}\label{eq:elementary-ident}
\begin{aligned}
 &\left(h_d+e(t)+e(s)-\frac{(t-s)^2+|z'|^2}{2}\right)_+
 -\left(h_d-\frac{(t-s)^2+|z'|^2}{2}\right)_+\\
 &\qquad=[e(t)+e(s)]\int_0^1
 \1_{\{h_d+\theta[e(t)+e(s)]-[(t-s)^2+|z'|^2]/2>0\}}\dd\theta.
\end{aligned}
\end{equation}
Integrating the factor multiplying $e(t)+e(s)$ in $z'$ defines
\begin{equation}\label{eq:profile-comparison-kernel}
 W(t,s)=\int_{\R^{d-1}}\int_0^1
 \1_{\{h_d+\theta[e(t)+e(s)]-[(t-s)^2+|z'|^2]/2>0\}}
 \dd\theta\dd z'.
\end{equation}

The kernel $W$ in \eqref{eq:profile-comparison-kernel} is symmetric. Since $e(r)=\varphi_d(r)-h_d/2$,
\[
 h_d+\theta[e(t)+e(s)]=(1-\theta)h_d+\theta[\varphi_d(t)+\varphi_d(s)].
\]
By \cref{lem:profile-regularity}, this quantity is bounded above and away from zero uniformly in $t,s\ge0$ and $\theta\in[0,1]$. We may therefore choose $0<a<b<\infty$, depending only on $d$, so that the indicator in \eqref{eq:profile-comparison-kernel} equals one when $|t-s|<a$ and $|z'|<a$, and vanishes when $|(t-s,z')|\ge b$. Integrating in $z'$ and $\theta$ gives, for constants $c,C>0$ depending only on $d$,
\begin{equation}\label{eq:profile-kernel-bounds}
 c\1_{\{|t-s|<a\}}\le W(t,s)\le C\1_{\{|t-s|<b\}},
 \qquad t,s\ge0.
\end{equation}
Define $m(t)=\int_0^\infty W(t,s)\dd s$. Integrating \eqref{eq:profile-kernel-bounds} in $s$ gives
\[
 ca\le m(t)\le2Cb\qquad(t\ge0),
\]
because $[0,\infty)\cap(t-a,t+a)$ has length at least $a$, while $[0,\infty)\cap(t-b,t+b)$ has length at most $2b$. Thus $m\asymp1$ uniformly on $[0,\infty)$.

We next identify the equation satisfied by $e$. By
\eqref{eq:cap-normalization}, the whole-space density is
$\kappa_d(z)=(h_d-|z|^2/2)_+$ and satisfies
$\int_{\R^d}\kappa_d(z)\dd z=1$. Fix $t\ge0$ and write a point of
$\R^d$ as $\zeta=(z',s)$. Translating by the fixed point $te_d$ gives
\[
 1=\int_{-\infty}^{\infty}\int_{\R^{d-1}}
 \left(h_d-\frac{|z'|^2+(s-t)^2}{2}\right)_+\dd z'\dd s.
\]
Splitting this integral into $s\ge0$ and $s<0$, and setting
$z_d=s-t$ in the second part, gives
\begin{equation}
    \label{eq:Rd-1-toF} \int_0^\infty\int_{\R^{d-1}}
 \left(h_d-\frac{|z'|^2+(s-t)^2}{2}\right)_+\dd z'\dd s
 =1-F(t),
\end{equation}
where $ F(t):=\int_{\{z_d<-t\}}\kappa_d(z)\dd z.$ 
On the other hand, \eqref{eq:scalar-halfspace-profile} gives
\[
 \int_0^\infty\int_{\R^{d-1}}
 \left(h_d+e(t)+e(s)
 -\frac{|z'|^2+(s-t)^2}{2}\right)_+\dd z'\dd s=1.
\]
Subtracting \eqref{eq:Rd-1-toF} 
from this identity therefore gives
\[
 F(t)
 =
 \int_0^\infty\int_{\R^{d-1}}
 \left[
 \left(h_d+e(t)+e(s)
 -\frac{|z'|^2+(s-t)^2}{2}\right)_+
 -
 \left(h_d-\frac{|z'|^2+(s-t)^2}{2}\right)_+
 \right]\dd z'\dd s.
\]
Integrating in $z'$, using the definition \eqref{eq:profile-comparison-kernel} of $W(t,s)$ and \eqref{eq:elementary-ident}, gives 
\begin{equation}\label{eq:profile-linearization}
 m(t)e(t)+\int_0^\infty W(t,s)e(s)\dd s=\int_0^\infty W(t,s)[e(t)+e(s)]\dd s=F(t).
\end{equation}
By \eqref{eq:cap-normalization}, $\kappa_d\ge0$ has total mass one, so
$0\le F(t)\le1$. Moreover,
$\supp\kappa_d=\overline{B(0,a_d)}$. If $t\ge a_d$, then
$\{z_d<-t\}\cap B(0,a_d)=\varnothing$, and hence $F(t)=0$.

Dividing \eqref{eq:profile-linearization} by $m(t)$ gives
$(I+P)e=F/m$, where
$Pe(t)=m(t)^{-1}\int_0^\infty W(t,s)e(s)\dd s$. The bounds
\eqref{eq:profile-kernel-bounds} verify the hypotheses of
\cref{lem:plus} with $l=1$. Since $m\asymp1$, the function $F/m$ is
bounded and is supported in $[0,a_d]$. Hence
\eqref{eq:inverse-tail} gives
\begin{equation}\label{eq:profile-height-decay}
 |e(t)|\le Ce^{-ct}\qquad(t\ge0),
\end{equation}
first for almost every $t$. Since $e$ is continuous by
\cref{lem:profile-regularity}, the estimate holds for every $t\ge0$.

We next estimate $e'=\varphi_d'$. By
\cref{lem:profile-regularity}, $e'$ is Lipschitz. For $t\ge1$ and
$0<r\le1$, Taylor's formula  gives
$|e(t+r)-e(t)-re'(t)|\le \frac12\Lip(e')r^2$, and therefore
\[
 |e'(t)|
 \le\frac{|e(t+r)|+|e(t)|}{r}+\frac r2\Lip(e')
 \le\frac{2}{r}\|e\|_{L^\infty([t-1,t+1])}
      +\frac r2\Lip(e').
\]
By \eqref{eq:profile-height-decay},
$\|e\|_{L^\infty([t-1,t+1])}\le Ce^{-ct}$ after changing $C$.
Taking $r=e^{-ct/2}$ yields
$|\varphi_d'(t)|=|e'(t)|\le Ce^{-ct/2}$ for $t\ge1$. Boundedness of
$\varphi_d'$ on $[0,1]$ follows from
\cref{lem:profile-regularity}. After decreasing $c$, this proves the
 exponential bound \eqref{eq:profile-exponential} for $\varphi_d'$ on $[0,\infty)$.

We now estimate $\beta_d$. Choose
$R_1>\max\{R_0,a_d\}$, where $R_0$ is given by
\eqref{eq:model-section-bounds}. If $t>R_1$ and $z\in B(0,R_1)$, then
$z_d>-R_1>-t$, so $B(0,R_1)\subset\Hh_{t,e_d}$. Moreover,
\eqref{eq:model-section-bounds} gives
$\supp\kappa_{t,e_d}\subset\B(0,R_0)\subset B(0,R_1)$, while
\eqref{eq:cap-normalization} gives
$\supp\kappa_d=\B(0,a_d)\subset B(0,R_1)$. By
\eqref{eq:halfspace-mean-profile},
$\beta_d(t)=\int z_d\kappa_{t,e_d}(z)\dd z$, whereas radial symmetry of
$\kappa_d$ gives $\int z_d\kappa_d(z)\dd z=0$. Hence
\[
 |\beta_d(t)|
 \le\int_{B(0,R_1)}
 |z_d|\,|\kappa_{t,e_d}(z)-\kappa_d(z)|\dd z.
\]
On $B(0,R_1)$ one has
$\kappa_{t,e_d}(z)=
(h_d+e(t)+e(t+z_d)-|z|^2/2)_+$ and
$\kappa_d(z)=(h_d-|z|^2/2)_+$. Since $r\mapsto r_+$ is
$1$-Lipschitz,
\[
 |\beta_d(t)|
 \le\int_{B(0,R_1)}
 |z_d|\bigl(|e(t)|+|e(t+z_d)|\bigr)\dd z
 \le C\sup_{|s-t|\le R_1}|e(s)|.
\]
For $t>R_1$, all $s$ in the last supremum are nonnegative, so
\eqref{eq:profile-height-decay} gives
$|\beta_d(t)|\le Ce^{-ct}$ after changing $c$ and $C$.
Boundedness of $\beta_d$ on $[0,R_1]$ follows from
\cref{lem:profile-regularity}. This proves
\eqref{eq:profile-exponential}, after decreasing $c$ once more, and
\eqref{eq:profile-deep-limit} follows immediately.

It remains to compute $\int_0^\infty\beta_d(t)\dd t$. Fix $t\ge0$ and
write the integration variable in \eqref{eq:scalar-halfspace-profile} as
$\zeta=(z',s)\in\R^{d-1}\times[0,\infty)$. For this fixed $t$, the
function
\[
 (z',s)\longmapsto
 \left(\varphi_d(t)+\varphi_d(s)
 -\tfrac12[(t-s)^2+|z'|^2]\right)_+
\]
has integral one over $\R^{d-1}\times[0,\infty)$ by
\eqref{eq:scalar-halfspace-profile}. For each fixed $t\ge0$,  its marginal density $k_d(t,\cdot)$ is
\begin{equation}\label{eq:normal-profile-kernel}
 k_d(t,s)=\int_{\R^{d-1}}
 \left(\varphi_d(t)+\varphi_d(s)
 -\tfrac12[(t-s)^2+|z'|^2]\right)_+\dd z',
 \qquad s\ge0.
\end{equation}
For fixed $t$, the identity
\eqref{eq:scalar-halfspace-profile} gives
$\int_0^\infty k_d(t,s)\dd s=1$. The formula
\eqref{eq:normal-profile-kernel} is symmetric in $t$ and $s$, so
$k_d(t,s)=k_d(s,t)$. The bounds from
\cref{lem:profile-regularity} also imply that $k_d$ is uniformly bounded
and that $k_d(t,s)=0$ whenever $|t-s|>b$, for some fixed
$b<\infty$. Finally, since
$ (z',s)-te_d=(z',s-t),$
\eqref{eq:halfspace-mean-profile} gives
$ \beta_d(t)=\int_0^\infty (s-t)k_d(t,s)\dd s.$ Integrating this identity over $t\in[0,L]$ gives an integral over
$[0,L]\times[0,\infty)$. On $[0,L]^2$, the function
$(s-t)k_d(t,s)$ is antisymmetric under interchange of $t$ and $s$, since
$k_d$ is symmetric. Its integral over this square is therefore zero, yielding
\begin{equation}\label{eq:normal-flux-identity}
 \int_0^L\beta_d(t)\dd t
 =\int_0^L\int_L^\infty
 (s-t)k_d(t,s)\dd s\dd t.
\end{equation}

Assume $L>b$ and set $t=L-u$ and $s=L+v$. Then
$u\in[0,L]$, $v\ge0$, and $s-t=u+v$. Since
$k_d(t,s)=0$ for $|t-s|>b$, only
$u,v\ge0$ with $u+v\le b$ contribute. Because $L>b$, this condition
already implies $u<L$. Hence the right-hand side of
\eqref{eq:normal-flux-identity} is
\[
 \int_{\substack{u,v\ge0\\u+v\le b}}
 (u+v)k_d(L-u,L+v)\dd v\dd u.
\]
For fixed $u,v$ in this triangle,
\eqref{eq:profile-deep-limit} gives
\[
 k_d(L-u,L+v)
 \longrightarrow
 \int_{\R^{d-1}}
 \left(h_d-\tfrac12[(u+v)^2+|z'|^2]\right)_+\dd z'
 =
 \int_{\R^{d-1}}\kappa_d(z',u+v)\dd z'.
\]
The triangle is bounded and $k_d$ is uniformly bounded, so dominated
convergence gives
\[
 \lim_{L\to\infty}\int_0^L\beta_d(t)\dd t
 =
 \int_{\substack{u,v\ge0\\u+v\le b}}
 (u+v)
 \int_{\R^{d-1}}\kappa_d(z',u+v)\dd z'\dd v\dd u.
\]
Set $r=u+v$. For fixed $r$, the variable $u$ ranges over $[0,r]$, so
integration in $u$ contributes a factor $r$. Since the factor
$s-t=u+v$ contributes another factor $r$, the last expression equals
\[
 \int_0^\infty r^2
 \int_{\R^{d-1}}\kappa_d(z',r)\dd z'\dd r.
\]
The upper limit may be written as $\infty$ because
$\kappa_d(z',r)=0$ for $r>a_d$. Since $\kappa_d$ is even in its last
coordinate,
\[
 \int_0^\infty r^2
 \int_{\R^{d-1}}\kappa_d(z',r)\dd z'\dd r
 =
 \frac12\int_{\R^d}z_d^2\kappa_d(z)\dd z.
\]

By radial symmetry,
$\int_{\R^d}z_d^2\kappa_d(z)\dd z
=d^{-1}\int_{\R^d}|z|^2\kappa_d(z)\dd z$. Using
$\kappa_d(z)=(h_d-|z|^2/2)_+$,
$a_d^2=2h_d$, and polar coordinates gives
\begin{equation}\label{eq:model-cap-second-moment}
 \int_{\R^d}z_d^2\kappa_d(z)\dd z
 =\frac{\omega_da_d^{d+4}}{(d+2)(d+4)}
 =\frac{2h_d}{d+4}.
\end{equation}
By \eqref{eq:profile-exponential},
$\int_0^\infty|\beta_d(t)|\dd t<\infty$. Hence the left-hand side of
\eqref{eq:normal-flux-identity} converges to
$\int_0^\infty\beta_d(t)\dd t$ as $L\to\infty$. Combining the preceding
limit with \eqref{eq:model-cap-second-moment} gives \eqref{eq:beta-sum-rule}.
\end{proof}

\section{Graph inclusion and rescaled profiles}\label{sec:profiles}
In this section, we prove the inclusion of the Brenier graph in the support of the regularized coupling $\pi_\eps$. To that end, we apply \cref{prop:classification} to the rescaled potentials analyzed below. Recall $\delta,\mathcal U,n,\gamma,t_\eps$ from \eqref{eq:boundary-coordinates}. For $x\in X$,  define the unit vector
$ N(x)=\frac{H(x)^{-1/2}n(x)}{\gamma(x)}$, recall $\Lambda_x=\sigma(x)H(x)^{-1/2}$ from \eqref{eq:rescaling-matrices} and define $\Omega_{\eps,x}=\ell^{-1}\Lambda_x^{-1}(X-x)$ and $x_{\eps,\xi}=x+\ell\Lambda_x\xi$ for $\xi\in\Omega_{\eps,x}$. We will show that the behavior of the rescaled potentials is governed by
\begin{equation}\label{eq:microscopic-comparison}
 \Phi_{\eps,x}(\xi)=\begin{cases}\varphi_d(t_\eps(x)+N(x)\cdot\xi),&x\in\mathcal U,\\ h_d/2,&x\notin\mathcal U.\end{cases}
\end{equation}
To see that \eqref{eq:microscopic-comparison} is well-defined, fix $x\in\mathcal U$ and let $p=x-\delta(x)n(x)$ be its nearest point on $\partial X$. Convexity gives $n(x)\cdot(z-p)\ge0$ for every $z\in X$. Substituting $z=x+\ell\Lambda_x\xi$ and using $n(x)\cdot\Lambda_x\xi=\sigma(x)\gamma(x)N(x)\cdot\xi$ then yields $\Omega_{\eps,x}\subset\Hh_{t_\eps(x),N(x)}$.

\subsection{Potential profiles and graph inclusion}
Throughout this section, $o_{C^1}(1)$ denotes a remainder whose value and first derivatives in the rescaled variables $\xi$ and/or $\zeta$ tend to zero uniformly on the indicated sets, uniformly in  $x\in X$. (We hold $x$ fixed when differentiating.) The notation $O_{C^1}(\ell)$ means that value and derivatives are bounded by $C\ell$. 

\begin{theorem}[Uniform blow-up limits]\label{thm:microscopic}
For every $M>0$, uniformly for $x\in X$ and $\xi,\zeta\in\Omega_{\eps,x}\cap\B(0,M)$,
\begin{equation}\label{eq:two-point-profile}
 \frac{q_\eps(x_{\eps,\xi},T(x_{\eps,\zeta}))}{\ell^2\sigma(x)^2}
 =\Phi_{\eps,x}(\xi)+\Phi_{\eps,x}(\zeta)-\frac12|\xi-\zeta|^2+o_{C^1}(1).
\end{equation}
Furthermore, the centered potentials satisfy
\begin{align}\label{eq:centered-potentials}
 \frac{u_\eps(x_{\eps,\xi})-u_\eps(x)-T(x)\cdot(x_{\eps,\xi}-x)}{\ell^2\sigma(x)^2}
 &=\tfrac12|\xi|^2-\Phi_{\eps,x}(\xi)+\Phi_{\eps,x}(0)+o_{C^1}(1),\\
 \frac{v_\eps(T(x_{\eps,\xi}))-v_\eps(T(x))-x\cdot[T(x_{\eps,\xi})-T(x)]}{\ell^2\sigma(x)^2}
 &=\tfrac12|\xi|^2-\Phi_{\eps,x}(\xi)+\Phi_{\eps,x}(0)+o_{C^1}(1).\nonumber
\end{align}
In particular,
\begin{equation}\label{eq:uniform-graph-profile}
 \sup_{x\in X}\left|\frac{q_\eps(x,T(x))}{\ell^2\sigma(x)^2}-2\varphi_d(t_\eps(x))\right|\longrightarrow0.
\end{equation}
\end{theorem}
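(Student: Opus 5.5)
We argue by contradiction and compactness. If the statement fails, there are $M>0$, $\eta>0$, a sequence $\eps_j\downarrow0$, base points $x_j\in X$ and $\xi_j,\zeta_j\in\Omega_{\eps_j,x_j}\cap\B(0,M)$ at which the remainder in \eqref{eq:two-point-profile}, or its $C^1$ norm, exceeds $\eta$. Passing to a subsequence, we may assume $x_j\to x_*$ and $t_{\eps_j}(x_j)\to t_*\in[0,\infty]$. We may also assume $N(x_j)\to N_*$, and that the domains $\Omega_{\eps_j,x_j}$ converge locally in Hausdorff sense to $\Omega=\Hh_{t_*,N_*}$; here $\Omega=\R^d$ when $t_*=\infty$. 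This uses the $C^{3,\alpha}$ boundary and the fact that rescaling by $\ell$ flattens $\partial X$.

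On $\Omega_{\eps,x}$ we set
\[
F_\eps(\xi)=-\frac{r_\eps(x_{\eps,\xi})-r_\eps(x)-\nabla r_\eps(x)\cdot\ell\Lambda_x\xi}{\ell^2\sigma(x)^2}+\text{const},
\]
and define $G_\eps$ analogously from $s_\eps\circ T$. The centering must be arranged so that the affine parts cancel in the slack. A cleaner alternative is to work directly with $\ell^{-2}\sigma(x)^{-2}(-r_\eps(x_{\eps,\xi}))$ and $\ell^{-2}\sigma(x)^{-2}(-s_\eps(T(x_{\eps,\zeta})))$. These are uniformly bounded by \eqref{eq:G-inputs}, have gradients bounded by \eqref{eq:pregrad} (the rescaled gradient is $O(\ell)/\ell=O(1)$), and have Lipschitz gradients by \eqref{eq:curvature-input} together with the $C^{3,\beta}$ regularity of $u_0,v_0$. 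Arzel\`a--Ascoli then gives $C^1_{\rm loc}$ convergence to bounded $F,G\in C^1(\Omega)$.

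By \eqref{eq:Q}, the rescaled slack equals $F_\eps(\xi)+G_\eps(\zeta)-\ell^{-2}\sigma^{-2}D(x_{\eps,\xi},x_{\eps,\zeta})$. Taylor expansion with $H$ Lipschitz gives $\ell^{-2}\sigma^{-2}D=\tfrac12|\xi-\zeta|^2+O_{C^1}(\ell)$, since $\Lambda_x^TH(x)\Lambda_x=\sigma^2\Id$. We then pass to the limit in the rescaled marginal identities \eqref{eq:Q-marginals}. The measure $\mu(\dd z)$ with $z=x_{\eps,\zeta}$ equals $\rho_0(x)\ell^d\sigma^d(\det H)^{-1/2}\dd\zeta\,(1+o(1))$. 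Dividing by $\eps=\ell^{d+2}$ and using $\sigma^{d+2}=\sqrt{\det H}/\rho_0$, the prefactor becomes exactly $\sigma(x)^2$, which absorbs the normalization. The integrands are supported in a fixed ball by \eqref{eq:prelocalization}, and the domain indicators converge a.e. Dominated convergence then gives \eqref{eq:model-pair} with $\tau=1$ on $\Omega$.

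\cref{prop:classification} yields $F=\phi_\Omega+a$ and $G=\phi_\Omega-a$. On the half-space, $\phi_\Omega(\xi)=\varphi_d(t_*+N_*\cdot\xi)$; on the whole space, $\phi_\Omega=h_d/2$. The constants $\pm a$ cancel in the slack, so the limit slack is $\phi_\Omega(\xi)+\phi_\Omega(\zeta)-\tfrac12|\xi-\zeta|^2$. Continuity of $\varphi_d$ and $\varphi_d'$, together with \eqref{eq:profile-deep-limit} when $t_*=\infty$, gives $\Phi_{\eps_j,x_j}\to\phi_\Omega$ in $C^1$ on compacts. This contradicts the choice of $\eta$ and proves \eqref{eq:two-point-profile}. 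Taking $\xi=\zeta=0$ gives \eqref{eq:uniform-graph-profile}.

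For \eqref{eq:centered-potentials}, we expand $u_\eps=u_0+r_\eps$. The $u_0$ part gives $\tfrac12|\xi|^2+O(\ell)$ after subtracting the linearization at $x$. The $r_\eps$ part is $-(F_\eps(\xi)-F_\eps(0))$ minus a linear term. That linear term equals $\ell^{-1}\sigma^{-1}\nabla r_\eps(x)\cdot\Lambda_x\xi/\sigma$, which tends to $-\nabla F(0)\cdot\xi$ along subsequences and so does not vanish automatically. This mismatch is the main obstacle: we must show that the first-order term is absorbed. The centering in \eqref{eq:centered-potentials} uses $T(x)$, not $\nabla u_\eps(x)$, so the linear term survives. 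It is, however, consistent, because $F(\xi)-F(0)=\phi_\Omega(\xi)-\phi_\Omega(0)$ already carries a gradient. Since the constants $a$ drop out of differences, the $C^1$ limit gives exactly $-\Phi(\xi)+\Phi(0)$ with no additional linear term. The $v_\eps$ line is handled the same way using $G$ and $T(x_{\eps,\xi})-T(x)=\ell B_x\xi+O(\ell^2)$. The delicate points are uniformity in $x$ near the boundary, handled by the subsequence argument, and justifying $C^1$ convergence up to $\partial\Omega$ via the Lipschitz gradient bounds.
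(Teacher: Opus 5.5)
Your argument for \eqref{eq:two-point-profile} and \eqref{eq:uniform-graph-profile} follows the paper's proof. Your ``cleaner alternative'' is exactly the paper's uncentered rescaled errors $A_j,B_j$, and the remaining steps match: uniform $C^{1,1}$ bounds, Arzel\`a--Ascoli, passage to the limit in the rescaled marginal identities, \cref{prop:classification}, and cancellation of $\pm a$ in the slack. Arguing by contradiction rather than by ``every sequence has a good subsequence'' is equivalent.

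One point you only flagged needs to be carried out. Your bad points $\xi_j,\zeta_j$ may converge to $\partial\Omega$, so $C^1$ convergence on compact subsets of $\intr\Omega$ is not enough. You need convergence at varying points up to the boundary. The paper obtains this in \eqref{eq:conv-varying-points}: compare with the interior point $\xi+sN$ using the uniform gradient-Lipschitz bound, let $j\to\infty$, then let $s\downarrow0$.

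The paragraph on \eqref{eq:centered-potentials} is wrong as written, although the formula you end with is correct. There is no extra linear term. Since $T(x)=\nabla u_0(x)$, centering by $T(x)$ removes exactly the linear part of $u_0$. The $r_\eps$-contribution is therefore $\ell^{-2}\sigma(x)^{-2}[r_\eps(x_{\eps,\xi})-r_\eps(x)]=-(F_\eps(\xi)-F_\eps(0))$ with your uncentered $F_\eps$, and nothing more. The term $\ell^{-1}\sigma^{-2}\nabla r_\eps(x)\cdot\Lambda_x\xi$ only appears if you use your first, gradient-subtracted $F_\eps$. In that case it is exactly what the subtraction removed, so writing ``$-(F_\eps(\xi)-F_\eps(0))$ minus a linear term'' counts it twice.

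Your proposed resolution would not rescue a genuinely surviving term. In the boundary regime $\nabla\phi_\Omega(0)=\varphi_d'(t_*)N_*$ is nonzero (e.g.\ $\varphi_d'(0)<0$). An additional $\pm\nabla F(0)\cdot\xi$ would therefore change the answer, and \cref{cor:boundary-gradients} reads off $z_\eps-T$ from precisely this derivative at $\xi=0$.

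The clean route, which the paper uses, is the pair of exact identities
\[
u_\eps(x_{\eps,\xi})-u_\eps(x)-T(x)\cdot(x_{\eps,\xi}-x)=q_\eps(x,T(x))-q_\eps(x_{\eps,\xi},T(x)),
\]
\[
v_\eps(T(x_{\eps,\xi}))-v_\eps(T(x))-x\cdot[T(x_{\eps,\xi})-T(x)]=q_\eps(x,T(x))-q_\eps(x,T(x_{\eps,\xi})).
\]
With these, \eqref{eq:centered-potentials} follows by subtracting \eqref{eq:two-point-profile} at $(\xi,0)$, respectively $(0,\xi)$, from its value at $(0,0)$. No separate Taylor expansion of $u_0,v_0$ is needed.
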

\begin{proof}
To prove uniformity in the base point, fix arbitrary sequences $\eps_j\downarrow0$ and $x_j\in X$. Write $\ell_j=\eps_j^{1/(d+2)}$, $\sigma_j=\sigma(x_j)$, $\Lambda_j=\Lambda_{x_j}$, $\Omega_j=\Omega_{\eps_j,x_j}$ and $x_{j,\xi}=x_j+\ell_j\Lambda_j\xi$. On $\Omega_j$, define
\begin{equation}\label{eq:rescaled-errors}
 A_j(\xi)=-\frac{r_{\eps_j}(x_{j,\xi})}{\ell_j^2\sigma_j^2},\qquad
 B_j(\xi)=-\frac{s_{\eps_j}(T(x_{j,\xi}))}{\ell_j^2\sigma_j^2}.
\end{equation}
The bounds \eqref{eq:G-inputs}, \eqref{eq:curvature-input} and \eqref{eq:pregrad} imply
\begin{equation}\label{eq:rescaled-error-bounds}
 \|A_j\|_\infty+\|B_j\|_\infty+\|\nabla A_j\|_\infty+\|\nabla B_j\|_\infty+\Lip(\nabla A_j)+\Lip(\nabla B_j)\le C,
\end{equation}
where all norms are on $\Omega_j$. For $B_j$, use $T\in C^2(X)$ and the chain rule
\[
 D^2(s_{\eps_j}\circ T)=(DT)^T(D^2s_{\eps_j}\circ T)DT+\sum_{k=1}^d(\partial_k s_{\eps_j}\circ T)D^2T_k
\]
almost everywhere on $X$. The terms on the right are uniformly bounded. The bi-Lipschitz properties of $T,S$ preserve null sets under the change of variables and justify the chain rule.

\emph{Domain convergence and compactness.} Pass to a subsequence along which $x_j$ converges, $t_j:=t_{\eps_j}(x_j)\to t\in[0,\infty]$, and $N(x_j)\to N$. If $t=\infty$, then $\delta(x_j)/\ell_j\to\infty$, so every fixed ball eventually lies in $\Omega_j$. For $t<\infty$, we have $\delta(x_j)=O(\ell_j)$. To identify the limiting boundary, let $\widetilde\delta$ be the signed distance to $\partial X$, positive inside $X$. It is $C^2$ in a fixed neighborhood of $\partial X$, so Taylor's formula gives
\begin{equation}\label{eq:rescaled-boundary-chart}
 \frac{\widetilde\delta(x_j+\ell_j\Lambda_j\xi)}{\ell_j\sigma_j\gamma(x_j)}
 =t_j+N(x_j)\cdot\xi+O(\ell_j|\xi|^2)
\end{equation}
uniformly on each fixed ball. Write $\Omega=\Hh_{t,N}$, with $\Hh_{\infty,N}=\R^d$. In both cases, every compact subset of $\intr\Omega$ eventually lies in $\Omega_j$, and $\1_{\Omega_j}\to\1_\Omega$ almost everywhere. For finite $t$, these facts follow from \eqref{eq:rescaled-boundary-chart}. Every limit of points in $\Omega_j$ belongs to $\Omega$, and every point of $\Omega$ is approximated by such points.
By \eqref{eq:rescaled-error-bounds} and Arzel\`a--Ascoli, a subsequence of $(A_j,B_j)$ converges in $C^1$ on compact subsets of $\intr\Omega$ to bounded functions $A,B$ with bounded $C^{1,1}$-norm. Hence, the limiting functions and gradients extend continuously to $\Omega$.

Along this subsequence, convergence extends to varying points: if $\xi_j\in\Omega_j$ and $\xi_j\to\xi\in\Omega$,
\begin{equation}\label{eq:conv-varying-points}
 A_j(\xi_j)\to A(\xi),\qquad
 \nabla A_j(\xi_j)\to\nabla A(\xi),\qquad
 B_j(\xi_j)\to B(\xi),\qquad
 \nabla B_j(\xi_j)\to\nabla B(\xi).
\end{equation}
Indeed, if $\xi\in\partial\Omega$, fix $s>0$. Then $\xi+sN\in\intr\Omega$ and hence $\xi+sN\in\Omega_j$ for all sufficiently large $j$. The bounds \eqref{eq:rescaled-error-bounds} give
$
 |A_j(\xi_j)-A_j(\xi+sN)|
 +|\nabla A_j(\xi_j)-\nabla A_j(\xi+sN)|
 \le C|\xi_j-\xi-sN|,
$
and analogously for $B_j$. Letting $j\to\infty$ and using the interior $C^1$ convergence, then letting $s\downarrow0$, proves the assertion. %

\emph{Limit marginal equations.} Normalize the unregularized gap and source density by setting $ \mathcal D_j(\xi,\zeta)=\frac{D(x_{j,\xi},x_{j,\zeta})}{\ell_j^2\sigma_j^2},$ and 
$a_j(\xi)=\frac{\rho_0(x_{j,\xi})}{\rho_0(x_j)}.$ 
Taylor's formula and $\Lambda_j^TH(x_j)\Lambda_j=\sigma_j^2\Id$ give
\begin{equation}\label{eq:rescaled-cost-limit}
 \mathcal D_j(\xi,\zeta)=\tfrac12|\xi-\zeta|^2+o_{C^1}(1),\qquad a_j(\xi)=1+o(1)
\end{equation}
uniformly for bounded $\xi,\zeta\in\Omega_j$. For the first derivatives, use $\nabla_1D(x,z)=T(x)-T(z)$ and $\nabla_2D(x,z)=H(z)(z-x)$, where the subscripts indicate the differentiated variable. These identities follow from \eqref{eq:D}. Since $\sigma_j^2\det\Lambda_j=1/\rho_0(x_j)$ and $\eps_j=\ell_j^{d+2}$, changing variables in \eqref{eq:Q-marginals} gives
\begin{equation}\label{eq:rescaled-marginals}
\begin{aligned}
 \int_{\Omega_j}\bigl(A_j(\xi)+B_j(\zeta)-\mathcal D_j(\xi,\zeta)\bigr)_+a_j(\zeta)\dd\zeta&=1,\\
 \int_{\Omega_j}\bigl(A_j(\zeta)+B_j(\xi)-\mathcal D_j(\zeta,\xi)\bigr)_+a_j(\zeta)\dd\zeta&=1
\end{aligned}
\qquad(\xi\in\Omega_j).
\end{equation}
Fix $\xi\in\intr\Omega$, so that $\xi\in\Omega_j$ for all sufficiently large $j$. Extend the integrands in \eqref{eq:rescaled-marginals} by zero outside $\Omega_j$. The uniform bounds in \eqref{eq:rescaled-error-bounds}, the density bounds, and the quadratic lower bound from \eqref{eq:common-gap-bounds} give a common majorant $C\1_{B(\xi,R)}$ on $\R^d$, with $C,R$ independent of $j$. Interior convergence and $\1_{\Omega_j}\to\1_\Omega$ almost everywhere therefore allow dominated convergence in both marginal equations. Hence $(A,B)$ satisfies \eqref{eq:model-pair} with $\tau=1$ on $\intr\Omega$. Boundedness and continuity of $A,B$, together with a common bounded integration region when $\xi$ ranges over a bounded set, make both limiting marginal integrals continuous in $\xi$ on $\Omega$, so the identities extend to $\partial\Omega$. \Cref{prop:classification} now gives $A=\phi_{\Omega,1}+a$ and $B=\phi_{\Omega,1}-a$ for a constant $a\in\R$. Here $\phi_{\Omega,1}(\xi)=\varphi_d(t+N\cdot\xi)$ for $t<\infty$, and $\phi_{\Omega,1}=h_d/2$ for $t=\infty$. Continuity of $\varphi_d,\varphi_d'$ and \eqref{eq:profile-deep-limit} give convergence of $\Phi_{\eps_j,x_j}$ and $\nabla\Phi_{\eps_j,x_j}$ to $\phi_{\Omega,1}$ and $\nabla\phi_{\Omega,1}$, respectively, along every sequence $\xi_j\in\Omega_j$ converging to a point of $\Omega$. Together with convergence~\eqref{eq:conv-varying-points} at varying points, a compactness contradiction yields 
\begin{equation}\label{eq:classified-errors}
 A_j-\Phi_{\eps_j,x_j}=a+o_{C^1}(1),\qquad B_j-\Phi_{\eps_j,x_j}=-a+o_{C^1}(1)
\end{equation}
uniformly on $\Omega_j\cap\B(0,M)$ for every $M>0$. 
The identity $q_\eps(x,T(z))=-r_\eps(x)-s_\eps(T(z))-D(x,z)$ follows from \eqref{eq:gap-slack-identity} and \eqref{eq:D}. The opposite constants in \eqref{eq:classified-errors} cancel in this sum, so \eqref{eq:rescaled-cost-limit} proves \eqref{eq:two-point-profile} along the selected subsequence. Every pair of initial sequences admits a subsequence with this limit, proving the uniformity in \eqref{eq:two-point-profile}. By the definition of $q_\eps$, the centered formulas \eqref{eq:centered-potentials} follow by subtracting \eqref{eq:two-point-profile} at $(\xi,0)$ and $(0,\xi)$, respectively, from its value at $(0,0)$. Evaluating at $(0,0)$ also gives \eqref{eq:uniform-graph-profile}.
\end{proof}

\begin{corollary}
    There are positive constants $c_0,C_0,c_1,r,R,\eps_0$ such that, for $0<\eps<\eps_0$,
\begin{equation}\label{eq:graph-positive}
 c_0\ell^2\le q_\eps(x,T(x))\le C_0\ell^2\qquad(x\in X),
\end{equation}
and
\begin{align}\label{eq:tubes}
 Y\cap B(T(x),r\ell)&\subset\{y\in Y:q_\eps(x,y)\ge c_1\ell^2\}\subset S_{\eps,x}\subset Y\cap\B(T(x),R\ell)\quad (x\in X),\\
 X\cap B(S(y),r\ell)&\subset\{x\in X:q_\eps(x,y)\ge c_1\ell^2\}\subset K_{\eps,y}\subset X\cap\B(S(y),R\ell) \quad (y\in Y).\nonumber
\end{align}
In particular, $\operatorname{graph}T\subset\{q_\eps>0\}\subset\spt\pi_\eps$. Moreover, for positive constants $c,C$,
\begin{equation}\label{eq:density-tube}
 c\ell^{-d}\1_{\{|y-T(x)|<r\ell\}}\le h_\eps(x,y)\le C\ell^{-d}\1_{\{|y-T(x)|\le R\ell\}}\qquad((x,y)\in X\times Y).
\end{equation}
\end{corollary}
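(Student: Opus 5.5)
The plan is to read off every assertion from the uniform blow-up \cref{thm:microscopic}, combined with the bounds on $\varphi_d$ and $\sigma$. For \eqref{eq:graph-positive}, I would use \eqref{eq:uniform-graph-profile}. Since $\varphi_d$ is bounded above and away from zero (\cref{lem:profile-regularity}), and $\varphi_d(\infty)=h_d/2>0$, the quantity $2\sigma(x)^2\varphi_d(t_\eps(x))$ lies in a fixed interval $[2c_0',2C_0']$ with $c_0'>0$. For $\eps$ small, the $o(1)$ error is below $c_0'$, and this gives $c_0\ell^2\le q_\eps(x,T(x))\le C_0\ell^2$. The upper bound also follows directly from \eqref{eq:height-upper}.

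For the tubes, the outer inclusions $S_{\eps,x}\subset Y\cap\B(T(x),R\ell)$ and $K_{\eps,y}\subset X\cap\B(S(y),R\ell)$ are exactly \eqref{eq:prelocalization}. The middle inclusions are trivial. For the inner inclusion I would argue by Lipschitz continuity of the slack in $y$ rather than through the profile. By \eqref{eq:pregrad}, $|\nabla_yq_\eps(x,y)|=|x-\nabla v_\eps(y)|\le|x-S(y)|+C\ell$, and $|x-S(y)|\le \Lip(S)|T(x)-y|$. Hence on $Y\cap B(T(x),\ell)$ the gradient is at most $C'\ell$. Convexity of $Y$ keeps the segment from $T(x)$ to $y$ in $Y$, so
\[
q_\eps(x,y)\ge c_0\ell^2-C'\ell|y-T(x)|\ge c_0\ell^2/2
\]
when $|y-T(x)|<r\ell$ with $r=\min\{1,c_0/(2C')\}$. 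This yields $c_1=c_0/2$.

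The reverse statement is symmetric. Fix $y$ and let $x'=S(y)$. Then $q_\eps(x',y)=q_\eps(x',T(x'))\ge c_0\ell^2$ and $|\nabla_xq_\eps(x,y)|=|y-\nabla u_\eps(x)|\le|y-T(x)|+C\ell\le C|x-S(y)|+C\ell$. The same segment argument in the convex set $X$ then applies. Finally, $\operatorname{graph}T\subset\{q_\eps>0\}\subset\spt\pi_\eps$ follows from \eqref{eq:graph-positive} and \eqref{eq:preliminary-support}.

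For \eqref{eq:density-tube}, recall $h_\eps=(q_\eps)_+/\eps$ with $\eps=\ell^{d+2}$. On $\{|y-T(x)|<r\ell\}$ we have $q_\eps\ge c_1\ell^2$, which gives the lower bound $c_1\ell^{-d}$. For the upper bound, \eqref{eq:height-upper} gives $q_\eps\le C\ell^2$, and $h_\eps$ vanishes unless $q_\eps\ge0$, which by \eqref{eq:prelocalization} forces $|y-T(x)|\le R\ell$.

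There is no serious obstacle here. The only point needing care is the uniformity in $x$ up to the boundary. This is handled by the uniform statement \eqref{eq:uniform-graph-profile} together with the gradient bounds \eqref{eq:pregrad}, which are global. The convexity of $X$ and $Y$ is what justifies the mean-value step along segments.
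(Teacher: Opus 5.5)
Your proposal is correct and follows essentially the same route as the paper. The two-sided graph bound comes from \eqref{eq:uniform-graph-profile} together with the bounds on $\varphi_d$ and $\sigma$. The inner tube inclusion comes from integrating $\nabla_y q_\eps(x,\cdot)=x-\nabla v_\eps$ along segments in the convex set $Y$, using $\Lip(S)$ and \eqref{eq:pregrad}. The outer inclusions and the density bounds come from \eqref{eq:prelocalization} and \eqref{eq:height-upper}. Restricting to $|y-T(x)|<\ell$ so that the $\Lip(S)$ contribution becomes part of $C'\ell$ is only a cosmetic variant of the paper's quadratic term $\tfrac12\Lip(S)|y-T(x)|^2$.
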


\begin{proof}
The positive lower and upper bounds for $\varphi_d$ in \cref{lem:profile-regularity}, the bounds for $\sigma$, and \eqref{eq:uniform-graph-profile} give \eqref{eq:graph-positive}. Fix $x\in X$ and put $y_0=T(x)$. Integrate $\nabla_yq_\eps(x,\cdot)=x-\nabla v_\eps$ along $[y_0,y]\subset Y$. Along this segment, write $x-\nabla v_\eps=[S(y_0)-S]+[S-\nabla v_\eps]$. The Lipschitz bound for $S$ and the gradient estimate \eqref{eq:pregrad} yield
\begin{equation}\label{eq:slack-variation}
 |q_\eps(x,y)-q_\eps(x,y_0)|\le C\ell|y-y_0|+\tfrac12\Lip(S)|y-y_0|^2.
\end{equation}
Using \eqref{eq:graph-positive}, and choosing $r>0$ in  \eqref{eq:slack-variation} so that $Cr+\Lip(S)r^2/2\le c_0/2$, proves the first inner inclusion in \eqref{eq:tubes} with $c_1=c_0/2$. The outer inclusions follow from \eqref{eq:prelocalization}. Finally,  \eqref{eq:height-upper} proves the density bounds \eqref{eq:density-tube} and \eqref{eq:preliminary-support} shows $\operatorname{graph}T\subset\spt\pi_\eps$. 
\end{proof}

\subsection{Sections and conditional laws}
We center the sections at $T(x)$ and $x$, respectively, using $B_x$ and $\Lambda_x$ to rescale:
\begin{equation}\label{eq:normalized-sections}
 \mathcal S_{\eps,x}=\ell^{-1}B_x^{-1}(S_{\eps,x}-T(x)),\qquad
 \mathcal K_{\eps,x}=\ell^{-1}\Lambda_x^{-1}(K_{\eps,T(x)}-x).
\end{equation}
In what follows, the superscripts $\rm r,\rm c$ indicate row conditioning (fixing the source $x$) and column conditioning (fixing the target $T(x)$). Let $f^{\rm r}_{\eps,x}$ and $f^{\rm c}_{\eps,x}$ be the Lebesgue densities of $(y\mapsto\ell^{-1}B_x^{-1}(y-T(x)))_\#\pi_\eps^x$ and $(z\mapsto\ell^{-1}\Lambda_x^{-1}(z-x))_\#\widehat\pi_\eps^{T(x)}$. The densities are defined on $\Omega^{\rm r}_{\eps,x}=\ell^{-1}B_x^{-1}(Y-T(x))$ and $\Omega_{\eps,x}$, respectively, and are extended by zero outside these domains. Their supports are $\mathcal S_{\eps,x}$ and $\mathcal K_{\eps,x}$.  Define
\begin{equation}\label{eq:rescaled-slacks}
 Q^{\rm r}_{\eps,x}(w)=\frac{q_\eps(x,T(x)+\ell B_xw)}{\ell^2\sigma(x)^2},\qquad
 Q^{\rm c}_{\eps,x}(w)=\frac{q_\eps(x+\ell\Lambda_xw,T(x))}{\ell^2\sigma(x)^2}
\end{equation}
on $\Omega^{\rm r}_{\eps,x}$ and $\Omega_{\eps,x}$, respectively. A family $E_\eps\subset X$ is called a \emph{bulk family} if $\inf_{x\in E_\eps}\delta(x)/\ell\to\infty$, with $\inf\varnothing=\infty$. Thus the points may approach the boundary, but their distances remain large compared with $\ell$. In particular, a fixed interior compact set is a bulk family. 

\begin{theorem}[Profiles of sections and conditional laws]\label{thm:section-laws}
Uniformly for $x\in X$,
\begin{equation}\label{eq:all-boundary-sections}
 d_H(\mathcal S_{\eps,x},\mathcal C_{t_\eps(x),N(x)})+d_H(\mathcal K_{\eps,x},\mathcal C_{t_\eps(x),N(x)})\longrightarrow0,
\end{equation}
and, for every $1\le p<\infty$,
\begin{equation}\label{eq:all-boundary-densities}
 \|f^{\rm r}_{\eps,x}-\kappa_{t_\eps(x),N(x)}\|_{L^p(\R^d)}+\|f^{\rm c}_{\eps,x}-\kappa_{t_\eps(x),N(x)}\|_{L^p(\R^d)}\longrightarrow0.
\end{equation}
The probability measures $(y\mapsto\ell^{-1}B_x^{-1}(y-T(x)))_\#\pi_\eps^x$ and $(z\mapsto\ell^{-1}\Lambda_x^{-1}(z-x))_\#\widehat\pi_\eps^{T(x)}$ converge to $\kappa_{t_\eps(x),N(x)}\dd w$ uniformly in total variation and in $p$-Wasserstein distance $W_p$ for every $1\le p<\infty$. 
On every bulk family $E_\eps$, the functions $ Q^{\rm r}_{\eps,x}$ and $ Q^{\rm c}_{\eps,x}$ are  eventually defined and satisfy
\begin{equation}\label{eq:bulk-slack-profiles}
 Q^{\rm r}_{\eps,x}(w)=h_d-|w|^2/2+o_{C^1}(1),\qquad Q^{\rm c}_{\eps,x}(w)=h_d-|w|^2/2+o_{C^1}(1),
\end{equation}
uniformly for $x\in E_\eps$ and bounded $w$. On such a family, \eqref{eq:all-boundary-densities} also holds for $p=\infty$. In physical coordinates,
\begin{equation}\label{eq:physical-bulk}
 \ell^{-2}q_\eps(x,T(x)+\ell w)=h(x)-\tfrac12w^TH(x)^{-1}w+o(1)
\end{equation}
uniformly for $x\in E_\eps$ and bounded $w$, and
\begin{equation}\label{eq:bulk-ellipsoid}
 \sup_{x\in E_\eps}d_H\left(\ell^{-1}(S_{\eps,x}-T(x)),\{w:w^TH(x)^{-1}w\le2h(x)\}\right)\longrightarrow0.
\end{equation}
\end{theorem}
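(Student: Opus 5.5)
The plan is to reduce everything to \cref{thm:microscopic} by the same subsequence/compactness device used there. We fix arbitrary sequences $\eps_j\downarrow0$ and $x_j\in X$, and pass to a subsequence with $x_j\to x_*$, $t_j=t_{\eps_j}(x_j)\to t\in[0,\infty]$ and $N(x_j)\to N$. It then suffices to show that each quantity in \eqref{eq:all-boundary-sections}--\eqref{eq:all-boundary-densities} tends to zero along this subsequence. Uniformity in $x$ follows because every sequence has such a subsequence.

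\textbf{Column conditioning.} Here the statement is a direct specialization of \eqref{eq:two-point-profile} with $\zeta=0$. Indeed, $Q^{\rm c}_{\eps,x}(w)$ is exactly the left side of \eqref{eq:two-point-profile} at $(\xi,\zeta)=(w,0)$, so
\[
Q^{\rm c}_{\eps,x}(w)=\varphi_d(t_\eps(x))+\varphi_d(t_\eps(x)+N(x)\cdot w)-\tfrac12|w|^2+o_{C^1}(1)=q_{t_\eps(x),N(x)}(w)+o_{C^1}(1)
\]
on bounded sets, with the whole-space convention when $x\notin\mathcal U$. The density $f^{\rm c}_{\eps,x}(w)$ equals $(Q^{\rm c}_{\eps,x}(w))_+$ times the ratio $\rho_0(x+\ell\Lambda_xw)/\rho_0(x)=1+o(1)$. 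This follows from $\eps=\ell^{d+2}$ and $\sigma^2\det\Lambda_x=1/\rho_0(x)$, exactly as in \eqref{eq:rescaled-marginals}.

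\textbf{Row conditioning.} Write $T(x)+\ell B_xw=T(x+\ell\Lambda_x\zeta)$ with $\zeta=w+O(\ell|w|^2)$. This uses $H\Lambda_x=B_x$ and $T\in C^2$; equivalently, $\zeta=\ell^{-1}\Lambda_x^{-1}(S(T(x)+\ell B_xw)-x)$. Applying \eqref{eq:two-point-profile} at $(0,\zeta)$ and noting that the map $w\mapsto\zeta$ is a $C^1$-small perturbation of the identity gives $Q^{\rm r}_{\eps,x}=q_{t_\eps(x),N(x)}+o_{C^1}(1)$ on bounded sets. The domain $\Omega^{\rm r}_{\eps,x}$ is the image of $\Omega_{\eps,x}$ under this near-identity change of variables. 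By \eqref{eq:rescaled-boundary-chart} and $T(\partial X)=\partial Y$, its indicator converges almost everywhere to $\1_{\Hh_{t,N}}$. The density factor is $\rho_1(T(x)+\ell B_xw)\det B_x\,\sigma^2\ell^{d+2}/\eps\to1$ by \eqref{eq:transport-jacobian}.

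\textbf{From slacks to sections and densities.} All sets lie in a fixed ball $\B(0,R)$ by \eqref{eq:tubes}, and the limiting $\mathcal C_{t,N}$ is a convex body containing $B(r_0N,r_0/2)$ with $q_{t,N}\ge c_0$ there (\cref{lem:profile-regularity}). For Hausdorff convergence I use the uniform concavity of $q_{t,N}$, which comes from the Hessian bounds for $U$ in \eqref{eq:halfspace-profile-bounds}. This concavity gives a quantitative transversality: for $w\in\mathcal C_{t,N}$ at distance $\rho$ from $\partial\mathcal C$ in the relative interior, $q\gtrsim\rho$; along segments toward the interior ball, $q$ increases linearly. Hence the superlevel sets $\{Q\ge0\}$ are within $o(1)$ of $\{q\ge0\}$.

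The main obstacle is the boundary part of $\partial\mathcal C$ lying on $\partial\Hh_{t,N}$, when $t<\infty$. There the sections are cut by the curved domains $\Omega_{\eps,x}$ and $\Omega^{\rm r}_{\eps,x}$, not by the slack. I handle this with \eqref{eq:rescaled-boundary-chart}: the domain boundaries converge in Hausdorff distance on bounded sets to $\partial\Hh_{t,N}$, uniformly. Both sections are intersections of a convex domain with a superlevel set. Each piece converges, and since both contain the common ball $B(r_0N,r_0/2)$ (this is where \eqref{eq:model-section-bounds} enters), intersections of converging convex bodies with a common interior ball converge in $d_H$.

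For the densities, the integrands converge almost everywhere and are dominated by $C\1_{\B(0,R)}$. This gives $L^p$ convergence for $p<\infty$, hence total variation convergence, and $W_p$ convergence because the supports lie in a fixed ball. On bulk families $t=\infty$ and there is no domain cutoff, so the $o(1)$ error is uniform and the $L^\infty$ statement and \eqref{eq:bulk-slack-profiles} follow. Transforming back by $w\mapsto B_xw$, using $B_x^{-T}B_x^{-1}\sigma^2=H^{-1}$ and $h=h_d\sigma^2$, gives \eqref{eq:physical-bulk}. Applying the linear map $B_x$, which has bounded norm, to the Hausdorff statement gives \eqref{eq:bulk-ellipsoid} through \eqref{eq:cap-change-of-variables}.
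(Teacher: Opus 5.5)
Your proposal is correct and matches the paper's proof step by step. It uses the same subsequence reduction to \cref{thm:microscopic}, the column case as the specialization $\zeta=0$, and the row case via the near-identity map $\zeta\mapsto\ell^{-1}B_x^{-1}[T(x+\ell\Lambda_x\zeta)-T(x)]$; it also uses the exact density formulas with dominated convergence and the concavity plus interior-ball argument for Hausdorff convergence. The one loose point is your claim that ``each piece converges'', since the slack superlevel sets are defined only inside the curved domains. The paper avoids this by showing directly that limit points of the sections lie in $\mathcal C_{t,N}$, and that $w_\theta=(1-\theta)w+\theta w_0$ satisfies $q_{t,N}(w_\theta)\ge\theta c_0$ and so eventually lies in the sections. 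Your segment argument already supplies exactly this.
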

\begin{proof}
Fix sequences $\eps_j\downarrow0$ and $x_j\in X$. After passing to a
subsequence, assume $t_j:=t_{\eps_j}(x_j)\to t\in[0,\infty]$ and
$N_j:=N(x_j)\to N$. As in the proof of \cref{thm:microscopic}, the expansion \eqref{eq:rescaled-boundary-chart} for $t<\infty$ and the relation $\delta(x_j)/\ell_j\to\infty$ for $t=\infty$ show that the sets $\Omega_{\eps_j,x_j}$ converge to $\Omega:=\Hh_{t,N}$. For $x\in X$, define
\begin{equation}\label{eq:row-coordinate-change}
 \Psi_{\eps,x}(\zeta)
 =
 \frac{B_x^{-1}[T(x+\ell\Lambda_x\zeta)-T(x)]}{\ell},
 \qquad
 \zeta\in\Omega_{\eps,x}.
\end{equation}
By definition of $\Omega^{\rm r}_{\eps,x}$,
$\Psi_{\eps,x}$ is a bijection from $\Omega_{\eps,x}$ onto
$\Omega^{\rm r}_{\eps,x}$. Since
$B_x^{-1}H(x)\Lambda_x=\Id$ and $T\in C^2(X)$, Taylor's formula gives
$\Psi_{\eps,x}(\zeta)=\zeta+O_{C^1}(\ell)$ on
$\Omega_{\eps,x}\cap\B(0,R)$ for every $R<\infty$, with the constant
independent of $x$. The bi-Lipschitz bounds for $T$ and $S$, together
with the bounds for $B_x$ and $\Lambda_x$ in
\eqref{eq:rescaling-matrices}, give Lipschitz constants for
$\Psi_{\eps,x}$ and $\Psi_{\eps,x}^{-1}$ independent of $\eps$ and
$x$. Since $\Psi_{\eps,x}(0)=0$, for every $R<\infty$ there is
$R'<\infty$, independent of $\eps$ and $x$, such that
$w\in\Omega^{\rm r}_{\eps,x}\cap\B(0,R)$ implies
$\Psi_{\eps,x}^{-1}(w)\in\B(0,R')$.

Substituting $w=\Psi_{\eps,x}(\zeta)$ into the definition of
$Q^{\rm r}_{\eps,x}$ and using
$x_{\eps,\zeta}=x+\ell\Lambda_x\zeta$ gives
\begin{equation}\label{eq:row-slack-pullback}
 Q^{\rm r}_{\eps,x}(\Psi_{\eps,x}(\zeta))
 =
 \frac{q_\eps(x,T(x_{\eps,\zeta}))}{\ell^2\sigma(x)^2},
 \qquad \zeta\in\Omega_{\eps,x}.
\end{equation}
We next prove convergence of $\Omega^{\rm r}_{\eps_j,x_j}$ to
$\Omega$. Let $w_j\in\Omega^{\rm r}_{\eps_j,x_j}$ with $w_j\to w$, and
set $\zeta_j=\Psi_{\eps_j,x_j}^{-1}(w_j)$. The uniform Lipschitz bound
for $\Psi_{\eps_j,x_j}^{-1}$ shows that $(\zeta_j)$ is bounded, while
\eqref{eq:row-coordinate-change} gives
$|\zeta_j-w_j|\to0$. Hence $\zeta_j\to w$, and convergence of
$\Omega_{\eps_j,x_j}$ implies $w\in\Omega$. Conversely, if
$w\in\Omega$, choose $\zeta_j\in\Omega_{\eps_j,x_j}$ with
$\zeta_j\to w$. Then \eqref{eq:row-coordinate-change} gives
$\Psi_{\eps_j,x_j}(\zeta_j)\to w$, with
$\Psi_{\eps_j,x_j}(\zeta_j)\in\Omega^{\rm r}_{\eps_j,x_j}$.

The two preceding approximation properties imply that
$\Omega^{\rm r}_{\eps_j,x_j}\to\Omega$ locally in Hausdorff distance.
Since the sets $\Omega^{\rm r}_{\eps_j,x_j}$ and $\Omega$ are convex,
every compact set $K\subset\intr\Omega$ is contained in
$\Omega^{\rm r}_{\eps_j,x_j}$ for all sufficiently large $j$.
Consequently,
$\1_{\Omega^{\rm r}_{\eps_j,x_j}}\to\1_\Omega$ almost everywhere in
$\R^d$, since $\partial\Omega$ has Lebesgue measure zero.

In \eqref{eq:two-point-profile}, take $\zeta=0$ and $\xi=w$. Since
$x_{\eps,0}=x$, the left-hand side is
$Q^{\rm c}_{\eps,x}(w)$. Hence
$Q^{\rm c}_{\eps_j,x_j}(w_j)\to q_{t,N}(w)$ whenever
$w_j\in\Omega_{\eps_j,x_j}$ and $w_j\to w\in\Omega$. In \eqref{eq:two-point-profile}, take $\xi=0$. Then, using
\eqref{eq:row-slack-pullback}, one obtains
$Q^{\rm r}_{\eps_j,x_j}(w_j)\to q_{t,N}(w)$ whenever
$w_j\in\Omega^{\rm r}_{\eps_j,x_j}$ and $w_j\to w\in\Omega$.  On every compact subset of
$\intr\Omega$, \eqref{eq:row-coordinate-change} and the chain rule give
$C^1$ convergence of both functions to $q_{t,N}$.

Set
$\mathcal S_j^{\rm r}:=\{w\in\Omega^{\rm r}_{\eps_j,x_j}:
Q^{\rm r}_{\eps_j,x_j}(w)\ge0\}$ and
$\mathcal S_j^{\rm c}:=\{w\in\Omega_{\eps_j,x_j}:
Q^{\rm c}_{\eps_j,x_j}(w)\ge0\}$. By
\eqref{eq:prelocalization} and \eqref{eq:model-section-bounds},
$\mathcal S_j^{\rm r}$, $\mathcal S_j^{\rm c}$, and
$\mathcal C_{t_j,N_j}$ are contained in a fixed ball. Moreover, $q_{t_j,N_j}(w_j)\to q_{t,N}(w)$ whenever $w_j\in\Hh_{t_j,N_j}$ and $w_j\to w$. 
For $t<\infty$, this follows from $t_j\to t$, $N_j\to N$, and
continuity of $\varphi_d$. For $t=\infty$, boundedness of $(w_j)$ gives
$t_j+N_j\cdot w_j\to\infty$, and \eqref{eq:profile-deep-limit} gives
$q_{t_j,N_j}(w_j)\to h_d-|w|^2/2=q_{\infty,N}(w)$.

Let $w_j\to w$. If $w_j\in\mathcal S_j^{\rm r}$, then
$w_j\in\Omega^{\rm r}_{\eps_j,x_j}$, so that 
$w\in\Hh_{t,N}$. Since
$Q^{\rm r}_{\eps_j,x_j}(w_j)\to q_{t,N}(w)$ and
$Q^{\rm r}_{\eps_j,x_j}(w_j)\ge0$, one has $q_{t,N}(w)\ge0$.
Hence $w\in\mathcal C_{t,N}$. Similarly, for 
$w_j\in\mathcal S_j^{\rm c}$,  we obtain the same
conclusion. Finally, if $w_j\in\mathcal C_{t_j,N_j}$, then
$t_j+N_j\cdot w_j\ge0$ and $q_{t_j,N_j}(w_j)\ge0$. Passing to the
limit gives $w\in\Hh_{t,N}$ and $q_{t,N}(w)\ge0$. Thus every limit of a sequence
from $\mathcal S_j^{\rm r}$, $\mathcal S_j^{\rm c}$, or
$\mathcal C_{t_j,N_j}$ belongs to $\mathcal C_{t,N}$.

For the converse, choose $w_0$ as in
\eqref{eq:model-section-bounds}, so that
$w_0\in\intr\Hh_{t,N}$ and $q_{t,N}(w_0)\ge c_0$. For
$w\in\mathcal C_{t,N}$ and $0<\theta<1$, set
$w_\theta=(1-\theta)w+\theta w_0$. Then
$w_\theta\in\intr\Hh_{t,N}$, and concavity of $q_{t,N}$ gives
$q_{t,N}(w_\theta)\ge\theta c_0>0$. For fixed $\theta$, the inclusions for compact sets proved above and in the
proof of \cref{thm:microscopic} give
$w_\theta\in\Omega^{\rm r}_{\eps_j,x_j}\cap
\Omega_{\eps_j,x_j}$ for all sufficiently large $j$. Since
$Q^{\rm r}_{\eps_j,x_j}(w_\theta)\to q_{t,N}(w_\theta)>0$ and
$Q^{\rm c}_{\eps_j,x_j}(w_\theta)\to q_{t,N}(w_\theta)>0$, one has
$w_\theta\in\mathcal S_j^{\rm r}\cap\mathcal S_j^{\rm c}$ for all
sufficiently large $j$. Also,
$t_j+N_j\cdot w_\theta>0$ and
$q_{t_j,N_j}(w_\theta)>0$ for all sufficiently large $j$, so
$w_\theta\in\mathcal C_{t_j,N_j}$. Since $w_\theta\to w$ as $\theta\downarrow0$, every
$w\in\mathcal C_{t,N}$ can be approximated arbitrarily closely by
points of $\mathcal S_j^{\rm r}$, $\mathcal S_j^{\rm c}$, and
$\mathcal C_{t_j,N_j}$ for all sufficiently large $j$. Together with
the preceding limit-point property and the common boundedness, this
gives
$d_H(\mathcal S_j^{\rm r},\mathcal C_{t,N})\to0$,
$d_H(\mathcal S_j^{\rm c},\mathcal C_{t,N})\to0$, and
$d_H(\mathcal C_{t_j,N_j},\mathcal C_{t,N})\to0$.
The triangle inequality for $d_H$ then yields
\eqref{eq:all-boundary-sections} along the chosen subsequence.

We next consider the rescaled densities. The changes of
variables  together with \eqref{eq:scale} and
\eqref{eq:transport-jacobian} give
\begin{equation}\label{eq:row-density-exact}
\begin{aligned}
 f^{\rm r}_{\eps,x}(w)
 &=
 \frac{\rho_1(T(x)+\ell B_xw)}{\rho_1(T(x))}
 (Q^{\rm r}_{\eps,x}(w))_+,
 &&w\in\Omega^{\rm r}_{\eps,x},\\
 f^{\rm c}_{\eps,x}(w)
 &=
 \frac{\rho_0(x+\ell\Lambda_xw)}{\rho_0(x)}
 (Q^{\rm c}_{\eps,x}(w))_+,
 &&w\in\Omega_{\eps,x}.
\end{aligned}
\end{equation}
By \eqref{eq:height-upper}, \eqref{eq:prelocalization}, and
\eqref{eq:model-slack-bounds}, the functions
$f^{\rm r}_{\eps_j,x_j}$, $f^{\rm c}_{\eps_j,x_j}$, and
$\kappa_{t_j,N_j}$ are uniformly bounded and vanish outside a common
ball. Extend $f^{\rm r}_{\eps_j,x_j}$ and
$f^{\rm c}_{\eps_j,x_j}$ by zero outside their respective domains. The convergences in \eqref{eq:two-point-profile} and
\eqref{eq:profile-deep-limit}, together with
\eqref{eq:row-slack-pullback} and \eqref{eq:row-density-exact}, then give
$f^{\rm r}_{\eps_j,x_j}(w)\to\kappa_{t,N}(w)$,
$f^{\rm c}_{\eps_j,x_j}(w)\to\kappa_{t,N}(w)$, and
$\kappa_{t_j,N_j}(w)\to\kappa_{t,N}(w)$ for almost every
$w\in\R^d$, including when $t=\infty$.  Dominated convergence and the triangle inequality
give \eqref{eq:all-boundary-densities} for every finite $p$ along the
chosen subsequence. Since the argument applies to arbitrary sequences
$\eps_j\downarrow0$ and $x_j\in X$, the convergence in
\eqref{eq:all-boundary-sections} and
\eqref{eq:all-boundary-densities} is uniform in $x$.

If $f$ and $g$ are probability densities supported in $\B(0,R)$, then
$\|f\dd w-g\dd w\|_{\TV}=\|f-g\|_{L^1}/2$. Applying this to the $L^1$ convergence obtained above gives the total variation and Wasserstein assertions.

It remains to consider the interior regime. On a bulk family,
$\delta(x)/\ell\to\infty$ uniformly. Hence, for every fixed
$R<\infty$, one has $\B(0,R)\subset\Omega_{\eps,x}$ for all
sufficiently small $\eps$, uniformly over the family, by the uniform
bounds for $\Lambda_x$. The corresponding inclusion for the target
domain follows from the bounds for $B_x$ and
\begin{equation}\label{eq:transport-boundary-distance}
 \dist(T(x),\partial Y)
 \ge
 \frac{\delta(x)}{\Lip(S)}.
\end{equation}
Indeed, if $y\in\partial Y$, then $S(y)\in\partial X$, and the
Lipschitz bound for $S$ gives
$\delta(x)\le |x-S(y)|=|S(T(x))-S(y)|
\le\Lip(S)|T(x)-y|$. Taking the infimum over $y\in\partial Y$ proves
\eqref{eq:transport-boundary-distance}.

By \eqref{eq:profile-deep-limit}, the profile in
\eqref{eq:microscopic-comparison} converges uniformly to $h_d/2$ on
every fixed bounded set. Hence
\eqref{eq:two-point-profile}, \eqref{eq:row-coordinate-change}, and
\eqref{eq:row-slack-pullback} give
\eqref{eq:bulk-slack-profiles}. Applying
\eqref{eq:row-density-exact} on a ball containing the supports gives
uniform convergence of both rescaled densities to $\kappa_d$.

The model densities also converge uniformly to $\kappa_d$. For
sufficiently large $t$, the hyperplane
$\{w:N\cdot w=-t\}$ does not intersect the fixed ball containing the
supports, and \eqref{eq:profile-deep-limit} applies uniformly to both
arguments of $\varphi_d$ in $q_{t,N}$. This proves
\eqref{eq:all-boundary-densities} with $p=\infty$ on bulk families.

Finally, replacing $w$ by $B_x^{-1}w$ in
\eqref{eq:bulk-slack-profiles} and multiplying by $\sigma(x)^2$ gives
\eqref{eq:physical-bulk}. Moreover, since
$\ell^{-1}(S_{\eps,x}-T(x))=B_x\mathcal S_{\eps,x}$ and $\mathcal S_{\eps,x}$ converges in Hausdorff distance to
$\B(0,a_d)$ on bulk families, $\sup_x\|B_x\|<\infty$, and
$B_x\B(0,a_d)=\mathscr E_x$, we derive
\eqref{eq:bulk-ellipsoid}. 
\end{proof}

We record two consequences of the section and conditional-law profiles in \cref{thm:section-laws}.
For $x\in X$ and $y\in Y$, define
\begin{equation}\label{eq:conditional-covariance-definitions}
 \Sigma_\eps(x)
 =
 \int_Y
 (y-m_\eps(x))(y-m_\eps(x))^T\,\pi_\eps^x(\dd y),
 \qquad
 \widehat\Sigma_\eps(y)
 =
 \int_X
 (z-\widehat m_\eps(y))(z-\widehat m_\eps(y))^T\,
 \widehat\pi_\eps^y(\dd z).
\end{equation}

\begin{corollary}[Covariance and support measure]
\label{cor:covariance-support}
On every bulk family $E_\eps$,
\begin{equation}\label{eq:covariance-limits}
 \sup_{x\in E_\eps}
 \left(
 \left\|
 \ell^{-2}\Sigma_\eps(x)
 -\frac{2h(x)}{d+4}H(x)
 \right\|
 +
 \left\|
 \ell^{-2}\widehat\Sigma_\eps(T(x))
 -\frac{2h(x)}{d+4}H(x)^{-1}
 \right\|
 \right)
 \longrightarrow0.
\end{equation}
Moreover,
\begin{equation}\label{eq:support-size-asymptotic}
 \ell^{-d}(\mu\otimes\nu)(\spt\pi_\eps)
 \longrightarrow
 \frac{d+2}{2}\int_X h(x)^{-1}\dd\mu(x).
\end{equation}
\end{corollary}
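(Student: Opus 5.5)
The plan has two parts, one for each assertion. Both rest on the uniform profile results of \cref{thm:section-laws} combined with dominated convergence.

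\emph{Covariance.} Fix a bulk family $E_\eps$. In the rescaled row coordinates $y=T(x)+\ell B_xw$, the covariance becomes $\Sigma_\eps(x)=\ell^2B_x\,\mathrm{Cov}(f^{\rm r}_{\eps,x})\,B_x^T$, where $\mathrm{Cov}(f)$ is the covariance of the probability density $f$. The argument proceeds in three steps.
\begin{itemize}
\item On bulk families, \eqref{eq:all-boundary-densities} with $p=\infty$ gives $f^{\rm r}_{\eps,x}\to\kappa_d$ uniformly in $x$.
\item All these densities vanish outside a common ball, by \eqref{eq:prelocalization}. Hence the first and second moments of $f^{\rm r}_{\eps,x}$ converge to those of $\kappa_d$, uniformly over $E_\eps$.
\item Since $\kappa_d$ is centered, its covariance is $\int w w^T\kappa_d\dd w=\frac{2h_d}{d+4}\Id$, by \eqref{eq:model-cap-second-moment} and radial symmetry.
\end{itemize}
Using $B_xB_x^T=\sigma(x)^2H(x)$ and $h=h_d\sigma^2$, this yields $\ell^{-2}\Sigma_\eps(x)\to\frac{2h(x)}{d+4}H(x)$. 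The reverse covariance is handled the same way with the column densities $f^{\rm c}_{\eps,x}$ and the matrix $\Lambda_x$. Here $\Lambda_x\Lambda_x^T=\sigma^2H^{-1}$, so the limit is $\frac{2h(x)}{d+4}H(x)^{-1}$. Boundedness of $B_x,\Lambda_x$ makes every convergence uniform over $E_\eps$.

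\emph{Support measure.} By \eqref{eq:preliminary-support} and Fubini,
\[
\ell^{-d}(\mu\otimes\nu)(\spt\pi_\eps)=\int_X\ell^{-d}\nu(S_{\eps,x})\dd\mu(x).
\]
The integrand is bounded by \eqref{eq:pregrad}, so dominated convergence reduces the claim to a pointwise limit for each $x\in\intr X$. A fixed interior point forms a bulk family. Hence \eqref{eq:bulk-ellipsoid} together with continuity of $\rho_1$ gives
\[
\ell^{-d}\nu(S_{\eps,x})\to\rho_1(T(x))\,|\mathscr E_x|=\rho_1(T(x))\,\omega_d\,(2h(x))^{d/2}\sqrt{\det H(x)}.
\]
Passing from Hausdorff convergence of convex bodies to convergence of volumes is standard.

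It remains to simplify this constant using the Monge--Amp\`ere equation \eqref{eq:transport-jacobian}. The factor $\rho_1(T)\sqrt{\det H}$ equals $\sqrt{\rho_0\rho_1(T)}$, and by \eqref{eq:scale} this is $(h/h_d)^{-(d+2)/2}$. The limit is therefore $\omega_d2^{d/2}h_d^{(d+2)/2}h^{-1}$. From the definition of $h_d$ we have $h_d^{(d+2)/2}=\frac{d+2}{2^{1+d/2}\omega_d}$, so the constant is $\frac{d+2}{2}h(x)^{-1}$, as claimed. Finally, the boundary $\partial X$ is $\mu$-null, so no boundary analysis is needed.

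The main point requiring care is the uniformity of the covariance moments over a bulk family, where $x$ may approach $\partial X$. This is handled by the common support ball together with the uniform $L^\infty$ convergence already established in \cref{thm:section-laws}. The rest is an algebraic check of the constants.
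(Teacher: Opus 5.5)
Your proof is correct and follows the paper's argument. For the covariances, both use moment convergence from the uniform profile convergence of \cref{thm:section-laws} on bulk families, with a common support ball. For the support measure, both use Fubini plus dominated convergence applied to a pointwise limit of $\ell^{-d}\nu(S_{\eps,x})$.

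The only differences are cosmetic. You pass to volumes through the Hausdorff convergence \eqref{eq:bulk-ellipsoid} of the convex sections to $\mathscr E_x$, whereas the paper uses a.e.\ convergence of the normalized indicators. You also evaluate the constant via $|\mathscr E_x|$ and the Monge--Amp\`ere equation, whereas the paper uses $\det B_x\,\rho_1(T(x))=\sigma(x)^{-2}$.
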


\begin{proof}
We first prove \eqref{eq:covariance-limits}. By the $W_2$ convergence
of the rescaled conditional laws in \cref{thm:section-laws}, their
first and second moments converge uniformly on every bulk family to
those of $\kappa_d(w)\dd w$. Since $\kappa_d$ is radial, its mean is
zero, and \eqref{eq:model-cap-second-moment} gives
\begin{equation}\label{eq:cap-vector-moments}
 \int_{\R^d}w\kappa_d(w)\dd w=0,\qquad \int_{\R^d}w^{\otimes2}\kappa_d(w)\dd w=\frac{2h_d}{d+4}\Id.
\end{equation}
Hence, uniformly for $x\in E_\eps$, the covariance matrix of the
probability density $f^{\rm r}_{\eps,x}$ converges to
$2h_d\Id/(d+4)$. The same limit holds for
the covariance matrix of $f^{\rm c}_{\eps,x}$. Changing variables 
 proves
\eqref{eq:covariance-limits}.

We next prove \eqref{eq:support-size-asymptotic}. Fix
$x\in\intr X$. By \eqref{eq:bulk-slack-profiles}, the indicator of
$\ell^{-1}B_x^{-1}(S_{\eps,x}-T(x))$ converges almost everywhere to
$\1_{\B(0,a_d)}$, since the limiting function
$h_d-|w|^2/2$ vanishes only on $\partial\B(0,a_d)$. By
\eqref{eq:prelocalization}, these sets are contained in a fixed ball.
The change of variables $y=T(x)+\ell B_xw$ and dominated convergence
therefore give
\begin{equation}\label{eq:section-volume-limit}
 \ell^{-d}\nu(S_{\eps,x})
 \longrightarrow
 \det(B_x)\rho_1(T(x))\omega_da_d^d
 =
 \sigma(x)^{-2}\omega_da_d^d
 =
 \frac{d+2}{2h(x)},
\end{equation}
where the two equalities follow from \eqref{eq:scale} and
\eqref{eq:cap-normalization}. By \eqref{eq:preliminary-support} and Fubini's theorem, we have
$(\mu\otimes\nu)(\spt\pi_\eps)
=\int_X\nu(S_{\eps,x})\dd\mu(x)$. Moreover,
\eqref{eq:pregrad} and the upper bound for $\rho_1$ give
$\ell^{-d}\nu(S_{\eps,x})\le C$ uniformly in $x$ and $\eps$.
Since $\mu(\partial X)=0$, dominated convergence applied to
\eqref{eq:section-volume-limit} proves
\eqref{eq:support-size-asymptotic}.
\end{proof}

The potential profiles and the first moments of the half-space conditional laws give the leading boundary errors of the two transport approximations defined in~\eqref{eq:forward-approximations}.

\begin{corollary}[Boundary bias]\label{cor:boundary-gradients}
Uniformly for $x\in X$,
\begin{align}\label{eq:boundary-input}
 z_\eps(x)-T(x)&=-\ell\frac{\sigma(x)}{\gamma(x)}\varphi_d'(t_\eps(x))n(x)+o(\ell),\\
 m_\eps(x)-T(x)&=\ell\frac{\sigma(x)}{\gamma(x)}\beta_d(t_\eps(x))n(x)+o(\ell),\nonumber\\
 \widehat z_\eps(T(x))-x&=-\ell\frac{\sigma(x)}{\gamma(x)}\varphi_d'(t_\eps(x))H(x)^{-1}n(x)+o(\ell),\nonumber\\
 \widehat m_\eps(T(x))-x&=\ell\frac{\sigma(x)}{\gamma(x)}\beta_d(t_\eps(x))H(x)^{-1}n(x)+o(\ell).\nonumber
\end{align}
The leading terms vanish outside $\mathcal U$. On every bulk family, the displacements are $o(\ell)$ uniformly.
In particular,
\begin{equation}\label{eq:uniform-bias-orders}
\begin{aligned}
 \|z_\eps-T\|_{L^\infty(X)}&\asymp\|m_\eps-T\|_{L^\infty(X)}\asymp\ell,\\
 \|\widehat z_\eps-S\|_{L^\infty(Y)}&\asymp\|\widehat m_\eps-S\|_{L^\infty(Y)}\asymp\ell.
\end{aligned}
\end{equation}
\end{corollary}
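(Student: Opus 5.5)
The plan is to read off both forward approximations from the barycenter formulas \eqref{eq:barycenters} and the definition of $m_\eps$. We write them in the rescaled target coordinates $y=T(x)+\ell B_xw$, where \cref{thm:section-laws} gives uniform control, and then translate the model centroids \eqref{eq:model-centroid} back to physical coordinates.

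\textbf{Forward formulas.} For $m_\eps$ we have
\[
m_\eps(x)-T(x)=\ell B_x\int_{\R^d}w\,f^{\rm r}_{\eps,x}(w)\dd w .
\]
By \cref{thm:section-laws}, $f^{\rm r}_{\eps,x}\to\kappa_{t_\eps(x),N(x)}$ in $L^1$, uniformly in $x$, with supports in a fixed ball (equivalently, $W_1$ convergence). Hence the first moment equals $\beta_d(t_\eps(x))N(x)+o(1)$ uniformly, by \eqref{eq:model-centroid}.

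For $z_\eps$, the formula \eqref{eq:barycenters} gives
\[
z_\eps(x)-T(x)=\ell B_x\,\frac{\int_{\mathcal S_{\eps,x}}w\,\rho_1(T(x)+\ell B_xw)\dd w}{\int_{\mathcal S_{\eps,x}}\rho_1(T(x)+\ell B_xw)\dd w}.
\]
Two facts make this tractable. First, $\rho_1$ is Lipschitz, so its ratio to $\rho_1(T(x))$ is $1+O(\ell)$ on the fixed ball. Second, the sets $\mathcal S_{\eps,x}$ are convex bodies converging in Hausdorff distance to $\mathcal C_{t_\eps(x),N(x)}$ uniformly, by \eqref{eq:all-boundary-sections}, and their volumes are bounded below via \eqref{eq:tubes}. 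For convex bodies in a fixed ball, Hausdorff convergence implies $L^1$ convergence of the indicators. This comparison should be done carefully between the two sequences, for instance through the compactness-along-subsequences argument used before, using the convergence of $\mathcal C_{t_j,N_j}$ established in the proof of \cref{thm:section-laws}. The centroid therefore equals $-\varphi_d'(t_\eps(x))N(x)+o(1)$. I expect this uniform centroid convergence to be the only mildly delicate point. It is handled by the subsequence argument: if $t_j\to t$ and $N_j\to N$, both $\mathcal S_j$ and $\mathcal C_{t_j,N_j}$ converge to $\mathcal C_{t,N}$, and the centroids are continuous under $L^1$ convergence of indicators with volumes bounded below. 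Continuity of $\varphi_d'$ and the definition $\varphi_d'(\infty)=0$, together with \eqref{eq:profile-exponential}, let us replace $t$ by $t_j$.

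\textbf{Conversion.} The algebra $B_xN(x)=\sigma H^{1/2}H^{-1/2}n/\gamma=\sigma n/\gamma$ turns the two rescaled moments into the first two lines of \eqref{eq:boundary-input}.

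\textbf{Reverse formulas.} These go the same way. We use the rescaled column sections $\mathcal K_{\eps,x}$ and densities $f^{\rm c}_{\eps,x}$, the coordinates $z=x+\ell\Lambda_xw$, and the identity $\Lambda_xN=\sigma H^{-1}n/\gamma$. For $\widehat z_\eps$ the barycenter is taken with respect to $\mu$, whose density ratio is again $1+O(\ell)$.

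\textbf{Remaining claims.} Outside $\mathcal U$ we have $t_\eps=\infty$, where $\varphi_d'(\infty)=\beta_d(\infty)=0$. On a bulk family $t_\eps\to\infty$ uniformly, so the leading terms are $o(1)$ by \eqref{eq:profile-exponential}. For \eqref{eq:uniform-bias-orders}, the upper bounds follow from \eqref{eq:pregrad} and from the supports of the conditional laws lying in $\B(T(x),R\ell)$ (and similarly in the reverse direction). For the lower bounds, evaluate at $x\in\partial X$, where $t_\eps=0$: there $\varphi_d'(0)<0$ and $\beta_d(0)>0$ by \cref{lem:profile-regularity}, while $\sigma/\gamma$ and $|H^{-1}n|$ are bounded below.
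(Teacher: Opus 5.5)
Your proposal is correct, but for the potential gradients it takes a different route from the paper. For $m_\eps$ and $\widehat m_\eps$, for the conversion via $B_xN=\sigma n/\gamma$ and $\Lambda_xN=\sigma H^{-1}n/\gamma$, and for the remaining claims, it follows the paper.

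The paper never looks at the sections when treating $z_\eps$ and $\widehat z_\eps$. It differentiates the centered potential profile \eqref{eq:centered-potentials} at $\xi=0$, using the $C^1$ convergence there and $\nabla_\xi\Phi_{\eps,x}(0)=\varphi_d'(t_\eps(x))N(x)$. This gives $\ell^{-1}B_x^{-1}(z_\eps(x)-T(x))=-\varphi_d'(t_\eps(x))N(x)+o(1)$, and its reverse analogue, in one line.

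You instead combine the barycenter formula \eqref{eq:barycenters}, the Hausdorff limits \eqref{eq:all-boundary-sections}, and the first identity in \eqref{eq:model-centroid}. This uses only the value convergence of the slack that drives the section limits; the derivative information is packaged in the model centroid identity. It also treats $z_\eps$ and $m_\eps$ in parallel, as first moments of flat and cap-shaped weights. The price is a geometric comparison step.

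That step is simpler than you expect. For convex bodies $K,L$ in a fixed ball, $d_H(K,L)\le\eta$ gives $|K\triangle L|\le C\eta$. Indeed, each set lies in the $\eta$-neighborhood of the other, and $|(K+\eta B_1)\setminus K|\le C\eta$ by monotonicity of surface area for convex sets. So the comparison with the varying model sets $\mathcal C_{t_\eps(x),N(x)}$ is uniform directly, and no subsequence argument is needed.

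One detail is missing for \eqref{eq:uniform-bias-orders}. Your lower bounds at $x\in\partial X$ hold on a Lebesgue-null set, so they do not by themselves bound the $L^\infty$ norms from below. You should note, as the paper does, that the four error fields are continuous. Alternatively, use continuity of $\varphi_d'$ and $\beta_d$ at $0$ to get the bound on the positive-measure layer $\{t_\eps\le t_0\}$.
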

\begin{proof}
The $C^1$ convergence in \eqref{eq:centered-potentials} permits differentiation at $\xi=0$, with the base point $x$ held fixed. Since $\nabla_\xi\Phi_{\eps,x}(0)=\varphi_d'(t_\eps(x))N(x)$, the identities $H(x)\Lambda_x=B_x$ and $\sigma(x)^2\Lambda_x^{-1}=B_x$ give
\begin{equation}\label{eq:normalized-gradient-profiles}
\begin{aligned}
 \ell^{-1}B_x^{-1}(z_\eps(x)-T(x))&=-\varphi_d'(t_\eps(x))N(x)+o(1),\\
 \ell^{-1}\Lambda_x^{-1}(\widehat z_\eps(T(x))-x)&=-\varphi_d'(t_\eps(x))N(x)+o(1).
\end{aligned}
\end{equation}
For the conditional means, the $L^1$ convergence in \eqref{eq:all-boundary-densities} and the common support bound give convergence of the first moments in rescaled coordinates. The second identity in \eqref{eq:model-centroid} identifies the model mean as $\beta_d(t_\eps(x))N(x)$. Thus \eqref{eq:normalized-gradient-profiles} holds with $(z_\eps,\widehat z_\eps,-\varphi_d')$ replaced by $(m_\eps,\widehat m_\eps,\beta_d)$. Since
\begin{equation}\label{eq:normal-coordinate-identities}
 B_xN(x)=\frac{\sigma(x)}{\gamma(x)}n(x),\qquad
 \Lambda_xN(x)=\frac{\sigma(x)}{\gamma(x)}H(x)^{-1}n(x),
\end{equation}
multiplication of the forward and reverse normalized displacements by $\ell B_x$ and $\ell\Lambda_x$, respectively, gives \eqref{eq:boundary-input}. On bulk families, $t_\eps\to\infty$ uniformly, so \eqref{eq:profile-exponential} makes both scalar coefficients tend uniformly to zero and gives the $o(\ell)$ bounds.

The bounded profiles and coefficient fields in \eqref{eq:boundary-input} give the upper bounds in \eqref{eq:uniform-bias-orders}, with $T(X)=Y$ for the reverse errors. At $x\in\partial X$, the depth is $t_\eps(x)=0$ and \cref{lem:profile-regularity} gives $-\varphi_d'(0)>0$ and $\beta_d(0)>0$. The vectors multiplying these scalar profiles have norms bounded away from zero on $\partial X$. The uniform $o(\ell)$ remainders therefore give boundary errors of size at least $c\ell$, also on $\partial Y$ because $T(\partial X)=\partial Y$. All four error fields are continuous, so their suprema equal their essential suprema.
\end{proof}
\subsection{Hessian and Sobolev convergence}\label{sec:second-derivatives}
\begin{corollary}[Hessian and $W^{2,p}$ convergence]\label{prop:interior-hessian}
For every measurable bulk family $E_\eps$,
\begin{equation}\label{eq:bulk-hessian-convergence}
 \esssup_{x\in E_\eps}\left(\|D^2u_\eps(x)-H(x)\|+\|D^2v_\eps(T(x))-H(x)^{-1}\|\right)\longrightarrow0.
\end{equation}
Consequently, $u_\eps\to u_0$ in $W^{2,p}(X)$ and $v_\eps\to v_0$ in $W^{2,p}(Y)$ for every $1\le p<\infty$.
\end{corollary}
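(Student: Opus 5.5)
The plan is to obtain \eqref{eq:bulk-hessian-convergence} from an exact formula for $D^2u_\eps$ as an integral over the boundary of the section, and then to pass to the limit using the $C^1$ slack profiles \eqref{eq:bulk-slack-profiles}.

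\emph{Step 1: the boundary formula.} Fix a bulk family. By \eqref{eq:transport-boundary-distance} and \eqref{eq:tubes}, for small $\eps$ the section $S_{\eps,x}\subset\B(T(x),R\ell)$ lies in $\intr Y$ for $x\in E_\eps$. Hence $\partial S_{\eps,x}=\{q_\eps(x,\cdot)=0\}$. The first-order identity is $\int_{S_{\eps,x}}(y-\nabla u_\eps(x))\dd\nu(y)=0$, which is \eqref{eq:barycenters}. I would differentiate it in $x$, in the spirit of Reynolds' transport theorem. Here $\nabla_xq_\eps=y-\nabla u_\eps(x)$ and $\nabla_yq_\eps=x-\nabla v_\eps(y)$. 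The boundary then moves with normal velocity $(y-\nabla u_\eps)\cdot(\cdot)/|\nabla_yq_\eps|$, so at every point where $u_\eps$ is twice differentiable,
\[
 D^2u_\eps(x)=\frac1{\nu(S_{\eps,x})}\int_{\partial S_{\eps,x}}\frac{(y-z_\eps(x))(y-z_\eps(x))^T}{|x-\nabla v_\eps(y)|}\rho_1(y)\dd\mathcal H^{d-1}(y).
\]
To make this rigorous with only $C^{1,1}$ potentials, I would not differentiate under the integral pointwise. Instead I would write the difference quotient of $\nabla u_\eps$ via the barycenter formula as the integral of $y-z_\eps(x)$ over the symmetric difference $S_{\eps,x+he}\,\triangle\, S_{\eps,x}$, divided by $h$. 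This converges as $h\to0$ because $\nabla_yq_\eps$ is bounded away from zero on $\partial S_{\eps,x}$; that lower bound is part of Step 2. The identity then holds at a.e.\ $x$, which is all the statement requires.

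\emph{Step 2: rescaling and the limit.} Substitute $y=T(x)+\ell B_xw$, so that $q_\eps=\ell^2\sigma^2Q^{\rm r}_{\eps,x}(w)$ and $\nabla_yq_\eps=\ell\sigma^2B_x^{-T}\nabla Q^{\rm r}_{\eps,x}(w)$. By \eqref{eq:bulk-slack-profiles}, $Q^{\rm r}_{\eps,x}\to h_d-|w|^2/2$ in $C^1$, uniformly on $E_\eps$. Its radial derivative near $|w|=a_d$ is therefore close to $-a_d$. Consequently the rescaled boundary is a radial graph $r=R_\eps(x,\theta)$ converging in $C^0$ to the sphere of radius $a_d$, and $|\nabla Q^{\rm r}|\to a_d$ on it. By \eqref{eq:pregrad}, $z_\eps(x)-T(x)=o(\ell)$ on bulk families (\cref{cor:boundary-gradients}).

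To avoid computing the anisotropic surface Jacobian directly, I would rewrite the boundary integral by the coarea formula. It equals $\lim_{s\downarrow0}s^{-1}\int_{\{0<q_\eps<s\}}(\cdots)\,|\nabla_y q_\eps|\,\rho_1\dd y/|\nabla_yq_\eps|$. In rescaled variables this becomes a volume integral over thin shells whose limit is controlled by the $C^1$ convergence.

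The limit is the model quantity. In $w$-coordinates,
\[
 \frac{1}{\omega_da_d^d}\int_{\partial B_{a_d}}\frac{ww^T}{a_d}\dd\mathcal H^{d-1}=\frac{1}{\omega_da_d^{d+1}}\cdot\frac{a_d^2}{d}\,d\omega_da_d^{d-1}\Id=\Id .
\]
Collecting the factors gives $\ell^2B_x(\cdot)B_x^T$ from $(y-z_\eps)^{\otimes2}$, a factor $\rho_1\det B_x$ from the surface measure and $\nu(S_{\eps,x})$ (these cancel), and $\ell\sigma^2$ from $|\nabla_yq_\eps|$. Hence $D^2u_\eps(x)\to B_xB_x^T/\sigma(x)^2=H(x)$ uniformly, up to the a.e.\ set. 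The reverse statement follows identically using $K_{\eps,T(x)}$, $Q^{\rm c}_{\eps,x}$ and $\Lambda_x$, yielding $\Lambda_x\Lambda_x^T/\sigma^2=H^{-1}$.

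\emph{Step 3: Sobolev convergence.} Each compact $K\subset\intr X$ is a bulk family, so $D^2u_\eps\to D^2u_0$ a.e.\ on $X$. The uniform bound \eqref{eq:curvature-input} and dominated convergence give $L^p$ convergence of the Hessians. Moreover, $u_\eps\to u_0$ uniformly by \eqref{eq:G-inputs}, and $\nabla u_\eps\to T$ uniformly by \eqref{eq:pregrad}. Together these give $W^{2,p}(X)$ convergence, and likewise on $Y$, using that $T$ is bi-Lipschitz. The main obstacle is Step 1 together with its limit: justifying the boundary formula at a.e.\ $x$ for merely $C^{1,1}$ potentials, and showing that the boundary integrals converge. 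Both rest on the uniform lower bound for $|\nabla_yq_\eps|$ near $\partial S_{\eps,x}$, which comes from the $C^1$ part of \eqref{eq:bulk-slack-profiles}.
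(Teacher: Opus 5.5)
Your proposal is correct. Its overall architecture matches the paper's: a boundary-integral formula for $D^2u_\eps$, the $C^1$ bulk profiles \eqref{eq:bulk-slack-profiles}, then exhaustion and dominated convergence. The genuine difference is in how the boundary integral is evaluated in the limit; also, you derive the formula yourself, whereas the paper cites it from Part I.

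The paper never identifies the limiting shape of $\partial S_{\eps,x}$ or its surface measure. From \eqref{eq:bulk-slack-profiles} it extracts only $\nabla_yq_\eps(x,y)=-H(x)^{-1}(y-T(x))+o(\ell)$ and $z_\eps(x)-T(x)=o(\ell)$, so that $y-z_\eps(x)=-H(x)\nabla_yq_\eps(x,y)+o(\ell)$ on the boundary. The integrand then becomes $-(y-z_\eps(x))\,\nabla_yq_\eps^T|\nabla_yq_\eps|^{-1}H(x)$ up to $o(\ell)$. The divergence theorem, with outward normal $-\nabla_yq_\eps/|\nabla_yq_\eps|$, turns its integral into exactly $|S_{\eps,x}|\Id$. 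The error is absorbed by $\mathcal H^{d-1}(\partial S_{\eps,x})\le C\ell^{d-1}$, which follows from convexity and the inner ball of radius $c\ell$. That argument is shape-independent and needs no model computation.

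Your route instead needs three inputs: uniform convergence of the radial function of $\mathcal S_{\eps,x}$ to $a_d$, uniform convergence of $\partial_rQ^{\rm r}_{\eps,x}$ on the boundary to $-a_d$, and the sphere moment you computed. The one point to tighten is the interchange of $s\downarrow0$ and $\eps\downarrow0$ in your thin-shell formulation. That issue disappears if you use the exact polar form of the coarea formula, $\int_{\partial\mathcal S_{\eps,x}}g\,|\nabla_wQ^{\rm r}_{\eps,x}|^{-1}\dd\mathcal H^{d-1}=\int_{\Sn}g(R\theta)R^{d-1}|\partial_rQ^{\rm r}_{\eps,x}(R\theta)|^{-1}\dd\mathcal H^{d-1}(\theta)$, where $R=R(x,\theta)$ is the radial function. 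This identity holds at each fixed $\eps$, and its integrand converges uniformly.

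In return, your difference-quotient derivation of the formula is self-contained. It uses only $u_\eps\in C^1$ and a lower bound on $|\nabla_yq_\eps|$ near $\partial S_{\eps,x}$.

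One minor attribution fix: \eqref{eq:pregrad} gives only $z_\eps-T=O(\ell)$. The $o(\ell)$ bound on bulk families comes from \cref{cor:boundary-gradients}, as you also note.
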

\begin{proof}
Fix a measurable bulk family $E_\eps$. All the $o(1)$ and $o(\ell)$
terms below are uniform for $x\in E_\eps$ and, when a point
$y\in S_{\eps,x}$ is present, also uniformly in $y$. Choose $r<a_d$. By \eqref{eq:bulk-slack-profiles},
$Q^{\rm r}_{\eps,x}(w)>0$ for $|w|\le r$ and all sufficiently small
$\eps$, uniformly for $x\in E_\eps$. By the definition
\eqref{eq:rescaled-slacks}, this implies
$T(x)+\ell B_x\B(0,r)\subset S_{\eps,x}$. The uniform bounds for
$B_x$ and $B_x^{-1}$ in \eqref{eq:rescaling-matrices} therefore give
$B(T(x),c\ell)\subset S_{\eps,x}$. The opposite inclusion
$S_{\eps,x}\subset B(T(x),C\ell)$ follows from
\eqref{eq:prelocalization}.

Differentiating the first identity in
\eqref{eq:centered-potentials} with respect to $\xi$ at $\xi=0$ gives
$\Lambda_x^T(z_\eps(x)-T(x))/(\ell\sigma(x)^2)
=-\nabla_\xi\Phi_{\eps,x}(0)+o(1)$. Since $E_\eps$ is a bulk family,
$t_\eps(x)\to\infty$ uniformly on $E_\eps$. By
\eqref{eq:microscopic-comparison} and
\eqref{eq:profile-deep-limit},
$\nabla_\xi\Phi_{\eps,x}(0)\to0$ uniformly. The uniform bounds for
$\sigma$ and $\Lambda_x^{-1}$ then give
$z_\eps(x)-T(x)=o(\ell)$.

Let $y\in S_{\eps,x}$ and write
$y=T(x)+\ell B_xw$. By \eqref{eq:prelocalization} and the uniform
bound for $B_x^{-1}$, the corresponding $w$ remains in a fixed ball.
Differentiating the first identity in \eqref{eq:rescaled-slacks} with
respect to $w$ gives
$\nabla_yq_\eps(x,y)=\ell\sigma(x)^2B_x^{-T}
\nabla_wQ^{\rm r}_{\eps,x}(w)$. By
\eqref{eq:bulk-slack-profiles},
$\nabla_wQ^{\rm r}_{\eps,x}(w)=-w+o(1)$. Since
$w=\ell^{-1}B_x^{-1}(y-T(x))$ and
$B_xB_x^T=\sigma(x)^2H(x)$ by
\eqref{eq:rescaling-matrices}, we obtain
\begin{equation}\label{eq:physical-hessian-profiles}
\begin{gathered}
 B(T(x),c\ell)\subset S_{\eps,x}\subset B(T(x),C\ell),
 \qquad z_\eps(x)-T(x)=o(\ell),\\
 \nabla_yq_\eps(x,y)
 =
 -H(x)^{-1}(y-T(x))+o(\ell)
 \qquad (y\in S_{\eps,x}).
\end{gathered}
\end{equation}

By \eqref{eq:transport-boundary-distance} and the definition of a bulk
family,
$\dist(T(x),\partial Y)/\ell\to\infty$ uniformly for
$x\in E_\eps$. Together with
$S_{\eps,x}\subset B(T(x),C\ell)$ from
\eqref{eq:physical-hessian-profiles}, this gives
$S_{\eps,x}\subset\intr Y$ for sufficiently small $\eps$.

If $y\in\partial S_{\eps,x}$, then
$|y-T(x)|\ge c\ell$ by the first inclusion in
\eqref{eq:physical-hessian-profiles}, while
$|y-T(x)|\le C\ell$ by the second inclusion. Uniform ellipticity of
$H(x)$ and the second line of
\eqref{eq:physical-hessian-profiles} therefore give
$|\nabla_yq_\eps(x,y)|\asymp\ell$ on
$\partial S_{\eps,x}$. Since
$S_{\eps,x}\subset\intr Y$, its boundary is the zero level of
$q_\eps(x,\cdot)$. Thus, for $d\ge2$, the implicit function theorem
shows that $\partial S_{\eps,x}$ is a $C^1$ hypersurface, with outward
unit normal
$-\nabla_yq_\eps(x,y)/|\nabla_yq_\eps(x,y)|$.

The differentiation formula for \eqref{eq:barycenters}, displayed in
\cite[Section~2.2]{GNGeometry}, gives
\begin{equation}\label{eq:physical-moving-hessian}
 D^2u_\eps(x)
 =
 \frac1{\nu(S_{\eps,x})}
 \int_{\partial S_{\eps,x}}
 \frac{(y-z_\eps(x))(y-z_\eps(x))^T\rho_1(y)}
      {|\nabla_yq_\eps(x,y)|}
 \dd\mathcal H^{d-1}(y)
\end{equation}
for almost every
$x\in E_\eps$. For $y\in\partial S_{\eps,x}$,
\eqref{eq:physical-hessian-profiles} gives
$y-z_\eps(x)=-H(x)\nabla_yq_\eps(x,y)+o(\ell)$. Since
$|y-z_\eps(x)|=O(\ell)$,
$|\nabla_yq_\eps(x,y)|\asymp\ell$, and $H(x)$ is symmetric, it follows
that
$\frac{(y-z_\eps(x))(y-z_\eps(x))^T}
{|\nabla_yq_\eps(x,y)|}
=-(y-z_\eps(x))
\frac{\nabla_yq_\eps(x,y)^T}{|\nabla_yq_\eps(x,y)|}H(x)+o(\ell)$
uniformly on $\partial S_{\eps,x}$.

Since $v_\eps$ is convex, $q_\eps(x,\cdot)$ is concave, and hence
$S_{\eps,x}$ is convex. For $y\in\partial S_{\eps,x}$, the supporting
hyperplane at $y$ and the inclusion
$B(T(x),c\ell)\subset S_{\eps,x}$ give
$(y-T(x))\cdot
[-\nabla_yq_\eps(x,y)/|\nabla_yq_\eps(x,y)|]\ge c\ell$.
The divergence theorem and
$S_{\eps,x}\subset B(T(x),C\ell)$ therefore give
\begin{equation}\label{eq:hessian-divergence-identities}
\begin{gathered}
 c\ell^d\le |S_{\eps,x}|\le C\ell^d,\qquad
 \mathcal H^{d-1}(\partial S_{\eps,x})\le C\ell^{d-1},\\
 -\int_{\partial S_{\eps,x}}
 (y-z_\eps(x))
 \frac{\nabla_yq_\eps(x,y)^T}{|\nabla_yq_\eps(x,y)|}
 \dd\mathcal H^{d-1}(y)
 =
 |S_{\eps,x}|\Id .
\end{gathered}
\end{equation}
Indeed, the surface-area estimate follows from
$c\ell\mathcal H^{d-1}(\partial S_{\eps,x})
\le d|S_{\eps,x}|$, and the identity follows by applying the
divergence theorem componentwise to
$y\mapsto y-z_\eps(x)$.

By \eqref{eq:physical-hessian-profiles},
$\sup_{y\in S_{\eps,x}}|y-T(x)|\le C\ell$. Since $\rho_1$ is uniformly
continuous on $Y$, it follows that
$\sup_{y\in S_{\eps,x}}|\rho_1(y)-\rho_1(T(x))|=o(1)$. The
surface-area estimate in \eqref{eq:hessian-divergence-identities}
shows that the integral of the uniform $o(\ell)$ term above is
$o(\ell^d)$. Using the identity in
\eqref{eq:hessian-divergence-identities}, the numerator in
\eqref{eq:physical-moving-hessian} is therefore
$\rho_1(T(x))|S_{\eps,x}|H(x)+o(\ell^d)$. Similarly,
$\nu(S_{\eps,x})
=\rho_1(T(x))|S_{\eps,x}|+o(\ell^d)$. Since
$|S_{\eps,x}|\ge c\ell^d$ and $\rho_1$ is bounded away from zero,
\eqref{eq:physical-moving-hessian} gives
$D^2u_\eps(x)=H(x)+o(1)$ uniformly for almost every
$x\in E_\eps$. For $d=1$, the boundary integrals are sums over the two
endpoints of $S_{\eps,x}$, and the same calculation applies.

For $D^2v_\eps$, apply the same argument after interchanging
$X,\mu,u_\eps,T$ with $Y,\nu,v_\eps,S$. By
\eqref{eq:transport-boundary-distance} and the bulk assumption,
$\inf_{x\in E_\eps}\dist(T(x),\partial Y)/\ell\to\infty$, so
$T(E_\eps)$ satisfies the 
bulk condition in $Y$.
Therefore
$D^2v_\eps(y)-D^2v_0(y)\to0$ uniformly for almost every
$y\in T(E_\eps)$. Since
$D^2v_0(T(x))=DS(T(x))=H(x)^{-1}$, this gives the second term in
\eqref{eq:bulk-hessian-convergence}. 

Finally, every compact subset of $\intr X$ is a bulk family, so
\eqref{eq:bulk-hessian-convergence} gives
$D^2u_\eps\to D^2u_0$ in $L^\infty$ on each compact subset of
$\intr X$. By \eqref{eq:curvature-input},
$D^2u_\eps-D^2u_0$ is uniformly bounded on $X$. Since
$|\partial X|=0$, exhaustion of $\intr X$ by compact subsets gives
$D^2u_\eps\to D^2u_0$ in $L^p(X)$ for every $1\le p<\infty$.
The same argument gives
$D^2v_\eps\to D^2v_0$ in $L^p(Y)$. Finally,
\eqref{eq:G-inputs} gives uniform convergence of the potentials, and
\eqref{eq:pregrad} gives uniform convergence of their gradients.
Therefore $u_\eps\to u_0$ in $W^{2,p}(X)$ and
$v_\eps\to v_0$ in $W^{2,p}(Y)$.
\end{proof}

\begin{proof}[Proof of \cref{thm:main-geometry}]
The graph and support assertions \eqref{eq:main-graph-inclusion} follow from \eqref{eq:graph-positive} and \eqref{eq:tubes}. A fixed compact set $K\subset\intr X$ is a bulk family. By \cref{thm:section-laws} and \eqref{eq:profile-deep-limit}, the normalized sections converge to $\B(0,a_d)$ and the normalized conditional laws converge in total variation to $\kappa_d(w)\dd w$, uniformly for $x\in K$. The map $w\mapsto B_xw$ sends these limits to $\mathscr E_x$ and $k_x(w)\dd w$, respectively, by \eqref{eq:rescaling-matrices} and \eqref{eq:cap-change-of-variables}. Hausdorff distance increases by at most $\|B_x\|$, which is uniformly bounded, and total variation is unchanged under this invertible map. This proves \eqref{eq:main-interior-profiles}. Interchanging the marginals gives the corresponding conclusions for $S$. The Sobolev limits \eqref{eq:main-Sobolev} are proved in \cref{prop:interior-hessian}.
\end{proof}

\section{Neumann corrector and potential expansions}\label{sec:macroscopic}

The limit \eqref{eq:uniform-graph-profile} determines the leading term of $r_\eps+s_\eps\circ T$. To prove \cref{thm:corrector}, it remains to identify the uniform limit of the normalized difference $g_\eps$ in \eqref{eq:errors}. Write
\begin{equation}\label{eq:corrector-coefficients}
 \mathcal A=\rho_0 H^{-1},\qquad \mathfrak b=\frac{h}{2(d+4)}\nabla\log\det H.
\end{equation}
With this notation, the Neumann problem \eqref{eq:corrector-Neumann} is
\[
 \int_X \mathcal A(\nabla g_0-\mathfrak b)\cdot\nabla\psi\,\dd x=0.
\]
By \cref{prop:inputs}, $\mathcal A$ is symmetric and uniformly positive
definite, $\mathcal A\in C^{1,\beta}(X)$, and
$\mathcal A\mathfrak b\in C^{0,\beta}(X)$.

We now prove the uniform convergence of the normalized potential
difference to $g_0$ and the convergence of its $\ell$-scale difference
quotients used in \cref{sec:centering}. The compactness and truncation
estimates invoked in the proof are established in
\cref{sec:nonlocal} and applied with $l=\ell$.

\subsection{Convergence to the corrector}\label{sec:corrector-proof}
\begin{proposition}[Convergence to the Neumann corrector]\label{prop:corrector-convergence}
There is a unique $g_0\in H^1(X)$ satisfying \eqref{eq:corrector-Neumann} and $\int_Xg_0\dd\mu=0$. It belongs to $C^{1,\beta}(X)$ for some $\beta\in(0,\alpha)$, and $g_\eps\to g_0$ uniformly on $X$. Moreover, with $\widetilde g_\eps=g_\eps-g_0$, for every $r>0$,
\begin{equation}\label{eq:short-energy}
 \ell^{-d-2}\iint_{\substack{x,z\in X\\|x-z|<r\ell}}|\widetilde g_\eps(z)-\widetilde g_\eps(x)|^2\dd x\dd z\longrightarrow0.
\end{equation}
\end{proposition}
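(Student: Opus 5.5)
We follow the route sketched in \cref{sec:proof-strategy}. First, existence, uniqueness and regularity of $g_0$ are standard. Since $\mathcal A=\rho_0H^{-1}$ is uniformly elliptic and $C^{1,\beta}$, Lax--Milgram on the mean-zero subspace of $H^1(X)$ (with Poincar\'e--Wirtinger) gives a unique solution. The right-hand side is the divergence of $\mathcal A\mathfrak b\in C^{0,\beta}$, and the associated conormal data is also $C^{0,\beta}$. Schauder estimates for the conormal problem on the $C^{3,\alpha}$ domain $X$ therefore give $g_0\in C^{1,\beta}(X)$.

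For convergence, subtract the two identities in \eqref{eq:Q-marginals}, with $Q_\eps$ written via \eqref{eq:Q}. The $\tau_\eps$ terms are symmetric, so with the secant kernel $J_\eps$ we obtain, for each $x$,
\[
\int_XJ_\eps(x,z)\{\ell^2[g_\eps(z)-g_\eps(x)]-\mathfrak a(x,z)\}\dd\mu(z)=0 .
\]
By \eqref{eq:tubes} and \eqref{eq:graph-positive}, $J_\eps=\ell^{-d}$ for $|x-z|<a\ell$ and $J_\eps=0$ for $|x-z|>R\ell$. Testing against $\psi(x)$ and symmetrizing gives the weak identity \eqref{eq:corrector-weak-intro}. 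Taking $\psi=g_\eps$, using $|\mathfrak a|\le C\ell^3$ on the support and Cauchy--Schwarz, yields the energy bound
\[
\ell^{-d-2}\iint_{|x-z|<a\ell}|g_\eps(z)-g_\eps(x)|^2\le C .
\]
Chaining along convex segments extends this to any radius $r\ell$. Mollifying $g_\eps$ at scale $\ell$ produces functions bounded in $H^1$ whose $L^2$ distance to $g_\eps$ tends to zero. Rellich then gives subsequential $L^2$ limits $g\in H^1$, with $(g_\eps(x+\ell w)-g_\eps(x))/\ell\rightharpoonup\nabla g\cdot w$ weakly in $L^2(\dd x\,\dd w)$.

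To identify the limit, pass to the limit in \eqref{eq:corrector-weak-intro} with smooth $\psi$, substituting $z=x+\ell w$. By \eqref{eq:physical-bulk}, $\ell^dJ_\eps\to\1_{E_x}$ a.e.\ in the interior, and the boundary strip carries vanishing mass. By \eqref{eq:corrector-cubic-intro}, the $\mathfrak a$ term converges to the cubic form, where weak-times-strong convergence suffices. Computing the second and fourth moments on $E_x$ (as in \eqref{eq:model-cap-second-moment}) produces $\mathcal A$ and $\mathfrak b$. The identity $D^3u_0$ contracted with $H^{-1}$ equals $\nabla\log\det H$ gives precisely \eqref{eq:corrector-Neumann}. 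Together with $\int g\,\dd\mu=0$, uniqueness forces $g=g_0$, so the full family converges in $L^2$.

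For \eqref{eq:short-energy}, test with $\psi=\widetilde g_\eps$ in the difference of the weak identities for $g_\eps$ and (the discretized) $g_0$. The cross term with $\mathfrak a$ and with $g_0$'s difference quotients converges to the limit form evaluated at $\nabla\widetilde g=0$ by the weak convergence, so the nonlocal energy of $\widetilde g_\eps$ tends to zero. This is the lower-semicontinuity/energy-equality argument.

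The main obstacle is upgrading $L^2$ convergence to uniform convergence on $X$. The plan is to use the nonlocal equation as an averaging identity, $g_\eps(x)=\bar g_\eps(x)-\ell^{-2}\bar{\mathfrak a}(x)$, where the bar denotes the $J_\eps$-average, with $|\ell^{-2}\bar{\mathfrak a}|\le C\ell$. Applying the same identity to $g_0$ (a $C^{1,\beta}$ solution of the limit problem, with consistency error $o(1)$ at the $\ell^2$-scale after division) shows that $\widetilde g_\eps$ is almost $P$-harmonic for a Markov operator with range $O(\ell)$. A Krylov--Safonov/De Giorgi-type oscillation estimate for such finite-range kernels, as in \cite{MSC2015} (Harnack/H\"older at scales $\gg\ell$), combined with the energy decay \eqref{eq:short-energy}, then converts $L^2$ smallness into sup-norm smallness. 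Alternatively, one can argue by contradiction: a point where $|\widetilde g_\eps|\ge\eta$ forces, by the discrete H\"older control, a ball of radius $c(\eta)$ on which $|\widetilde g_\eps|\ge\eta/2$, contradicting $L^2$ convergence. Handling this near $\partial X$, where the kernel is truncated, is the delicate point. The boundary-inclusive bounds \eqref{eq:tubes} keep the kernel uniformly elliptic up to the boundary, which is what the H\"older estimate requires.
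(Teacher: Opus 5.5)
Up to and including \eqref{eq:short-energy} you follow the paper's proof: the secant-kernel equation, the energy bound from $\psi=g_\eps$, smoothing and Rellich compactness, identification of the limit through moments on $E_x$, and the energy-equality argument with $\psi=\widetilde g_\eps$. Getting $g_0$ from Lax--Milgram and conormal Schauder theory, instead of from the subsequential limits and \cite{Vita2022}, is a harmless variation.

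The gap is in the step you flag as the main obstacle, uniform convergence. Your mechanism treats $\widetilde g_\eps$ as almost $P$-harmonic with a pointwise small defect, but the defect is not small at the scale that matters. After division by $\ell^2$, the equation for $\widetilde g_\eps$ is exactly \eqref{eq:corrector-error-equation}. Thus $\widetilde g_\eps-P\widetilde g_\eps=-\overline{\mathcal F}_\eps$, where $\overline{\mathcal F}_\eps$ is the $J_\eps$-average of the antisymmetric $\mathcal F_\eps(x,z)=\ell^{-2}\mathfrak a(x,z)-[g_0(z)-g_0(x)]$. With $z=x+\ell w$, $\overline{\mathcal F}_\eps(x)$ is $\ell$ times the normalized first moment of the odd function $F_0(x,w)-\nabla g_0(x)\cdot w$ against the weight $\omega_\eps(x,x+\ell w)\rho_0(x+\ell w)$.
\begin{itemize}
\item In the $O(\ell)$ boundary layer, this weight tends to the indicator of a half-space section, which is not symmetric in $w$. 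The Neumann condition constrains only integrated fluxes, so $\overline{\mathcal F}_\eps$ is generically of exact order $\ell$ there.
\item In the interior, the odd part of $\omega_\eps(x,x+\ell\,\cdot)$ is a priori controlled only by the $o(1)$ profile convergence of \cref{thm:microscopic}. This gives $o(\ell)$, not $O(\ell^2)$.
\end{itemize}
Since $P-I$ acts like $\ell^2$ times an elliptic operator, such a defect corresponds to a right-hand side of size up to $1/\ell$. In any non-divergence (Krylov--Safonov/ABP type) estimate it accumulates to order $(r/\ell)^2\cdot\ell=r^2/\ell$ at scale $r$. So it yields neither H\"older control at scales $\gg\ell$ nor sup-norm smallness. \cite{MSC2015} treats genuinely $P$-harmonic functions and gives nothing for this forcing. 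Your H\"older estimate, and the contradiction argument built on it (including its boundary version), therefore rest on an estimate that is not available as stated.

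The idea that makes this step work is the antisymmetry of $\mathcal F_\eps$ together with $|\mathcal F_\eps|\le C\ell$ on $\{J_\eps>0\}$. The forcing is of divergence form with bounded, though not small, flux. The paper exploits this in \cref{lem:uniform-truncation} by a De Giorgi iteration:
\begin{itemize}
\item Test with $w_k=(\widetilde g_\eps-k)_+$ and symmetrize. Young's inequality then bounds the nonlocal energy of $w_k$ by $C\mu(\{\widetilde g_\eps>k\})$.
\item The averaged Sobolev inequality \eqref{eq:app-averaged-sobolev} turns this into an $L^{2(1+1/d)}$ bound for $P_\ell w_k$.
\item Jensen's inequality gives the sub-mean-value property $w_k\le P_\ell w_k+C\ell$.
\end{itemize}
Together these give a superlinear recursion for the level-set measures, started from Chebyshev's inequality and the $L^2$ convergence. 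After $O(\log\log(1/\ell))$ levels it pushes $\mu(\{\widetilde g_\eps>3\eta/4\})$ below $\ell^{d+2}$, and one further averaging step gives $\widetilde g_\eps\le\eta$. The bounded flux only costs an $O(\ell)$ shift of the levels. The argument is global on the convex body $X$, so the boundary needs no separate treatment.

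Without an argument of this type, or a De Giorgi--Nash--Moser H\"older estimate for finite-range kernels with antisymmetric forcing proved up to $\partial X$, your proposal does not establish $g_\eps\to g_0$ uniformly.
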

\begin{proof}
\emph{Nonlocal equation.}
Recall the notation $Q_\eps(x,z)=q_\eps(x,T(z))$. Define
\begin{equation}\label{eq:corrector-kernel}
 \omega_\eps(x,z)
 =
 \int_0^1
 \1_{\{(1-t)Q_\eps(z,x)+tQ_\eps(x,z)>0\}}
 \dd t,
 \qquad
 J_\eps(x,z)=\ell^{-d}\omega_\eps(x,z).
\end{equation}
Changing $t$ to $1-t$ in \eqref{eq:corrector-kernel} gives
$\omega_\eps(x,z)=\omega_\eps(z,x)$, and
$0\le\omega_\eps\le1$. By \eqref{eq:W-secant} and \eqref{eq:Q},
\begin{equation}\label{eq:corrector-slack-difference}
 (Q_\eps(x,z))_+-(Q_\eps(z,x))_+
 =
 2\omega_\eps(x,z)
 [\eta_\eps(z)-\eta_\eps(x)-\mathfrak a(x,z)].
\end{equation}
By \eqref{eq:Q-marginals}, integrating
\eqref{eq:corrector-slack-difference} with respect to $\mu$
 gives
\begin{equation}\label{eq:corrector-exact}
 \int_X
 J_\eps(x,z)
 [\eta_\eps(z)-\eta_\eps(x)-\mathfrak a(x,z)]
 \dd\mu(z)=0
 \qquad(x\in X).
\end{equation}
By \eqref{eq:tubes} and the Lipschitz
continuity of $T$, there exist $R>a>0$, independent of $\eps$, such
that %
\begin{equation}\label{eq:omega-ellipticity}
 \1_{\{|x-z|<a\ell\}}
 \le
 \omega_\eps(x,z)
 \le
 \1_{\{|x-z|<R\ell\}}
 \qquad(x,z\in X).
\end{equation}
Increase $R$ so that
$R^2>2\max_{x\in X}h(x)\|H(x)^{-1}\|$. Then
$\{w:w^TH(x)w/2<h(x)\}\subset B_R$ for every $x\in X$.

For $x\in X$ and $w\in B_R$ with $x+\ell w\in X$, define
\begin{equation}\label{eq:corrector-edge-definitions}
\begin{gathered}
 D_\ell v(x,w)
 =
 \frac{v(x+\ell w)-v(x)}{\ell},
 \qquad
 F_\ell(x,w)
 =
 \frac{\mathfrak a(x,x+\ell w)}{\ell^3},\\
 a_\ell(x,w)
 =
 \rho_0(x)\rho_0(x+\ell w)
 \omega_\eps(x,x+\ell w).
\end{gathered}
\end{equation}
Set these expressions equal to zero when $x+\ell w\notin X$.
By \eqref{eq:D}, two integrations by parts along
$t\mapsto x+t\ell w$ give
\begin{equation}\label{eq:cubic-source-limit}
\begin{gathered}
 F_\ell(x,w)
 =
 \frac12\int_0^1
 t(1-t)D^3u_0(x+t\ell w)[w,w,w]\dd t
 =
 F_0(x,w)+O(\ell^\beta),\\
 F_0(x,w)=\frac1{12}D^3u_0(x)[w,w,w].
\end{gathered}
\end{equation}
The $C^{0,\beta}$ regularity of $D^3u_0$ makes the error in
\eqref{eq:cubic-source-limit} uniform for
$w\in B_R$ and $x+\ell w\in X$. In particular, $F_\ell$ is uniformly
bounded on $X\times B_R$.

Multiply \eqref{eq:corrector-exact} by $\psi(x)$ and integrate with
respect to $\mu$. Interchanging $x$ and $z$, using the
symmetry of $J_\eps$ and the antisymmetry of $\mathfrak a$, and
averaging gives the factor $\psi(z)-\psi(x)$. Since
$\eta_\eps=\ell^2g_\eps$, the change of variables $z=x+\ell w$ yields
\begin{equation}\label{eq:corrector-weak-edges}
 \int_{X\times B_R}
 a_\ell D_\ell g_\eps D_\ell\psi\dd x\dd w
 =
 \int_{X\times B_R}
 a_\ell F_\ell D_\ell\psi\dd x\dd w
 \qquad(\psi\in L^2(X)).
\end{equation}

\emph{Compactness and the limiting equation.}
Taking $\psi=g_\eps$ in \eqref{eq:corrector-weak-edges} and applying
Cauchy--Schwarz gives
\begin{equation}\label{eq:corrector-energy-bound}
 \int_{X\times B_R}
 a_\ell|D_\ell g_\eps|^2\dd x\dd w
 \le
 \int_{X\times B_R}
 a_\ell|F_\ell|^2\dd x\dd w
 \le C.
\end{equation}
By \eqref{eq:omega-ellipticity} and the positive lower bound for
$\rho_0$, $a_\ell(x,w)\ge c$ whenever
$|w|<a$ and $x+\ell w\in X$. Hence
\eqref{eq:corrector-energy-bound},
\eqref{eq:preliminary-error-bounds}, and
\cref{lem:edge-compactness} imply that, along a subsequence of any
sequence $\eps\downarrow0$,
\begin{equation}\label{eq:corrector-subsequential-limit}
 g_\eps\longrightarrow g
 \quad\text{in }L^2(\mu),
 \qquad
 g\in H^1(X),
 \qquad
 \int_Xg\dd\mu=0.
\end{equation}

Let $x$ vary in a fixed compact subset of $\intr X$ and
$w\in B_R$. For all sufficiently small $\eps$,
$x+\ell w\in X$. In \eqref{eq:two-point-profile}, first take
$\xi=0$ and $\zeta=\Lambda_x^{-1}w$, and then take
$\xi=\Lambda_x^{-1}w$ and $\zeta=0$. Since
$x+\ell\Lambda_x\Lambda_x^{-1}w=x+\ell w$,
\eqref{eq:rescaling-matrices} and
\eqref{eq:profile-deep-limit} give
\begin{equation}\label{eq:corrector-slack-limit}
 \ell^{-2}Q_\eps(x,x+\ell w)
 \longrightarrow
 h(x)-\frac12w^TH(x)w,
 \qquad
 \ell^{-2}Q_\eps(x+\ell w,x)
 \longrightarrow
 h(x)-\frac12w^TH(x)w,
\end{equation}
uniformly on compact subsets of $\intr X\times B_R$.

Let
$E_x=\{w:w^TH(x)w/2<h(x)\}$.
If $x\in\intr X$ and $w\in B_R\setminus\partial E_x$, then
\eqref{eq:corrector-slack-limit} implies that
$Q_\eps(x,x+\ell w)$ and $Q_\eps(x+\ell w,x)$ eventually have the
sign of $h(x)-w^TH(x)w/2$. Hence
$\omega_\eps(x,x+\ell w)\to\1_{E_x}(w)$.
Since $\partial E_x$ and $\partial X$ have Lebesgue measure zero,
continuity of $\rho_0$ gives
\begin{equation}\label{eq:weight-limit}
 a_\ell(x,w)\longrightarrow
 a_0(x,w):=\rho_0(x)^2\1_{E_x}(w)
 \qquad\text{for a.e. }(x,w)\in X\times B_R.
\end{equation}

The functions $a_\ell$ are uniformly bounded.  \cref{lem:edge-compactness}, together with
\eqref{eq:weight-limit}, therefore gives
\begin{equation}\label{eq:corrector-weak-edge-limit}
 \sqrt{a_\ell}D_\ell g_\eps
 \rightharpoonup
 \sqrt{a_0}\,\nabla g(x)\cdot w
 \qquad\text{in }L^2(X\times B_R).
\end{equation}
For $\psi\in C^1(X)$ and $x+\ell w\in X$, convexity of $X$ gives
$D_\ell\psi(x,w)
=\int_0^1\nabla\psi(x+t\ell w)\cdot w\dd t$.
For almost every $x\in X$ and every fixed $w\in B_R$,
$x+\ell w\in X$ for all sufficiently small $\ell$. Using
\eqref{eq:weight-limit}, the uniform bound for $a_\ell$,
\eqref{eq:cubic-source-limit}, and dominated convergence gives
\begin{equation}\label{eq:corrector-strong-test-limits}
 \sqrt{a_\ell}D_\ell\psi
 \longrightarrow
 \sqrt{a_0}\,\nabla\psi(x)\cdot w,
 \qquad
 \sqrt{a_\ell}F_\ell
 \longrightarrow
 \sqrt{a_0}F_0
 \quad\text{in }L^2(X\times B_R).
\end{equation}
Combining \eqref{eq:corrector-weak-edge-limit} and
\eqref{eq:corrector-strong-test-limits} in
\eqref{eq:corrector-weak-edges} then yields
\begin{equation}\label{eq:corrector-limit-integral}
 \int_X\rho_0(x)^2
 \int_{E_x}
 [\nabla g(x)\cdot w-F_0(x,w)]
 \nabla\psi(x)\cdot w
 \dd w\dd x=0
 \qquad(\psi\in C^1(X)).
\end{equation}
We next identify two terms in
\eqref{eq:corrector-limit-integral}. We claim that
\begin{equation}\label{eq:corrector-moments}
 \rho_0(x)^2\int_{E_x}ww^T\dd w=\mathcal A(x),
 \qquad
 \rho_0(x)^2\int_{E_x}wF_0(x,w)\dd w
 =
 \mathcal A(x)\mathfrak b(x).
\end{equation}
For $w\in E_x$, set
$c_x(w)=h(x)-w^TH(x)w/2$. Since
$c_x=0$ on $\partial E_x$ and
$\nabla_wc_x=-H(x)w$, integration by parts gives
\begin{equation}\label{eq:corrector-cap-integration}
 \int_{E_x}wf(w)\dd w
 =
 H(x)^{-1}
 \int_{E_x}c_x(w)\nabla_wf(w)\dd w
 \qquad
 (f\in C^1(\overline E_x)).
\end{equation}
The change of variables
$w=\sigma(x)H(x)^{-1/2}v$ maps $B_{a_d}$ onto $E_x$.
Using \eqref{eq:scale}, \eqref{eq:cap-normalization}, and
\eqref{eq:cap-vector-moments} gives
\begin{equation}\label{eq:corrector-cap-moments}
 \int_{E_x}c_x(w)\dd w=\rho_0(x)^{-1},
 \qquad
 \int_{E_x}c_x(w)ww^T\dd w
 =
 \frac{2h(x)}{\rho_0(x)(d+4)}H(x)^{-1}.
\end{equation}
Taking $f(w)=w_j$ in
\eqref{eq:corrector-cap-integration} and using the first identity in
\eqref{eq:corrector-cap-moments} proves the first identity in
\eqref{eq:corrector-moments}. For
$f(w)=F_0(x,w)$,
\[
 \partial_{w_j}F_0(x,w)
 =
 \frac14\sum_{k,m}
 \partial_{jkm}u_0(x)w_kw_m.
\]
Thus the second identity in \eqref{eq:corrector-cap-moments} and
Jacobi's formula
$\sum_{k,m}(H(x)^{-1})_{km}\partial_{jkm}u_0(x)
=\partial_j\log\det H(x)$ give
\[
 \int_{E_x}c_x(w)\nabla_wF_0(x,w)\dd w
 =
 \frac{h(x)}{2\rho_0(x)(d+4)}
 \nabla\log\det H(x).
\]
Substitution into \eqref{eq:corrector-cap-integration} proves the
second identity in \eqref{eq:corrector-moments}.

Using \eqref{eq:corrector-moments} in
\eqref{eq:corrector-limit-integral} gives
\eqref{eq:corrector-Neumann} with $g$ in place of $g_0$.
Since $C^1(X)$ is dense in $H^1(X)$, the identity holds for every
$\psi\in H^1(X)$.

If $g^{(1)}$ and $g^{(2)}$ satisfy
\eqref{eq:corrector-Neumann} and
$\int_Xg^{(1)}\dd\mu=\int_Xg^{(2)}\dd\mu=0$, subtracting their
equations and taking
$\psi=g^{(1)}-g^{(2)}$ gives
\[
 \int_X
 (\nabla g^{(1)}-\nabla g^{(2)})^T
 \mathcal A
 (\nabla g^{(1)}-\nabla g^{(2)})
 \dd x=0.
\]
By the uniform positive definiteness of $\mathcal A$ from
\cref{prop:inputs},
$\nabla g^{(1)}=\nabla g^{(2)}$ almost everywhere. Since $X$ is
connected, $g^{(1)}-g^{(2)}$ is constant, and the zero $\mu$-mean
condition makes this constant zero. Thus
\eqref{eq:corrector-Neumann} has a unique solution with zero
$\mu$-mean, namely $g_0$.

Every subsequential limit in
\eqref{eq:corrector-subsequential-limit} is therefore $g_0$, and
hence
$g_\eps\to g_0$ in $L^2(\mu)$ for the full family. The bound
\eqref{eq:corrector-energy-bound} and the uniqueness of the
subsequential limit in \eqref{eq:corrector-weak-edge-limit} also give
\eqref{eq:corrector-weak-edge-limit}  with
$g=g_0$. Finally, the uniform bound for $g_\eps$ in
\eqref{eq:preliminary-error-bounds} and a.e.\ convergence
along an $L^2$-convergent subsequence give
$g_0\in L^\infty(X)$.

\emph{Regularity and energy convergence.}
For $d\ge2$, apply
\cite[Corollary~1.5 and Remark~1.6]{Vita2022} with $k=0$,
coefficient matrix $\mathcal A$, vector forcing
$-\mathcal A\mathfrak b$, and all other data equal to zero.
By \cref{prop:inputs}, $\mathcal A$ is symmetric and uniformly
positive definite,
$\mathcal A\in C^{1,\beta}(X)$, and
$\mathcal A\mathfrak b\in C^{0,\beta}(X)$, while the boundary regularity
required in \cite{Vita2022} follows from \cref{ass:data}. The weak formulation
\cite[equation~(2.3)]{Vita2022} is precisely
\eqref{eq:corrector-Neumann}, and the homogeneous Neumann condition is
the vanishing normal component of
$\mathcal A(\nabla g_0-\mathfrak b)$. The boundary and interior
estimates therefore give
$g_0\in C^{1,\beta}(X)$.

If $d=1$, every function in $L^2(X)$ is the derivative of a function
in $H^1(X)$. Thus \eqref{eq:corrector-Neumann} implies
$\mathcal A(g_0'-\mathfrak b)=0$ almost everywhere. Since
$\mathcal A>0$, $g_0'=\mathfrak b$, and hence
$g_0\in C^{1,\beta}(X)$ also in dimension one.

Set $\widetilde g_\eps=g_\eps-g_0$. Taking
$\psi=\widetilde g_\eps$ in
\eqref{eq:corrector-weak-edges} and subtracting the term containing
$D_\ell g_0$ gives
\begin{equation}\label{eq:corrector-energy-error}
 \int_{X\times B_R}
 a_\ell|D_\ell\widetilde g_\eps|^2
 \dd x\dd w
 =
 \int_{X\times B_R}
 \sqrt{a_\ell}(F_\ell-D_\ell g_0)
 \sqrt{a_\ell}D_\ell\widetilde g_\eps
 \dd x\dd w
 \longrightarrow0.
\end{equation}
Indeed, since $g_0\in C^{1,\beta}(X)$,
\eqref{eq:corrector-strong-test-limits} applies with $\psi=g_0$, so
$\sqrt{a_\ell}(F_\ell-D_\ell g_0)$ converges strongly in
$L^2(X\times B_R)$. By
\eqref{eq:corrector-weak-edge-limit} with $g=g_0$ and the strong
convergence of $\sqrt{a_\ell}D_\ell g_0$,
$\sqrt{a_\ell}D_\ell\widetilde g_\eps\rightharpoonup0$ in
$L^2(X\times B_R)$. Hence the right-hand side of
\eqref{eq:corrector-energy-error} tends to zero.

Since $g_0\in C^{1,\beta}(X)$,
$D_\ell g_0(x,w)-\nabla g_0(x)\cdot w\to0$ uniformly for
$w\in B_R$ and $x+\ell w\in X$. It follows from
\eqref{eq:corrector-energy-error} that
\begin{equation}\label{eq:strong-edge-convergence}
 \int_{X\times B_R}
 a_\ell
 |D_\ell g_\eps-\nabla g_0(x)\cdot w|^2
 \dd x\dd w
 \longrightarrow0.
\end{equation}

By \eqref{eq:omega-ellipticity} and the positive lower bound for
$\rho_0$, \eqref{eq:corrector-energy-error} gives
\[
 \ell^{-d-2}
 \iint_{\substack{x,z\in X\\|x-z|<a\ell}}
 |\widetilde g_\eps(z)-\widetilde g_\eps(x)|^2
 \dd x\dd z
 \longrightarrow0.
\]
For any fixed $r>0$, choose $n\in\mathbb N$ such that $r\le na$.
Then \eqref{eq:app-length-comparison} gives
\eqref{eq:short-energy}.

\emph{Uniform convergence.}
Dividing \eqref{eq:corrector-exact} by $\ell^2$ and using $\eta_\eps=\ell^2g_\eps$ yields
\begin{equation}\label{eq:corrector-error-equation}
 \int_X
 J_\eps(x,z)
 [\widetilde g_\eps(z)-\widetilde g_\eps(x)
 -\mathcal F_\eps(x,z)]
 \dd\mu(z)=0,
 \qquad
 \mathcal F_\eps(x,z)
 =
 \frac{\mathfrak a(x,z)}{\ell^2}
 -
 [g_0(z)-g_0(x)].
\end{equation}
The function $\mathcal F_\eps$ is antisymmetric. If
$J_\eps(x,z)>0$, then
\eqref{eq:omega-ellipticity} gives $|x-z|<R\ell$. Hence
\eqref{eq:common-gap-bounds} and the Lipschitz continuity of $g_0$
show
\[
 |\mathcal F_\eps(x,z)|
 \le
 C\ell^{-2}|x-z|^3
 +
 \Lip(g_0)|x-z|
 \le C\ell
\]
whenever $J_\eps(x,z)>0$. This inequality, \eqref{eq:preliminary-error-bounds},
\eqref{eq:corrector-subsequential-limit} (with $g=g_0$), \eqref{eq:omega-ellipticity},
\eqref{eq:corrector-error-equation} and  the antisymmetry of
$\mathcal F_\eps$ verify the hypotheses of
\cref{lem:uniform-truncation}. Consequently,
$\|\widetilde g_\eps\|_{L^\infty(X)}\to0$.
\end{proof}

\begin{proof}[Proof of \cref{thm:corrector}]
Proposition~\ref{prop:corrector-convergence} gives existence, uniqueness, and regularity of $g_0$, together with uniform convergence $g_\eps\to g_0$. By \eqref{eq:common-gap-bounds}, $2\tau_\eps(x)=q_\eps(x,T(x))$. The limit \eqref{eq:uniform-graph-profile} and boundedness of $\sigma$ therefore give $\tau_\eps=\ell^2\mathfrak p_\eps+o(\ell^2)$ uniformly on $X$. Substituting into $r_\eps=\ell^2g_\eps-\tau_\eps$ and $s_\eps\circ T=-\ell^2g_\eps-\tau_\eps$ proves \eqref{eq:corrector-uniform}. By \eqref{eq:profile-deep-limit} and \eqref{eq:boundary-coordinates}, $\mathfrak p_\eps\to h/2$ locally uniformly on $\intr X$. The uniform profile bounds and dominated convergence give \eqref{eq:global-bulk-bias}.
\end{proof}

While the centered profiles \eqref{eq:centered-potentials} removed the values at the base point, we can now combine those profiles with the base-point values from \cref{thm:corrector} to determine the uncentered profiles.

\begin{corollary}[Uncentered potential profiles]\label{cor:uncentered-potential-profiles}
For every $M>0$, uniformly for $x\in X$ and $\xi\in\Omega_{\eps,x}\cap\B(0,M)$,
\begin{equation}\label{eq:uncentered-potential-profiles}
\begin{aligned}
 -\frac{r_\eps(x_{\eps,\xi})}{\ell^2\sigma(x)^2}
 &=\Phi_{\eps,x}(\xi)-\frac{g_0(x)}{\sigma(x)^2}+o_{C^1}(1),\\
 -\frac{s_\eps(T(x_{\eps,\xi}))}{\ell^2\sigma(x)^2}
 &=\Phi_{\eps,x}(\xi)+\frac{g_0(x)}{\sigma(x)^2}+o_{C^1}(1).
\end{aligned}
\end{equation}
\end{corollary}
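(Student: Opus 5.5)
The plan is to write each potential error as the sum of a centered profile and a base-point value, and to combine the centered expansion of \cref{thm:microscopic} with the base-point expansion of \cref{thm:corrector}. For $r_\eps$, the identity
\[
 -r_\eps(x_{\eps,\xi})=-[r_\eps(x_{\eps,\xi})-r_\eps(x)]-r_\eps(x)
\]
separates the two parts. The first bracket is handled by \eqref{eq:centered-potentials}. Since $u_0(x_{\eps,\xi})-u_0(x)-T(x)\cdot(x_{\eps,\xi}-x)=D(x_{\eps,\xi},x)$, subtracting this gap from the centered expression in \eqref{eq:centered-potentials} gives
\[
 \frac{r_\eps(x_{\eps,\xi})-r_\eps(x)}{\ell^2\sigma(x)^2}
 =\tfrac12|\xi|^2-\Phi_{\eps,x}(\xi)+\Phi_{\eps,x}(0)-\frac{D(x_{\eps,\xi},x)}{\ell^2\sigma(x)^2}+o_{C^1}(1).
\]
The rescaled gap is $\tfrac12|\xi|^2+o_{C^1}(1)$ by \eqref{eq:rescaled-cost-limit}. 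Hence the centered part equals $-\Phi_{\eps,x}(\xi)+\Phi_{\eps,x}(0)+o_{C^1}(1)$, uniformly in $x$ and in bounded $\xi$.

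For the base-point value, \eqref{eq:corrector-uniform} gives $-r_\eps(x)/(\ell^2\sigma(x)^2)=\mathfrak p_\eps(x)/\sigma(x)^2-g_0(x)/\sigma(x)^2+o(1)$, uniformly in $x$, where we use that $\sigma$ is bounded below. By the definitions \eqref{eq:potential-errors} and \eqref{eq:microscopic-comparison}, $\mathfrak p_\eps(x)/\sigma(x)^2=\varphi_d(t_\eps(x))=\Phi_{\eps,x}(0)$, and this also holds for $x\notin\mathcal U$, where both sides equal $h_d/2$. Adding the two parts, the $\Phi_{\eps,x}(0)$ terms cancel and the first line of \eqref{eq:uncentered-potential-profiles} follows. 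The base-point term is constant in $\xi$, so the remainder stays $o_{C^1}(1)$ in the derivative sense as well.

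The second line is proved the same way. Write $-s_\eps(T(x_{\eps,\xi}))=-[s_\eps(T(x_{\eps,\xi}))-s_\eps(T(x))]-s_\eps(T(x))$. The second line of \eqref{eq:centered-potentials} handles the bracket, now subtracting $v_0(T(x_{\eps,\xi}))-v_0(T(x))-x\cdot[T(x_{\eps,\xi})-T(x)]=D(x,x_{\eps,\xi})$. By \eqref{eq:rescaled-cost-limit}, this gap is again $\tfrac12|\xi|^2+o_{C^1}(1)$ after rescaling. The base point is then handled by the second line of \eqref{eq:corrector-uniform}, which contributes $+g_0(x)/\sigma(x)^2$.

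The only point needing care is the $C^1$ statement for the rescaled gaps uniformly up to the boundary. This is already contained in \eqref{eq:rescaled-cost-limit}, which holds uniformly for bounded $\xi,\zeta\in\Omega_{\eps,x}$ by Taylor's formula with $u_0\in C^{3,\beta}(X)$. So we expect no real obstacle: the corollary is a bookkeeping combination of earlier results.
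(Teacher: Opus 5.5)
Your proposal is correct and follows essentially the paper's own proof. You split each uncentered error into the centered increment from \eqref{eq:centered-potentials}, corrected by the unregularized gap (correctly identified as $D(x_{\eps,\xi},x)$ and $D(x,x_{\eps,\xi})$, with rescaled form $\tfrac12|\xi|^2+O_{C^1}(\ell)$), plus the base-point value from \eqref{eq:corrector-uniform}, where $\mathfrak p_\eps(x)=\sigma(x)^2\Phi_{\eps,x}(0)$ cancels the $\Phi_{\eps,x}(0)$ terms and the $\xi$-independent base-point error keeps the remainder $o_{C^1}(1)$, exactly as in the paper.
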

\begin{proof}
Subtracting the corresponding unregularized Taylor expansions from \eqref{eq:centered-potentials} gives the profiles of $r_\eps(x_{\eps,\xi})-r_\eps(x)$ and $s_\eps(T(x_{\eps,\xi}))-s_\eps(T(x))$. The normalized Taylor remainders are $O_{C^1}(\ell)$. Adding the base-point values from \eqref{eq:corrector-uniform} and using $\mathfrak p_\eps(x)=\sigma(x)^2\Phi_{\eps,x}(0)$ gives \eqref{eq:uncentered-potential-profiles}. The normalized errors in the base-point values are $o(1)$ uniformly in $x$ and constant in $\xi$, so the resulting remainders are $o_{C^1}(1)$.
\end{proof}

\subsection{Nonlocal estimates}\label{sec:nonlocal}\label{app:nonlocal}

This subsection supplies analytic tools used to prove the corrector convergence in the preceding Section~\ref{sec:corrector-proof} as well as the small-displacement estimates in Section~\ref{sec:small-displacements} below. Throughout the subsection, $\Omega\subset\R^d$ is a convex body and $\vartheta(\dd x)=\varrho(x)\dd x$, with $0<\varrho_-\le\varrho\le\varrho_+<\infty$ almost everywhere. The measure $\vartheta$ has arbitrary finite mass. Here $l>0$ is a free parameter, and limits are taken as $l\downarrow0$. Proposition~\ref{prop:corrector-convergence} uses these estimates with $\Omega=X$, $\vartheta=\mu$, and $l=\ell$. Fix $0<a\le b<\infty$ and $0<\lambda\le\Lambda<\infty$. We consider measurable kernels $J_l:\Omega\times \Omega\to[0,\infty)$ satisfying
\begin{equation}\label{eq:app-kernel-bounds}
 J_l(x,y)=J_l(y,x),\qquad \lambda l^{-d}\1_{\{|x-y|<al\}}\le J_l(x,y)\le\Lambda l^{-d}\1_{\{|x-y|<bl\}}.
\end{equation}
Thus interactions have range at most $bl$, with a positive kernel lower bound on $|x-y|<al$. These conditions and all equations involving measurable functions hold almost everywhere. The density may depend on $l$, subject to the same bounds $\varrho_-,\varrho_+$. Constants depend only on $\Omega,d,a,b,\lambda,\Lambda,\varrho_\pm$ and the fixed smooth cutoffs in the smoothing estimates.
\subsubsection{Smoothing and averaged Sobolev estimates}

For $t>0$ and $v\in L^2(\Omega)$, set
\begin{equation}\label{eq:app-short-energy}
 \mathcal E_{l,t}^\Omega(v)
 =l^{-d-2}\iint_{\substack{x,y\in\Omega\\|x-y|<tl}}
 |v(y)-v(x)|^2\dd x\dd y,\qquad
 \mathcal E_l^\Omega(v)=\mathcal E_{l,a}^\Omega(v).
\end{equation}
For $J_l$ as in \eqref{eq:app-kernel-bounds}, set $m_l(x)=\int_\Omega J_l(x,y)\dd\vartheta(y)$ and
$(P_l v)(x)=m_l(x)^{-1}\int_\Omega J_l(x,y)v(y)\dd\vartheta(y)$,
$$\mathscr E_l^{\Omega,\vartheta}(v)
=l^{-2}\iint_{\Omega\times\Omega}J_l(x,y)|v(y)-v(x)|^2
\dd\vartheta(x)\dd\vartheta(y).$$
Convexity gives $|\Omega\cap B(x,r)|\asymp r^d$ uniformly in $x$
for small $r$, hence $c\le m_l\le C$ for small $l$. Set $Z_l=\int_\Omega m_l\dd\vartheta$,
$\vartheta_l^\star=Z_l^{-1}m_l\vartheta$, and
$K_l(x,y)=Z_lJ_l(x,y)/(m_l(x)m_l(y))$. Applying \cite[Lemma~6.8, equation~(6.25)]{GNGeometry}, with $\kappa=1+1/d$, to the symmetric Markov density $K_l$ relative to $\vartheta_l^\star$, yields the bound
\begin{equation}\label{eq:app-averaged-sobolev}
 \|P_l v\|_{L^{2\kappa}(\Omega,\vartheta)}
 \le C\bigl(\|v\|_{L^2(\Omega,\vartheta)}
 +\mathscr E_l^{\Omega,\vartheta}(v)^{1/2}\bigr)
 \qquad(v\ge0).
\end{equation}
We use the smoothing of \cite[equation~(6.22)]{GNGeometry} with normalized Lebesgue measure on $\Omega$. By \cite[Lemma~6.8]{GNGeometry}, this gives a smoothing operator $\mathcal S_l:L^2(\Omega)\to H^1(\Omega)$, independent of $J_l$ and $\vartheta$, satisfying $\|\mathcal S_l v\|_2\le C\|v\|_2$ and
\begin{equation}\label{eq:smoothing}
 \|v-\mathcal S_l v\|_2
 \le Cl\mathcal E_l^\Omega(v)^{1/2},\qquad
 \|\nabla\mathcal S_l v\|_2
 \le C\mathcal E_l^\Omega(v)^{1/2},
\end{equation}
where the $L^2$ norms use Lebesgue measure.
Finally, \cite[equation~(6.26)]{GNGeometry} yields the bound
\begin{equation}\label{eq:app-length-comparison}
 \mathcal E_{l,t}^\Omega(v)
 \le n^{d+2}\mathcal E_l^\Omega(v)
 \qquad(t>0,\ n\in\N,\ t\le na,\ l>0).
\end{equation}

\subsubsection{Nonlocal compactness}
\begin{lemma}[Nonlocal compactness]\label{lem:edge-compactness}
Let $\Omega$ be a convex body, $l_j\downarrow0$, and $u_j\in L^2(\Omega)$ with
\begin{equation}\label{eq:app-compactness-hypothesis}
 \sup_j\bigl(\|u_j\|_{L^2(\Omega)}^2+\mathcal E_{l_j,a}^\Omega(u_j)\bigr)<\infty
\end{equation}
for some $a>0$. There is a subsequence, not relabeled, and $u\in H^1(\Omega)$, such that $u_j\to u$ strongly in $L^2(\Omega)$. Along this subsequence, for every $R>0$, the functions
\begin{equation}\label{eq:app-difference-quotient}
 D_j u_j(x,w)=\begin{cases}
 \displaystyle\frac{u_j(x+l_jw)-u_j(x)}{l_j},&x+l_jw\in \Omega,\\[5pt]
 0,&x+l_jw\notin \Omega,
 \end{cases}\qquad (x,w)\in \Omega\times B_R
\end{equation}
are bounded in $L^2(\Omega\times B_R)$ and satisfy
\begin{equation}\label{eq:app-quotient-limit}
 D_j u_j\rightharpoonup\nabla u(x)\cdot w\qquad\text{in }L^2(\Omega\times B_R).
\end{equation}
For a fixed $R$, let $\gamma_j:\Omega\times B_R\to[0,\infty)$ be measurable, with $\sup_j\|\gamma_j\|_\infty<\infty$ and $\gamma_j\to\gamma$ almost everywhere. Then, along the same subsequence,
\begin{equation}\label{eq:app-weighted-quotient-limit}
 \sqrt{\gamma_j}\,D_j u_j\rightharpoonup\sqrt\gamma\,\nabla u(x)\cdot w\qquad\text{in }L^2(\Omega\times B_R).
\end{equation}
\end{lemma}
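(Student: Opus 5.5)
The plan is to use the smoothing operator $\mathcal S_{l}$ from \eqref{eq:smoothing}. Set $v_j=\mathcal S_{l_j}u_j$. By \eqref{eq:app-compactness-hypothesis}, the sequence $(v_j)$ is bounded in $H^1(\Omega)$, and $\|u_j-v_j\|_2\le Cl_j\mathcal E_{l_j,a}^\Omega(u_j)^{1/2}\to0$. Rellich compactness on the Lipschitz domain $\Omega$ then gives a subsequence with $v_j\to u$ in $L^2$ and $\nabla v_j\rightharpoonup\nabla u$ weakly in $L^2$, for some $u\in H^1(\Omega)$. Consequently $u_j\to u$ strongly in $L^2(\Omega)$.

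Next, for the boundedness of $D_ju_j$ in $L^2(\Omega\times B_R)$, substitute $y=x+l_jw$. This gives
\[
\int_{\Omega\times B_R}|D_ju_j|^2\dd x\dd w=l_j^{-d-2}\iint_{|x-y|<Rl_j}|u_j(y)-u_j(x)|^2\dd x\dd y=\mathcal E^\Omega_{l_j,R}(u_j),
\]
and \eqref{eq:app-length-comparison} bounds the right-hand side by $n^{d+2}\mathcal E^\Omega_{l_j,a}(u_j)$ with $n\ge R/a$.

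To identify the weak limit, I would split $D_ju_j=D_jv_j+D_j(u_j-v_j)$ and test against $\Psi\in C_c^\infty(\intr\Omega\times B_R)$.
\begin{itemize}
\item For the smooth part, on the support of $\Psi$ we have $x+l_jw\in\Omega$ for large $j$. Then $D_jv_j(x,w)=\int_0^1\nabla v_j(x+tl_jw)\cdot w\dd t$, and a change of variables moves the shift onto $\Psi$. Strong convergence of shifted test functions together with weak convergence of $\nabla v_j$ gives the limit $\int\nabla u(x)\cdot w\,\Psi\dd x\dd w$.
\item For the remainder, with $e_j=u_j-v_j$, write $\int D_je_j\Psi=l_j^{-1}\int e_j(x)[\Psi(x-l_jw,w)-\Psi(x,w)]\dd x\dd w$ after a shift, for large $j$. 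This is bounded by $C\|e_j\|_2\|\nabla\Psi\|_\infty\to0$.
\end{itemize}
Since the sequence is bounded in $L^2$ and $C_c^\infty(\intr\Omega\times B_R)$ is dense, with $\partial\Omega\times B_R$ Lebesgue-null, this yields \eqref{eq:app-quotient-limit}. Uniqueness of the limit means no further subsequence is needed for different $R$; a diagonal argument over integer $R$ suffices.

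Finally, \eqref{eq:app-weighted-quotient-limit} follows from the fact that $\sqrt{\gamma_j}\to\sqrt\gamma$ almost everywhere with a uniform bound. For any $\Psi\in L^2$, dominated convergence gives $\sqrt{\gamma_j}\Psi\to\sqrt\gamma\Psi$ strongly in $L^2$. Pairing this strong convergence with the weak convergence of $D_ju_j$ gives the claim.

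The main obstacle is the boundary. Difference quotients are truncated to zero where $x+l_jw\notin\Omega$, and the convergence of the smoothed gradients has to be transferred to these shifted differences. Testing only against functions compactly supported in the interior avoids the problem, because the $L^2$ bound, combined with density, handles the rest. I would double-check that the smoothing estimates \eqref{eq:smoothing} hold on the whole convex body, so that the $H^1$ bound is global and Rellich applies up to the boundary.
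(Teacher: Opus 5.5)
Your proposal is correct and follows the paper's proof essentially step for step: smoothing plus Rellich for the strong limit, the change of variables $y=x+l_jw$ together with \eqref{eq:app-length-comparison} for the $L^2$ bound, identification of the limit against test functions in $C_c^\infty(\intr\Omega\times B_R)$ combined with density, and weak--strong pairing for the weighted statement. The only difference is in identifying the limit: you split off $v_j=\mathcal S_{l_j}u_j$ and use weak convergence of $\nabla v_j$, whereas the paper applies your remainder shift directly to all of $u_j$ and then uses $u_j\to u$ strongly in $L^2$ and integrates by parts against $u\in H^1(\Omega)$, so neither the split nor the line-integral representation of $D_jv_j$ is needed.
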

\begin{proof}
By \eqref{eq:smoothing}, $\mathcal S_{l_j}u_j$ is bounded in $H^1(\Omega)$ and $\|u_j-\mathcal S_{l_j}u_j\|_2=O(l_j)$. Weak compactness in $H^1(\Omega)$ and Rellich compactness on $\intr\Omega$ give a subsequential limit $u\in H^1(\Omega)$ of $\mathcal S_{l_j}u_j$. Such a limit holds strongly in  $L^2(\Omega)$ and weakly in $H^1(\Omega)$. Hence, $u_j$ shares the same limit in $L^2(\Omega)$. For fixed $R>0$, choose an integer $n\ge1$ with $R\le na$. Changing variables $y=x+l_jw$ and applying \eqref{eq:app-length-comparison} gives
\begin{equation}\label{eq:app-cylinder-bound}
 \|D_j u_j\|_{L^2(\Omega\times B_R)}^2=\mathcal E_{l_j,R}^\Omega(u_j)\le n^{d+2}\mathcal E_{l_j,a}^\Omega(u_j).
\end{equation}
For $\zeta\in C_c^\infty(\intr\Omega\times B_R)$,  its support in $x$ and its translates by $l_jw$ lie in $\intr\Omega$ for large $j$, so that  by changing variables,  the strong $L^2$ convergence of $u_j$ and integration by parts, 
\begin{equation}\label{eq:app-quotient-test}
\begin{aligned}
& \int_{\Omega\times B_R}D_j u_j\,\zeta\dd x\dd w =\int_{\Omega\times B_R}u_j(x)\frac{\zeta(x-l_jw,w)-\zeta(x,w)}{l_j}\dd x\dd w\\
 & \qquad \longrightarrow \int_{\Omega\times B_R}-u(x) \nabla_x\zeta(x,w)\cdot w\dd x\dd w =\int_{\Omega\times B_R}\nabla u(x)\cdot w\,\zeta(x,w)\dd x\dd w.
\end{aligned}
\end{equation}
Combining this with 
 \eqref{eq:app-cylinder-bound} yields \eqref{eq:app-quotient-limit} for each $R$ along the same subsequence. For any $\zeta\in L^2(\Omega\times B_R)$, boundedness and a.e.\ convergence of $\gamma_j$ give $\sqrt{\gamma_j}\,\zeta\to\sqrt\gamma\,\zeta$ strongly in $L^2$. Pairing with \eqref{eq:app-quotient-limit} proves \eqref{eq:app-weighted-quotient-limit}. %
\end{proof}

\subsubsection{Uniform convergence by truncation}
\begin{lemma}[Uniform convergence of bounded solutions]\label{lem:uniform-truncation}
Suppose \eqref{eq:app-kernel-bounds} holds. Let $u_l\in L^\infty(\Omega)$ satisfy
\begin{equation}\label{eq:app-forced-equation}
 \int_\Omega J_l(x,y)[u_l(y)-u_l(x)-F_l(x,y)]\dd\vartheta(y)=0\qquad\text{for $\vartheta$-a.e. }x\in\Omega,
\end{equation}
where $F_l$ is measurable and antisymmetric, and $|F_l|\le Ll$ on $\{J_l>0\}$ for a fixed $L\ge0$. If $\sup_l\|u_l\|_\infty<\infty$ and $\|u_l\|_{L^2(\Omega,\vartheta)}\to0$, then $\|u_l\|_\infty\to0$.
\end{lemma}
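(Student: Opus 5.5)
The plan is a De Giorgi level-set iteration adapted to the nonlocal equation \eqref{eq:app-forced-equation}. It is run for finitely many steps, down to the scale $l$, and then closed by a direct pointwise argument using the averaging structure of the equation. By symmetry ($u_l\mapsto-u_l$, $F_l\mapsto-F_l$ preserves all hypotheses) it suffices to show $\sup u_l\le\delta$ eventually, for each fixed $\delta>0$. Write $M=\sup_l\|u_l\|_\infty$, $A_k=\{u_l>k\}$ and $\phi_k=(u_l-k)_+$.

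\emph{Energy (Caccioppoli) inequality.} Multiply \eqref{eq:app-forced-equation} by $\phi_k(x)$ and integrate against $\vartheta$. Then exchange $x,y$, using the symmetry of $J_l$ and the antisymmetry of $F_l$, and average. This gives
\[
 \iint J_l\,[u_l(y)-u_l(x)][\phi_k(y)-\phi_k(x)]\dd\vartheta\dd\vartheta=\iint J_l\,F_l\,[\phi_k(y)-\phi_k(x)]\dd\vartheta\dd\vartheta .
\]
The left side dominates $l^2\mathscr E_l^{\Omega,\vartheta}(\phi_k)$. On the right, the integrand vanishes unless $x$ or $y$ lies in $A_k$. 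It is supported in $|x-y|<bl$ and bounded by $Ll\,|\phi_k(y)-\phi_k(x)|$. Cauchy--Schwarz together with $\int J_l(x,\cdot)\dd\vartheta\le C$ therefore yields
\[
\mathscr E_l^{\Omega,\vartheta}(\phi_k)\le CL^2\vartheta(A_k).
\]

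\emph{Gain of integrability with level shift.} The averaged Sobolev bound \eqref{eq:app-averaged-sobolev} controls $P_l\phi_k$, not $\phi_k$ itself. Dividing the equation by $m_l(x)$ gives $u_l=P_lu_l-m_l^{-1}\int J_lF_l\dd\vartheta$, hence $u_l\le P_lu_l+CLl$ pointwise a.e. Therefore
\[
\phi_{k+CLl}\le P_l\phi_k,
\]
and combining with \eqref{eq:app-averaged-sobolev} and the energy bound,
\[
 \|\phi_{k+CLl}\|_{L^{2\kappa}}\le C\bigl(\|\phi_k\|_{L^2}+\vartheta(A_k)^{1/2}\bigr).
\]
Now take levels $k_n=\delta(1-2^{-n-1})$ and $Y_n=\vartheta(A_{k_n})$. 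Chebyshev and Hölder give the usual recursion $Y_{n+1}\le C\,4^{n\kappa'}\,\delta^{-2\kappa}(1+M)^{2\kappa}\,Y_n^{\kappa}$ with $\kappa=1+1/d$. This is valid as long as the level increment $\delta2^{-n-2}$ exceeds the shift $CLl$. The starting value $Y_0\le 4\|u_l\|_{L^2(\vartheta)}^2/\delta^2$ tends to zero. Hence, for $l$ small, the recursion is in its superexponentially decaying regime, $Y_n\le Y_0^{\kappa^n/2}$, say.

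\emph{Finite stopping and pointwise closing.} We cannot run the iteration to $n=\infty$, because the increments must dominate $CLl$. Instead, stop at $N\approx C\log\log(1/l)$. This is enough to reach $Y_N\le \eta\, l^d$ for any prescribed small $\eta$. Yet $\delta2^{-N}\ge \delta(\log(1/l))^{-C}\gg Ll$, so all steps are legitimate. Finally, use the pointwise inequality once more, now with the crude bound $\phi_{k_N}\le M$ and the upper kernel bound $J_l\le\Lambda l^{-d}$, $m_l\ge c$:
\[
 \phi_{k_N+CLl}(x)\le P_l\phi_{k_N}(x)\le C M l^{-d}\,\vartheta(A_{k_N})\le CM\eta .
\]
Choosing $\eta$ small gives $u_l\le \delta+CLl+CM\eta\le2\delta$ a.e. for small $l$.

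The main obstacle is the level-shift bookkeeping in the second step: the forcing is only $O(l)$, so it cannot be absorbed in an infinite iteration, and the argument hinges on the doubly logarithmic number of steps being compatible with the $l$-scale shift. I would check carefully that the recursion constants from \eqref{eq:app-averaged-sobolev} are uniform in $l$, and that the averaged Sobolev inequality applies to the nonnegative functions $\phi_k$ with the measure $\vartheta$ in place of $\vartheta_l^\star$; their densities are uniformly comparable, so this should be fine.
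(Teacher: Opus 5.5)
Your proposal is correct and follows essentially the same route as the paper. Both arguments use the energy bound $\mathscr E_l^{\Omega,\vartheta}((u_l-k)_+)\le CL^2\vartheta(A_k)$, the shifted pointwise inequality $(u_l-k)_+\le P_l(u_l-k)_++Ll$ combined with \eqref{eq:app-averaged-sobolev} to drive a De Giorgi recursion, a stop after $O(\log\log(1/l))$ levels so that the increments still dominate $Ll$, and a final crude bound $P_l(u_l-k_N)_+\le CMl^{-d}\vartheta(A_{k_N})$; the only differences (your level range $[\delta/2,\delta]$ and stopping threshold $\eta l^d$ instead of $l^{d+2}$) are cosmetic.
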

\begin{proof}
Fix $M\ge1$ such that $\sup_l\|u_l\|_\infty\le M$.
For $k>0$, set $w_k=(u_l-k)_+$ and $A_k=\{u_l>k\}$.
Testing \eqref{eq:app-forced-equation} with $w_k$, using
symmetry of $J_l$ and antisymmetry of $F_l$, gives
\begin{equation}\label{eq:truncation-weak}
\begin{aligned}
 &\iint_{\Omega^2}J_l(x,y)
 [u_l(y)-u_l(x)][w_k(y)-w_k(x)]\dd\vartheta(x)\dd\vartheta(y)\\
 &\qquad=\iint_{\Omega^2}J_l(x,y)F_l(x,y)
 [w_k(y)-w_k(x)]\dd\vartheta(x)\dd\vartheta(y).
\end{aligned}
\end{equation}
Since $ [u_l(y)-u_l(x)][w_k(y)-w_k(x)]
 \ge |w_k(y)-w_k(x)|^2$
and $w_k(y)\ne w_k(x)$ requires $x\in A_k$ or $y\in A_k$,
Young's inequality in \eqref{eq:truncation-weak},
$|F_l|\le Ll$, and \eqref{eq:app-kernel-bounds} imply $\mathscr E_l^{\Omega,\vartheta}(w_k)
 \le CL^2\vartheta(A_k)$ and 
$ \|w_k\|_{L^2(\vartheta)}
 \le M\vartheta(A_k)^{1/2},$ 
where the second bound follows from $0\le w_k\le M\1_{A_k}$.
By \eqref{eq:app-averaged-sobolev} 
\begin{equation}\label{eq:truncation-sobolev}
 \|P_lw_k\|_{L^{2\kappa}(\vartheta)}
 \le C(M+L)\vartheta(A_k)^{1/2},
 \qquad \kappa=1+1/d.
\end{equation}
Dividing \eqref{eq:app-forced-equation} by $m_l(x)$
and applying Jensen's inequality gives
\begin{equation}\label{eq:truncation-average}
 u_l\le P_lu_l+Ll,\qquad w_k\le P_lw_k+Ll.
\end{equation}
Hence $P_lw_k\ge h-k-Ll$ on $A_h$ for $h>k+Ll$.
Together with \eqref{eq:truncation-sobolev}, this yields
\begin{equation}\label{eq:truncation-levels}
 \vartheta(A_h)
 \le\left(\frac{C(M+L)}{h-k-Ll}\right)^{2\kappa}
 \vartheta(A_k)^\kappa.
\end{equation}
Fix $\eta>0$ and set $k_j=\eta/2+(\eta/4)(1-2^{-j})$.
Since $k_{j+1}-k_j=\eta2^{-j-3}$,
\eqref{eq:truncation-levels} gives
\begin{equation}\label{eq:truncation-recursion}
 \vartheta(A_{k_{j+1}})
 \le D_\eta B^j\vartheta(A_{k_j})^\kappa
 \qquad\text{if }\eta2^{-j-3}\ge2Ll,
\end{equation}
where $D_\eta\ge1$ and $B>1$ are independent of $l,j$.
We claim that, for sufficiently small $l$,
\begin{equation}\label{eq:truncation-decay}
 \vartheta(A_{k_j})
 \le D_\eta^{-1/(\kappa-1)}
 B^{-j/(\kappa-1)-1/(\kappa-1)^2}e^{-\kappa^j}
 \le e^{-\kappa^j},
\end{equation}
provided $\eta2^{-i-3}\ge2Ll$ for $0\le i<j$.
For $j=0$, Chebyshev's inequality and
$\|u_l\|_{L^2(\vartheta)}\to0$ give
\[
 \vartheta(A_{k_0})
 \le4\eta^{-2}\|u_l\|_{L^2(\vartheta)}^2
 \le D_\eta^{-1/(\kappa-1)}
 B^{-1/(\kappa-1)^2}e^{-1}
\]
for small $l$. Substituting \eqref{eq:truncation-decay}
into \eqref{eq:truncation-recursion} proves the induction step. Let $N$ be the least integer such that
$\kappa^N\ge(d+2)\log(1/l)$.
Then $N=O(\log\log(1/l))$ and $2^{-N}/l\to\infty$,
so the condition in \eqref{eq:truncation-recursion}
holds for $0\le j<N$ when $l$ is small.
By \eqref{eq:truncation-decay},
$\vartheta(A_{k_N})\le e^{-\kappa^N}\le l^{d+2}$.
Together with \eqref{eq:app-kernel-bounds} and $m_l\ge c$,
this gives
$P_lw_{k_N}\le CMl^{-d}\vartheta(A_{k_N})\le CMl^2$.
Thus \eqref{eq:truncation-average} implies
\[
 u_l\le k_N+w_{k_N}
 \le k_N+P_lw_{k_N}+Ll
 \le\frac{3\eta}{4}+CMl^2+Ll
 \le\eta
\]
for small $l$. Applying the same argument to $-u_l$ and $-F_l$
gives $\|u_l\|_\infty\le\eta$. Since $\eta>0$ is arbitrary,
$\|u_l\|_\infty\to0$.
\end{proof}

\subsection{Equality of the leading corrections}
We use the inner product induced by $\mathcal A$ to characterize equality of the leading corrections to $r_\eps$ and $s_\eps\circ T$, or equivalently $g_0\equiv0$. For square-integrable vector fields $V,W:X\to\R^d$, write $\langle V,W\rangle_{\mathcal A}=\int_XV^T\mathcal A W\dd x$ and $\|V\|_{\mathcal A}=\langle V,V\rangle_{\mathcal A}^{1/2}$.

\begin{proposition}[Equality of the leading corrections]\label{prop:equal}
Under the normalization \eqref{eq:gauge},
\begin{equation}\label{eq:equal-splitting-criterion}
 g_0\equiv0\quad\Longleftrightarrow\quad\det DT\text{ is constant on }X\quad\Longleftrightarrow\quad\|r_\eps-s_\eps\circ T\|_\infty=o(\ell^2).
\end{equation}
More precisely, with $L=\log\det H$,
\begin{equation}\label{eq:quant-equal}
 \frac{\min_Xh}{2(d+4)}\|\nabla L\|_{\mathcal A}\le\|\nabla g_0\|_{\mathcal A}\le\frac{\max_Xh}{2(d+4)}\|\nabla L\|_{\mathcal A}.
\end{equation}
\end{proposition}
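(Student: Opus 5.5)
The plan is to prove the quantitative bounds \eqref{eq:quant-equal} first and then read off the equivalences. Write $\mathfrak b=\frac{h}{2(d+4)}\nabla L$ as in \eqref{eq:corrector-coefficients}, with $L=\log\det H$. Testing \eqref{eq:corrector-Neumann} with $\psi=g_0$ gives
\[
\|\nabla g_0\|_{\mathcal A}^2=\langle\nabla g_0,\mathfrak b\rangle_{\mathcal A}.
\]
Hence $\nabla g_0$ is the $\langle\cdot,\cdot\rangle_{\mathcal A}$-orthogonal projection of $\mathfrak b$ onto the closed subspace $\{\nabla\psi:\psi\in H^1(X)\}$ of $\mathcal A$-weighted square-integrable fields.

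For the upper bound, I would take $\psi=L$, which is admissible since $u_0\in C^{3,\beta}$ gives $L\in C^{1,\beta}(X)\subset H^1(X)$. Then
\[
\langle\nabla g_0,\nabla L\rangle_{\mathcal A}=\langle\mathfrak b,\nabla L\rangle_{\mathcal A}=\int_X\frac{h}{2(d+4)}\nabla L^T\mathcal A\nabla L\dd x,
\]
which lies between $\frac{\min h}{2(d+4)}\|\nabla L\|_{\mathcal A}^2$ and $\frac{\max h}{2(d+4)}\|\nabla L\|_{\mathcal A}^2$ because $\mathcal A$ is positive definite. Cauchy--Schwarz bounds the left-hand side by $\|\nabla g_0\|_{\mathcal A}\|\nabla L\|_{\mathcal A}$, which gives the lower bound in \eqref{eq:quant-equal} after dividing by $\|\nabla L\|_{\mathcal A}$ (the case $\nabla L=0$ is trivial).

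For the upper bound in \eqref{eq:quant-equal}, I would use $\|\nabla g_0\|_{\mathcal A}^2=\langle\nabla g_0,\mathfrak b\rangle_{\mathcal A}\le\|\nabla g_0\|_{\mathcal A}\|\mathfrak b\|_{\mathcal A}$. The pointwise bound $\mathfrak b^T\mathcal A\mathfrak b\le(\frac{\max h}{2(d+4)})^2\nabla L^T\mathcal A\nabla L$ then gives $\|\mathfrak b\|_{\mathcal A}\le\frac{\max h}{2(d+4)}\|\nabla L\|_{\mathcal A}$. This step is routine; it is just the projection having norm at most one.

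For the equivalences, \eqref{eq:quant-equal} and the uniform positivity of $\mathcal A$ give $\nabla g_0=0$ a.e. iff $\nabla L=0$ a.e. Connectedness of $X$ then shows this is the same as $g_0$ being constant, hence $g_0\equiv0$ by the mean-zero normalization. It is likewise the same as $L$ being constant, i.e. $\det DT=\det H$ constant. For the third equivalence, \eqref{eq:corrector-uniform} gives $r_\eps-s_\eps\circ T=2\ell^2g_0+o(\ell^2)$ uniformly, since the common term $\mathfrak p_\eps$ cancels. So $\|r_\eps-s_\eps\circ T\|_\infty=o(\ell^2)$ iff $\|g_0\|_\infty=0$, using continuity of $g_0$. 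The only point needing care is admissibility of $\psi=L$ in the weak formulation, which holds by the $C^{3,\beta}$ regularity in \cref{prop:inputs}; I expect no real obstacle.
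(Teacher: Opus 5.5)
Your proposal is correct and follows the paper's proof: testing \eqref{eq:corrector-Neumann} with $\psi=L$ and with $\psi=g_0$, then applying Cauchy--Schwarz, gives the two bounds in \eqref{eq:quant-equal}. The equivalences then follow from connectedness of $X$, the zero $\mu$-mean of $g_0$, and the uniform limit $\ell^{-2}(r_\eps-s_\eps\circ T)=2g_\eps\to2g_0$ (equivalently \eqref{eq:corrector-uniform}), with the only slip being a label: the paragraph you introduce as ``for the upper bound'' (testing with $L$) actually proves the lower bound, as your own conclusion there states.
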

\begin{proof}
Testing \eqref{eq:corrector-Neumann} with $L$ and with $g_0$, respectively, gives
\[
 \langle\nabla g_0,\nabla L\rangle_{\mathcal A}=\frac{\langle h\nabla L,\nabla L\rangle_{\mathcal A}}{2(d+4)},\qquad
 \|\nabla g_0\|_{\mathcal A}^2=\frac{\langle h\nabla L,\nabla g_0\rangle_{\mathcal A}}{2(d+4)},
\]
and now \eqref{eq:quant-equal} follows by Cauchy--Schwarz. Thus $\nabla g_0=0$ if and only if $\nabla L=0$. Connectedness of $X$ and $\int_Xg_0\dd\mu=0$ give the first equivalence in \eqref{eq:equal-splitting-criterion}. For the second, \eqref{eq:errors} and \cref{prop:corrector-convergence} yield $\ell^{-2}\|r_\eps-s_\eps\circ T\|_\infty=2\|g_\eps\|_\infty\to2\|g_0\|_\infty$.
\end{proof}

\begin{remark}[Affine Brenier maps]
\Cref{prop:equal} shows in particular that affine Brenier maps have $g_0=0$. In dimension one, the converse also holds, since $\det DT$ is the derivative of $T$. For $d\ge2$, however, the converse can fail. Indeed, on $\{x_2>0\}$, the function
$u_0(x)=\frac{x_1^2}{2x_2}+\frac{x_2^3}{6}
 +\frac12\sum_{j=3}^d x_j^2$
satisfies $D^2u_0>0$ and $\det D^2u_0=1$, whereas
$T=\nabla u_0$ is nonaffine. Let $X=\overline B_r(e_2)$ and
$Y=T(X)$. Since $DT(e_2)=I$, the maps
$\xi\mapsto r^{-1}[T(e_2+r\xi)-T(e_2)]$ converge to the identity
in $C^2(\overline B_1)$ as $r\downarrow0$. Thus, for sufficiently
small $r>0$, both domains have smooth uniformly convex boundaries.
Moreover, $\det DT=1$ gives $|Y|=|X|$, and $T$ transports the
uniform probability measure on $X$ to that on $Y$. These measures
satisfy \cref{ass:data}, and \cref{prop:equal} yields $g_0=0$.
\end{remark}
\section{Second-order bias}\label{sec:centering}\label{sec:cancellation}

In this section, we prove the local limits in \cref{thm:main-bias} and the remainder
bounds used in \cref{sec:norms}, under \cref{ass:data} and the normalization
\eqref{eq:gauge}. The global limits in \cref{thm:main-bias} will be proved in \cref{sec:norms}. On $X$, set
\begin{equation}\label{eq:bias-error-fields}
 V^z_\eps=z_\eps-T,\qquad
 V^m_\eps=m_\eps-T,\qquad
 \widehat V^z_\eps=\widehat z_\eps\circ T-\id,\qquad
 \widehat V^m_\eps=\widehat m_\eps\circ T-\id.
\end{equation}
Let $b_z,b_m$ be as in \eqref{eq:biases}, and define
\begin{equation}\label{eq:reverse-biases}
 \widehat b_z\circ T=-H^{-1}(\nabla g_0+\tfrac12\nabla h),\qquad
 \widehat b_m\circ T=-H^{-1}(\nabla g_0+\tfrac{d}{2(d+4)}\nabla h).
\end{equation}
For efficient exposition, we denote the respective coefficients by
$b_V\in\{b_z,b_m,\widehat b_z\circ T,\widehat b_m\circ T\}$;
each belongs to $C(X;\R^d)$.
Throughout, $\ell=\eps^{1/(d+2)}$ and $\eps$ is sufficiently small.
\begin{theorem}[Second-order bias expansion]\label{thm:centers}
For each error field $V_\eps$ in \eqref{eq:bias-error-fields} and every compact $K\subset\intr X$,
\begin{equation}\label{eq:centers-local}
 \ell^{-2}V_\eps\longrightarrow b_V\qquad\text{strongly in }L^2(K).
\end{equation}
There is $L_0>0$ such that, on $X_\ell=\{x\in X:\delta(x)>L_0\ell\}$, each error admits the decomposition
\begin{equation}\label{eq:field-decomp}
\begin{gathered}
 V_\eps=\ell^2b_V+\mathcal R_\eps+\mathcal B_\eps,\qquad \|\mathcal R_\eps\|_{L^2(X_\ell)}=o(\ell^2),\qquad \|\mathcal R_\eps\|_{L^\infty(X_\ell)}\le C\ell,\\
 |\mathcal B_\eps(x)|\le C\ell e^{-c\delta(x)/\ell}\qquad\text{for a.e. }x\in X_\ell.
\end{gathered}
\end{equation}
The remainders depend on the error field, while $L_0,c,C>0$ can be chosen for all four fields simultaneously.
\end{theorem}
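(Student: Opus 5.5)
The plan follows the route sketched in \cref{sec:proof-strategy}: a sharp expansion of the graph slack $\tau_\eps$, then an expansion of the off-graph slack $q_\eps(x,T(x)+\ell B_xw)$ at order $\ell^3$, and finally insertion into the barycenter formulas \eqref{eq:barycenter-formulas-proofstrat}.

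\emph{Step 1: expansion of the graph slack.} Set $U_\eps=(Q_\eps(x,z)+Q_\eps(z,x))/2=\tau_\eps(x)+\tau_\eps(z)-D^{\rm s}(x,z)$. Averaging the two identities in \eqref{eq:Q-marginals} gives \eqref{eq:averaged-marginal-intro}. The correction term is $C_\eps=\tfrac12[(Q_\eps(x,z))_++(Q_\eps(z,x))_+]-(U_\eps)_+$. Since $Q_\eps(x,z)-Q_\eps(z,x)=2[\eta_\eps(z)-\eta_\eps(x)-\mathfrak a(x,z)]$, the term $C_\eps$ is nonzero only near the zero set of $U_\eps$. There it is bounded by the size of this antisymmetric difference, which is $O(\ell^3)$ pointwise for $|x-z|\le R\ell$. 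After integration, $C_\eps$ contributes $O(\ell^{d+3})$ on a shell of measure $O(\ell^{d})$. To get $o(\ell^{d+3})$ in $L^2$, I would use that the antisymmetric difference is $\ell^3[\nabla g_0\cdot w-F_0]+o(\ell^3)$ in $L^2$ by \eqref{eq:strong-edge-convergence}. I would also use that $(a)_+$ is piecewise linear, so the second-order (Jensen) defect is quadratic in the difference divided by the shell width, giving $O(\ell^4)$.

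Next replace $\tau_\eps$ by $\ell^2h/2$ and expand $\int_X(\ell^2h(x)/2+\ell^2h(z)/2-D^{\rm s})_+\dd\mu(z)$ in $z=x+\ell w$. The order-$\ell^{d+2}$ term equals $\eps$ by the choice of $h$. The order-$\ell^{d+3}$ terms (gradients of $h$ and $\rho_0$, cubic part of $D^{\rm s}$) are odd in $w$, so they vanish when $B(x,R\ell)\subset X$. The $C^{1,\beta}$ regularity gives an $O(\ell^{d+3+\beta})$ remainder there, and an $O(\ell^{d+2})$ error in the strip $\{\delta\le R\ell\}$. Factoring the difference of positive parts by the secant kernel $W_\ell$ yields \eqref{eq:height-linear-intro}. \Cref{lem:plus} with $l=\ell$ then gives $\tau_\eps=\ell^2h/2+e_\eps+b_\eps$ with $\|e_\eps\|_{L^2}=o(\ell^3)$ and $|b_\eps|\le C\ell^2e^{-c\delta/\ell}$. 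The tail estimate \eqref{eq:inverse-tail} handles the strip part. Uniform bounds on $e_\eps$ come from the $L^\infty$ invertibility.

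\emph{Step 2: off-graph slack.} Write $z=S(T(x)+\ell B_xw)$ and use \eqref{eq:Q}:
\[q_\eps=\tau_\eps(x)+\tau_\eps(z)+\ell^2[g_\eps(z)-g_\eps(x)]-D(x,z).\]
The key sub-step, which I expect to be the main obstacle, is the sampling estimate
\[g_\eps(z)-g_\eps(x)=\nabla g_0(x)\cdot(z-x)+o(\ell)\quad\text{in }L^2(X_\ell),\ \text{uniformly in bounded }w.\]
The energy bound \eqref{eq:short-energy} controls only averages over pairs, not evaluation along the prescribed maps $x\mapsto S(T(x)+\ell B_xw)$. I would first try to split $\widetilde g_\eps=\mathcal S_\ell\widetilde g_\eps+(\widetilde g_\eps-\mathcal S_\ell\widetilde g_\eps)$ using \eqref{eq:smoothing}. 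The smoothing error is $o(\ell)$ in $L^2$ by \eqref{eq:short-energy}, and its composition with the near-identity map costs only a bounded Jacobian factor on $X_\ell$. For the smooth part, integrate its gradient, which is $o(1)$ in $L^2$, along segments, again using a change of variables. The same procedure handles $\tau_\eps(z)$, where $e_\eps(z)$ is controlled in $L^2$ by change of variables and $b_\eps(z)$ by the exponential bound, since $|z-x|=O(\ell)$. Combined with a cubic Taylor expansion of $D$ and of $h(z)$, this gives the normalized slack $h_d-|w|^2/2+\ell\,p_x(w)+\text{remainder}$. Here $p_x$ is an explicit odd cubic-plus-linear polynomial, and the remainder splits into an $L^2$-$o(\ell)$ part and an exponentially decaying part. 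It is also $O(1)\cdot\ell^0$ uniformly in $C^1$ by \cref{thm:section-laws}, which yields the $L^\infty$ bound $C\ell$ for $\mathcal R_\eps$.

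\emph{Step 3: barycenters.} For $m_\eps$, insert the slack expansion and $\rho_1(T(x)+\ell B_xw)=\rho_1(T(x))(1+\ell\nabla\log\rho_1\cdot B_xw)$ into the second formula of \eqref{eq:barycenter-formulas-proofstrat}. The first moment of the leading radial profile vanishes. The order-$\ell$ terms give $\ell^2b_m$ after the second and fourth moment computations of \eqref{eq:cap-vector-moments}, \eqref{eq:model-cap-second-moment}, the Monge--Amp\`ere relation and Jacobi's formula, exactly as in \eqref{eq:corrector-moments}. For $z_\eps$, I would use the radial boundary function $R_\eps(x,\theta)$ from the implicit function theorem. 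The radial derivative $-a_d$ of the leading term stays bounded away from zero on $X_\ell$ once $L_0$ is large, by the $C^1$ profile. The displaced boundary gives $R_\eps-a_d=\ell p_x(a_d\theta)/a_d+\cdots$, and integrating $w$ over the perturbed ball together with the density variation gives $b_z$. The reverse fields follow by symmetry, interchanging $\mu,\nu$, which produces \eqref{eq:reverse-biases}. Finally, \eqref{eq:centers-local} follows from \eqref{eq:field-decomp} on $K\subset X_\ell$, since $e^{-c\delta/\ell}$ is superpolynomially small there.
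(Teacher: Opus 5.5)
Your overall route is the paper's. It runs through the graph-slack expansion of \cref{thm:height} (symmetrized marginal equation, comparison with $\ell^2h/2$, inversion of $I+P_\ell$ with the tail bound) and the sampling estimate of \cref{lem:sampling} (smoothing at scale $\ell$ plus change of variables). It then uses the off-graph slack and radial expansions of \cref{thm:slack,thm:radial}, the two barycenter computations, and the reverse fields obtained by interchanging the marginals. One step in Step 1, however, fails as written: your estimate of the correction $C_\eps$.

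You assert that $V^{\rm a}_\eps(x,z)=\ell^2[g_\eps(z)-g_\eps(x)]-\mathfrak a(x,z)$ is $O(\ell^3)$ pointwise for $|x-z|\le R\ell$. Your shell-width (``Jensen defect'') bound $O(\ell^4)$ rests on this claim, but nothing available gives it. The term $\mathfrak a$ is $O(\ell^3)$, but for the increment of $g_\eps$ you only have $\nabla g_\eps=O(\ell^{-1})$ from \eqref{eq:pregrad} and $g_\eps\to g_0$ uniformly. So pointwise you only get $V^{\rm a}_\eps=o(\ell^2)$; the scale $\ell^3$ holds only in $L^2$ in $(x,w)$. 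An $L^2$-controlled $V^{\rm a}_\eps$ gives no pointwise bound on the width of $\{|U_\eps|<|V^{\rm a}_\eps|\}$.

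The missing idea is a uniform-integrability argument, which the paper uses. Start from $0\le C_\eps\le\frac12|V^{\rm a}_\eps|\1_{\{|U_\eps|<|V^{\rm a}_\eps|\}}$. The $(x,w)$-measure of this set tends to zero, because $\ell^{-2}|V^{\rm a}_\eps|\to0$ uniformly while $\ell^{-2}U_\eps(x,x+\ell w)\to h(x)-\frac12w^TH(x)w$, which is nonzero a.e. In addition, $\ell^{-3}V^{\rm a}_\eps(x,x+\ell w)$ converges strongly in $L^2(X\times B_R)$. Together these give $\|\ell^{-3}C_\eps(x,x+\ell w)\|_{L^2(X\times B_R)}\to0$, and Cauchy--Schwarz in $w$ yields \eqref{eq:switching-small}. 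This is $o(\ell^3)$ rather than $O(\ell^4)$, which is all the theorem needs.

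The strong convergence must also be unweighted. The weighted statement \eqref{eq:strong-edge-convergence} you cite does not control $V^{\rm a}_\eps$ where $C_\eps\neq0$. There the two slacks have opposite signs and $\omega_\eps=\frac12(1+U_\eps/|V^{\rm a}_\eps|)$, which can be arbitrarily small. Use \eqref{eq:unweighted-strong-differences} instead; it follows from \eqref{eq:short-energy}, obtained from the weighted energy on $|x-z|<a\ell$ via \eqref{eq:app-length-comparison}. With this repair, the rest of your plan goes through as in the paper.
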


To wit, \eqref{eq:field-decomp} holds up to distance $O(\ell)$
from the boundary, with an $L^2$ remainder $\mathcal R_\eps$
and an exponentially decaying term $\mathcal B_\eps$. This separation will be crucial below. Note that for fixed $L>0$ and small $\ell$,
$\{\delta>L\ell\}$ is nonempty, open, and convex. 
The proof of \cref {thm:centers} will be given in \cref{sec:bias-proof}, based on the preparations developed next.

\subsection{Estimates for small displacements}\label{sec:small-displacements}
Use \eqref{eq:app-short-energy} with $\Omega=X$, $l=\ell$,
and $a$ from \eqref{eq:omega-ellipticity}.
By \cref{prop:corrector-convergence},
$\mathcal E_\ell(g_\eps-g_0)\to0$.
The next lemma controls increments along small perturbations
of the identity.

\Needspace{12\baselineskip}
\begin{lemma}[Difference quotients along small displacements]\label{lem:sampling}
Let $E_\ell\subset\intr X$ be open and convex. Suppose that $\Phi_\ell\in C^1(E_\ell;X)$ satisfies
\[
 \sup_{E_\ell}|\Phi_\ell-\id|\le M\ell,\qquad \sup_{E_\ell}\|D\Phi_\ell-\Id\|\le\kappa
\]
for fixed $M>0$ and $0\le\kappa<1$. Then
\begin{equation}\label{eq:moving-increments}
 \left\|\frac{g_\eps\circ\Phi_\ell-g_\eps}{\ell}-\nabla g_0\cdot\frac{\Phi_\ell-\id}{\ell}\right\|_{L^2(E_\ell)}\le C\mathcal E_\ell(\widetilde g_\eps)^{1/2}+C\omega_0(M\ell)\longrightarrow0,
\end{equation}
where $\omega_0(t)=\sup\{|\nabla g_0(x)-\nabla g_0(z)|:x,z\in X,\ |x-z|\le t\}$. Moreover,
\begin{equation}\label{eq:displacement-composition-bound}
 \|f\circ\Phi_\ell\|_{L^2(E_\ell)}\le(1-\kappa)^{-d/2}\|f\|_{L^2(X)}\qquad(f\in L^2(X)).
\end{equation}
The constant $C$ in \eqref{eq:moving-increments} depends only on the fixed data, $M$, and $\kappa$.
\end{lemma}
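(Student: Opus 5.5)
First I prove the composition bound \eqref{eq:displacement-composition-bound}, since the main estimate uses it. On the convex set $E_\ell$ the map $\Phi_\ell$ is injective. Indeed, for $x\ne x'$ in $E_\ell$, integrating $D\Phi_\ell$ along the segment $[x,x']\subset E_\ell$ gives
\[
 |\Phi_\ell(x)-\Phi_\ell(x')-(x-x')|\le\kappa|x-x'|<|x-x'|.
\]
The singular values of $D\Phi_\ell$ are at least $1-\kappa$, so $|\det D\Phi_\ell|\ge(1-\kappa)^d$. The change of variables $y=\Phi_\ell(x)$, whose image lies in $X$, then gives
\[
 \int_{E_\ell}|f\circ\Phi_\ell|^2\dd x\le(1-\kappa)^{-d}\int_X|f|^2\dd y.
\]

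For \eqref{eq:moving-increments}, split $g_\eps=g_0+\widetilde g_\eps$.

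\emph{Smooth part.} Since $g_0\in C^{1}(X)$ by \cref{prop:corrector-convergence}, and the segment from $x$ to $\Phi_\ell(x)$ lies in $X$ by convexity and has length at most $M\ell$, the mean value theorem gives
\[
 |g_0(\Phi_\ell(x))-g_0(x)-\nabla g_0(x)\cdot(\Phi_\ell(x)-x)|\le\omega_0(M\ell)M\ell.
\]
Dividing by $\ell$ and taking the $L^2(E_\ell)$ norm (with $|X|<\infty$) yields $C\omega_0(M\ell)$.

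\emph{Rough part.} It remains to show $\|(\widetilde g_\eps\circ\Phi_\ell-\widetilde g_\eps)/\ell\|_{L^2(E_\ell)}\le C\mathcal E_\ell(\widetilde g_\eps)^{1/2}$. Write $v=\widetilde g_\eps$ and $S=\mathcal S_\ell v$ with the smoothing operator from \eqref{eq:smoothing}, and decompose
\[
 v\circ\Phi_\ell-v=(v-S)\circ\Phi_\ell-(v-S)+(S\circ\Phi_\ell-S).
\]
By \eqref{eq:displacement-composition-bound} and \eqref{eq:smoothing}, the first two terms have $L^2(E_\ell)$ norm at most $C\ell\,\mathcal E_\ell(v)^{1/2}$. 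For the third term, write
\[
 S(\Phi_\ell(x))-S(x)=\int_0^1\nabla S\bigl(x+t(\Phi_\ell(x)-x)\bigr)\cdot(\Phi_\ell(x)-x)\dd t.
\]
Take the $L^2$ norm in $x$ and use Minkowski's inequality in $t$. For each $t$, the map $\Phi^t=\id+t(\Phi_\ell-\id)$ sends $E_\ell$ into $X$ by convexity of $X$, and it satisfies $\|D\Phi^t-\Id\|\le t\kappa\le\kappa$. Applying \eqref{eq:displacement-composition-bound} to $\Phi^t$ with $f=|\nabla S|$ gives
\[
 \|S\circ\Phi_\ell-S\|_{L^2(E_\ell)}\le M\ell(1-\kappa)^{-d/2}\|\nabla S\|_{L^2(X)}\le C\ell\,\mathcal E_\ell(v)^{1/2},
\]
again by \eqref{eq:smoothing}. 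Dividing by $\ell$ gives the claim.

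The right-hand side of \eqref{eq:moving-increments} tends to zero. The term $\mathcal E_\ell(\widetilde g_\eps)\to0$ by \eqref{eq:short-energy} with $r=a$, and $\omega_0(M\ell)\to0$ by uniform continuity of $\nabla g_0$ on the compact set $X$.

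The main obstacle is regularity: $\mathcal S_\ell v$ lies only in $H^1$, so the chain rule along segments needs justification. I would approximate $\mathcal S_\ell v$ in $H^1(\intr X)$ by smooth functions. Both sides of the segment estimate are continuous under this approximation, because of the composition bound applied to each $\Phi^t$.
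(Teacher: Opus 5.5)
Your proposal is correct and follows essentially the same route as the paper. It proves the composition bound via injectivity and the Jacobian lower bound for the interpolated maps $(1-t)\id+t\Phi_\ell$. It then removes $g_0$ by a Taylor estimate with $\omega_0$, and splits $\widetilde g_\eps$ into $\widetilde g_\eps-\mathcal S_\ell\widetilde g_\eps$ and $\mathcal S_\ell\widetilde g_\eps$, handling the second piece by segment integration, Minkowski's inequality and the composition bound. Your density argument justifying the chain rule for the $H^1$ function $\mathcal S_\ell\widetilde g_\eps$ is a detail the paper leaves implicit.
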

\begin{proof}
For $t\in[0,1]$, let $\Phi_{\ell,t}=(1-t)\id+t\Phi_\ell$. The segment from $x$ to $\Phi_\ell(x)$ is in $X$ by convexity. Integrating $D\Phi_{\ell,t}-\Id$ along segments in the convex set $E_\ell$ gives
\[
 |\Phi_{\ell,t}(x)-\Phi_{\ell,t}(z)|\ge(1-\kappa)|x-z|\qquad(x,z\in E_\ell).
\]
Each $\Phi_{\ell,t}$ is therefore injective. The derivative bound also makes its singular values at least $1-\kappa$, so the absolute value of its Jacobian determinant is at least $(1-\kappa)^d$. Change of variables proves \eqref{eq:displacement-composition-bound} for every $\Phi_{\ell,t}$, with the same constant. In particular, composing with these maps preserves equality almost everywhere.

The smoothing estimate \eqref{eq:smoothing} separates $\widetilde g_\eps$ into an $L^2$ error and a smooth function with a small $L^2$ gradient. Write $\widetilde g_\eps=e_\ell+w_\ell$, where $w_\ell=\mathcal S_\ell\widetilde g_\eps$. Then $\|e_\ell\|_2\le C\ell\mathcal E_\ell(\widetilde g_\eps)^{1/2}$
and $\|\nabla w_\ell\|_2\le C\mathcal E_\ell(\widetilde g_\eps)^{1/2}$.
By \eqref{eq:displacement-composition-bound}, the increment $e_\ell\circ\Phi_\ell-e_\ell$ has $L^2(E_\ell)$ norm at most $C\ell\mathcal E_\ell(\widetilde g_\eps)^{1/2}$. For the smooth part, integration along the segment gives
\[
 w_\ell(\Phi_\ell(x))-w_\ell(x)=\int_0^1\nabla w_\ell(\Phi_{\ell,t}(x))\cdot[\Phi_\ell(x)-x]\dd t.
\]
Minkowski's inequality and \eqref{eq:displacement-composition-bound}
applied to $\Phi_{\ell,t}$ give
$\|w_\ell\circ\Phi_\ell-w_\ell\|_{L^2(E_\ell)}
\le C\ell\mathcal E_\ell(\widetilde g_\eps)^{1/2}$.
Moreover,
$|g_0\circ\Phi_\ell-g_0-\nabla g_0\cdot(\Phi_\ell-\id)|
\le M\ell\omega_0(M\ell)$.
Combining these bounds with the estimate for $e_\ell$ and
dividing by $\ell$ proves \eqref{eq:moving-increments}.
\end{proof}

\subsection{Slack on the Brenier graph}
Recall from \eqref{eq:common-gap-bounds} that $\tau_\eps(x)=q_\eps(x,T(x))/2$. We sharpen its leading approximation $\ell^2h/2$ by separating a small $L^2$ error from the correction caused by the boundary.

\begin{lemma}[Slack expansion on the Brenier graph]\label{thm:height}
There are measurable functions $e_\eps,b_\eps$ on $X$ such that
\begin{equation}\label{eq:height-decomp}
 \tau_\eps=\ell^2h/2+e_\eps+b_\eps,\qquad \|e_\eps\|_{L^2(X)}=o(\ell^3),\qquad |b_\eps(x)|\le C\ell^2e^{-c\delta(x)/\ell}\quad\text{a.e. on }X.
\end{equation}
In addition, $\|e_\eps\|_\infty\le C\ell^2$. In particular, for every compact $K\subset\intr X$,
\begin{equation}\label{eq:height-local}
 \|q_\eps(\cdot,T(\cdot))-\ell^2h\|_{L^2(K)}=o(\ell^3).
\end{equation}
\end{lemma}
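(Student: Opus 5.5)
We follow the strategy sketched in \cref{sec:proof-strategy}: average the two marginal identities \eqref{eq:Q-marginals}, linearize around the candidate $\ell^2h/2$, and invert $I+P_\ell$ with \cref{lem:plus}.

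\textbf{Step 1: the averaged equation.} Set $U_\eps(x,z)=\tfrac12[Q_\eps(x,z)+Q_\eps(z,x)]=\tau_\eps(x)+\tau_\eps(z)-D^{\rm s}(x,z)$, so that the antisymmetric parts $\ell^2[g_\eps(z)-g_\eps(x)]$ and $\mathfrak a$ cancel. Also set $C_\eps=\tfrac12[(Q_\eps(x,z))_++(Q_\eps(z,x))_+]-(U_\eps)_+$. Averaging the two identities in \eqref{eq:Q-marginals} gives \eqref{eq:averaged-marginal-intro}.

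To bound $C_\eps$, note that the difference of the two slacks is $2[\eta_\eps(z)-\eta_\eps(x)-\mathfrak a(x,z)]=:2\Delta$. The function $\tfrac12(U+\Delta)_++\tfrac12(U-\Delta)_+-U_+$ vanishes unless $|U_\eps|\le|\Delta|$, and it is at most $|\Delta|$. Write $\Delta=\ell^2[\widetilde g_\eps(z)-\widetilde g_\eps(x)]+\ell^2[g_0(z)-g_0(x)]-\mathfrak a$, with both nonrandom terms of size $O(\ell^3)$ on $|x-z|\le R\ell$. We estimate $\int_X C_\eps(x,z)\dd\mu(z)$ in two pieces.
\begin{itemize}
\item The $\widetilde g_\eps$ piece is controlled in $L^2(\dd x)$ via Cauchy--Schwarz and \eqref{eq:short-energy}. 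This gives $\ell^{d}\cdot\ell^2\cdot\ell\,o(1)$, that is $o(\ell^{d+3})$.
\item The $O(\ell^3)$ piece only contributes where $|U_\eps|\le C\ell^3$. For interior $x$ this is a shell of $z$-volume $O(\ell^{d+1})$, since $U_\eps$ has radial derivative $\asymp\ell$ near the section boundary by \eqref{eq:bulk-slack-profiles}. So this piece is $O(\ell^{d+4})$.
\end{itemize}
Near the boundary we simply use the crude bound $O(\ell^{d+2})$. Thus $\int C_\eps\dd\mu=o(\ell^{d+3})$ in $L^2$ plus $O(\ell^{d+2})$ supported in $\{\delta\le R'\ell\}$. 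This is \eqref{eq:switching-small}.

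\textbf{Step 2: linearization.} Put $U^0(x,z)=\tfrac{\ell^2}2[h(x)+h(z)]-D^{\rm s}(x,z)$ and $f=\tau_\eps-\ell^2h/2$. By \eqref{eq:W-secant}, $(U_\eps)_+-(U^0)_+=W(x,z)[f(x)+f(z)]$, with $W\in[0,1]$ symmetric. Since $U_\eps$ and $U^0$ are both $\ge c\ell^2$ for $|x-z|<a\ell$, by \eqref{eq:graph-positive} and Lipschitz bounds, and both vanish for $|x-z|>R\ell$, the kernel $W_\ell=W$ satisfies \eqref{eq:plus-kernel} with $l=\ell$. Dividing by $m_\ell\asymp\ell^d$ yields \eqref{eq:height-linear-intro} with
\[
\mathcal F_\ell=m_\ell^{-1}\Bigl[\eps-\int_X(U^0)_+\dd\mu-\int_XC_\eps\dd\mu\Bigr].
\]

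\textbf{Step 3: the source term.} We need $\int(U^0(x,z))_+\dd\mu(z)=\eps+O(\ell^{d+4})$ whenever $B(x,R\ell)\subset X$. Substitute $z=x+\ell\Lambda_x\xi$ and expand $h(z)$, $\rho_0(z)$ and $D^{\rm s}$ to first order in $\ell$. The zeroth-order term equals $\eps$ by \eqref{eq:cap-normalization} and the choice of $\sigma$. The first-order terms are odd in $\xi$ and integrate to zero over the symmetric cap. The remaining second-order terms are $O(\ell^{d+4})$ provided $h,\rho_0\in C^{1,\beta}$ and $u_0\in C^{3,\beta}$; if only $O(\ell^{d+3+\beta})$ is obtained this is still $o(\ell^{d+3})$, which suffices. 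For $x$ within $R\ell$ of $\partial X$ the bound is $O(\ell^{d+2})$.

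Hence $\mathcal F_\ell=\mathcal F^1+\mathcal F^2$, where $\|\mathcal F^1\|_{L^2}=o(\ell^3)$ and $\mathcal F^2=O(\ell^2)$ is supported in $E=\{\delta\le R'\ell\}$.

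\textbf{Step 4: inversion.} Set $e_\eps=(I+P_\ell)^{-1}\mathcal F^1$ and $b_\eps=(I+P_\ell)^{-1}\mathcal F^2$. \Cref{lem:plus} gives $\|e_\eps\|_{L^2}=o(\ell^3)$, and \eqref{eq:inverse-tail} gives $|b_\eps|\le C\ell^2e^{-c\dist(x,E)/\ell}\le C\ell^2e^{-c\delta/\ell}$. The bound $\|e_\eps\|_\infty\le C\ell^2$ follows from the $L^\infty$ boundedness of the inverse, because $\mathcal F^1$ is also $O(\ell^2)$ pointwise. Finally, \eqref{eq:height-local} follows since $b_\eps$ is exponentially small on $K$.

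\textbf{Main obstacle.} The hard part is Step 1, the $o(\ell^{d+3})$ bound for $C_\eps$ in $L^2$. It needs the quantitative measure of the thin shell $\{|U_\eps|\lesssim\ell^3\}$ to hold uniformly in the bulk; I would obtain this from the $C^1$ profile \eqref{eq:bulk-slack-profiles} on $\{\delta>L\ell\}$. It also needs the energy estimate \eqref{eq:short-energy} to absorb the oscillation of $\widetilde g_\eps$.
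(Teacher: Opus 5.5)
Your proof is correct, and its architecture is the paper's. It uses the same symmetrized slack $U_\eps$ and switching term $C_\eps$, and the same comparison $U^0$ built from $\ell^2h/2$, with the odd first-order terms cancelling in the bulk. It also uses the same secant kernel, and inverts $I+P_\ell$ by \cref{lem:plus}, with \eqref{eq:inverse-tail} producing $b_\eps$.

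The genuine difference is how you estimate $C_\eps$.
\begin{itemize}
\item The paper upgrades \eqref{eq:short-energy} to strong $L^2(X\times B_M)$ convergence of the unweighted quotients $D_\ell g_\eps\to\nabla g_0(x)\cdot w$, and hence of $\ell^{-3}V^{\rm a}_\eps(x,x+\ell w)$. Uniform convergence $g_\eps\to g_0$ gives $\ell^{-2}V^{\rm a}_\eps\to0$, while $\ell^{-2}U_\eps$ has an a.e.\ nonzero limit. So the set $\{|U_\eps|<|V^{\rm a}_\eps|\}$ has vanishing measure, and uniform integrability yields \eqref{eq:switching-small} on all of $X$. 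There is no rate and no boundary split, so only $R_\ell$ feeds $b_\eps$.
\item You split $V^{\rm a}_\eps$ into two parts. The oscillation $\ell^2[\widetilde g_\eps(z)-\widetilde g_\eps(x)]$ is $o(\ell^{d+3})$ in $L^2$ by Cauchy--Schwarz and \eqref{eq:short-energy}, with no indicator needed. The smooth $O(\ell^3)$ part is controlled by the shell bound $|\{z:|U_\eps(x,z)|\le C\ell^3\}|=O(\ell^{d+1})$.
\end{itemize}
Your route skips the strong-convergence upgrade and gives a rate for the smooth part. The price is that you need $C^1$ control of the rescaled slack, rather than only its a.e.\ limit.

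Two details to tighten.
\begin{itemize}
\item \eqref{eq:bulk-slack-profiles} is stated only for bulk families, so it does not literally cover $\{\delta>R'\ell\}$ with $R'$ fixed. You can use \eqref{eq:two-point-profile} together with \eqref{eq:profile-exponential} for a fixed large $L$, as in \eqref{eq:deep-slack-comparison}. Alternatively, take the strip $\{\delta\le L_\eps\ell\}$ with $L_\eps\to\infty$ and $L_\eps\ell\to0$. In that case the smooth part is $O(\ell^{d+3})$ pointwise, hence $o(\ell^{d+3})$ in $L^2$ over the strip. Your splitting then actually gives the global bound \eqref{eq:switching-small}, and the boundary part of $C_\eps$ need not be sent to $b_\eps$, though doing so is harmless.
\item In Step 3, the available regularity yields $O(\ell^{d+3+\beta})$ rather than $O(\ell^{d+4})$. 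Expanding the positive part also needs a slab estimate: the $O(\ell)$-neighborhood of the zero level set of the quadratic profile must have volume $O(\ell)$ in rescaled coordinates. As you note, this still suffices.
\end{itemize}
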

\begin{proof}
\emph{Symmetrization.} By \eqref{eq:Q},
\begin{equation}\label{eq:symmetric-slack-parts}
\begin{aligned}
 U_\eps(x,z)
 &=\tfrac12[Q_\eps(x,z)+Q_\eps(z,x)]
 =\tau_\eps(x)+\tau_\eps(z)-D^{\rm s}(x,z),\\
 V_\eps^{\rm a}(x,z)
 &=\tfrac12[Q_\eps(x,z)-Q_\eps(z,x)]
 =\ell^2[g_\eps(z)-g_\eps(x)]-\mathfrak a(x,z).
\end{aligned}
\end{equation}
The identity $ \tfrac12[(U+V)_++(U-V)_+]
 =U_++\tfrac12(|V|-|U|)_+$ 
and \eqref{eq:Q-marginals} give
\begin{equation}\label{eq:symmetric-equation}
 \int_X(U_\eps(x,z))_+\dd\mu(z)
 =\eps-\int_XC_\eps(x,z)\dd\mu(z),
\end{equation}
where $C_\eps=(|V_\eps^{\rm a}|-|U_\eps|)_+/2$.
By \eqref{eq:prelocalization}, both integrands vanish
for $|x-z|\ge R\ell$, after increasing~$R$. Extend $U_\eps(x,x+\ell w)$, $V_\eps^{\rm a}(x,x+\ell w)$,
and $C_\eps(x,x+\ell w)$ by zero when $x+\ell w\notin X$.

To estimate $C_\eps$, we first derive unweighted strong convergence of the difference quotients from \eqref{eq:short-energy}.
With $D_\ell$ as in \eqref{eq:corrector-edge-definitions},
the change of variables $z=x+\ell w$ and \eqref{eq:short-energy}
give, for each $M>0$,
\[
 \|D_\ell\widetilde g_\eps\|_{L^2(X\times B_M)}^2
 =\ell^{-d-2}\iint_{\substack{x,z\in X\\|x-z|<M\ell}}
 |\widetilde g_\eps(z)-\widetilde g_\eps(x)|^2\dd x\dd z
 \longrightarrow0.
\]
For $x+\ell w\in X$ and $|w|<M$,
$|D_\ell g_0(x,w)-\nabla g_0(x)\cdot w|
\le M\omega_0(M\ell)$.
Otherwise, $D_\ell g_0=0$ and $\delta(x)\le M\ell$.
Since $|\{\delta\le M\ell\}|\to0$ and $\nabla g_0$ is bounded,
$D_\ell g_0\to\nabla g_0(x)\cdot w$ in $L^2(X\times B_M)$.
Consequently,
\begin{equation}\label{eq:unweighted-strong-differences}
 D_\ell g_\eps\longrightarrow\nabla g_0(x)\cdot w
 \qquad\text{strongly in }L^2(X\times B_M).
\end{equation}

By \eqref{eq:symmetric-slack-parts},
\eqref{eq:unweighted-strong-differences}, and
\eqref{eq:cubic-source-limit},
\begin{equation}\label{eq:antisymmetric-slack-limit}
 \ell^{-3}V_\eps^{\rm a}(x,x+\ell w)
 \longrightarrow
 \nabla g_0(x)\cdot w-\tfrac1{12}D^3u_0(x)[w,w,w]
 \qquad\text{strongly in }L^2(X\times B_R).
\end{equation}
Moreover, \cref{prop:corrector-convergence} and
\eqref{eq:common-gap-bounds} give, for $|x-z|<R\ell$,
\[
 \ell^{-2}|V_\eps^{\rm a}(x,z)|
 \le2\|g_\eps-g_0\|_\infty+C\ell\longrightarrow0.
\]
By \eqref{eq:corrector-slack-limit},
$\ell^{-2}U_\eps(x,x+\ell w)\to h(x)-w^TH(x)w/2$
almost everywhere. For each $x$, the ellipsoid boundary
$\{w:w^TH(x)w=2h(x)\}$ has Lebesgue measure zero.
Hence, for almost every $(x,w)\in X\times B_R$, we have $\ell^{-2}|U_\eps(x,x+\ell w)|
\to|h(x)-w^TH(x)w/2|>0$, whereas
$\ell^{-2}|V_\eps^{\rm a}(x,x+\ell w)|\to0$.
Thus $\1_{\{|U_\eps(x,x+\ell w)|<|V_\eps^{\rm a}(x,x+\ell w)|\}}
\to0$ almost everywhere, and dominated convergence shows that
the measure of this set tends to zero. The $L^2$-convergence in \eqref{eq:antisymmetric-slack-limit} and
$ 0\le C_\eps
 \le\tfrac12|V_\eps^{\rm a}|
 \1_{\{|U_\eps|<|V_\eps^{\rm a}|\}}$ 
therefore imply
$\|\ell^{-3}C_\eps(x,x+\ell w)\|_{L^2(X\times B_R)}\to0$.
Finally, changing variables,
\begin{equation}\label{eq:switching-small}
 \left\|\ell^{-d-3}\int_XC_\eps(\cdot,z)\dd\mu(z)\right\|_{L^2(X)}
 \longrightarrow0.
\end{equation}

\emph{Comparison with the quadratic profile.}
Set $f_\ell=\ell^2h/2$,
$U_\ell^0(x,z)=f_\ell(x)+f_\ell(z)-D^{\rm s}(x,z)$, and
$R_\ell(x)=\int_X(U_\ell^0(x,z))_+\dd\mu(z)-\eps$.
By \eqref{eq:common-gap-bounds},
$(U_\ell^0)_+=0$ for $|x-z|\ge R\ell$, after increasing~$R$. 
For $x+\ell w\in X$ and $|w|<R$,  a Taylor expansion gives
\begin{equation}\label{eq:symmetric-comparison-expansion}
\begin{aligned}
 \ell^{-2}U_\ell^0(x,x+\ell w)
 &=A_x(w)+\ell L_x(w)+O(\ell^{1+\beta}),\\
 A_x(w)&=h(x)-\tfrac12w^TH(x)w,\\
 L_x(w)&=\tfrac12\nabla h(x)\cdot w
 -\tfrac14D^3u_0(x)[w,w,w].
\end{aligned}
\end{equation}
Here we used
$D^{\rm s}(x,z)=\tfrac12(T(z)-T(x))\cdot(z-x)$,
$T\in C^{2,\beta}$, and $h\in C^{1,\beta}$. The uniform bounds on $H$ and $h$, with
$v=H(x)^{1/2}w$, give
\begin{equation}\label{eq:quadratic-level-slab}
 |\{w\in B_R:|A_x(w)|\le t\}|
 \le Ct\qquad(0<t<t_0).
\end{equation}
Using
$ |(A+B)_+-A_+-\1_{\{A>0\}}B|
 \le |B|\1_{\{|A|\le|B|\}},$
equations \eqref{eq:symmetric-comparison-expansion}
and \eqref{eq:quadratic-level-slab} yield
\[
 \int_{B_R}
 \left|(\ell^{-2}U_\ell^0(x,x+\ell w))_+
 -(A_x(w))_+-\ell L_x(w)\1_{\{A_x(w)>0\}}\right|\dd w
 \le C(\ell^2+\ell^{1+\beta})
\]
when $\delta(x)>R\ell$.
Combining this with
$\rho_0(x+\ell w)=\rho_0(x)+\ell\nabla\rho_0(x)\cdot w
+O(\ell^{1+\beta})$, the order-$\ell$ terms integrate to zero
as they are odd. Since
$\rho_0(x)\int_{B_R}(A_x)_+\dd w=1$
by \eqref{eq:scale} and \eqref{eq:cap-normalization}, and
$\eps=\ell^{d+2}$, we obtain
\begin{equation}\label{eq:height-comparison-residual}
 |R_\ell(x)|\le C\ell^{d+3+\beta}
 \quad\text{if }\delta(x)>R_0\ell,\qquad
 |R_\ell(x)|\le C\ell^{d+2}\quad(x\in X),
\end{equation}
for fixed $R_0\ge R$.
Since $(U_\ell^0)_+\le C\ell^2$ and vanishes for
$|x-z|\ge R\ell$, boundedness of $\rho_0$ gives
$|R_\ell(x)|\le C\ell^2\mu(B(x,R\ell))+\eps
\le C\ell^{d+2}$ for all $x\in X$.

\emph{Inversion and the boundary remainder.}
Define
\[
 W_\ell(x,z)=\int_0^1
 \1_{\{(1-t)U_\ell^0(x,z)+tU_\eps(x,z)>0\}}\dd t.
\]
This kernel is symmetric. By \eqref{eq:graph-positive},
$\tau_\eps\ge c\ell^2$, and $f_\ell\ge c\ell^2$
by \eqref{eq:scale}. Together with \eqref{eq:common-gap-bounds} and \eqref{eq:prelocalization}, these bounds show that  $U_\eps$ and $U_\ell^0$ are positive for $|x-z|<a'\ell$ and nonpositive for $|x-z|>R'\ell$, for suitable constants $0<a'<R'$. Hence
\[
 \1_{\{|x-z|<a'\ell\}}
 \le W_\ell(x,z)\le
 \1_{\{|x-z|<R'\ell\}}.
\]
Set $m_\ell(x)=\int_XW_\ell(x,z)\dd\mu(z)$ and
$P_\ell f(x)=m_\ell(x)^{-1}\int_XW_\ell(x,z)f(z)\dd\mu(z)$.
By \cref{lem:plus}, $m_\ell\asymp\ell^d$ and
$I+P_\ell$ has a uniformly bounded inverse on $L^2$.
In view of 
$(U_\eps)_+-(U_\ell^0)_+
=W_\ell[(\tau_\eps-f_\ell)(x)+(\tau_\eps-f_\ell)(z)]$,
\eqref{eq:symmetric-equation} and the definition of $R_\ell$ give
\[
 m_\ell(x)(\tau_\eps-f_\ell)(x)
 +\int_XW_\ell(x,z)(\tau_\eps-f_\ell)(z)\dd\mu(z)
 =-R_\ell(x)-\int_XC_\eps(x,z)\dd\mu(z).
\]
Dividing by $m_\ell$ and applying $(I+P_\ell)^{-1}$ yields
\begin{equation}\label{eq:height-inverse-equation}
 \tau_\eps-f_\ell
 =-(I+P_\ell)^{-1}
 \left[
 \frac{R_\ell}{m_\ell}
 +\frac1{m_\ell}\int_XC_\eps(\cdot,z)\dd\mu(z)
 \right].
\end{equation}
Set
\[
 b_\eps=-(I+P_\ell)^{-1}
 \left[\frac{R_\ell}{m_\ell}
 \1_{\{\delta\le R_0\ell\}}\right],
 \qquad
 e_\eps=\tau_\eps-f_\ell-b_\eps.
\]
By \eqref{eq:height-comparison-residual},
the forcing defining $b_\eps$ is bounded by $C\ell^2$.
Since
$\operatorname{dist}(x,\{\delta\le R_0\ell\})
\ge(\delta(x)-R_0\ell)_+$,
\eqref{eq:inverse-tail} gives
$ |b_\eps(x)|\le C\ell^2e^{-c\delta(x)/\ell}.$ 
Equations \eqref{eq:height-inverse-equation},
\eqref{eq:height-comparison-residual}, and
\eqref{eq:switching-small}, together with \cref{lem:plus}, give
\[
 \|e_\eps\|_2
 \le C\left(
 \ell^{3+\beta}
 +\left\|\frac1{m_\ell}
 \int_XC_\eps(\cdot,z)\dd\mu(z)\right\|_2
 \right)
 =o(\ell^3).
\]
Moreover, \eqref{eq:preliminary-error-bounds} and the bound
for $b_\eps$ imply $\|e_\eps\|_\infty\le C\ell^2$.
This proves \eqref{eq:height-decomp}.
On each compact subset of $\intr X$, $b_\eps=o(\ell^3)$
uniformly. Since $q_\eps(x,T(x))=2\tau_\eps(x)$ by \eqref{eq:common-gap-bounds}, the decomposition \eqref{eq:height-decomp} then gives \eqref{eq:height-local}.
\end{proof}

\subsection{Slack and radial expansions}
To identify the biases of $z_\eps$ and $m_\eps$ at order $\ell^2$,
we expand $q_\eps(x,T(x)+\ell B_xw)$ to order $\ell^3$.
Define
\begin{equation}\label{eq:tensor}
\begin{gathered}
 \mathcal T_x[v_1,v_2,v_3]
 =D^3u_0(x)[H(x)^{-1/2}v_1,H(x)^{-1/2}v_2,H(x)^{-1/2}v_3],\\
 \zeta(x)=H(x)^{-1/2}\nabla\log\det H(x).
\end{gathered}
\end{equation}
Jacobi's formula gives
$\zeta_i(x)=\sum_{j=1}^d\mathcal T_x[e_i,e_j,e_j]$.
Set
\begin{equation}\label{eq:LP}
 \lambda(x)=\sigma(x)^{-1}H(x)^{-1/2}
 (\nabla g_0(x)+\tfrac12\nabla h(x)),\qquad
 P(x,w)=\lambda(x)\cdot w
 +\tfrac{\sigma(x)}6\mathcal T_x[w,w,w].
\end{equation}
The next lemma gives the expansion
$q_\eps(x,T(x)+\ell B_xw)
=\ell^2\sigma(x)^2[h_d-|w|^2/2+\ell P(x,w)]$
up to the stated remainders.
The term $P$ determines the first correction to the positive set
of $q_\eps$ and to its positive part, which are used to compute $z_\eps$ and $m_\eps$, respectively.
\begin{lemma}[Slack expansion under small displacements]\label{thm:slack}
Fix $M>0$. Let $E_\ell\subset\intr X$ be open and convex, and let $w_\ell:E_\ell\to B_M$ be such that
$\Phi_\ell(x)=S(T(x)+\ell B_xw_\ell(x))$
is well-defined and satisfies the hypotheses of \cref{lem:sampling}, with fixed displacement and derivative bounds. Then, for almost every $x\in E_\ell$,
\begin{equation}\label{eq:moving-slack-decomp}
 \frac{q_\eps(x,T(x)+\ell B_xw_\ell(x))}{\ell^2\sigma(x)^2}
 =h_d-\tfrac12|w_\ell(x)|^2+\ell P(x,w_\ell(x))+E_\eps(x)+B_\eps(x),
\end{equation}
where $\|E_\eps\|_{L^2(E_\ell)}=o(\ell)$, $\|E_\eps\|_{L^\infty(E_\ell)}\le C$, and $|B_\eps(x)|\le Ce^{-c\delta(x)/\ell}$. These three estimates are uniform over maps satisfying the same displacement and derivative bounds. In particular, for every compact $K\subset\intr X$,
\begin{equation}\label{eq:first-slack}
 \ell^{-1}\bigl[Q^{\rm r}_{\eps,x}(w)-h_d+|w|^2/2\bigr]\longrightarrow P(x,w)\qquad\text{strongly in }L^2(K\times B_M).
\end{equation}
\end{lemma}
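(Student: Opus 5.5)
Write $y=T(x)+\ell B_xw$ with $w=w_\ell(x)$ and $z=\Phi_\ell(x)=S(y)$, so $y=T(z)$. By \eqref{eq:Q},
\[
 q_\eps(x,y)=Q_\eps(x,z)=\tau_\eps(x)+\tau_\eps(z)+\ell^2[g_\eps(z)-g_\eps(x)]-D(x,z).
\]
The plan is to expand each of these four terms to order $\ell^3$. Each term splits into an $o(\ell^3)$ part in $L^2(E_\ell)$, bounded by $C\ell^2$, and an exponentially decaying boundary part. Dividing by $\ell^2\sigma(x)^2$ then gives \eqref{eq:moving-slack-decomp}.

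\emph{Step 1 (graph slack).} Apply \cref{thm:height}: $\tau_\eps=\ell^2h/2+e_\eps+b_\eps$. At the point $x$, $e_\eps(x)$ goes into $E_\eps$ and $b_\eps(x)$ into $B_\eps$. At $z=\Phi_\ell(x)$, the composition bound \eqref{eq:displacement-composition-bound} gives $\|e_\eps\circ\Phi_\ell\|_{L^2(E_\ell)}\le C\|e_\eps\|_{L^2}=o(\ell^3)$. Since $|\Phi_\ell-\id|\le M\ell$, we have $\delta(\Phi_\ell(x))\ge\delta(x)-M\ell$, so $|b_\eps(\Phi_\ell(x))|\le C\ell^2e^{-c\delta(x)/\ell}$ after enlarging $C$. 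The smooth term is expanded by Taylor's formula: $h(z)=h(x)+\nabla h(x)\cdot(z-x)+O(\ell^{1+\beta})$, using $h\in C^{1,\beta}$. The uniform bounds $\|e_\eps\|_\infty\le C\ell^2$ and \eqref{eq:preliminary-error-bounds} supply the $L^\infty$ bound on $E_\eps$.

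\emph{Step 2 (corrector increment).} Apply \cref{lem:sampling} to $\Phi_\ell$:
\[
 \ell^2[g_\eps(\Phi_\ell)-g_\eps]=\ell^2\nabla g_0\cdot(\Phi_\ell-\id)+o(\ell^3)\quad\text{in }L^2(E_\ell),
\]
and the remainder is pointwise $O(\ell^2)$ by \eqref{eq:preliminary-error-bounds}. I expect this step to be the main obstacle: $g_\eps$ is only known to converge uniformly, with no pointwise gradient control. \Cref{lem:sampling}, built on the energy estimate \eqref{eq:short-energy}, is exactly the tool for it. Its uniformity over maps with the same bounds gives the asserted uniformity of the estimates.

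\emph{Step 3 (gap and displacement).} Expand $z-x=S(T(x)+\ell B_xw)-x$. Since $DS(T(x))=H(x)^{-1}$ and $H^{-1}B_x=\Lambda_x$, this gives $z-x=\ell\Lambda_xw+\tfrac{\ell^2}{2}D^2S(T(x))[B_xw,B_xw]+O(\ell^{2+\beta})$. For the gap, use $D(x,z)=G_0(x,T(z))$ and expand $v_0$ around $T(x)$ to third order:
\[
 G_0(x,y)=\tfrac12(y-T(x))^TH(x)^{-1}(y-T(x))+\tfrac16D^3v_0(T(x))[\cdot]^3+O(\ell^{3+\beta}).
\]
With $y-T(x)=\ell B_xw$, the quadratic term equals $\ell^2\sigma^2|w|^2/2$. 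Differentiating $DS\circ T=H^{-1}$ gives $D^3v_0(T(x))[B_xw,B_xw,B_xw]=-\sigma^3D^3u_0(x)[H^{-1/2}w,H^{-1/2}w,H^{-1/2}w]=-\sigma^3\mathcal T_x[w,w,w]$. Hence $-D(x,z)$ contributes $-\ell^2\sigma^2|w|^2/2+\ell^3\sigma^3\mathcal T_x[w,w,w]/6$.

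\emph{Step 4 (collecting terms).} At order $\ell^3$, only the linear part $\ell\Lambda_xw$ of $z-x$ enters the terms $\nabla h\cdot(z-x)/2$ and $\nabla g_0\cdot(z-x)$; the $\ell^2$ correction to $z-x$ contributes only $O(\ell^4)$ there. The collected linear terms are $\ell^3(\nabla g_0+\tfrac12\nabla h)\cdot\Lambda_xw$. Dividing by $\ell^2\sigma^2$ and using $\Lambda_x=\sigma H^{-1/2}$ yields $\ell\lambda(x)\cdot w$ with $\lambda$ as in \eqref{eq:LP}. The constant term is $\ell^2h/\sigma^2=\ell^2h_d$. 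The cubic term divides to $\ell\sigma\mathcal T_x[w,w,w]/6$. Together these give $\ell P(x,w)$.

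Finally, \eqref{eq:first-slack} follows by taking constant maps $w_\ell(x)\equiv w$ on a convex open neighborhood of $K$ inside $\intr X$. On such a set $B_\eps$ is $o(\ell)$ uniformly, and the $L^2$ bound in $x$ is uniform in $w$. Integrating over $w\in B_M$ then gives convergence in $L^2(K\times B_M)$.
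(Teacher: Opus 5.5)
Your proposal is correct and follows essentially the same route as the paper: you use the same decomposition $q_\eps(x,y)=\tau_\eps(x)+\tau_\eps(z)+\ell^2[g_\eps(z)-g_\eps(x)]-G_0(x,y)$ with $z=\Phi_\ell(x)$, control $\tau_\eps$ via \cref{thm:height} together with the composition bound and $|\delta(z)-\delta(x)|\le C\ell$, control the corrector increment via \cref{lem:sampling}, and expand the gap to third order using $D^3v_0(T(x))[B_xw,B_xw,B_xw]=-\sigma(x)^3\mathcal T_x[w,w,w]$. The only cosmetic difference is that for \eqref{eq:first-slack} you work on a fixed convex neighborhood of $K$ instead of $\{\delta>L_M\ell\}$; on that set you should still state explicitly that $x\mapsto S(T(x)+\ell B_xw)$ is well defined and satisfies the hypotheses of \cref{lem:sampling} uniformly in $|w|\le M$, which follows from $D\Phi=\Id+O(\ell)$.
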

\begin{proof}
Set $y=T(x)+\ell B_xw_\ell(x)$ and $z=\Phi_\ell(x)=S(y)$.
Since $DS(T(x))B_x=\Lambda_x$, Taylor's formula gives
\begin{equation}\label{eq:source-displacement-expansion}
 z-x=\ell\Lambda_xw_\ell(x)+O(\ell^2),
\end{equation}
with a remainder bounded by $C\ell^2|w_\ell(x)|^2$.
By \eqref{eq:gap-slack-identity} and \eqref{eq:errors},
\begin{equation}\label{eq:moving-slack-identity}
 q_\eps(x,y)
 =-G_0(x,y)+\tau_\eps(x)+\tau_\eps(z)
 +\ell^2[g_\eps(z)-g_\eps(x)].
\end{equation}
Equations \eqref{eq:height-decomp} and
\eqref{eq:displacement-composition-bound} give
$\|e_\eps+e_\eps\circ\Phi_\ell\|_{L^2(E_\ell)}=o(\ell^3)$.
Since $|\delta(z)-\delta(x)|\le C\ell$,
\eqref{eq:height-decomp} gives
$ |B_\eps(x)|\le Ce^{-c\delta(x)/\ell}$, where $B_\eps(x):=\frac{b_\eps(x)+b_\eps(z)}{\ell^2\sigma(x)^2}$.
Moreover, \eqref{eq:source-displacement-expansion} and
$h\in C^{1,\beta}$ imply
\[
 \tfrac{\ell^2}{2}[h(x)+h(z)]
 =\ell^2h(x)+\tfrac{\ell^3}{2}
 \nabla h(x)\cdot\Lambda_xw_\ell(x)+O(\ell^{3+\beta}).
\]
By \eqref{eq:moving-increments} and
\eqref{eq:source-displacement-expansion},
\[
 g_\eps(z)-g_\eps(x)
 =\ell\nabla g_0(x)\cdot\Lambda_xw_\ell(x)+o(\ell)
 \qquad\text{in }L^2(E_\ell).
\]
 Differentiating $D^2v_0(T(x))=H(x)^{-1}$ gives
\begin{equation}\label{eq:dual-cubic-identity}
 D^3v_0(T(x))[B_xw,B_xw,B_xw]
 =-\sigma(x)^3\mathcal T_x[w,w,w].
\end{equation}
Taylor's formula and \eqref{eq:dual-cubic-identity} yield
\[
 G_0(x,T(x)+\ell B_xw)
 =\tfrac12\ell^2\sigma(x)^2|w|^2
 -\tfrac16\ell^3\sigma(x)^3\mathcal T_x[w,w,w]
 +O(\ell^{3+\beta}).
\]
Substituting these expansions into
\eqref{eq:moving-slack-identity} and dividing by
$\ell^2\sigma(x)^2$ proves \eqref{eq:moving-slack-decomp},
with $P$ from \eqref{eq:LP} and
$\|E_\eps\|_{L^2(E_\ell)}=o(\ell)$.
All estimates are uniform over the indicated maps.
Compositions of Borel representatives are well-defined
up to null sets by \eqref{eq:displacement-composition-bound}.
Equations \eqref{eq:unregularized-gap},
\eqref{eq:gap-slack-identity}, and \eqref{eq:G-inputs}
bound the normalized slack uniformly. Hence
\eqref{eq:moving-slack-decomp} also gives
$\|E_\eps\|_\infty\le C$.

For $|w|\le M$, set
$\Phi_{\ell,w}(x)=S(T(x)+\ell B_xw)$.
By \eqref{eq:transport-boundary-distance}, these maps are
well-defined on $\{\delta>L_M\ell\}$ for fixed sufficiently
large $L_M$. By \eqref{eq:source-displacement-expansion},
$\sup_{x\in\{\delta>L_M\ell\}}|\Phi_{\ell,w}(x)-x|\le C_M\ell$
for all $|w|\le M$. Furthermore, 
$D\Phi_{\ell,w}=I+O(\ell)$ due to $S\circ T=\id$,
$S,T\in C^2$, and $B\in C^1$.
Thus \cref{lem:sampling} applies uniformly for $|w|\le M$.
Apply \eqref{eq:moving-slack-decomp} on
$\{\delta>L_M\ell\}$ and integrate in $w$.
For small $\ell$, this domain contains $K$, and
$\ell^{-1}B_\eps\to0$ uniformly on $K\times B_M$.
This proves \eqref{eq:first-slack}.
\end{proof}

For $x\in X_\ell=\{\delta>L_0\ell\}$,
\eqref{eq:transport-boundary-distance} gives
$\dist(T(x),\partial Y)\ge cL_0\ell$.
Choose $L_0$ so that $cL_0>R$, with $R$ from
\eqref{eq:prelocalization}. Then
$S_{\eps,x}\subset\overline B(T(x),R\ell)\subset\intr Y$.
By \eqref{eq:curvature-input}, $q_\eps(x,\cdot)$ is strictly
concave, and \eqref{eq:graph-positive} gives
$q_\eps(x,T(x))>0$.
Thus, for each $\theta\in\Sn$, there is a unique
$R_\eps(x,\theta)>0$ such that
$q_\eps(x,T(x)+\ell B_xR_\eps(x,\theta)\theta)=0$, and
\begin{equation}\label{eq:radial}
 \partial S_{\eps,x}
 =\{T(x)+\ell B_xR_\eps(x,\theta)\theta:\theta\in\Sn\},
 \qquad c\le R_\eps(x,\theta)\le C.
\end{equation}
The radius bounds follow from \eqref{eq:tubes} and the
uniform bounds on $B_x$ and $B_x^{-1}$.
To determine $R_\eps(x,\theta)-a_d$, we compare the slack at the fixed reference point
$T(x)+\ell B_xa_d\theta$ with its zero at
$T(x)+\ell B_xR_\eps(x,\theta)\theta$.
The following lemma quantifies the resulting radial displacement.

\begin{lemma}[Radial expansion of the sections]\label{thm:radial}
Define
\begin{equation}\label{eq:F}
 F(x,\theta)=\frac{P(x,a_d\theta)}{a_d}
 =\lambda(x)\cdot\theta
 +\frac{\sigma(x)h_d}{3}\mathcal T_x[\theta,\theta,\theta].
\end{equation}
There exist $L_0,C,c,\eps_0>0$ such that, for every
$0<\eps<\eps_0$, there are jointly measurable functions
$r_\eps^{\rm rad},b_\eps^{\rm rad}:X_\ell\times\Sn\to\R$
satisfying, for every $\theta\in\Sn$ and almost every $x\in X_\ell$,
\begin{equation}\label{eq:radial-decomp}
\begin{gathered}
 R_\eps(x,\theta)
 =a_d+\ell F(x,\theta)
 +r_\eps^{\rm rad}(x,\theta)+b_\eps^{\rm rad}(x,\theta),\\[.2em]
 |r_\eps^{\rm rad}(x,\theta)|\le C,\qquad
 |b_\eps^{\rm rad}(x,\theta)|
 \le Ce^{-c\delta(x)/\ell},\qquad
 \sup_{\theta\in\Sn}
 \|r_\eps^{\rm rad}(\cdot,\theta)\|_{L^2(X_\ell)}=o(\ell).
\end{gathered}
\end{equation}
\end{lemma}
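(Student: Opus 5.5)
We will apply \cref{thm:slack} with constant displacement directions, followed by a one-dimensional mean-value argument along each ray. Fix $\theta\in\Sn$ and let $Q_\theta(x,r)=Q^{\rm r}_{\eps,x}(r\theta)$ for $r$ in a fixed interval $[c,C]$ containing all radii from \eqref{eq:radial}. By definition, $Q_\theta(x,R_\eps(x,\theta))=0$. The reference value at $r=a_d$ is controlled by \cref{thm:slack}, applied on $E_\ell=X_\ell$ (open and convex) with the constant map $w_\ell\equiv a_d\theta$. The resulting map $\Phi_\ell=S(T(\cdot)+\ell B_\cdot a_d\theta)$ satisfies the hypotheses of \cref{lem:sampling} uniformly in $\theta$, as checked at the end of that proof, provided $L_0$ is large. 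Since $h_d-a_d^2/2=0$, this gives
\[
 Q_\theta(x,a_d)=\ell P(x,a_d\theta)+E_\eps(x,\theta)+B_\eps(x,\theta),
\]
with $\|E_\eps(\cdot,\theta)\|_{L^2(X_\ell)}=o(\ell)$ uniformly in $\theta$, $\|E_\eps\|_\infty\le C$, and $|B_\eps|\le Ce^{-c\delta/\ell}$.

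Next, we control the radial derivative. Write $\partial_rQ_\theta(x,r)=\ell^{-1}\sigma(x)^{-2}\nabla_yq_\eps(x,y)\cdot B_x\theta$ and use $\nabla_yq_\eps=x-\nabla v_\eps(y)$. By \eqref{eq:bulk-slack-profiles}, on the bulk family $X_\ell$ the derivative equals $-r+o(1)$ uniformly in $x$, $\theta$, and $r\in[c,C]$. Strictly speaking, $X_\ell$ is a bulk family only if $L_0\to\infty$, so we handle the layer $L_0\ell<\delta\le L\ell$ separately. There, we use only the crude bound $\partial_rQ_\theta\le -c$, which follows from uniform concavity of $q_\eps(x,\cdot)$ from \eqref{eq:curvature-input} together with $Q_\theta(x,0)\ge c$ and $Q_\theta(x,R_\eps)=0$. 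Concavity along the ray then gives $\partial_r Q_\theta(x,r)\le -c$ for $r\ge R_\eps/2$. Near $a_d$, this lower bound on $|\partial_r Q_\theta|$ also follows from \eqref{eq:bulk-slack-profiles} in the bulk. For a fixed small $\eps$, the mean-value theorem therefore yields
\[
 R_\eps(x,\theta)-a_d=\frac{Q_\theta(x,a_d)}{-\partial_rQ_\theta(x,\bar r)}
\]
for some $\bar r$ between $a_d$ and $R_\eps$.

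We now define the remainders. Set $b^{\rm rad}_\eps=B_\eps/(-\partial_rQ_\theta(x,\bar r))$; this satisfies the exponential bound because the denominator is at least $c$. Set $r^{\rm rad}_\eps=R_\eps-a_d-\ell F-b^{\rm rad}_\eps$. It is bounded by the radius bounds and is measurable because $R_\eps$ is continuous, being the zero of a continuous strictly monotone function. It remains to show $\|r^{\rm rad}_\eps(\cdot,\theta)\|_{L^2(X_\ell)}=o(\ell)$. We write
\[
 r^{\rm rad}_\eps=\frac{E_\eps}{-\partial_rQ_\theta}+\ell P(x,a_d\theta)\Bigl(\frac1{-\partial_rQ_\theta(x,\bar r)}-\frac1{a_d}\Bigr).
\]
The first term is $o(\ell)$ in $L^2$. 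For the second term, the bracket is $O(|\bar r-a_d|)+o(1)$ on $\{\delta>L\ell\}$. Since $|\bar r-a_d|\le|R_\eps-a_d|=O(\ell)+|E_\eps|+|B_\eps|$ and $E_\eps\to0$ in measure, dominated convergence shows this term is $o(\ell)$ in $L^2$. On the thin layer $\{L_0\ell<\delta\le L\ell\}$, the second term is $O(\ell)$ on a set of measure $O(L\ell)$, so its $L^2$ contribution is $O(\ell^{3/2}L^{1/2})=o(\ell)$. Letting $L$ grow slowly gives the claim. Uniformity in $\theta$ comes from the uniformity statements in \cref{thm:slack}.

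The main obstacle is the \emph{uniform control of the radial derivative}, i.e.\ the $C^1$ closeness of $Q^{\rm r}_{\eps,x}$ to $h_d-|w|^2/2$. This is only available on bulk families. Elsewhere, the plan relies on concavity to supply merely a lower bound, with the thin-layer measure estimate absorbing the loss. A second delicate point is that the $L^2$ smallness of $E_\eps$ must survive the nonlinear inversion; the dominated-convergence step above is what ensures this.
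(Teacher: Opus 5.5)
Your architecture matches the paper's: \cref{thm:slack} with the constant direction $w_\ell\equiv a_d\theta$ on $X_\ell$, inversion along the ray $r\mapsto Q_\theta(x,r)$, and the same remainders $r^{\rm rad}_\eps=E_\eps/A+\ell P(x,a_d\theta)(1/A-1/a_d)$ and $b^{\rm rad}_\eps=B_\eps/A$, where $A$ is the inverted radial slope. One step fails as written. You claim that uniform concavity together with $Q_\theta(x,0)\ge c$ and $Q_\theta(x,R_\eps)=0$ gives $\partial_rQ_\theta(x,r)\le-c$ for all $r\ge R_\eps/2$. Those hypotheses do not imply this. Take a concave function that rises from $f(0)=c_1$ with curvature $-c_2$ to its maximum at $R/2$, then falls with curvature $-C_2$. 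It vanishes at $R$ once $R^2=8c_1/(C_2-c_2)$, respects two-sided curvature bounds, and has $f'(R/2)=0$. You use this bound on the layer $\{L_0\ell<\delta\le L\ell\}$, which is not a bulk family, for two purposes: the exponential bound on $b^{\rm rad}_\eps$ and the $O(\ell)$ bound on the second term of $r^{\rm rad}_\eps$. So the layer argument currently has no valid denominator bound.

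The repair is short, because you only need the slope at the mean-value point, and that is a chord slope: $-\partial_rQ_\theta(x,\bar r)=Q_\theta(x,a_d)/(R_\eps-a_d)$. Chord slopes of a concave function decrease, so in either ordering of $a_d$ and $R_\eps$ this chord slope is at most $(Q_\theta(x,R_\eps)-Q_\theta(x,0))/R_\eps\le -c/C$. This requires $L_0$ large enough that $T(x)+\ell B_x\overline B_M\subset\intr Y$. Define the denominator as this chord slope, or as the paper's average $A_\eps=-\int_0^1\partial_rQ_\theta(x,a_d+t(R_\eps-a_d))\dd t$, rather than through an unspecified $\bar r$; that also settles measurability when $R_\eps=a_d$.

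With this fix your route is valid, but it handles the non-bulk part of $X_\ell$ differently from the paper. The paper avoids splitting. It combines \eqref{eq:two-point-profile} with the exponential decay \eqref{eq:profile-exponential} of the half-space profile to get $\|Q^{\rm r}_{\eps,x}-(h_d-|\cdot|^2/2)\|_{C^1(B_M)}\le Ce^{-c\delta(x)/\ell}+\vartheta_\eps$ on all of $X_\ell$. Then, for $L_0$ large, $A_\eps\in[a_d/2,3a_d/2]$ everywhere, and $|R_\eps-a_d|$ is controlled by the same quantity. The $L^2$ bound then follows from a single dominated-convergence step. Your version needs only the qualitative bulk profile \eqref{eq:bulk-slack-profiles} on $\{\delta>L\ell\}$ with $L\to\infty$ and $L\ell\to0$, plus concavity and a measure count on the layer. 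The cost is the two-scale splitting.
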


\begin{proof}
In view of \eqref{eq:radial}, we can choose $M>\max\{a_d,\sup_{\eps,x,\theta}R_\eps(x,\theta)\}$. By \eqref{eq:transport-boundary-distance},
$\dist(T(x),\partial Y)\ge c_0\delta(x)$.
Taking $c_0L_0>M\sup_X\|B_x\|$ ensures
$T(x)+\ell B_x\overline B_M\subset\intr Y$ for $x\in X_\ell$.
Equations \eqref{eq:two-point-profile},
\eqref{eq:row-coordinate-change}, and
\eqref{eq:profile-exponential} yield
\begin{equation}\label{eq:deep-slack-comparison}
 \|Q^{\rm r}_{\eps,x}-(h_d-|\cdot|^2/2)\|_{C^1(B_M)}
 \le\eta_\eps^{\rm prof}(x)
 :=Ce^{-c\delta(x)/\ell}+\vartheta_\eps,
 \qquad \vartheta_\eps\to0,
\end{equation}
uniformly in $x\in X_\ell$, with derivatives taken in $w$.
Indeed, when $\delta(x)<\delta_*$, every argument $s$ of $\varphi_d$ arising for $w\in B_M$ satisfies
$|s-t_\eps(x)|\le C_M$ and $s\ge c\delta(x)/\ell-C_M$ by \eqref{eq:boundary-coordinates}.
Thus \eqref{eq:profile-exponential} bounds
$|\varphi_d(s)-h_d/2|+|\varphi_d'(s)|$
by $Ce^{-c\delta(x)/\ell}$.
For $\delta(x)\ge\delta_*$, the comparison profile is $h_d/2$.
The uniform errors in \eqref{eq:two-point-profile} and
\eqref{eq:row-coordinate-change} are included in $\vartheta_\eps$.

For $\theta\in\Sn$, the maps
$\Phi_{\ell,\theta}(x)=S(T(x)+\ell B_xa_d\theta)$ satisfy
$|\Phi_{\ell,\theta}(x)-x|\le C\ell$ and
$\|D\Phi_{\ell,\theta}(x)-I\|\le C\ell$ on $X_\ell$,
with constants independent of $\theta$.
Thus \cref{thm:slack} and $a_d^2=2h_d$ give
\begin{equation}\label{eq:fixed-sphere-slack}
 Q^{\rm r}_{\eps,x}(a_d\theta)
 =\ell P(x,a_d\theta)+E_\eps(x,\theta)+B_\eps(x,\theta),
\end{equation}
with jointly measurable remainders satisfying
$\sup_\theta\|E_\eps(\cdot,\theta)\|_{L^2(X_\ell)}=o(\ell)$,
$|E_\eps|\le C$, and
$|B_\eps(x,\theta)|\le Ce^{-c\delta(x)/\ell}$.
Using $Q^{\rm r}_{\eps,x}(R_\eps(x,\theta)\theta)=0$,
\eqref{eq:deep-slack-comparison} gives
$|R_\eps(x,\theta)^2-a_d^2|\le2\eta_\eps^{\rm prof}(x)$,
hence $|R_\eps(x,\theta)-a_d|\le C\eta_\eps^{\rm prof}(x)$.
Set
\[
 A_\eps(x,\theta)
 =-\int_0^1\nabla_wQ^{\rm r}_{\eps,x}
 \bigl((a_d+t(R_\eps(x,\theta)-a_d))\theta\bigr)
 \cdot\theta\dd t.
\]
By \eqref{eq:deep-slack-comparison},
$|A_\eps(x,\theta)-a_d|\le C\eta_\eps^{\rm prof}(x)$.
Since $\delta(x)>L_0\ell$, choosing $L_0$ large and then
$\eps$ small gives $a_d/2\le A_\eps\le3a_d/2$.
Continuity of $R_\eps$ in $(x,\theta)$ implies joint
measurability of $A_\eps$.
The fundamental theorem of calculus and
\eqref{eq:fixed-sphere-slack} yield
\[
 R_\eps(x,\theta)-a_d
 =\frac{\ell P(x,a_d\theta)+E_\eps(x,\theta)+B_\eps(x,\theta)}
 {A_\eps(x,\theta)}.
\]
Define
\[
 r_\eps^{\rm rad}(x,\theta)
 =\frac{E_\eps(x,\theta)}{A_\eps(x,\theta)}
 +\ell P(x,a_d\theta)
 \left(\frac1{A_\eps(x,\theta)}-\frac1{a_d}\right),
 \qquad
 b_\eps^{\rm rad}(x,\theta)
 =\frac{B_\eps(x,\theta)}{A_\eps(x,\theta)}.
\]
These functions are jointly measurable and uniformly bounded.
By \eqref{eq:F}, they satisfy \eqref{eq:radial-decomp}, with
$|b_\eps^{\rm rad}(x,\theta)|\le Ce^{-c\delta(x)/\ell}$.
Moreover, \eqref{eq:deep-slack-comparison} and dominated
convergence give
$\|\eta_\eps^{\rm prof}\|_{L^2(X_\ell)}\to0$. Therefore,
\[
 \sup_{\theta\in\Sn}
 \|r_\eps^{\rm rad}(\cdot,\theta)\|_{L^2(X_\ell)}
 \le C\sup_{\theta\in\Sn}
 \|E_\eps(\cdot,\theta)\|_{L^2(X_\ell)}
 +C\ell\|\eta_\eps^{\rm prof}\|_{L^2(X_\ell)}
 =o(\ell). \qedhere
\]
\end{proof}

\subsection{Proof of the second-order expansion}\label{sec:bias-proof}
\begin{proof}[Proof of \cref{thm:centers}]
The limiting normalized sections and densities are symmetric,
so their first moments vanish. We compute their 
corrections at order $\ell$. Multiplication by $\ell B_x$ then gives the
transport biases at order $\ell^2$.

Choose $M>a_d$ larger than the upper radius bound in
\eqref{eq:radial}. By \eqref{eq:transport-boundary-distance},
increasing $L_0$ ensures
$T(x)+\ell B_x\overline B_M\subset\intr Y$ for $x\in X_\ell$.
Since $\rho_1\in C^{1,\beta}$ is bounded away from zero,
Taylor's formula gives
\begin{equation}\label{eq:density-tilt}
 \frac{\rho_1(T(x)+\ell B_xw)}{\rho_1(T(x))}
 =1+\ell d_1(x)\cdot w+O(\ell^{1+\beta}),\qquad
 d_1(x)=B_x\nabla_y\log\rho_1(T(x)),
\end{equation}
uniformly on $X_\ell\times B_M$.

\emph{Potential gradients.}
Set
\[
 M_\eps(x)=\int_{\mathcal S_{\eps,x}}
 \frac{\rho_1(T(x)+\ell B_xw)}{\rho_1(T(x))}\dd w,\qquad
 N_\eps(x)=\int_{\mathcal S_{\eps,x}}
 w\frac{\rho_1(T(x)+\ell B_xw)}{\rho_1(T(x))}\dd w.
\]
Changing variables $y=T(x)+\ell B_xw$ in
\eqref{eq:barycenters} cancels the common factors
$\ell^d\det B_x$ and $\rho_1(T(x))$, giving
$\ell^{-1}B_x^{-1}V_\eps^z(x)=\frac{N_\eps(x)}{M_\eps(x)}.$
By \eqref{eq:radial} and the density bounds,
$c\le M_\eps(x)\le C$ on $X_\ell$. Radial integration gives
\begin{equation}\label{eq:radial-mass-moment}
\begin{aligned}
 |\mathcal S_{\eps,x}|
 &=\frac1d\int_{\Sn}R_\eps(x,\theta)^d
 \dd\mathcal H^{d-1}(\theta),\quad
 \int_{\mathcal S_{\eps,x}}w\dd w
 &=\frac1{d+1}\int_{\Sn}\theta R_\eps(x,\theta)^{d+1}
 \dd\mathcal H^{d-1}(\theta).
\end{aligned}
\end{equation}
For $p\in\{d,d+1\}$, \eqref{eq:radial-decomp} and boundedness
of $F,r_\eps^{\rm rad},b_\eps^{\rm rad}$ imply
\[
 \left|R_\eps^p-a_d^p
 -pa_d^{p-1}(\ell F+r_\eps^{\rm rad})\right|
 \le C|\ell F+r_\eps^{\rm rad}|^2
 +C|b_\eps^{\rm rad}|.
\]
Indeed, first remove $b_\eps^{\rm rad}$ using boundedness
of the derivative of $s\mapsto s^p$ on a fixed bounded interval,
and then apply Taylor's formula at $a_d$.
Furthermore,
\[
 \sup_{\theta\in\Sn}
 \|(\ell F+r_\eps^{\rm rad})^2\|_{L^2(X_\ell)}
 \le C\ell^2+
 C\sup_{\theta\in\Sn}
 \|r_\eps^{\rm rad}(\cdot,\theta)\|_{L^2(X_\ell)}
 =o(\ell).
\]
Minkowski's inequality gives
\[
 \left\|\int_{\Sn}|\ell F+r_\eps^{\rm rad}|^2
 \dd\mathcal H^{d-1}(\theta)\right\|_{L^2(X_\ell)}
 \le\int_{\Sn}
 \|(\ell F+r_\eps^{\rm rad})^2\|_{L^2(X_\ell)}
 \dd\mathcal H^{d-1}(\theta)
 =o(\ell).
\]
The terms linear in $r_\eps^{\rm rad}$ also have
$L^2$ norm $o(\ell)$, while the terms containing
$b_\eps^{\rm rad}$ are bounded by $Ce^{-c\delta(x)/\ell}$.
Since $\int_{\Sn}\theta\dd\mathcal H^{d-1}(\theta)=0$,
\eqref{eq:radial-mass-moment} therefore gives
\[
 \int_{\mathcal S_{\eps,x}}w\dd w
 =\ell a_d^d\int_{\Sn}\theta F(x,\theta)
 \dd\mathcal H^{d-1}(\theta)
 +\big[\text{an }o(\ell)\text{ remainder in }L^2(X_\ell)\big]
 +O(e^{-c\delta(x)/\ell}).
\]
Note that $|B_{a_d}|=\omega_da_d^d$.
By \eqref{eq:radial-decomp} and dominated convergence,
\[
 \sup_{\theta\in\Sn}
 \|R_\eps(\cdot,\theta)-a_d\|_{L^2(X_\ell)}
 \le C\ell+o(\ell)
 +C\|e^{-c\delta/\ell}\|_{L^2(X_\ell)}
 \longrightarrow0.
\]
Since $R_\eps$ is bounded, \eqref{eq:radial-mass-moment}
implies
$\||\mathcal S_{\eps,\cdot}|-\omega_da_d^d\|_{L^2(X_\ell)}\to0$.
Equation \eqref{eq:density-tilt} also gives
$|M_\eps(x)-|\mathcal S_{\eps,x}||\le C\ell$.
Consequently,
\[
 \|M_\eps-\omega_da_d^d\|_{L^2(X_\ell)}\longrightarrow0,\qquad
 \|M_\eps^{-1}-(\omega_da_d^d)^{-1}\|_{L^2(X_\ell)}
 \le C\|M_\eps-\omega_da_d^d\|_{L^2(X_\ell)}
 \longrightarrow0.
\]
For the density contribution to $N_\eps$, radial integration
similarly gives
\[
 \int_{\mathcal S_{\eps,x}}w\otimes w\dd w
 =\frac1{d+2}\int_{\Sn}\theta\otimes\theta\,
 R_\eps(x,\theta)^{d+2}\dd\mathcal H^{d-1}(\theta)
 \longrightarrow
 \int_{B_{a_d}}w\otimes w\dd w=I
 \quad\text{in }L^2(X_\ell).
\]
Thus \eqref{eq:density-tilt} gives
\[
 N_\eps(x)
 =\int_{\mathcal S_{\eps,x}}w\dd w
 +\ell\left(\int_{\mathcal S_{\eps,x}}w\otimes w\dd w\right)d_1(x)
 +O(\ell^{1+\beta}).
\]
Replacing the second-moment matrix by $I$ costs $o(\ell)$
in $L^2(X_\ell)$ because $d_1$ is bounded. Hence
\begin{equation}\label{eq:restricted-first-moment-expansion}
 N_\eps
 =\ell\left[
 a_d^d\int_{\Sn}\theta F(x,\theta)\dd\mathcal H^{d-1}(\theta)
 +d_1(x)\right]
 +\widetilde r_\eps+\widetilde b_\eps,
\end{equation}
where $\|\widetilde r_\eps\|_{L^2(X_\ell)}=o(\ell)$ and
$|\widetilde b_\eps(x)|\le Ce^{-c\delta(x)/\ell}$. 
The coefficient of $\ell$ in
\eqref{eq:restricted-first-moment-expansion} is bounded, so
replacing its denominator by $\omega_da_d^d$ costs at most
$C\ell\|M_\eps^{-1}-(\omega_da_d^d)^{-1}\|_{L^2(X_\ell)}=o(\ell)$.
By \eqref{eq:F} and the divergence theorem,
\begin{equation}\label{eq:bias-shape-integration}
\begin{aligned}
 a_d^d\int_{\Sn}\theta F(x,\theta)\dd\mathcal H^{d-1}(\theta)
 &=\int_{\partial B_{a_d}}
 P(x,w)\frac{w}{a_d}\dd\mathcal H^{d-1}(w)\\
 &=\int_{B_{a_d}}\nabla_wP(x,w)\dd w
 =\omega_da_d^d\lambda(x)+\frac{\sigma(x)}2\zeta(x).
\end{aligned}
\end{equation}
For the last equality, \eqref{eq:LP} gives
$\partial_{w_i}P=\lambda_i+\frac{\sigma}{2}\mathcal T_x[e_i,w,w]$,
and \eqref{eq:tensor} gives
$\sum_j\mathcal T_x[e_i,e_j,e_j]=\zeta_i$. Since $\omega_da_d^d=(d+2)/(2h_d)$, we obtain
\begin{equation}\label{eq:normalized-barycenter-bias}
 \ell^{-1}B_x^{-1}V_\eps^z
 =\ell\left[
 \lambda+\frac{\sigma h_d}{d+2}\zeta
 +\frac{2h_d}{d+2}d_1\right]+r_\eps^z+b_\eps^z,
\end{equation}
with $\|r_\eps^z\|_{L^2(X_\ell)}=o(\ell)$ and
$|b_\eps^z(x)|\le Ce^{-c\delta(x)/\ell}$.
All coefficient fields are evaluated at $x$.

\emph{Conditional means.}
Equation \eqref{eq:row-density-exact} gives
$\ell^{-1}B_x^{-1}V_\eps^m(x)
 =\int_{B_M}wf^{\rm r}_{\eps,x}(w)\dd w.$
\cref{thm:slack} applies to $\Phi_{\ell,w}(x)=S(T(x)+\ell B_xw)$ for  $|w|\le M$ with constants independent of $w$.
Equations \eqref{eq:row-density-exact} and \eqref{eq:moving-slack-decomp} yield
\[
 f^{\rm r}_{\eps,x}(w)
 =
 \frac{\rho_1(T(x)+\ell B_xw)}{\rho_1(T(x))}
 \bigl(h_d-|w|^2/2+\ell P(x,w)
       +E_\eps(x,w)+B_\eps(x,w)\bigr)_+,
\]
where
$\|E_\eps\|_{L^2(X_\ell\times B_M)}=o(\ell)$ and
$|B_\eps(x,w)|\le Ce^{-c\delta(x)/\ell}$. Remove $E_\eps$ first and then $B_\eps$ from the positive part.
Since $|s_+-t_+|\le|s-t|$ and the density ratio is bounded,
the corresponding first moment errors satisfy, respectively,
\[
 \left\|\int_{B_M}C|w||E_\eps(x,w)|\dd w\right\|_{L^2(X_\ell)}
 \le C_M\|E_\eps\|_{L^2(X_\ell\times B_M)}
 =o(\ell)
\]
and
$\int_{B_M}C|w||B_\eps(x,w)|\dd w
\le C_Me^{-c\delta(x)/\ell}$.
It remains to expand the first moment of
\[
 \frac{\rho_1(T(x)+\ell B_xw)}{\rho_1(T(x))}
 (h_d-|w|^2/2+\ell P(x,w))_+.
\]
Boundedness of $P$ on $X\times B_M$ gives
\[
 \left|
 (h_d-|w|^2/2+\ell P(x,w))_+
 -\kappa_d(w)-\ell P(x,w)\1_{B_{a_d}}(w)
 \right|
 \le C\ell\1_{\{|h_d-|w|^2/2|\le C\ell\}}.
\]
The set on the right has volume $O(\ell)$.
Thus its contribution to the first moment is $O(\ell^2)$.
Multiplying by \eqref{eq:density-tilt}, the remaining errors
are $O(\ell^2+\ell^{1+\beta})$ after integration in $w$.
Since $\int_{B_{a_d}}w\kappa_d(w)\dd w=0$, we conclude that
\[
 \ell^{-1}B_x^{-1}V_\eps^m
 =\ell\int_{B_{a_d}}wP(x,w)\dd w
 +\ell\int_{B_{a_d}}w(d_1(x)\cdot w)\kappa_d(w)\dd w
 +r_\eps^m+b_\eps^m,
\]
where $\|r_\eps^m\|_{L^2(X_\ell)}=o(\ell)$ and
$|b_\eps^m(x)|\le Ce^{-c\delta(x)/\ell}$. Since $\nabla\kappa_d=-w$ on $B_{a_d}$ and
$\kappa_d=0$ on $\partial B_{a_d}$, integration by parts gives
\begin{equation}\label{eq:bias-cap-integration}
 \int_{B_{a_d}}wP(x,w)\dd w
 =\int_{B_{a_d}}\kappa_d(w)\nabla_wP(x,w)\dd w
 =\lambda+\frac{\sigma h_d}{d+4}\zeta.
\end{equation}
Indeed, \eqref{eq:cap-normalization} and
\eqref{eq:cap-vector-moments} give
$\int\kappa_d\dd w=1$ and
$\int w_jw_k\kappa_d(w)\dd w
=\frac{2h_d}{d+4}\delta_{jk}$, and then inserting
$\partial_{w_i}P=\lambda_i+\frac{\sigma}{2}\mathcal T_x[e_i,w,w]$
proves the last equality.
The same second moment identity gives
\[
 \int_{B_{a_d}}w(d_1\cdot w)\kappa_d(w)\dd w
 =\frac{2h_d}{d+4}d_1.
\]
Therefore,
\begin{equation}\label{eq:normalized-mean-bias}
 \ell^{-1}B_x^{-1}V_\eps^m
 =\ell\left[
 \lambda+\frac{\sigma h_d}{d+4}\zeta
 +\frac{2h_d}{d+4}d_1\right]+r_\eps^m+b_\eps^m.
\end{equation}

\emph{Bias coefficients and remainders.}
Differentiating \eqref{eq:transport-jacobian} and
\eqref{eq:scale} gives
\begin{equation}\label{eq:identities}
 H\nabla_y\log\rho_1(T)
 =\nabla\log\rho_0-\nabla\log\det H,\qquad
 \nabla h=\frac{h}{d+2}
 (\nabla\log\det H-2\nabla\log\rho_0).
\end{equation}
By \eqref{eq:LP}, \eqref{eq:tensor},
\eqref{eq:density-tilt}, and $h=\sigma^2h_d$,
\[
 B_x\lambda=\nabla g_0+\tfrac12\nabla h,\qquad
 B_x(\sigma h_d\zeta)=h\nabla\log\det H,\qquad
 B_x(2h_dd_1)=2hH\nabla_y\log\rho_1(T).
\]
Using \eqref{eq:identities}, the last two terms satisfy
\[
 B_x(\sigma h_d\zeta+2h_dd_1)
 =h(2\nabla\log\rho_0-\nabla\log\det H)
 =-(d+2)\nabla h.
\]
Multiplying \eqref{eq:normalized-barycenter-bias} and
\eqref{eq:normalized-mean-bias} by $\ell B_x$ therefore yields
\[
\begin{aligned}
 V_\eps^z
 &=\ell^2(\nabla g_0-\tfrac12\nabla h)
   +\ell B_xr_\eps^z+\ell B_xb_\eps^z,\\
 V_\eps^m
 &=\ell^2(\nabla g_0-\tfrac{d}{2(d+4)}\nabla h)
   +\ell B_xr_\eps^m+\ell B_xb_\eps^m.
\end{aligned}
\]
For each error, take
$\mathcal R_\eps=\ell B_xr_\eps^z$ or $\ell B_xr_\eps^m$
and $\mathcal B_\eps=\ell B_xb_\eps^z$ or $\ell B_xb_\eps^m$.
Boundedness of $B_x$ gives $ \|\mathcal R_\eps\|_{L^2(X_\ell)}=o(\ell^2)$ and 
$ |\mathcal B_\eps(x)|\le C\ell e^{-c\delta(x)/\ell}.$ 
By \eqref{eq:prelocalization} and \eqref{eq:barycenters},
both $z_\eps(x)$ and $m_\eps(x)$ are averages of points
within distance $C\ell$ of $T(x)$. Hence
$\|V_\eps^z\|_\infty+\|V_\eps^m\|_\infty\le C\ell$.
Since the bias fields are bounded,
$\mathcal R_\eps=V_\eps-\ell^2b_V-\mathcal B_\eps$
also satisfies $\|\mathcal R_\eps\|_\infty\le C\ell$.
This proves the forward cases of \eqref{eq:field-decomp}.

\emph{Reverse errors.}
Interchanging $\mu$ and $\nu$ replaces $T$ by $S$,
$g_\eps$ by $-g_\eps\circ S$, and $h$ by $h\circ S$.
By \cref{prop:corrector-convergence},
$-g_\eps\circ S\to-g_0\circ S$ uniformly.
The forward calculation therefore applies with these quantities.
Since $DS(T(x))=H(x)^{-1}$,
\[
 \nabla_y(-g_0\circ S)(T(x))=-H(x)^{-1}\nabla g_0(x),
 \qquad
 \nabla_y(h\circ S)(T(x))=H(x)^{-1}\nabla h(x),
\]
which gives \eqref{eq:reverse-biases}. By \eqref{eq:transport-boundary-distance},
$\dist(T(x),\partial Y)\ge c\delta(x)$.
After increasing $L_0$, the reverse estimates thus apply
at $T(x)$ for every $x\in X_\ell$, and their exponential
remainders are bounded by $C\ell e^{-c\delta(x)/\ell}$.
Changing variables $y=T(x)$, with $\det DS$ bounded,
preserves the $o(\ell^2)$ bounds in $L^2$.
The $L^\infty$ bounds are preserved by composition.
This proves \eqref{eq:field-decomp} for all four errors.

Finally, for each compact $K\subset\intr X$, $K\subset X_\ell$
for small $\ell$ and
\[
 \|\ell^{-2}\mathcal B_\eps\|_{L^\infty(K)}
 \le C\ell^{-1}e^{-c\dist(K,\partial X)/\ell}\longrightarrow0,
 \qquad
 \|\ell^{-2}\mathcal R_\eps\|_{L^2(K)}\longrightarrow0,
\]
so \eqref{eq:centers-local} also follows.
\end{proof}

\begin{corollary}[Potential expansions in $H^1_{\mathrm{loc}}$]\label{cor:H1}
Strongly in $H^1_{\rm loc}(\intr X)$,
\begin{equation}\label{eq:local-H1-potential-limits}
 \ell^{-2}r_\eps\longrightarrow g_0-h/2,\qquad \ell^{-2}(s_\eps\circ T)\longrightarrow-g_0-h/2.
\end{equation}
Consequently,
\[
 g_\eps\longrightarrow g_0,\qquad \ell^{-2}\tau_\eps\longrightarrow h/2
 \qquad\text{strongly in }H^1_{\rm loc}(\intr X).
\]
Moreover, $\ell^{-2}(m_\eps-z_\eps)\to2\nabla h/(d+4)$ strongly in $L^2_{\rm loc}(\intr X)$.
\end{corollary}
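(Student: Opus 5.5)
The idea is to combine the uniform convergence of the potential errors with $L^2_{\rm loc}$ convergence of their gradients, which the local bias expansions already provide. Fix a compact $K\subset\intr X$. By \eqref{eq:global-bulk-bias}, $\ell^{-2}r_\eps\to g_0-h/2$ and $\ell^{-2}(s_\eps\circ T)\to -g_0-h/2$ uniformly on $K$, hence in $L^2(K)$. It remains to handle the gradients.

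For $r_\eps$, note that $\nabla r_\eps=\nabla u_\eps-\nabla u_0=z_\eps-T=V^z_\eps$. \Cref{thm:centers}, through \eqref{eq:centers-local}, gives $\ell^{-2}\nabla r_\eps\to b_z=\nabla g_0-\tfrac12\nabla h=\nabla(g_0-h/2)$ strongly in $L^2(K)$. Here $g_0\in C^{1,\beta}$ and $h\in C^{1,\beta}$, so the limit is indeed an $H^1$ function, and the first claim follows.

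For $s_\eps\circ T$, the chain rule gives $\nabla(s_\eps\circ T)=DT^T(\nabla v_\eps-\nabla v_0)\circ T=H(\widehat z_\eps\circ T-\id)=H\widehat V^z_\eps$, using $\nabla v_0\circ T=S\circ T=\id$ and the symmetry of $H$. By \eqref{eq:centers-local} for the reverse field, $\ell^{-2}\widehat V^z_\eps\to \widehat b_z\circ T=-H^{-1}(\nabla g_0+\tfrac12\nabla h)$ in $L^2(K)$. Since $H$ is bounded, $\ell^{-2}\nabla(s_\eps\circ T)\to -\nabla g_0-\tfrac12\nabla h=\nabla(-g_0-h/2)$ in $L^2(K)$. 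This gives the second limit in \eqref{eq:local-H1-potential-limits}.

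The consequences for $g_\eps$ and $\tau_\eps$ follow by linearity from \eqref{eq:errors}. We have $g_\eps=(r_\eps-s_\eps\circ T)/(2\ell^2)\to\tfrac12[(g_0-h/2)-(-g_0-h/2)]=g_0$, and $\ell^{-2}\tau_\eps=-(r_\eps+s_\eps\circ T)/(2\ell^2)\to h/2$, both in $H^1(K)$. Finally, $m_\eps-z_\eps=V^m_\eps-V^z_\eps$, so \eqref{eq:centers-local} gives $\ell^{-2}(m_\eps-z_\eps)\to b_m-b_z=\bigl(\tfrac12-\tfrac{d}{2(d+4)}\bigr)\nabla h=\tfrac{2}{d+4}\nabla h$ in $L^2(K)$.

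The only delicate point is justifying the chain rule for $s_\eps\circ T$ and the identification $\nabla v_0\circ T=\id$. Both are immediate because $v_\eps\in C^{1,1}$, $T\in C^{2}$, and $T$ is bi-Lipschitz (\cref{prop:inputs}). The rest is bookkeeping on top of \cref{thm:centers}.
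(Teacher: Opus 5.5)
Your proposal is correct and follows essentially the same route as the paper. Both arguments use the gradient identities $\nabla r_\eps=V^z_\eps$ and $\nabla(s_\eps\circ T)=H\widehat V^z_\eps$ with the local $L^2$ bias limits of \cref{thm:centers} and the locally uniform convergence \eqref{eq:global-bulk-bias}, then conclude by linearity through \eqref{eq:errors} and the identity $b_m-b_z=2\nabla h/(d+4)$.
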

\begin{proof}
The gradient identities are $\nabla r_\eps=V^z_\eps$ and $\nabla(s_\eps\circ T)=H\widehat V^z_\eps$. After division by $\ell^2$, \eqref{eq:centers-local} and \eqref{eq:reverse-biases} give their local $L^2$ limits as $\nabla g_0-\nabla h/2$ and $-\nabla g_0-\nabla h/2$, respectively. Together with convergence of the functions in \eqref{eq:global-bulk-bias}, this proves \eqref{eq:local-H1-potential-limits}. Taking half the difference and minus half the sum in \eqref{eq:local-H1-potential-limits}, respectively, gives the asserted limits for $g_\eps$ and $\ell^{-2}\tau_\eps$ by \eqref{eq:errors}. Subtracting the two forward bias limits gives the final assertion, since $b_m-b_z=2\nabla h/(d+4)$.
\end{proof}

\section{Global bias asymptotics}\label{sec:norms}

This section proves the global $L^p$ asymptotics in \cref{thm:main-bias} and the weak limits of the bias measures in \cref{thm:Lp} below. 
For each error field $V_\eps$ in \eqref{eq:bias-error-fields}, we derive
weak limits of the normalized measures $|V_\eps|^p\mu$ and
$V_\eps\mu$ from \eqref{eq:boundary-input} and
\eqref{eq:field-decomp}. These limits describe where the error
concentrates and, for the vector measures, its direction.
Testing the scalar limits against $1$ then gives the norm asymptotics.

Let $b_V$ be the corresponding interior coefficient in
\cref{thm:centers}. Set $p_V=-\varphi_d'$ for potential-gradient
errors and $p_V=\beta_d$ for conditional-mean errors.
For $\delta(x)<\delta_*$, set $a_V=n$ for forward errors
and $a_V=H^{-1}n$ for reverse errors. Then the boundary term
in \eqref{eq:boundary-input} is
$\ell(\sigma/\gamma)p_V(t_\eps)a_V$.
Each $a_V$ is continuous and satisfies $c\le|a_V|\le C$.
Define
\begin{equation}\label{eq:bias-profile-integrals}
 c_{V,p}=\int_0^\infty|p_V(t)|^p\dd t
 \quad(1\le p<\infty),\qquad
 j_V=\int_0^\infty p_V(t)\dd t.
\end{equation}
By \eqref{eq:profile-exponential}, these integrals converge
absolutely. Continuity and $p_V(0)>0$ from
\cref{lem:profile-regularity} imply $c_{V,p}>0$.
For potential-gradient errors,
$c_{V,p}=A_{d,p}$ and
$j_V=\varphi_d(0)-h_d/2$ by \eqref{eq:profile-deep-limit}.
For conditional-mean errors,
$c_{V,p}=B_{d,p}$ and $j_V=h_d/(d+4)$
by \eqref{eq:beta-sum-rule}. In the following result, \eqref{eq:absolute-bias-measures}
holds weak* in $\mathcal M(X)=C(X)^*$, the space of finite
signed Radon measures on $X$, and \eqref{eq:signed-measure}
holds weak* in $\mathcal M(X;\R^d)=C(X;\R^d)^*$. 
\begin{theorem}[Weak-* limits of the bias measures]\label{thm:Lp}\label{cor:measures}
Under \cref{ass:data}, each error $V_\eps$ in
\eqref{eq:bias-error-fields} satisfies, for every $1\le p<\infty$,
\begin{equation}\label{eq:absolute-bias-measures}
 \ell^{-(p+1)}|V_\eps|^p\mu
 \overset{*}{\rightharpoonup}
 \1_{\{p=1\}}|b_V|\mu
 +c_{V,p}\rho_0\sigma^{p+1}\gamma^{1-p}|a_V|^p\,
 \mathcal H^{d-1}|_{\partial X}.
\end{equation}
Moreover,
\begin{equation}\label{eq:signed-measure}
 \ell^{-2}V_\eps\mu
 \overset{*}{\rightharpoonup}
 b_V\mu+j_V\rho_0\sigma^2a_V\,
 \mathcal H^{d-1}|_{\partial X}.
\end{equation}

\end{theorem}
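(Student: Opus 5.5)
Fix a test function $f\in C(X)$ (or $f\in C(X;\R^d)$ for \eqref{eq:signed-measure}) and $L>L_0$, with $L_0$ from \cref{thm:centers}. For small $\eps$ we split $X$ into the strip $\Sigma_L=\{\delta\le L\ell\}$ and its complement $X_{L\ell}=\{\delta>L\ell\}\subset X_\ell$. We send $\eps\downarrow0$ with $L$ fixed, and then $L\to\infty$.

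\emph{Strip contribution.} On $\Sigma_L$, \eqref{eq:boundary-input} gives $V_\eps=\ell(\sigma/\gamma)p_V(t_\eps)a_V+o(\ell)$ uniformly. Since $|\Sigma_L|\le CL\ell$, the $o(\ell)$ remainder contributes $o(\ell^{p+1})$ to $\int_{\Sigma_L}f|V_\eps|^p\dd\mu$, and $o(\ell^2)$ to $\int_{\Sigma_L}fV_\eps\dd\mu$. For the leading term we use normal coordinates $x=y+sn(y)$, $y\in\partial X$, $0\le s\le L\ell$. The Jacobian is $1+O(s)$, and $\rho_0,\sigma,\gamma,a_V,f$ at $x$ differ from their values at $y$ by $o(1)$. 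Substituting $s=\ell\sigma(y)\gamma(y)t$ turns $t_\eps(x)$ into $t+o(1)$ uniformly, which is harmless by uniform continuity of $p_V$ on $[0,\infty)$ (continuity from \cref{lem:profile-regularity} together with the exponential decay). This gives
\[
\ell^{-(p+1)}\int_{\Sigma_L}f|V_\eps|^p\dd\mu\to\int_{\partial X}f\rho_0\sigma^{p+1}\gamma^{1-p}|a_V|^p\int_0^{L/(\sigma\gamma)}|p_V(t)|^p\dd t\,\dd\mathcal H^{d-1},
\]
where $\ell\sigma\gamma\cdot\ell^p\sigma^p\gamma^{-p}$ produces the weight. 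The signed version is analogous, with weight $\rho_0\sigma^2a_V$ and $\int_0^{L/(\sigma\gamma)}p_V$. As $L\to\infty$, the exponential bound \eqref{eq:profile-exponential} lets these truncated integrals converge to $c_{V,p}$ and $j_V$.

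\emph{Complement contribution.} On $X_{L\ell}$ we use \eqref{eq:field-decomp}: $V_\eps=\ell^2b_V+\mathcal R_\eps+\mathcal B_\eps$. For the remainder $\mathcal R_\eps$ we interpolate between $\|\mathcal R_\eps\|_{L^2}=o(\ell^2)$ and $\|\mathcal R_\eps\|_\infty\le C\ell$. For $p\ge2$ this gives $\int|\mathcal R_\eps|^p\le C\ell^{p-2}o(\ell^4)=o(\ell^{p+2})$. For $1\le p<2$, Hölder on the bounded domain gives $\int|\mathcal R_\eps|^p\le C\|\mathcal R_\eps\|_2^p=o(\ell^{2p})$, which is $o(\ell^{p+1})$ because $2p\ge p+1$.

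For the boundary term, integrating in the normal coordinate gives $\int_{X_{L\ell}}|\mathcal B_\eps|^p\le C\ell^p\int_{L\ell}^\infty e^{-cps/\ell}\dd s\le C\ell^{p+1}e^{-cpL}$. This is negligible after $L\to\infty$. In the signed case $\int|\mathcal B_\eps|\le C\ell^2e^{-cL}$ is treated the same way.

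The interior term satisfies $\int|\ell^2b_V|^p=O(\ell^{2p})$. For $p>1$ this is $o(\ell^{p+1})$, and for $p=1$ it contributes $\int_X f|b_V|\dd\mu$ in the limit, since $|X\setminus X_{L\ell}|\to0$. To combine the three pieces inside $|\cdot|^p$ we use $\bigl||a+b|^p-|a|^p\bigr|\le C(|a|^{p-1}|b|+|b|^p)$ together with Hölder. The signed case \eqref{eq:signed-measure} is linear and needs no such step; there the interior term gives $\int_X fb_V\dd\mu$ and the rest follows as above.

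\emph{Expected obstacle.} The main difficulty is bookkeeping of cross terms at $p=1$ and for $p$ near $1$. There the interior $\ell^2b_V$ and the boundary $\mathcal B_\eps$ are both of the relevant order, so a crude triangle inequality is not enough; it would only produce the limit plus an error. I would handle this by bounding $\bigl||V_\eps|-|\ell^2b_V|\bigr|\le|\mathcal R_\eps|+|\mathcal B_\eps|$ exactly when $p=1$. The $\mathcal B_\eps$ part is then $O(\ell^2e^{-cL})$, which vanishes after $L\to\infty$. For $p>1$ the interior piece is lower order, so $|V_\eps|^p$ on $X_{L\ell}$ is bounded by $C(|\ell^2b_V|^p+|\mathcal R_\eps|^p+|\mathcal B_\eps|^p)$ and contributes $O(e^{-cL})\ell^{p+1}+o(\ell^{p+1})$.

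Finally, the norm asymptotics in \cref{thm:main-bias} follow by testing with $f=1$. For forward errors $|a_V|=|n|=1$, and $c_{V,p}=A_{d,p}$ or $B_{d,p}$.
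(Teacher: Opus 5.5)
Your proposal is correct and follows the paper's proof essentially step by step. The shared steps are the split at $\{\delta\le L\ell\}$ with $L>L_0$ fixed and the normal-coordinate rescaling of \eqref{eq:boundary-input} on the strip. On the complement, both use the H\"older/interpolation bounds for $\mathcal R_\eps$, the exponential integral for $\mathcal B_\eps$, and the reverse triangle inequality at $p=1$, then let $L\to\infty$.

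The only detail you gloss over is that normal coordinates cover only a fixed collar $\{\delta<s_0\}$. The $\mathcal B_\eps$ integral beyond it therefore needs the crude bound $C\ell^pe^{-cps_0/\ell}$ (or a coarea argument), as in the paper.
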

\begin{proof}
Fix $V_\eps$, $f\in C(X)$, and $\Psi\in C(X;\R^d)$.
For $L>L_0$, set
$C_{\ell,L}=\{\delta\le L\ell\}$ and
$D_{\ell,L}=\{\delta>L\ell\}$.
We apply \eqref{eq:boundary-input} on $C_{\ell,L}$ and
\eqref{eq:field-decomp} on $D_{\ell,L}$, first letting
$\eps\downarrow0$ with $L$ fixed, then $L\to\infty$.

\emph{Boundary contribution.}
For sufficiently small $s_0>0$, the map
\begin{equation}\label{eq:bias-normal-coordinates}
 \Xi(y,s)=y+sn(y),\qquad
 y\in\partial X,\quad 0\le s<s_0,
\end{equation}
parametrizes $\{\delta<s_0\}$, with
$\delta(\Xi(y,s))=s$ and
$\dd x=J(y,s)\dd s\dd\mathcal H^{d-1}(y)$,
where $J(y,s)=1+O(s)$ uniformly in $y$.
For $d=1$, $\mathcal H^0$ counts the endpoints and $J=1$.
Define
\begin{equation}\label{eq:bias-boundary-profile}
 \mathcal P_V(y,r)
 =\frac{\sigma(y)}{\gamma(y)}
 p_V\!\left(\frac{r}{\sigma(y)\gamma(y)}\right)a_V(y).
\end{equation}
By \eqref{eq:boundary-coordinates},
\[
 t_\eps(\Xi(y,\ell r))
 =\frac{r}{\sigma(\Xi(y,\ell r))\gamma(\Xi(y,\ell r))}
 \longrightarrow\frac{r}{\sigma(y)\gamma(y)}
\]
uniformly on $\partial X\times[0,L]$.
Continuity of $\sigma,\gamma,a_V,p_V$ and
\eqref{eq:boundary-input} therefore give
\begin{equation}\label{eq:bias-rescaled-collar-limit}
 \sup_{\substack{y\in\partial X\\0\le r\le L}}
 \left|\ell^{-1}V_\eps(\Xi(y,\ell r))
 -\mathcal P_V(y,r)\right|\longrightarrow0.
\end{equation}
Changing variables $x=\Xi(y,\ell r)$ yields
\[
 \ell^{-(p+1)}\int_{C_{\ell,L}}f|V_\eps|^p\dd\mu
 =
 \int_{\partial X}\int_0^L
 f(\Xi(y,\ell r))\rho_0(\Xi(y,\ell r))
 |\ell^{-1}V_\eps(\Xi(y,\ell r))|^p
 J(y,\ell r)\dd r\dd\mathcal H^{d-1}(y).
\]
Thus \eqref{eq:bias-rescaled-collar-limit} implies, for $1\le p<\infty$,
\begin{align}
 \ell^{-(p+1)}\int_{C_{\ell,L}}f|V_\eps|^p\dd\mu
 &\longrightarrow
 \int_{\partial X}\rho_0(y)f(y)
 \int_0^L|\mathcal P_V(y,r)|^p\dd r
 \dd\mathcal H^{d-1}(y),
 \label{eq:truncated-boundary}\\
 \ell^{-2}\int_{C_{\ell,L}}\Psi\cdot V_\eps\dd\mu
 &\longrightarrow
 \int_{\partial X}\rho_0(y)\Psi(y)\cdot
 \int_0^L\mathcal P_V(y,r)\dd r
 \dd\mathcal H^{d-1}(y).
 \label{eq:bias-signed-collar}
\end{align}
The substitution $r=\sigma(y)\gamma(y)t$ and
\eqref{eq:bias-profile-integrals} give
\begin{equation}\label{eq:bias-boundary-integral-limit}
\begin{aligned}
 \int_0^\infty|\mathcal P_V(y,r)|^p\dd r
 &=c_{V,p}\sigma(y)^{p+1}\gamma(y)^{1-p}|a_V(y)|^p,\quad
 \int_0^\infty\mathcal P_V(y,r)\dd r
=j_V\sigma(y)^2a_V(y).
\end{aligned}
\end{equation}
By \eqref{eq:profile-exponential} and the bounds on
$\sigma,\gamma,a_V$, $|\mathcal P_V(y,r)|\le Ce^{-cr}$
uniformly in $y$. Hence the right-hand sides of
\eqref{eq:truncated-boundary} and \eqref{eq:bias-signed-collar}
converge as $L\to\infty$ to the corresponding expressions
in \eqref{eq:bias-boundary-integral-limit}.

\emph{Contribution from $\{\delta>L\ell\}$.}
Since $D_{\ell,L}\subset X_\ell$, \eqref{eq:field-decomp} gives
$V_\eps=\ell^2b_V+\mathcal R_\eps+\mathcal B_\eps$ on $D_{\ell,L}$.
The density bounds, H\"older's inequality, and
\eqref{eq:field-decomp} imply
\begin{equation}\label{eq:bias-remainder-moments}
 \int_{D_{\ell,L}}|\mathcal R_\eps|^p\dd\mu
 \le
 \begin{cases}
 \|\mathcal R_\eps\|_{L^2(X_\ell,\mu)}^p
 =o(\ell^{2p}),&1\le p\le2,\\
 \|\mathcal R_\eps\|_{L^\infty(X_\ell)}^{p-2}
 \|\mathcal R_\eps\|_{L^2(X_\ell,\mu)}^2
 =o(\ell^{p+2}),&p>2.
 \end{cases}
\end{equation}
These bounds are independent of $L>L_0$. 
For the boundary remainder $\mathcal B_\eps$, \eqref{eq:field-decomp} and
\eqref{eq:bias-normal-coordinates} give, when $L\ell<s_0$,
\[
 \int_{D_{\ell,L}}|\mathcal B_\eps|^p\dd\mu
 \le C_p\ell^p\int_{L\ell}^{s_0}e^{-cp s/\ell}\dd s
 +C_p\ell^pe^{-cp s_0/\ell}.
\]
For fixed $L$ and sufficiently small $\ell$,
$L\ell<s_0/2$ and
$e^{-cp s_0/\ell}\le C_p\ell e^{-cpL/2}$.
Consequently,
\begin{equation}\label{eq:tail-integral}
 \int_{D_{\ell,L}}|\mathcal B_\eps|^p\dd\mu
 \le C_p\ell^{p+1}e^{-c_pL},
 \qquad 1\le p<\infty,
\end{equation}
with constants independent of $L$. For $p>1$, boundedness of $b_V$ gives
$\ell^{-(p+1)}\int_{D_{\ell,L}}|\ell^2b_V|^p\dd\mu
\le C_p\ell^{p-1}\to0$.
Equation \eqref{eq:bias-remainder-moments} likewise gives
$\ell^{-(p+1)}\int_{D_{\ell,L}}|\mathcal R_\eps|^p\dd\mu\to0$.
Using $|u+v+w|^p\le3^{p-1}(|u|^p+|v|^p+|w|^p)$
and \eqref{eq:tail-integral}, we obtain
\begin{equation}\label{eq:bias-complement-bound}
 \limsup_{\eps\downarrow0}
 \ell^{-(p+1)}
 \left|\int_{D_{\ell,L}}f|V_\eps|^p\dd\mu\right|
 \le C_p\|f\|_\infty e^{-c_pL}.
\end{equation}

For $p=1$, \eqref{eq:field-decomp},
\eqref{eq:bias-remainder-moments}, and
\eqref{eq:tail-integral} give
\begin{equation}\label{eq:bias-endpoint-complement}
 \limsup_{\eps\downarrow0}
 \int_{D_{\ell,L}}|\ell^{-2}V_\eps-b_V|\dd\mu
 \le Ce^{-cL}.
\end{equation}
The reverse triangle inequality therefore yields
\[
 \limsup_{\eps\downarrow0}
 \left|
 \ell^{-2}\int_{D_{\ell,L}}f|V_\eps|\dd\mu
 -\int_{D_{\ell,L}}f|b_V|\dd\mu
 \right|
 \le C\|f\|_\infty e^{-cL}.
\]
Similarly,
\[
 \limsup_{\eps\downarrow0}
 \left|
 \ell^{-2}\int_{D_{\ell,L}}\Psi\cdot V_\eps\dd\mu
 -\int_{D_{\ell,L}}\Psi\cdot b_V\dd\mu
 \right|
 \le C\|\Psi\|_\infty e^{-cL}.
\]
Since $\mu(C_{\ell,L})\le CL\ell$ and $b_V$ is bounded,
the integrals involving $b_V$ converge, for fixed $L$, to
$\int_Xf|b_V|\dd\mu$ and $\int_X\Psi\cdot b_V\dd\mu$,
respectively.

Combining \eqref{eq:truncated-boundary} with
\eqref{eq:bias-complement-bound} for $p>1$ and the preceding estimate for $p=1$,
and then letting $L\to\infty$, proves
\eqref{eq:absolute-bias-measures}.
Equations \eqref{eq:bias-signed-collar},
\eqref{eq:bias-endpoint-complement}, and
\eqref{eq:bias-boundary-integral-limit} similarly prove
\eqref{eq:signed-measure}.
Taking $f=1$ in the scalar limit for $p=1$ also gives $\sup_{\eps<\eps_0}\ell^{-2}\int_X|V_\eps|\dd\mu<\infty$ for some $\eps_0>0$, which is the total variation bound needed for the vector measures
$\ell^{-2}V_\eps\mu$.
\end{proof}

\begin{proof}[Proof of \cref{thm:main-bias}]
The local limits \eqref{eq:main-local-bias} follow from \eqref{eq:centers-local}. For the global conclusions, we first derive norm asymptotics for all four error fields in \eqref{eq:bias-error-fields}. Testing \eqref{eq:absolute-bias-measures} with $f=1$ gives
\begin{align}
 \ell^{-(p+1)}\|V_\eps\|_{L^p(\mu)}^p&\longrightarrow c_{V,p}\int_{\partial X}\rho_0\sigma^{p+1}\gamma^{1-p}|a_V|^p\dd\mathcal H^{d-1},&&1<p<\infty,\label{eq:Lp}\\
 \ell^{-2}\|V_\eps\|_{L^1(\mu)}&\longrightarrow\int_X|b_V|\dd\mu+c_{V,1}\int_{\partial X}\rho_0\sigma^2|a_V|\dd\mathcal H^{d-1}.\label{eq:L1}
\end{align}
The factors $\rho_0,\sigma,\gamma,|a_V|$ are bounded above and away from zero on $\partial X$, which has finite positive surface measure. Since $c_{V,p}>0$, the limits in \eqref{eq:Lp} and \eqref{eq:L1} are finite and strictly positive, even when $b_V=0$. Taking $p$th roots gives the rates for finite $p$, while \eqref{eq:uniform-bias-orders} supplies $p=\infty$. Thus
\begin{equation}\label{eq:global-bias-orders}
 \|V_\eps\|_{L^p(\mu)}\asymp\ell^{1+1/p}=\eps^{(1+1/p)/(d+2)},\qquad 1\le p\le\infty.
\end{equation}

For the forward errors $V_\eps=z_\eps-T$ and $V_\eps=m_\eps-T$, we have $a_V=n$ and $|a_V|=1$ on $\partial X$; their constants $c_{V,p}$ are $A_{d,p}$ and $B_{d,p}$, respectively. Substituting these choices into \eqref{eq:Lp} and \eqref{eq:L1} gives \eqref{eq:main-Lp} and \eqref{eq:main-L1}, and \eqref{eq:global-bias-orders} gives \eqref{eq:main-bias-rates}. Choosing the same two errors in \eqref{eq:absolute-bias-measures} gives their measure limits.

For the reverse errors, composition with $T$ preserves the norms because $T_\#\mu=\nu$. Thus \eqref{eq:global-bias-orders} also gives the rates of $\widehat z_\eps-S$ and $\widehat m_\eps-S$ in $L^p(\nu)$ for $1\le p\le\infty$.
\end{proof}

\appendix
\section{Failure of Brenier graph inclusion}\label{app:graph-exclusion}

The graph inclusion can fail when $\eps$ is not small and $d\geq2$. (It holds globally when $d=1$.)

\begin{proposition}[Failure of graph inclusion]\label{prop:graph-exclusion}
For every $d\ge2$, there are probability measures $\mu=\rho_0\1_\Omega\dd x$ and $\nu=\rho_1\1_\Omega\dd y$ on the ball $\Omega=\B(0,12)\subset\R^d$, with $C^\infty$ densities bounded above and away from zero, such that their Brenier map $T$ satisfies
\begin{equation}\label{eq:graph-exclusion}
  q_{\eps}(x,T(x))<-\frac14\qquad(x\in E)
\end{equation}
at $\eps=13/8$, for some open set $E\subset\intr\Omega$ with $\mu(E)>1/64$. In particular, $\operatorname{graph}T\not\subset\spt\pi_\eps$.
\end{proposition}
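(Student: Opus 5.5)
The plan is to build the example as a smoothing of an explicit near-atomic configuration. First I solve a finite (semi-)discrete QOT problem in closed form at $\eps=13/8$. Then I pass to smooth densities on $\Omega=\B(0,12)$ by a stability argument.

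The mechanism is a mass-versus-penalty trade-off. In \eqref{eq:problem} the conditional law of $x$ is $h_\eps(x,\cdot)\nu$, and the quadratic penalty is $\frac\eps2\int h_\eps^2\dd(\mu\otimes\nu)$. If the Brenier image of a region $E$ is a region $F$ whose $\nu$-mass is spread in a geometrically unfavorable way, then for moderate $\eps$ it is cheaper in the regularized objective to send the mass of $E$ to nearby, heavier regions than to $T(E)$. For $d\ge2$ this can be arranged with $\nu(F)=\mu(E)$ by a shear. The marginals are translated relative to each other in a second coordinate, so that the convex potential must move $E$ across a heavy block rather than along it.

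\textbf{Step 1 (model problem).} Fix a configuration of a few clusters in the $(x_1,x_2)$-plane, inside $\B(0,12)$, in the spirit of the 5-point example of \cite{GNRMonotonicity}:
\begin{itemize}
\item atoms $a_1,\dots,a_k$ for $\mu$ and $b_1,\dots,b_k$ for $\nu$, with masses $\alpha_i,\beta_j$;
\item a cyclically monotone pairing $a_i\mapsto b_{\tau(i)}$, which is the unregularized optimal plan;
\item one designated pair $a_1\mapsto b_{\tau(1)}$ with $\alpha_1>1/64$.
\end{itemize}
For the discrete problem at $\eps=13/8$, I guess the support pattern of $\pi_\eps$ and solve the resulting linear system for the dual vectors $(u_i,v_j)$ coming from \eqref{eq:marginal}. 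I then verify the KKT conditions: $q_\eps(a_i,b_j)\ge0$ exactly on the guessed pattern, and $q_\eps<0$ off it. The parameters are tuned so that $q_\eps(a_1,b_{\tau(1)})\le-1/2$, leaving margin for the later perturbation. The radius $12$ and the value $13/8$ are the outcome of this tuning.

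\textbf{Step 2 (smoothing).} Replace each atom by a smooth bump of small width $r$, and add a floor of mass $\eta$ spread uniformly on $\Omega$, then normalize. This produces $C^\infty$ densities bounded above and away from zero. I use two stability facts.
\begin{itemize}
\item QOT potentials at fixed $\eps$ are stable under weak convergence of the marginals. Via the marginal equations \eqref{eq:marginal}, the potentials are uniformly Lipschitz and converge uniformly to the discrete potentials, extended to $\Omega$ by the $c$-transform type formula coming from \eqref{eq:marginal}.
\item Brenier maps converge in $L^1(\mu)$, hence in measure. By stability of optimal plans, points in the bump around $a_1$ are mapped mostly near $b_{\tau(1)}$.
\end{itemize}
Continuity of $q_\eps$ then gives $q_\eps(x,T(x))<-1/4$ for $x$ in an open subset $E$ of the bump around $a_1$. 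Because $\mu(E)$ is close to $\alpha_1>1/64$, this gives \eqref{eq:graph-exclusion}. Since $\spt\pi_\eps=\{q_\eps\ge0\}$ by \eqref{eq:preliminary-support}, the failure of inclusion follows.

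\textbf{Main obstacle.} The delicate step is the explicit computation in Step 1. I must guess the correct support pattern and check all strict inequalities with enough quantitative margin to survive Step 2; this is where the specific numbers come from. A secondary difficulty is showing that $T$ is controlled pointwise on an open set, not just in measure. To handle this, I would first make $\mu$ near $a_1$ and the target near $b_{\tau(1)}$ both radially symmetric bumps, so that locally $T$ is close to a translation. Caffarelli's interior regularity on that region then upgrades the convergence of $T$ to uniform convergence on a smaller ball.
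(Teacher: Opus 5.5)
\textbf{Gap: Step 1, the core construction, is never carried out.}
The proposition is equivalent to exhibiting a discrete configuration whose unregularized optimal plans all charge a pair at which the regularized slack at $\eps=13/8$ is negative. Your Step 1 only describes a search (``guess the support pattern'', ``tune the parameters''). It neither produces such a configuration nor proves one exists. Attributing $12$ and $13/8$ to this tuning simply reads the answer off the statement.

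Your design is also more demanding than needed. You require a Monge pairing in which the whole atom $a_1$, of mass $\alpha_1>1/64$, is sent to $b_{\tau(1)}$ by the unregularized plan yet receives no regularized mass. You give no evidence that this is realizable. The paper only needs a \emph{split} atom:
\begin{itemize}
\item Take $x_1=0$, $x_2=e_1$, $x_3=e_2$, $y_1=-8e_1-7e_2$, $y_2=-4e_1+e_2$, $y_3=0$, with masses $a=\frac1{32}(13,14,5)$ and $b=\frac1{32}(12,4,16)$.
\item The unregularized plan is unique: every maximizer lives on the zero set of $K\le0$, and there the marginals force $P^0$. It gives mass $1/32$ to $(0,0)$.
\item At $\eps=13/8$, the vectors $U=(0,-4,-3)$ and $V=(-4,-1,1)$ solve the discrete marginal equations with slack $-1$ at $(0,0)$.
\item An explicit strict-convexity certificate shows the regularized plan is unique.
\end{itemize}
The bound $\mu(E)>1/64$ then comes from the mass $1/32$ of that pair, with a factor-two margin, not from an atom mass.

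\textbf{Step 2 needs three repairs.}
\begin{enumerate}
\item[(i)] The weak limit of $(\id,T_n)_\#\mu_n$ is only known to be \emph{some} optimal plan for the discrete marginals. To force it to charge your designated pair, you need uniqueness of the discrete unregularized plan, which you never check. Also, ``$T$ converges in $L^1(\mu)$'' is not meaningful when $\mu_n$ varies and the limit is atomic. What you actually have is weak convergence of plans.
\item[(ii)] Identifying the limit of the regularized potentials requires that \emph{every} solution of the discrete marginal equations gives the same slack at the designated pair. Discrete QOT potentials need not be unique. The paper checks this in two steps: any solution induces the same plan, by the certificate, and the slack then propagates along the plan's support.
\item[(iii)] The pointwise control of $T$ you worry about is unnecessary, and your remedy is not justified. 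Caffarelli's estimates degenerate as the ratio between bump height and the floor $\eta$ blows up. Nothing makes $T_n$ nearly a translation near $a_1$, since the floor mass and the other bumps interact with it.
\end{enumerate}

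\textbf{How to avoid (iii).} Set $E_n=B(a_1,r)\cap T_n^{-1}(B(b_{\tau(1)},r))$. Then:
\begin{itemize}
\item $E_n$ is open, because each $T_n$ is continuous.
\item $\mu_n(E_n)=\gamma_n\bigl(B(a_1,r)\times B(b_{\tau(1)},r)\bigr)$, which Portmanteau bounds below on this open product set.
\item $q_n<-1/4$ on the whole product set for large $n$. This follows from uniform convergence of $q_n$ at the pair together with the bounds $\Lip(u_n),\Lip(v_n)\le12$ from the barycenter formula.
\end{itemize}
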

\begin{proof}
\emph{Atomic marginals.}
Let $e_1,e_2$ be the first two coordinate vectors.
Set $x_1=0$, $x_2=e_1$, $x_3=e_2$,
$y_1=-8e_1-7e_2$, $y_2=-4e_1+e_2$, and $y_3=0$.
For $a=\tfrac1{32}(13,14,5)^T$ and
$b=\tfrac1{32}(12,4,16)^T$, define
$\bar\mu=\sum_{i=1}^3a_i\delta_{x_i}$ and
$\bar\nu=\sum_{j=1}^3b_j\delta_{y_j}$.
Feasible couplings are matrices $P\ge0$ satisfying
$P\mathbf1=a$ and $P^T\mathbf1=b$.
Writing $M_{ij}=x_i\cdot y_j$,  optimal transport with squared Euclidean cost
maximizes $\sum_{i,j}M_{ij}P_{ij}$.
Subtracting $1$ from the second column changes this objective
by the constant $-b_2$. The resulting matrix $K$ and a feasible coupling are
\begin{equation}\label{eq:exclusion-unregularized-plan}
 K=\begin{pmatrix}0&-1&0\\-8&-5&0\\-7&0&0\end{pmatrix},
 \qquad
 P^0=\frac1{32}\begin{pmatrix}12&0&1\\0&0&14\\0&4&1\end{pmatrix}.
\end{equation}
Since $K\le0$ and $P^0$ is supported on $\{K=0\}$,
the maximum is zero. Every maximizer is supported on
$\{K=0\}$, where the marginal constraints uniquely determine
$P^0$. Thus the unregularized optimizer
$\bar\gamma=\sum_{i,j}P^0_{ij}\delta_{(x_i,y_j)}$
satisfies $\bar\gamma(\{(0,0)\})=1/32$. At $\eps=13/8$, take $U=(0,-4,-3)$ and $V=(-4,-1,1)$.
Direct calculation gives
\begin{equation}\label{eq:exclusion-atomic-slack}
 Q=(M_{ij}-U_i-V_j)_{i,j}
 =\begin{pmatrix}4&1&-1\\0&1&3\\0&5&2\end{pmatrix},
 \qquad
 Q_+b=\eps\mathbf1,\quad Q_+^Ta=\eps\mathbf1.
\end{equation}
Hence $P^*_{ij}=a_ib_j(Q_{ij})_+/\eps$ is feasible.
Up to terms fixed by the marginals, the regularized objective is
\[
 \mathcal F(P)
 =-\sum_{i,j}M_{ij}P_{ij}
 +\frac{\eps}{2}\sum_{i,j}\frac{P_{ij}^2}{a_ib_j}.
\]
For every feasible $P$, expansion and cancellation of the
potential terms give
\begin{equation}\label{eq:exclusion-optimality-certificate}
 \mathcal F(P)-\mathcal F(P^*)
 =\frac{\eps}{2}\sum_{i,j}
 \frac{(P_{ij}-P^*_{ij})^2}{a_ib_j}
 +\sum_{i,j}(-Q_{ij})_+P_{ij}\ge0.
\end{equation}
Equality requires $P=P^*$, so $P^*$ is the unique optimizer.
In particular, $P^*_{13}=0$ and $Q_{13}=-1$. Any other solution $(\widetilde U,\widetilde V)$ of the
discrete marginal equations induces the same optimizer by
\eqref{eq:exclusion-optimality-certificate}. For every $(i,j)$ with $P^*_{ij}>0$,
$M_{ij}-\widetilde U_i-\widetilde V_j
=\eps P^*_{ij}/(a_ib_j)=Q_{ij}$.
Normalize $\widetilde U_1=0$.
Since $P^*_{12},P^*_{22},P^*_{23}>0$, these identities give
$-\widetilde V_2=1$,
$-4-\widetilde U_2-\widetilde V_2=1$, and
$-\widetilde U_2-\widetilde V_3=3$.
Thus $\widetilde V_2=-1$, $\widetilde U_2=-4$, and
$\widetilde V_3=1$, so
$M_{13}-\widetilde U_1-\widetilde V_3=-1$.

\emph{Smooth marginals.}
Choose $\phi\in C_c^\infty(B_1)$ with $\phi\ge0$ and
$\int\phi=1$, and set $\phi_n(x)=n^d\phi(nx)$.
For $n\ge2$, define on $\Omega=\B(0,12)$
\begin{equation}\label{eq:exclusion-smooth-densities}
\begin{aligned}
 \rho_{0,n}(x)
 &=(1-n^{-1})\sum_{i=1}^3a_i\phi_n(x-x_i)
 +\frac1{n|\Omega|},\ 
 \rho_{1,n}(y)
 =(1-n^{-1})\sum_{j=1}^3b_j\phi_n(y-y_j)
 +\frac1{n|\Omega|}.
\end{aligned}
\end{equation}
The densities are smooth, integrate to one, and are bounded
below by $1/(n|\Omega|)$. Their measures satisfy
$\mu_n\rightharpoonup\bar\mu$ and
$\nu_n\rightharpoonup\bar\nu$. Let $u_n,v_n$ be the regularized potentials at $\eps$,
normalized by $u_n(0)=0$, and set
$q_n(x,y)=x\cdot y-u_n(x)-v_n(y)$.
By \eqref{eq:barycenters},
$\Lip(u_n),\Lip(v_n)\le12$.
Equation \eqref{eq:marginal} at $x=0$ gives
$\int(-v_n)_+\dd\nu_n=\eps$, hence
$\min v_n\le-\eps\le\max v_n$.
Consequently,
\begin{equation}\label{eq:exclusion-potential-bounds}
 \|u_n\|_\infty\le144,\qquad
 \|v_n\|_\infty\le288+\eps.
\end{equation}
Arzel\`a--Ascoli gives relative compactness in $C(\Omega)^2$.
For any subsequential limit $(u,v)$, passing to the
limit in \eqref{eq:marginal} at $x_i$ and $y_j$ gives
\[
 \sum_j b_j(M_{ij}-u(x_i)-v(y_j))_+=\eps,
 \qquad
 \sum_i a_i(M_{ij}-u(x_i)-v(y_j))_+=\eps.
\]
The atomic calculation therefore gives
$-u(0)-v(0)=-1$. Every subsequential limit has this value,
so $q_n(0,0)\to-1$. The Lipschitz bounds imply
$|q_n(x,y)-q_n(0,0)|\le|x||y|+12(|x|+|y|)$.
Taking $r=1/100$, for sufficiently large $n$ we obtain
\begin{equation}\label{eq:exclusion-product-neighborhood}
 q_n(x,y)<-\tfrac34+r^2+24r<-\tfrac14
 \qquad(x,y\in\B(0,r)).
\end{equation}

Let $T_n$ be the Brenier map and
$\gamma_n=(\id,T_n)_\#\mu_n$.
Compactness of $\Omega$ gives convergence of the marginals
in $W_2$, hence convergence of the optimal quadratic costs.
Every weak limit of $\gamma_n$ is therefore an optimal
coupling of $\bar\mu,\bar\nu$.
Uniqueness in \eqref{eq:exclusion-unregularized-plan} implies
$\gamma_n\rightharpoonup\bar\gamma$.
For $C=B(0,r)\times B(0,r)$, Portmanteau gives
\begin{equation}\label{eq:exclusion-unregularized-mass}
 \liminf_{n\to\infty}\gamma_n(C)
 \ge\bar\gamma(C)=\frac1{32}.
\end{equation}
Set $E_n=B(0,r)\cap T_n^{-1}(B(0,r))$.
By \cref{prop:inputs}, $T_n$ is continuous, so $E_n$ is open.
Moreover, $\mu_n(E_n)=\gamma_n(C)>1/64$ for sufficiently
large $n$ by \eqref{eq:exclusion-unregularized-mass}, and
\eqref{eq:exclusion-product-neighborhood} gives
$q_n(x,T_n(x))<-1/4$ on $E_n$.
Fixing such an $n$ proves \eqref{eq:graph-exclusion}.
\end{proof}

\paragraph{Declarations.} AGS was partially supported by the RSME--Fundaci\'on BBVA Jos\'e Luis Rubio de Francia Start-up Grant, awarded in the framework of the Jos\'e Luis Rubio de Francia Prize. MN was partially supported by NSF Grants DMS-2407074 and DMS-2606731. The authors declare no competing interests. No data were created or analyzed. AI was used in the preparation of this manuscript. Any remaining errors are our own.

\printbibliography[heading=bibintoc]

@book{GrafakosClassicalFourierAnalysis,
  author    = {Grafakos, Loukas},
  title     = {Classical Fourier Analysis},
  series    = {Graduate Texts in Mathematics},
  volume    = {249},
  edition   = {3},
  publisher = {Springer},
  address   = {New York},
  year      = {2014},
  doi       = {10.1007/978-1-4939-1194-3}
}

@article{GigliTamanini.21,
    author = "N. Gigli and L. Tamanini",
    title  = "Second order differentiation formula on {${\mathrm{RCD}}^{*}(K,N)$} spaces",
  JOURNAL = {Journal of the European Mathematical Society (JEMS)},
    VOLUME = {23},
      YEAR = {2021},
    NUMBER = {5},
     PAGES = {1727--1795},
      ISSN = {1435-9855},
   MRCLASS = {53C23},
  MRNUMBER = {4244516},
       DOI = {10.4171/JEMS/1042},
       URL = {https://doi.org/10.4171/JEMS/1042},
}

@article{NWPotentials,
  author = {Nutz, Marcel and Wiesel, Johannes},
  title = {Entropic optimal transport: convergence of potentials},
  journal = {Probability Theory and Related Fields},
  volume = {184},
  pages = {401--424},
  year = {2022},
  doi = {10.1007/s00440-021-01096-8}
}

@article{CCGTGradients,
  author = {Chiarini, Alberto and Conforti, Giovanni and Greco, Giacomo and Tamanini, Luca},
  title = {Gradient estimates for the {Schr\"odinger} potentials: convergence to the {Brenier} map and quantitative stability},
  journal = {Communications in Partial Differential Equations},
  volume = {48},
  number = {6},
  pages = {895--943},
  year = {2023},
  doi = {10.1080/03605302.2023.2215527}
}

@article{LRUniform,
  author = {L\'opez-Rivera, Pablo},
  title = {A uniform rate of convergence for the entropic potentials in the quadratic {Euclidean} setting},
  journal = {ESAIM: Control, Optimisation and Calculus of Variations},
  volume = {32},
  eid = {27},
  year = {2026},
  doi = {10.1051/cocv/2026012}
}

@online{GS,
  author = {Gonz{\'a}lez-Sanz, Alberto and Sheng, Shunan},
  title = {Linearization of {Monge--Amp\`ere} equations and statistical applications},
  year = {2024},
  eprint = {2408.06534v4},
  eprinttype = {arXiv},
  eprintclass = {math.AP}
}

@online{IF,
  author = {Gonz{\'a}lez-Sanz, Alberto and Sheng, Shunan and Wu, Bohan and Avella Medina, Marco},
  title = {The influence function of transport-based quantiles},
  year = {2026},
  eprint = {2607.19080},
  eprinttype = {arXiv},
  eprintclass = {math.ST}
}

@article{EssidSolomon.18,
  author = {Essid, Montacer and Solomon, Justin},
  title = {Quadratically regularized optimal transport on graphs},
  journal = {SIAM Journal on Scientific Computing},
  volume = {40},
  number = {4},
  pages = {A1961--A1986},
  year = {2018},
  doi = {10.1137/17M1132665}
}

@article{Muzellec.2017.AAAI,
  author = {Muzellec, Boris and Nock, Richard and Patrini, Giorgio and Nielsen, Frank},
  title = {{Tsallis} regularized optimal transport and ecological inference},
  journal = {Proceedings of the AAAI Conference on Artificial Intelligence},
  volume = {31},
  number = {1},
  pages = {2387--2393},
  year = {2017},
  doi = {10.1609/aaai.v31i1.10854}
}

@online{GGK,
  author = {Gonz\'alez-Sanz, Alberto and Gvalani, Rishabh S. and Koch, Lukas},
  title = {Sharp local sparsity of regularized optimal transport},
  year = {2026},
  eprint = {2604.00843v2},
  eprinttype = {arXiv}
}

@article{Nutz,
 author = {Nutz, Marcel},
 title = {Quadratically regularized optimal transport: existence and multiplicity of potentials},
 journal = {SIAM Journal on Mathematical Analysis},
 volume = {57},
 number = {3},
 pages = {2622--2649},
 year = {2025},
 doi = {10.1137/24M1638136},
 eprint = {2404.06847v3},
 eprinttype = {arXiv}
}

@online{GK,
  author = {Gvalani, Rishabh S. and Koch, Lukas},
  title = {Sparsity and uniform regularity for regularised optimal transport},
  year = {2026},
  eprint = {2601.05130v2},
  eprinttype = {arXiv}
}

@article{GSN,
  author = {Gonz\'alez-Sanz, Alberto and Nutz, Marcel},
  title = {Sparsity of quadratically regularized optimal transport: scalar case},
  journal = {SIAM Journal on Mathematical Analysis},
  volume = {58},
  number = {4},
  pages = {4162--4183},
  year = {2026},
  doi = {10.1137/25M1754480},
  eprint = {2410.03353v2},
  eprinttype = {arXiv}
}

@online{GMS,
 author={Gonz\'alez-Sanz, Alberto and Mordant, Gilles and Sheng, Shunan},
 title={Empirical optimal transport potentials: fast rates and a functional central limit theorem},
 year={2026}, eprint={2608.00649v1}, eprinttype={arXiv}
}

@article{WX,
  author = {Wiesel, Johannes and Xu, Xingyu},
  title = {Sparsity of quadratically regularized optimal transport: bounds on concentration and bias},
  journal = {SIAM Journal on Mathematical Analysis},
  volume = {57},
  number = {6},
  pages = {6498--6521},
  year = {2025},
  doi = {10.1137/25M1723633},
  eprint = {2410.03425v1},
  eprinttype = {arXiv}
}

@inproceedings{BSR,
 author={Blondel, Mathieu and Seguy, Vivien and Rolet, Antoine},
 title={Smooth and sparse optimal transport},
 booktitle={Proceedings of the Twenty-First International Conference on Artificial Intelligence and Statistics},
 series={Proceedings of Machine Learning Research},volume={84},pages={880--889},
 year={2018},publisher={PMLR},url={https://proceedings.mlr.press/v84/blondel18a.html}
}

@article{LMM,
  author = {Lorenz, Dirk A. and Manns, Paul and Meyer, Christian},
  title = {Quadratically regularized optimal transport},
  journal = {Applied Mathematics \& Optimization},
  volume = {83},
  number = {3},
  pages = {1919--1949},
  year = {2021},
  doi = {10.1007/s00245-019-09614-w}
}

@article{EN,
 author={Eckstein, Stephan and Nutz, Marcel},
 title={Convergence rates for regularized optimal transport via quantization},
 journal={Mathematics of Operations Research},volume={49},number={2},pages={1223--1240},
 year={2024},doi={10.1287/moor.2022.0245}
}

@online{GGM,
 author={Garriz-Molina, Alejandro and Gonz\'alez-Sanz, Alberto and Mordant, Gilles},
 title={Infinitesimal behavior of quadratically regularized optimal transport and its relation with the porous medium equation},
 year={2024},eprint={2407.21528},eprinttype={arXiv}
}

@online{CGN,
 author={Cardoso-Perell\'o, Carlos and Gonz\'alez-Sanz, Alberto and Nutz, Marcel},
 title={Sharp asymptotics for regularized optimal transport},
 year={2026},eprint={2607.18191},eprinttype={arXiv}
}

@inproceedings{NCN,
  author = {Nguyen-Chi, Long and Nguyen, Nam and Nguyen, Binh T.},
  title = {Quadratically regularized optimal transport: localization bounds and affine case analysis},
  booktitle = {Proceedings of the Forty-Third International Conference on Machine Learning},
  series = {Proceedings of Machine Learning Research},
  volume = {306},
  publisher = {PMLR},
  year = {2026},
  eprint = {2605.24644v2},
  eprinttype = {arXiv},
  pages = {19438--19458}
}

@online{GNGeometry,
  author = {Gonz\'alez-Sanz, Alberto and Nutz, Marcel},
  title = {Geometry and convergence of quadratically regularized optimal transport {I}},
  year = {2026},
  eprint = {2609.20400v2},
  eprinttype = {arXiv}
}

@article{MSC2015,
  author = {Murugan, Mathav and Saloff-Coste, Laurent},
  title = {{Harnack} inequalities and {Gaussian} estimates for random walks on metric measure spaces},
  journal = {Electronic Journal of Probability},
  volume = {28},
  year = {2023},
  eid = {64},
  pages = {1--81},
  doi = {10.1214/23-EJP954}
}

@article{Vita2022,
  author = {Vita, Stefano},
  title = {Boundary regularity estimates in {H\"older} spaces with variable exponent},
  journaltitle = {Calculus of Variations and Partial Differential Equations},
  volume = {61},
  year = {2022},
  eid = {166},
  doi = {10.1007/s00526-022-02274-9}
}

@article{GNRMonotonicity,
  author = {Gonz{\'a}lez-Sanz, Alberto and Nutz, Marcel and Riveros Valdevenito, Andr{\'e}s},
  title = {Monotonicity in quadratically regularized linear programs},
  journaltitle = {SIAM Journal on Optimization},
  volume = {35},
  number = {2},
  pages = {1419--1437},
  year = {2025},
  doi = {10.1137/24M1685171}
}

@article{BGNLargeDeviations,
  author = {Bernton, Espen and Ghosal, Promit and Nutz, Marcel},
  title = {Entropic optimal transport: geometry and large deviations},
  journaltitle = {Duke Mathematical Journal},
  volume = {171},
  number = {16},
  pages = {3363--3400},
  year = {2022},
  doi = {10.1215/00127094-2022-0035}
}

@online{PNMaps,
  author = {Pooladian, Aram-Alexandre and Niles-Weed, Jonathan},
  title = {Entropic estimation of optimal transport maps},
  year = {2024},
  eprint = {2109.12004v3},
  eprinttype = {arXiv}
}

@online{MordantExpansions,
  author = {Mordant, Gilles},
  title = {The statistics of entropic optimal transport with decreasing regularisation},
  year = {2026},
  eprint = {2412.12007v2},
  eprinttype = {arXiv}
}

@article{AHMPSelf,
  author = {Agarwal, Medha and Harchaoui, Zaid and Mulcahy, Garrett and Pal, Soumik},
  title = {{Langevin} diffusion approximation to same marginal {Schr\"odinger} bridge},
  journaltitle = {Journal of Functional Analysis},
  volume = {291},
  number = {1},
  eid = {111494},
  year = {2026},
  doi = {10.1016/j.jfa.2026.111494}
}

@online{YKSPotentials,
  author = {Yao, Rentian and Kim, Young-Heon and Schiebinger, Geoffrey},
  title = {Quantitative stability of many-marginal {Schr\"odinger} bridge},
  year = {2026},
  eprint = {2604.15191v1},
  eprinttype = {arXiv}
}

@online{ZMMSManifoldQOT,
  author = {Zhang, Stephen and Mordant, Gilles and Matsumoto, Tetsuya and Schiebinger, Geoffrey},
  title = {Manifold learning with sparse regularised optimal transport},
  year = {2023},
  eprint = {2307.09816v2},
  eprinttype = {arXiv}
}

@article{MSCouplingConvergence,
  author = {Malamut, Hugo and Sylvestre, Maxime},
  title = {Convergence rates of the regularized optimal transport: disentangling suboptimality and entropy},
  journal = {SIAM Journal on Mathematical Analysis},
  volume = {57},
  number = {3},
  pages = {2533--2558},
  year = {2025},
  doi = {10.1137/23M1591554}
}

@online{MPEntropicDiffusion,
  author = {Mulcahy, Garrett and Pal, Soumik},
  title = {Diffusion approximations to {Schr\"odinger} bridges and the convergence of entropic potentials},
  year = {2026},
  eprint = {2609.22595v1},
  eprinttype = {arXiv}
}

@article{SGKSemidiscreteStability,
  author = {Sadhu, Ritwik and Goldfeld, Ziv and Kato, Kengo},
  title = {Stability and statistical inference for semidiscrete optimal transport maps},
  journal = {The Annals of Applied Probability},
  volume = {34},
  number = {6},
  pages = {5694--5736},
  year = {2024},
  doi = {10.1214/24-AAP2104}
}

@inproceedings{PDNSemidiscreteMaps,
  author = {Pooladian, Aram-Alexandre and Divol, Vincent and Niles-Weed, Jonathan},
  title = {Minimax estimation of discontinuous optimal transport maps: the semi-discrete case},
  booktitle = {Proceedings of the 40th International Conference on Machine Learning},
  series = {Proceedings of Machine Learning Research},
  volume = {202},
  pages = {28128--28150},
  publisher = {PMLR},
  year = {2023},
  url = {https://proceedings.mlr.press/v202/pooladian23b.html}
}

@article{ANSSemidiscreteAsymptotics,
  author = {Altschuler, Jason M. and Niles-Weed, Jonathan and Stromme, Austin J.},
  title = {Asymptotics for semidiscrete entropic optimal transport},
  journal = {SIAM Journal on Mathematical Analysis},
  volume = {54},
  number = {2},
  pages = {1718--1741},
  year = {2022},
  doi = {10.1137/21M1440165}
}

@article{SGKEntropicApproximation,
  author = {Sadhu, Ritwik and Goldfeld, Ziv and Kato, Kengo},
  title = {Approximation rates of entropic maps in semidiscrete optimal transport},
  journal = {Electronic Communications in Probability},
  volume = {30},
  pages = {1--13},
  year = {2025},
  doi = {10.1214/25-ECP682}
}

@article{GHOLinearization,
  author = {Goldman, Michael and Huesmann, Martin and Otto, Felix},
  title = {Quantitative linearization results for the {Monge--Amp\`ere} equation},
  journal = {Communications on Pure and Applied Mathematics},
  volume = {74},
  number = {12},
  pages = {2483--2560},
  year = {2021},
  doi = {10.1002/cpa.21994}
}

@article{DFStrongSobolev,
  author = {De Philippis, Guido and Figalli, Alessio},
  title = {Second order stability for the {Monge--Amp\`ere} equation and strong {Sobolev} convergence of optimal transport maps},
  journal = {Analysis \& PDE},
  volume = {6},
  number = {4},
  pages = {993--1000},
  year = {2013},
  doi = {10.2140/apde.2013.6.993}
}

@article{GNExactRegularization,
  author = {Gonz{\'a}lez-Sanz, Alberto and Nutz, Marcel},
  title = {Quantitative convergence of quadratically regularized linear programs},
  journal = {Applied Mathematics \& Optimization},
  volume = {91},
  number = {3},
  eid = {68},
  year = {2025},
  doi = {10.1007/s00245-025-10267-1}
}
\end{document}